\colorlet{darkblue}{blue!90!black}
\colorlet{darkred}{red!90!black}
\newenvironment{claim}[1][MM]
               {\list{$\bullet$}{%
  \setbox\@tempboxa\hbox{#1}\@tempdima\wd\@tempboxa%
  \setlength{\labelwidth}{\@tempdima}
  \advance\@tempdima by 1em%
  \setlength{\leftmargin}{\@tempdima}
  \setlength{\parsep}{1mm}\setlength{\itemindent}{0mm}%
  \setlength{\labelsep}{2mm}\setlength{\itemsep}{0mm}%
  \setlength{\topsep}{1mm}%
}}{\endlist}
\newtheorem{theorem}{Theorem}[section]
\newtheorem{lemma}[theorem]{Lemma}
\newtheorem{proposition}[theorem]{Proposition}
\theoremstyle{definition}
\newtheorem{definition}[theorem]{Definition}
\newtheorem{example}[theorem]{Example}
\theoremstyle{remark}
\newtheorem{remark}[theorem]{Remark}
\def\scal#1{\langle #1 \rangle}
\newcommand{\vn}[1]{{\vert\kern-0.23ex\vert\kern-0.23ex\vert #1 
    \vert\kern-0.23ex\vert\kern-0.23ex\vert}}
\newcommand{\bn}[1]{{[\kern-0.5ex] #1 
    [\kern-0.5ex]}}
\newcommand\bR{\mathbf{R}}
\newcommand\bone{\mathbf{1}}
\newcommand\cR{\mathcal{R}}
\newcommand\CR{{\mathcal{R}}}
\newcommand\cK{\mathcal{K}}
\newcommand\cD{\mathcal{D}}
\newcommand\CD{\mathcal{D}}
\newcommand\cB{\mathcal{B}}
\newcommand\cC{\mathcal{C}}
\newcommand\CC{{\mathcal{C}}}
\newcommand\cN{\mathcal{N}}
\newcommand\CN{\mathcal{N}}
\newcommand\cI{\mathcal{I}}
\newcommand\cJ{\mathcal{J}}
\newcommand\cQ{\mathcal{Q}}
\newcommand\cS{\mathcal{S}}
\newcommand\cG{\mathcal{G}}
\newcommand\cV{\mathcal{V}}
\newcommand\CV{\mathcal{V}}
\newcommand\cW{\mathcal{W}}
\newcommand\CW{\mathcal{W}}
\newcommand\cP{\mathcal{P}}
\newcommand\CO{\mathcal{O}}
\newcommand\scK{\mathscr{K}}
\newcommand\scZ{\mathscr{Z}}
\newcommand\scD{\mathscr{D}}
\newcommand\scT{\mathscr{T}}
\newcommand\frK{\mathfrak{K}}
\newcommand\frs{\mathfrak{s}}
\newcommand\frm{\mathfrak{m}}
\def\half{\textstyle{1\over 2}}
\def\eps{\varepsilon}
\def\Erf{\mathop{\mathrm{Erf}}}
\newcommand{\R}{{\mathbb{R}}}
\newcommand{\E}{\mathbf{E}}
\newcommand{\Var}{\mathbf{Var}}
\def\supp{\mathop{\mathrm{supp}}}
\def\d{\partial}
\def\id{\mathrm{id}}
\begin{document}
\title{Singular SPDEs in domains with boundaries}

\author{M\'at\'e Gerencs\'er${}^1$ and Martin Hairer${}^2$}
\institute{IST Austria, \email{mate.gerencser@ist.ac.at} \and 
University of Warwick, \email{M.Hairer@Warwick.ac.uk}}
\date{\today}

\maketitle
\begin{abstract}
We study spaces of modelled distributions with singular behaviour near the boundary of a domain that, in the context of the 
theory of regularity structures, allow one to give robust solution theories for singular stochastic PDEs with boundary 
conditions. The calculus of modelled distributions established in Hairer (\emph{Invent. Math.} \textbf{198}, 2014) is extended to this 
setting. We formulate and solve fixed point problems in these spaces with a class of kernels that is sufficiently large to 
cover in particular the Dirichlet and Neumann heat kernels. These results are then used to provide solution theories 
for the KPZ equation with Dirichlet and Neumann boundary conditions and for 
the 2D generalised parabolic Anderson model with Dirichlet boundary conditions.

In the case of the KPZ equation with Neumann boundary conditions, we show that, depending
on the class of mollifiers one considers, a ``boundary renormalisation'' takes place. In other words,
there are situations in which a certain boundary condition is applied to an approximation to the KPZ
equation, but the limiting process is the Hopf-Cole solution to the KPZ equation with 
a \textit{different} boundary condition.
\end{abstract}

\setcounter{tocdepth}{2}
\tableofcontents

\section{Introduction}

The theory of regularity structures, recently developed in \cite{H0}, was in large part motivated by, and very successful in dealing with, singular stochastic partial differential equations (SPDEs). These SPDEs are typically semilinear perturbations of the stochastic heat equation, with their formal right-hand side including expressions that are not well-defined even for functions that are as regular as the solution of the linear part. One well-known example is the KPZ equation
$$
\partial_t u=\Delta u+(\partial_x u)^2+\xi,
$$
where $\xi$ is the $1+1$-dimensional space-time white noise. From the linear theory we know that $u$ is not expected to have better (parabolic) regularity than $1/2$, so its spatial derivative is a distribution, which, in general, one cannot take the square of. The theory developed in \cite{H0} provided a robust concept of solution to equations like KPZ \cite{H_KPZ}, $\Phi^4_3$, the parabolic Anderson model in both two \cite{H0} and three \cite{HP15} dimensions, the dynamical Sine-Gordon model \cite{HShen} on the torus, or such equations on the whole Euclidean space \cite{HL15}.
As neither the torus nor the whole space has boundaries, the spatial behaviour in these examples are `uniform', and the only blow-up of the generalised abstract Taylor expansions - also referred to as `modelled distributions' - that describe the solutions  occur at the $\{t=0\}$ hyperplane of the initial time.

The aim of the present article is to provide a framework within the context of this theory, with which one can provide 
solution theories for initial-boundary problems for singular SPDEs. The appropriate spaces of modelled distributions 
introduced here are flexible enough to account for singular behaviour at the spatial boundary. These are similar to the 
singularities at the initial time treated in \cite{H0} and indeed a similar calculus can be built on them. One could hope 
that, provided such a generalisation of the abstract calculus is obtained, coupling it with rest of the theory automatically 
gives solution theories of the same equations that were previously considered without or with periodic boundary conditions, 
now with for instance Dirichlet or Neumann boundary conditions. However, a subtle-looking but notable difference is that the codimension $2$ of the initial time hyperplane is replaced by the codimension $1$ of the spatial boundary, and therefore dual elements of spaces of test functions supported away from the boundary which are uniformly `locally in $\cC^\alpha$' for $\alpha<-1$  have no canonical extensions as bona fide distributions - a simple example for such situation is the function $1/|x|$, considered as an element of $\cD'(\R\setminus\{0\})$. As elements with (local) regularity less than $-1$ are quite common in applications (unlike elements with regularity less than $-2$), for each such object one has to make sense of their extensions, in a consistent way so that the sufficient continuity properties are preserved.
Although, unlike the rest of the theory, the treatment of this issue is not performed in a systematic way, 
the methods used to treat the examples discussed in the next section are likely to be relevant to different situations. 

\subsection{Applications}\label{subsec:applications}

We now give a few examples of singular SPDEs to which the framework developed in this article can be applied. 
The proofs of the results stated here are postponed to Section~\ref{sec:applications}.

Our first example is the Dirichlet problem for the two-dimensional generalised parabolic Anderson model given by
\begin{equs}[eq:0PAM]
        \partial_t u&=\Delta u+f_{ij}(u)\partial_iu\partial_j u+g(u)\xi\quad & \text{on } &\R_+\times D,\\
        u&=0, & \text{on } &\R_+\times\partial D,\\
        u&=u_0.&\text{on }&\{0\}\times D
\end{equs}
Here $\xi$ denotes two-dimensional spatial white noise, $g$ and $f_{ij}$, $i,j=1,2$ are smooth functions, $D$ is the square $(-1,1)^2$, and $u_0$ belongs to $\cC^\delta(\bar D)$ for some $\delta>0$ and vanishes on $\partial D$.

Take a smooth compactly supported function $\rho$ on $\R^2$ integrating to 1, define $\rho_\varepsilon(x)=\varepsilon^{-2}\rho(\varepsilon^{-1}x)$ and set $\xi_\varepsilon=\rho_\varepsilon\ast\xi$. Consider then the renormalised approximating initial / boundary value problem
\begin{equs}[2][eq:0PAM approx]
        \partial_t u^\varepsilon &=\Delta u^\varepsilon+f_{ij}(u^\varepsilon)(\partial_iu^\varepsilon\partial_j u^\varepsilon-\delta_{ij}C_\varepsilon g^2(u^\varepsilon)) &&\\
        &\quad +g(u^\varepsilon)(\xi_\varepsilon-2C_\varepsilon g'(u^\varepsilon))\quad & \text{on } &\R_+\times D,\\
        u^\varepsilon&=0, & \text{on } &\R_+\times \partial D,\\
        u^\varepsilon&=u_0, & \text{on } &\{0\}\times D,
\end{equs}
for some constants $C_\varepsilon$. One can solve \eqref{eq:0PAM approx} in the classical sense, and in the $\varepsilon\rightarrow0$ limit this provides a concept of local solution to \eqref{eq:0PAM} in the following sense.
\begin{theorem}\label{thm:PAM}
There exists a choice of diverging constants $C_\varepsilon$ and a random time $T>0$ such that the sequence $u^\varepsilon\bone_{[0,T]}$ converge in probability to a continuous function $u$. Furthermore,
provided that the constants $C_\varepsilon$ are suitably chosen, the limit does not depend on the choice of the mollifier $\rho$.
\end{theorem}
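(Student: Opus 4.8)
The plan is to recast \eqref{eq:0PAM} as an abstract fixed point problem inside the weighted spaces of modelled distributions constructed in the earlier sections, solve it locally in time for an arbitrary admissible model, and then transfer the result to the renormalised approximations \eqref{eq:0PAM approx} by continuity. I would work with the regularity structure generated, over the polynomials, by the symbols $\Xi$, $\cI(\Xi)$, $\cI_i(\Xi)$ and the products of these appearing in the expansion of the right-hand side up to homogeneity $\gamma$, for $\gamma$ slightly larger than $1$. The Dirichlet heat kernel on $D=(-1,1)^2$ is decomposed by the method of images into a translation-invariant, compactly supported, $2$-regularising singular kernel $K$, plus reflection terms that are smooth except when both arguments approach the same face of $\partial D$, plus a smooth remainder; this puts us in exactly the class of kernels covered by the integration theorem of the earlier sections, which therefore supplies an operator $\cK^D$ raising homogeneity by $2$ and acting between the weighted spaces $\cD^{\gamma,w}$, where the weight $w=(\eta,\sigma,\mu)$ records the blow-up tolerated at $\{t=0\}$, at $\R_+\times\partial D$, and at $\{0\}\times\partial D$ respectively.

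I would then pose the fixed point $U=\cK^D\bone_+\big(F_{ij}(U)\,\d_iU\,\d_jU+G(U)\,\Xi\big)+\cG^D u_0$, where $\cG^D u_0$ is the lift to $\cD^{\gamma,\eta}$ of the solution of the linear Dirichlet problem with datum $u_0$, which is admissible because $u_0\in\cC^\delta(\bar D)$ vanishes on $\partial D$; the weight $w$ is chosen with $\sigma\ge 0$ — reflecting that, unlike at $t=0$, the solution does not blow up at the spatial boundary — and with $\eta,\mu$ fixed by $\delta$ and the homogeneities of the subcritical symbols (the Dirichlet condition itself being automatic since $\cK^D$ and $\cG^D$ produce modelled distributions whose reconstructions vanish on $\partial D$). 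That this map is a contraction on a small ball of $\cD^{\gamma,w}$ over a short time interval, with the fixed point depending locally Lipschitz-continuously on the underlying model and with an existence time lower semicontinuous in the model, follows from the multiplication and $\cK^D$-estimates together with the usual gain of a small positive power of time; I do not expect this part to differ in substance from the boundaryless case.

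Next I would construct the models: from $\xi_\varepsilon=\rho_\varepsilon\ast\xi$ one forms the canonical lift $Z_\varepsilon$ and the renormalised model $\hat Z_\varepsilon=M_\varepsilon Z_\varepsilon$, where $M_\varepsilon$ subtracts the divergent constants arising from contracting $\cI(\Xi)$ with $\Xi$ and $\cI_i(\Xi)$ with $\cI_j(\Xi)$; both divergences are logarithmic and their divergent parts are proportional, so a single sequence $C_\varepsilon=c\log\varepsilon^{-1}+O(1)$ governs them (the factor $2$ in front of $g'$ in \eqref{eq:0PAM approx} reflecting this ratio). A Wick-calculus computation in the second Wiener chaos, carried out with the explicit kernels including the reflection pieces, then gives $\hat Z_\varepsilon\to\hat Z$ in probability in the appropriate weighted model topology. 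This is the step I expect to carry the real difficulty, for two related reasons. First, the modelled distribution $G(U)\Xi$ has lowest homogeneity $-1-\kappa$, which is strictly below $-1=-\mathrm{codim}(\R_+\times\partial D)$, so its reconstruction is not canonically a distribution up to the boundary and one must pin down an extension — the natural one being that compatible with the odd reflection underlying $G^D$ and with $\xi$ viewed as a globally defined white noise — and check that $\cK^D$, the identification of $\cR^{\hat Z}\cK^D(\cdot)$ with a kernel convolution, and all the continuity bounds are consistent with it. Second, the stochastic estimates must hold uniformly up to $\partial D$, with weights that may blow up there but stay integrable; in particular one must verify that localising the relevant diagrams near the boundary produces no divergence beyond the bulk one already absorbed into $C_\varepsilon$, so that — in contrast with the Neumann KPZ equation — no additional divergent boundary renormalisation is needed, and that the mollifier-dependent finite part of $C_\varepsilon$ can be fixed so that $\hat Z$ is independent of $\rho$.

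Finally I would close the loop. The abstract Leibniz- and chain-rule bookkeeping shows that $u^\varepsilon:=\cR^{\hat Z_\varepsilon}U_\varepsilon$ is a classical solution of exactly \eqref{eq:0PAM approx}, the counterterms $-\delta_{ij}C_\varepsilon g^2(u^\varepsilon)$ and $-2C_\varepsilon g'(u^\varepsilon)g(u^\varepsilon)$ being the two ways the subtractions performed by $M_\varepsilon$ resurface after reconstruction. Combining $\hat Z_\varepsilon\to\hat Z$ with the local Lipschitz dependence of $U_\varepsilon$ on the model and the continuity of reconstruction yields a random $T>0$ and a limit $u:=\cR^{\hat Z}U$, continuous on $[0,T]\times\bar D$, with $u^\varepsilon\bone_{[0,T]}\to u$ in probability; and since for the above choice of $C_\varepsilon$ the limiting model $\hat Z$ does not depend on $\rho$, neither does $u$. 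Other admissible choices of $C_\varepsilon$, differing by a bounded sequence, change $u$ only through the action of the finite renormalisation group, i.e. by adding $\rho$-independent lower-order terms to \eqref{eq:0PAM}.
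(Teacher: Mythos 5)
Your high-level strategy---weighted modelled-distribution spaces, the image decomposition $G \sim K+Z$ of the Dirichlet heat kernel, a short-time contraction, renormalised models converging in probability, and the observation that $\sigma\ge 0$ since the solution need not blow up at $\partial D$---matches the paper, and you have correctly located the crux: the coefficient of $\Xi$ in $\hat g(u)\star\Xi$ has homogeneity $-1-\kappa$, strictly below $-\mathrm{codim}(\R_+\times\partial D)=-1$, so the local reconstruction does not extend canonically across the spatial boundary. What your proposal does not supply is a workable mechanism for producing that extension. The ``odd reflection underlying $G^D$'' is not the right lever here: the reflection is a property of the Green's function, not of the noise, and spatial white noise carries no parity structure that would single out an extension. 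Your second suggestion---``$\xi$ viewed as a globally defined white noise''---is closer, but by itself it does not define a product: in the limit $u$ is only $\cC^{1-\kappa}$ while $\xi$ is $\cC^{-1-\kappa}$, so $g(u)\xi$ is not a classical product of a function with a distribution, and one cannot simply multiply the globally defined noise by the reconstructed $g(u)$.

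The paper's resolution is a splitting that your proposal omits: write $\hat g(u)\star\Xi = \bigl(\hat g(u)-g(0)\bone\bigr)\star\Xi + g(0)\Xi$. The Dirichlet boundary data force $\langle\bone,u(z)\rangle\to 0$ at $P_1\setminus P_0$, hence $\langle\bone,\hat g(u)(z)\rangle - g(0)\to 0$ as well; this places the first summand in the space $\cD_{P,\{1\}}^{\gamma_0,w_1}$ of modelled distributions vanishing at the spatial boundary, for which Theorem~\ref{thm:reco hat'} supplies a \emph{canonical} reconstruction. The second summand is a constant multiple of $\Xi$, and $\hat\cR^\eps(\bR^D_+\Xi)$ is defined by fiat as $\mathbf{1}_{[0,\infty)\times D}\xi_\eps$ (and as $\mathbf{1}_{[0,\infty)\times D}\xi$ for $\eps=0$), a genuine distribution whose convergence as $\eps\to 0$ is a standard Kolmogorov-type estimate. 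The abstract fixed point of Theorem~\ref{thm:FPP} is then set up with the right-hand side decomposed into three components---$\hat f_{ij}(u)\star\scD_iu\star\scD_ju$, $\bigl(\hat g(u)-g(0)\bone\bigr)\star\Xi$, and $g(0)\Xi$---each living in its own weighted space with its own reconstruction. Once you have this decomposition, the rest of your sketch (contraction, continuity in the model, identification of $\cR u^\eps$ with the solution of \eqref{eq:0PAM approx}, convergence as $\eps\to 0$) goes through as you describe.
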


\begin{remark}
We believe that the choice $D = (-1,1)^2$ is not essential, the restriction to the square case
is mostly for the sake of convenience: it is easier to verify our conditions when the explicit form
of the Greens function is known.
\end{remark}

\begin{remark}
One could easily deal with inhomogeneous Dirichlet data of the type $u^\varepsilon=g$ on $\partial D$
by considering the equation for $u^\eps - \hat g$, where $\hat g$ is the harmonic extension of $g$ to 
all of $D$.
\end{remark}

Our next example is the KPZ equation with $0$ Dirichlet boundary condition.
Write this time  $\xi$ for space-time white noise and choose $u_0\in\cC^\delta([-1,1])$ for some $\delta>0$ with 
$u_0(\pm 1) = 0$.
Taking a smooth, 
compactly supported function $\rho$ integrating to $1$, define $\rho_\varepsilon(t,x)=\varepsilon^{-3}\rho(\varepsilon^{-2}t,\varepsilon^{-1}x)$ and set $\xi_\varepsilon=\rho_\varepsilon\ast\xi$. The approximating equations then read as
\begin{equs}[eq:0KPZ approx]
        \partial_t u^\varepsilon &= \half \d_x^2 u^\varepsilon+ (\partial_x u^\varepsilon)^2-C_\varepsilon+\xi_\varepsilon \quad & \text{on } &\R_+ \times [-1,1],\\
        u^\varepsilon &=0, & \text{on } &\R_+ \times \{\pm 1\},\\
        u^\varepsilon&=u_0 & \text{on } & \{0\}\times[-1,1].
\end{equs}

\begin{remark}
We have chosen to include the arbitrary constant $\half$ in front of the term $\d_x^2 u$ so that the corresponding
semigroup at time $t$ is given by the Gaussian with variance $t$. 
\end{remark}

We then have the following analogous result on local solvability.
\begin{theorem}\label{thm:KPZ D}
If $\rho$ satisfies the condition $\rho(x,t) = \rho(-x,t)$, then the 
statement of Theorem~\ref{thm:PAM} also holds for $u^\eps$ defined in \eqref{eq:0KPZ approx}.
\end{theorem}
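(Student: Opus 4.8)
The plan is to run the standard programme of regularity structures, following the treatment of the KPZ equation in \cite{H_KPZ} but working throughout in the weighted spaces of modelled distributions with boundary blow-up constructed in the body of the paper, and then to supply the new stochastic and algebraic inputs needed to handle the spatial boundary. Concretely, I would first recall the KPZ regularity structure, whose renormalisation group is generated by the single constant subtracted from $(\d_x u)^2$, and reformulate \eqref{eq:0KPZ approx} as a fixed point problem of the form $U=\mathcal{G}_D\bigl((\d_x U)^2-C\bigr)+\mathcal{G}_D\Xi+P u_0$, where $\mathcal{G}_D$ is the abstract integration operator built from the Dirichlet heat kernel on $[-1,1]$ together with its boundary and harmonic corrections, as provided by the Schauder theory for the admissible kernel class established earlier, and $P u_0$ encodes the initial data. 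Choosing the exponents $(\gamma,\eta)$ at $\{t=0\}$ and $\sigma$ at the spatial boundary admissibly — so that $\d_x U$ still has strictly positive order near $t=0$ after the loss of regularity, and so that the products and the map $\mathcal{G}_D$ stay within the allowed range near the boundary — the abstract fixed point theorem of the paper yields, for each admissible model $Z$, a unique local solution $U(Z)$ on an interval $[0,T(Z)]$ with $T(Z)>0$, depending locally Lipschitz-continuously on $Z$. Since $u_0\in\cC^\delta$ with $\delta>0$ and $u_0(\pm1)=0$, the reconstruction $\mathcal{R}U$ is an honest continuous function up to $\{t=0\}$ and up to the boundary.

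Next I would construct the renormalised models. Let $Z_\eps=(\Pi^\eps,\Gamma^\eps)$ be the canonical lift of $\xi_\eps$, let $M_\eps$ be the element of the renormalisation group attached to $C_\eps$, and set $\hat Z_\eps=M_\eps Z_\eps$. The core analytic work is to show that, for $C_\eps$ chosen as in \cite{H_KPZ} (the bulk Wick constant $C_\eps=\E[(\d_x(G*\xi_\eps))^2]$, diverging like $\log\eps^{-1}$), the models $\hat Z_\eps$ converge in probability, in the boundary-weighted metric on admissible models, to a limit $\hat Z$ independent of $\rho$. This is done by the usual route: expand the finitely many relevant stochastic objects — iterated Dirichlet-kernel convolutions of $\xi_\eps$ and their $x$-derivatives, and their Wick products — into Wiener chaos, bound $p$-th moments of their increments and of their differences across mollifiers with the correct homogeneities (now measured by the weighted norms sensitive to the distance to $\partial D$), and conclude by Kolmogorov's criterion. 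The genuinely new point is that every kernel is a Dirichlet kernel, so each object acquires image-charge contributions; one must check that the norms with boundary exponent $\sigma$ stay finite, i.e. that these contributions blow up no faster than $d(\cdot,\partial D)^\sigma$ permits, and that the reconstruction of the modelled distributions of order below $-1$ near the boundary — which a priori has no canonical extension, as flagged in the introduction — is nonetheless the limit of the unambiguous smooth approximations, hence well defined and continuous in the model.

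The hypothesis $\rho(x,t)=\rho(-x,t)$ enters precisely here. Because $\rho_\eps$ is even in $x$, the mollified Dirichlet kernel retains the exact method-of-images structure, with the relevant building block $\d_x G_\eps$ an \emph{odd} function of its spatial argument; this parity forces the would-be boundary renormalisation constants — the boundary analogues of $C_\eps$ coming from the cross terms between the direct and image parts of the kernel — either to vanish identically or to remain of size $o(\eps^{-1})$ on the $O(\eps)$-neighbourhood of the boundary, so that after subtracting only the bulk $C_\eps$ no residual divergence survives in the limit and the boundary condition is not shifted. (This is exactly the mechanism that fails for general $\rho$ in the Neumann case, producing the boundary renormalisation announced in the abstract.) Equivalently, one can run the whole argument by reflection: the symmetry of $\rho$ realises $u^\eps$ on $(0,1)$, locally near a boundary point, as the restriction of a whole-line solution driven by the antisymmetric extension of $\xi_\eps$, and the limiting model is read off from the known whole-space analysis. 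Once $\hat Z_\eps\to\hat Z$ is established, combining it with the continuity of the solution map gives $\mathcal{R}U(\hat Z_\eps)\big|_{[0,T]}\to\mathcal{R}U(\hat Z)\big|_{[0,T]}$ in probability with $T=T(\hat Z)\wedge\liminf T(\hat Z_\eps)>0$; identifying $\mathcal{R}U(\hat Z_\eps)$ with the classical solution $u^\eps$ of \eqref{eq:0KPZ approx}, and noting that for this choice of $C_\eps$ the limit is the Hopf--Cole solution of KPZ with $0$ Dirichlet data — hence a continuous function vanishing on the boundary and, after the standard finite adjustment of $C_\eps$, independent of $\rho$ — completes the proof.

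\textbf{Main obstacle.} The hard part is the stochastic analysis near the boundary in the second and third paragraphs: carrying the image-charge terms through all the KPZ trees, proving the weighted moment bounds uniformly up to $\partial D$, and above all checking that the symmetry of $\rho$ is exactly what makes the bulk constant sufficient, with no surviving boundary counterterm and no shift of the boundary condition. The second delicate ingredient is the consistent, model-continuous definition of the reconstruction for the sub-$(-1)$ objects near the boundary, since this is the one place where the codimension $1$ of the boundary, as opposed to the codimension $2$ of the initial-time hyperplane, genuinely bites.
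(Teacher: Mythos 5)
Your overall strategy — set up the fixed point in the weighted spaces, use the Schauder theory for a kernel class containing the Dirichlet kernel, and then confront the non-canonical reconstruction of the sub-$(-1)$ stochastic objects at the codimension-one boundary — is the right architecture, and you correctly locate the main obstacle. But you have misidentified the mechanism behind the hypothesis $\rho(x,t)=\rho(-x,t)$, and as a consequence your proposed route for the genuinely new part of the argument would not go through.

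First, there is no new model convergence to prove. The paper uses the \emph{same} periodic BPHZ models $(\Pi^\eps,\Gamma^\eps)$ as on the torus (with a large enough period); the image-charge part of the Dirichlet Green's function does not enter the model at all. In the decomposition $G \sim K + Z$ of Example~\ref{example}, only the translation-invariant piece $K\in\scK_2$ is ``lifted'' into the regularity structure; the method-of-images piece $Z\in\scZ_{2,P}$ is smooth away from $\partial D$ and is applied \emph{after} reconstruction, via the operator $Z_\gamma$. So your second paragraph, which plans to ``carry the image-charge terms through all the KPZ trees'' and prove weighted moment bounds for new stochastic objects, is extra work that the paper's decomposition deliberately avoids.

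Second, and more seriously, the symmetry of $\rho$ does \emph{not} kill the boundary renormalisation. What the paper shows is that $\tilde\cR\,\bR^D_+(\scD\Psi)^{\star 2}$ has, after removing the time-zero singularity $B_0^\eps$, a deterministic piece $C_0^\eps$ that blows up like $1/d(\cdot,\partial D)$, and its finite-part extension $\hat C_0^\eps$ (definition~\eqref{e:defC0hat}) differs from the naive extension by Dirac masses $c^\pm_\eps\,\delta_{\pm 1}$ at the boundary. These constants $c^\pm_\eps$ do not vanish under the symmetry hypothesis; in fact the computation carried out in Section~\ref{sec:KPZNeumann} shows they converge to $a/2$ with $a$ given by \eqref{eq:constant a}, which is generically nonzero even for symmetric $\rho$. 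The entire reason the Dirichlet case works is the identity~\eqref{eq:kpz hat r}: since the Dirichlet Green's function $G$ vanishes on $\R_+\times\partial D$, convolving $G$ against a Dirac mass supported on $\partial D$ gives zero, so
$$
G\ast\hat\cR^\varepsilon\bigl(\bR^D_+(\scD\Psi^\eps)^{\star 2}\bigr)
=G\ast\cR\bigl(\bR^D_+(\scD\Psi^\eps)^{\star 2}\bigr)\;,
$$
i.e.\ the non-canonical choice made at the boundary is \emph{invisible} after integration. This step is completely insensitive to the symmetry of $\rho$ and would work for any mollifier. The symmetry hypothesis enters elsewhere: it makes the \emph{bulk} drift constant $c$ of \eqref{eq:constant c} vanish, i.e.\ it ensures no term $2c\,\d_x u^\eps$ appears in the limiting equation, exactly as in the whole-space/torus setting of \cite{HS15}. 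This is what the remark immediately following the theorem is saying. Your argument attributes to boundary parity a cancellation that actually never occurs, and omits the observation that cancellation is not needed because the Dirichlet kernel annihilates boundary Diracs.

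Third, the ``reflection'' heuristic in the second half of your third paragraph is unsound as an argument: the mollified noise $\rho_\eps * \xi$ on $(-1,1)$ is not the restriction of the mollification of the odd extension of $\xi$, because mollification does not commute with restriction/extension by a boundary; the canonical models near $\partial D$ genuinely differ from those obtained by reflecting a whole-line model. The paper's approach sidesteps this precisely by keeping the model global and periodic and pushing all boundary effects into the operators $Z_\gamma$ and the choice of extension $\hat\cR^\eps$.

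In summary: you have the right framework and the right identification of the hard point (reconstruction below order $-1$ near a codimension-one boundary), but you should replace the role you assign to the symmetry of $\rho$ by two separate facts: (i) the bulk drift $c$ of \cite{HS15} vanishes by symmetry; (ii) the boundary Dirac masses produced by the finite-part reconstruction are killed by $G|_{\partial D}=0$, regardless of $\rho$.
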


\begin{remark}
If the additional symmetry on $\rho$ fails, then an analogous result holds, 
but an additional drift term appears in general, see for example \cite{HS15}.
\end{remark}

A more interesting situation arises when trying to define solutions to the KPZ equation with Neumann boundary 
conditions. First, in this case, it is much less clear \textit{a priori} what such a boundary condition
even means since solutions are nowhere differentiable. It is however possible to define a notion of
``KPZ equation with Neumann boundary conditions'' via the Hopf-Cole transform. Indeed, it suffices to realise
that, at least \textit{formally}, if $u$ solves
\begin{equ}[e:KPZ]
\d_t u = \half \d_x^2 u + (\d_x u)^2 + \xi\;,\qquad \d_x u(t,\pm 1) = c_\pm\;, 
\end{equ}
then the process $Z = \exp(2u)$ solves
\begin{equ}[e:SHE]
\d_t Z = \half \d_x^2 Z + 2 Z\,\xi\;,\qquad \d_x Z(t,\pm 1) = 2c_\pm Z(t,\pm 1)\;.
\end{equ}
The latter equation is well-posed as an It\^o stochastic PDE in mild form \cite{DPZ} (with the boundary condition
encoded in the choice of heat semigroup for the mild formulation), so that 
we can \textit{define} the ``Hopf-Cole solution'' to \eqref{e:KPZ} by $u = \half \log Z$ with $Z$ solving \eqref{e:SHE}.
This is the point of view that was taken in \cite{2016arXiv161004931C} where the authors showed that the height function
associated to a large
but finite discrete system of particles performing a weakly asymmetric simple exclusion 
process converges to the solutions to \eqref{e:KPZ}
with boundary conditions $c_\pm$ that are related to the boundary behaviour of the discrete system in a straightforward way. In particular, if the `net flow' of particles at each boundary is $0$, then $c_\pm=0$.  

One of the main results of the present article is to show that the values of $c_\pm$ are very ``soft'' in the 
sense that they in general depend in a rather non-trivial way on the fine details of the particular approximation
one considers for \eqref{e:KPZ}. This is not too surprising: after all, the solution itself is
not differentiable, so it is not so clear what we mean when we impose the value of its derivative at the
boundary. 
To formulate this more precisely, consider $\xi_\varepsilon=\rho_\varepsilon\ast\xi$ and $\hat u_0 \in \CC^\delta([-1,1])$ 
as before (except that we do not impose that $\hat u_0$ vanishes at the boundaries)
and let $\hat u^\eps$ be the solution to 
\begin{equs}[eq:0KPZ N approx]
        \partial_t \hat u^\varepsilon &=\half\d_x^2 \hat u^\varepsilon+(\partial_x \hat u^\varepsilon)^2+\xi_\varepsilon \quad & \text{on } &\R_+\times [-1,1],\\
        \partial_x \hat u^\varepsilon &= \hat b_\pm, & \text{on } &\R_+\times \{\pm 1\},\\
        \hat u^\varepsilon &=\hat u_0 &\text{on }&\{0\}\times[-1,1].
\end{equs}
We then have the following result.

\begin{theorem}\label{thm:KPZ N}
There exist constants $C_\eps$ with $\lim_{\eps \to 0} C_\eps = \infty$, as well as
 constants $a, c \in \R$ such that, setting
\begin{equ}[e:renormu]
 u^\eps(t,x) = \hat u^\eps(t,x) - C_\eps t - cx\;,
\end{equ}
the sequence $u^\eps$ converges, locally uniformly and in probability, to a limit $u$
solving the KPZ equation \eqref{e:KPZ} in the Hopf-Cole sense with boundary data 
$b_\pm = \hat b_\pm - c \pm a$ and with initial condition $u_0(x)=\hat u_0(x)-cx$.
In the particular case where $\rho(x,t) = \rho(-x,t)$, one has $c = 0$. 
\end{theorem}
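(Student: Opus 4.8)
The proof rests on three ingredients: the boundary version of the calculus of modelled distributions established in the previous sections, the renormalisation of the KPZ regularity structure, and the classical well-posedness of the linear multiplicative stochastic heat equation, tied together by the Hopf--Cole transform.

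\emph{Set-up and abstract fixed point.} I would work in the regularity structure constructed for the KPZ equation in \cite{H_KPZ}, the only modification being that the heat kernel is everywhere replaced by the Neumann heat kernel $G^N$ on $(-1,1)$. By the method of images one writes $G^N = K + \bar K + R$, where $K$ is the usual compactly supported singular kernel with the correct small-scale behaviour, $\bar K$ is a finite sum of reflected copies of $K$ centred at the mirror images of the boundary points, and $R$ is smooth. The kernels $K$ and $\bar K$ fall into the class for which, in the earlier sections, the Schauder estimate on the boundary-weighted spaces $\cD^{\gamma,w}$ of modelled distributions has been established; in particular the associated abstract convolution operator $\cP^N$ gains two units of regularity while keeping the weight structure under control. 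One then poses the usual KPZ fixed point $U = \cP^N \bone_+((\cD U)^2 + \Xi) + (\text{contributions of the initial and boundary data})$ (with $\cD$ the abstract spatial derivative) in a space $\cD^{\gamma,w}$ with $\gamma$ slightly above $3/2$ and a weight $w$ that accommodates the mild blow-up of $(\cD U)^2$ both at $\{t=0\}$ and at the spatial boundary $\{x=\pm 1\}$ (and, most delicately, at their intersection). Using the renormalised canonical model $\scZ^\eps$ built from $\xi_\eps$ and $G^N$, this fixed point has a unique solution $U^\eps$ on a time interval $[0,T]$ whose length is controlled solely by the norm of the model, and its reconstruction $\cR^\eps U^\eps$ is a classical solution of a renormalised version of \eqref{eq:0KPZ N approx}.

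\emph{Identification of the counterterms.} The counterterms produced by renormalising the model are of three types. First, the usual KPZ counterterms in the interior, dominated by the diverging constant $C_\eps$ coming from the Wick contraction of $(\cI'(\Xi))^2$. Second, a finite drift constant $c$: this is the part of the renormalisation that is odd in the spatial argument of $\rho$ (and is therefore absent when $\rho(x,t)=\rho(-x,t)$), and it acts on the reconstructed equation exactly as the subtraction of the linear-in-$x$ term $cx$, as in \cite{HS15}. Third --- and this is the genuinely new point --- a finite boundary constant $a$: the components of the model of homogeneity below $-1$, chiefly $(\cI'(\Xi))^2$, have no canonical extension across $\{x=\pm 1\}$ (the obstruction flagged in the introduction for $1/|x|$), so a choice of extension has to be fixed; the natural choice, dictated by the reflection structure of $\bar K$, contributes to the renormalised equation a distribution supported on $\{x=\pm 1\}$. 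Since $\cP^N$ applied to such a boundary distribution is a simple-layer heat potential that shifts the Neumann data --- with opposite signs at $x=+1$ and $x=-1$ because the outward normals point in opposite directions --- a bookkeeping of all three contributions shows that, when the fixed point is set up with initial datum $u_0=\hat u_0-cx$ and boundary data $b_\pm=\hat b_\pm-c\pm a$, its reconstruction $\cR^\eps U^\eps$ coincides with $u^\eps$ as defined in \eqref{e:renormu}.

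\emph{Convergence and Hopf--Cole identification.} One then verifies the stochastic bounds: the renormalised models $\scZ^\eps$ converge, in the boundary-adapted topology of models, to a limiting model $\scZ$. In the interior this is the standard finite collection of Wick-type moment estimates, as in \cite{H0,H_KPZ}; near $\{x=\pm 1\}$ one must additionally check that the extensions chosen above are admissible, i.e.\ satisfy the moment bounds dictated by the weight $w$ --- this is precisely where the potential obstruction of the introduction is resolved, and it also confirms that $a$ and $c$ are finite and force no further counterterms. Continuity of the solution map on $\cD^{\gamma,w}$ together with a uniform-in-$\eps$ lower bound on the lifespan then gives $u^\eps=\cR^\eps U^\eps\to u:=\cR U$ locally uniformly on $[0,T]\times[-1,1]$, in probability, $u$ being a genuine continuous function since $\gamma>0$ and the weights are subcritical for $U$ itself. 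Finally, since $\hat u^\eps$ is a classical solution of \eqref{eq:0KPZ N approx}, the process $\exp(2\hat u^\eps)$ solves, classically, a linear multiplicative heat equation with Neumann data $2\hat b_\pm$; conjugating by $\exp(-2C_\eps t-2cx)$ one finds that $\exp(2u^\eps)$ solves a renormalised multiplicative heat equation in which $C_\eps$ is exactly the It\^o--Wick constant of $\rho_\eps\ast\xi$, so that by the classical theory \cite{DPZ} (the boundary-layer analysis near $\{x=\pm1\}$ being consistent with the preceding step) $\exp(2u^\eps)$ converges to the mild solution $\bar Z$ of \eqref{e:SHE} with initial datum $\exp(2u_0)$ and boundary data $b_\pm$. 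Comparing with $\exp(2u^\eps)\to\exp(2u)$ gives $\exp(2u)=\bar Z$, i.e.\ $u=\half\log\bar Z$ is by definition the Hopf--Cole solution of \eqref{e:KPZ} with boundary data $b_\pm=\hat b_\pm-c\pm a$; the last assertion is the vanishing of $c$ for symmetric $\rho$ noted above. I expect the main obstacle throughout to be exactly this boundary renormalisation: choosing, for every model component of homogeneity below $-1$, an extension across $\{x=\pm1\}$ that is at once algebraically consistent (compatible with the structure group and the recentring maps) and stable (compatible with the stochastic moment bounds), and then correctly evaluating the resulting constants $a$ and $c$, including the sign pattern and the absence of any further boundary counterterm.
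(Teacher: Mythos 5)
Your overall architecture matches the paper's: decomposition of the Neumann kernel into a translation-invariant singular part plus a boundary-singular smooth remainder, a fixed point in the weighted spaces $\cD_P^{\gamma,w}$, and the identification of the boundary constant $a$ as coming from the non-canonical extension of the reconstruction of $(\scD\Psi)^{\star 2}$ across $\{x=\pm1\}$, which after convolution with the Neumann kernel acts as a simple-layer potential shifting the Neumann data with opposite signs at the two endpoints. However, two of your steps do not go through as written.

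First, your Hopf--Cole identification is circular. You claim that, after conjugating $\exp(2\hat u^\eps)$ by $\exp(-2C_\eps t-2cx)$, ``classical theory'' yields convergence of $\exp(2u^\eps)$ to the mild solution of \eqref{e:SHE} with boundary data $b_\pm=\hat b_\pm-c\pm a$. For a general space-time mollifier $\rho$ the noise $\xi_\eps$ is not adapted, It\^o's formula does not apply, and classical arguments would at best suggest the boundary data $\hat b_\pm - c$ \emph{without} the shift $\pm a$ --- the appearance of $a$ is precisely the boundary renormalisation the theorem asserts, so it cannot be an output of the classical theory you invoke. The paper resolves this differently: the regularity-structure argument shows that the limit of $u^\eps$ is \emph{independent of the mollifier} once the boundary data of the approximation are adjusted as in the statement; the identification with the Hopf--Cole solution is then performed only for the special mollifier $\rho(t,x)=\delta(t)\hat\rho(x)$, for which $a=c=0$ and the noise is white in time, so that It\^o's formula legitimately applies. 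Without some such device, your final step fails.

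Second, the existence and finiteness of $a$ --- equivalently, the convergence of the Dirac masses $c_\eps^\pm$ extracted at the boundary and the convergence in $\cC^{-1-\kappa}$ of what remains of the zeroth-chaos component $C_0^\eps$ after their subtraction --- is the analytic heart of the proof, and you defer it entirely (``a bookkeeping \ldots shows''). The paper has to prove that the limiting function $C_0$ vanishes identically away from the boundary, establish the quantitative bound $|C_0^\eps(x)|\lesssim \eps^{1-\kappa}|x|^{\kappa-2}$ so that the cutoff in the definition of the renormalised extension is immaterial, and then evaluate $\lim_\eps c_\eps^\pm$ by an explicit Gaussian computation (yielding \eqref{eq:constant a}). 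Without this, one cannot even assert that $u^\eps$ converges, since the naive reconstruction differs from the renormalised one by boundary Dirac masses whose convergence is exactly what is at stake.
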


\begin{remark}
Even in the symmetric case, one can have $a \neq 0$, so that one can end up 
with non-zero boundary conditions
in the limit, although one imposes zero boundary conditions for the approximation.
\end{remark}

\begin{remark}
The effect of subtracting $cx$ in \eqref{e:renormu} is the same as that of adding a drift 
term $2c \d_x u^\eps$ to the right hand side of \eqref{eq:0KPZ N approx} and changing the boundary condition
$\hat c_\pm$ into $\hat c_\pm - c$, which is the reason for the form of the constants $c_\pm$.
\end{remark}

\begin{remark}
At first sight, this may appear to contradict the results of \cite{Ismael} where the authors consider
the three-dimensional parabolic Anderson model in a rather general setting which covers
that of domains with boundary. Since this scales in exactly the same way as the KPZ equation
(after applying the Hopf-Cole transform), one would expect to observe a similar ``boundary renormalisation''
in this case. The reason why there is no contradiction with our results is that there is no
statement on the behaviour of the renormalisation term $\lambda^\eps$ in \cite[Thm~1]{Ismael} as a 
function of position. What our result suggests is that, at least in the flat case, one should
be able to take $\lambda^\eps$ of the form $\lambda^\eps = C_\eps + \mu$, where $C_\eps$ is a
constant and $\mu$ is some measure concentrated on the boundary of the domain.
\end{remark}

\begin{remark}
The recent result \cite{Nicolas} is consistent with our result in the sense that it shows 
that the ``natural'' notion of solution to \eqref{e:KPZ} with homogeneous Neumann boundary condition
(i.e.\ $c_\pm = 0$) does \textit{not} coincide with the Hopf-Cole solution with homogeneous
boundary data.
In this particular case, one possible interpretation is that, for any fixed time, the solution
to the KPZ equation is a forward / backwards semimartingale (in its own filtration) near the
right / left boundary point. It is then natural to define the ``space derivative'' at the boundary 
to be the derivative of its bounded variation component. When performing the Hopf-Cole
transform, one then picks up an It\^o correction term, which is precisely what one sees in \cite{Nicolas}.
Note however that it is not clear at all whether the homogeneous Neumann solution of \cite{Nicolas} can 
be obtained by considering \eqref{eq:0KPZ N approx} with $\hat b_\pm = 0$ for some mollifier 
$\rho$. This is because, with our conventions for units, this corresponds to the Hopf-Cole solution
with $b_\pm = \pm 1$, while in our case one has $|a| \le {1\over 2}$ as a consequence of
the explicit formula \eqref{eq:constant a} for typical choices of the mollifier, i.e.\ those
with $\rho \ge 0$.
\end{remark}

One has explicit expressions for $c$ and $a$  in terms of $\rho$:
with the notation $\bar\rho(s,y)=\rho(-s,-y)$ and $\Erf$ standing for the error function,
one has the identities
\begin{equs}
a&=\int_{\R^2}(\bar\rho\ast\rho)(s,y)\Big({1\over 2} - {1\over 2}\Erf\Big(\frac{|y|}{\sqrt{2|s|}}\Big) - 2|y| \CN(y,s)\Big)\,ds\,dy\;,
\label{eq:constant a}
\\
c&=2\int_{\R^2}(\bar\rho\ast\rho)(s,y)\, y\CN(y,s)\,ds\,dy\;,
\label{eq:constant c}
\end{equs}
where $\CN$ denotes the heat kernel, see Section~\ref{sec:KPZNeumann} below. Note that in both cases the function
integrated against $\bar\rho\ast\rho$ vanishes at $s = 0$ for any fixed value of $y$, so that 
$a = c = 0$ if we consider the KPZ equation driven by purely spatial regularisations 
of white noise. To the best of our knowledge, this is the first observed instance of ``boundary renormalisation''
for stochastic PDEs. On the other hand, it is somewhat similar to the effects one observes in the
analysis of (deterministic) singularly perturbed problems in the presence of boundary layers, see for example 
\cite{Hinch,Holmes}.

The remainder of the article is structured as follows. After recalling some elements of the theory of
regularity structures in Section~\ref{sec:H0}, mostly to fix our notations, we introduce in Section~\ref{sec:def} 
the spaces of modelled distributions that are relevant for solving singular stochastic PDEs on domains.
Section~\ref{sec:calculus} is then devoted to a rederivation of the calculus developed in \cite{H0}, adapted to these
spaces, with an emphasis on those aspects that actually differ in the present context. 
In Section~\ref{sec:FPP}, we then ``package'' these results into a rather general fixed point theorem, 
which is finally applied to the above examples in Section~\ref{sec:applications}.

\subsection*{Acknowledgements}

{\small
MH gratefully acknowledges support by the Leverhulme Trust and by an ERC consolidator grant,
project 615897.
MG thanks the support of the LMS Postdoctoral Mobility Grant.
}

\section{Elements of the theory of regularity structures}\label{sec:H0}

First let us summarise the relevant definitions, constructions, and results from the theory of regularity structures that we will need in the sequel.
\subsection{Main definitions}
\begin{definition}
A regularity structure $\scT=(A,T,G)$ consists of the following elements.
\begin{claim}
\item An index set $A\subset\R$ which is locally finite and bounded from below.
\item A graded vector space $T=\bigoplus_{\alpha\in A}T_\alpha$ with each $T_\alpha$ a finite-dimensional
normed vector space.
\item A group $G$ of linear operators $\Gamma:T\rightarrow T$, such that, for all $\Gamma\in G$, $\alpha\in A$, $a\in T_\alpha$, one has
$
\Gamma a-a=\bigoplus_{\beta<\alpha}T_\beta
$.
\end{claim}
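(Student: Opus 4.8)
The statement in question is a \emph{definition} (that of a regularity structure $\scT = (A,T,G)$), so strictly speaking there is no assertion to be proved: the three bulleted items are the defining data and axioms, not claims derived from prior hypotheses. Consequently, the only ``proof obligation'' one could reasonably attach to it is a sanity check that the definition is consistent and non-vacuous. The plan for such a check is to exhibit the canonical example and to verify that the group axiom is internally coherent.

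First I would record the \emph{polynomial regularity structure}: take $A = \Z_{\ge 0}$ (or, in the parabolic scaling relevant to the SPDEs above, the scaled degrees $\Z_{\ge 0}$ with $|X_0| = 2$, $|X_i| = 1$), let $T_\alpha$ be the span of monomials $X^k$ of scaled degree $\alpha$, and let $G = (\R^{d}, +)$ act by $\Gamma_h X^k = (X - h\mathbf 1)^k$, i.e.\ by Taylor shift. One checks that $A$ is locally finite and bounded below, that each $T_\alpha$ is finite-dimensional, and that $\Gamma_h X^k - X^k$ is a linear combination of monomials of strictly lower degree, which is exactly the displayed condition $\Gamma a - a \in \bigoplus_{\beta<\alpha} T_\beta$. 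This shows the axioms are satisfiable.

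Second, I would verify the internal coherence of the third axiom, namely that the class of linear maps satisfying $\Gamma a - a \in \bigoplus_{\beta<\alpha}T_\beta$ for all $\alpha \in A$, $a \in T_\alpha$ is closed under composition and inversion, so that requiring $G$ to be a \emph{group} of such operators is not contradictory. Closure under composition is immediate by writing $\Gamma_1\Gamma_2 a - a = \Gamma_1(\Gamma_2 a - a) + (\Gamma_1 a - a)$ and using that each $\Gamma_i$ preserves the filtration $T^{<\alpha} := \bigoplus_{\beta<\alpha}T_\beta$ (which follows from the triangularity together with local finiteness of $A$). For inverses, one observes that on each finite-dimensional truncation $\bigoplus_{\beta \le \alpha} T_\beta$ the operator $\Gamma$ is unipotent with respect to the grading, hence invertible with $\Gamma^{-1}$ again of the same triangular form (a finite Neumann-type series in $\Gamma - \id$ terminates by local finiteness of $A$). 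The only point requiring a little care — and the closest thing here to an ``obstacle'' — is this use of local finiteness and boundedness below of $A$ to guarantee that the relevant sums are finite and the unipotence argument terminates; everything else is routine linear algebra. No further argument is needed, and the definition can be taken as given in what follows.
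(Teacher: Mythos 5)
You are right that this is a definition rather than a theorem, so the paper supplies no proof; its only ``verification'' is to immediately exhibit the polynomial structure $\bar\scT$ and note that it satisfies the axioms, which is exactly your first check (up to the inessential sign convention $\bar\Gamma_h P(X)=P(X+h)$ versus your $X-h$). Your additional remarks on closure under composition and unipotent invertibility are correct and consistent with the standard treatment, so the proposal matches the paper's approach.
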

We will furthermore always consider situations where $T_0$ contains a distinguished element $\bone$
of unit norm which is fixed by the action of $G$.
\end{definition}
\begin{definition}
Given a regularity structure and $\alpha\leq 0$, a sector $V$ of regularity $\alpha$ is a $G$-invariant subspace of $T$ of the form $V=\bigoplus_{\beta\in A}V_{\beta}$ such that $V_\beta\subset T_\beta$ and $V_\beta=\{0\}$ for $\beta<\alpha.$
\end{definition}
With $V$ as above, we will always use the notations
$
V_\alpha^+=\bigoplus_{\gamma\geq\alpha}V_\gamma$ and $V_{\alpha}^-=\bigoplus_{\gamma<\alpha}V_\gamma
$,
with the convention that the empty direct sum is $\{0\}$.
Some further notations will be useful. For $a\in T$, its component in $T_\alpha$ will be denoted either by $\cQ_\alpha a$ or by $(a)_\alpha$ and the norm of $(a)_\alpha$ in $T_\alpha$ is $\|a\|_\alpha$. The projection onto $T_\alpha^-$ is denoted by $\cQ_\alpha^-$. The coefficient of $\bone$ in $a$ is denoted by $\langle \bone,a\rangle$. 

We henceforth fix a \textit{scaling} $\frs$ on $\R^d$, which is just an element of $\mathbb{N}^d$. 
We use the notations $|\frs|=\sum_{i=1}^d\frs_i$, and, for any $d$-dimensional 
multiindex $k$, we write $|k|_\frs=\sum_{i=1}^d\frs_i k_i$. 
A scaling also induces a metric on $\R^d$ by
$d_\frs(x,y)=\sum_{i=1}^d|x_i-y_i|^{1/\frs_i}$,
and this quantity will also sometimes be denoted by $\|x-y\|_\frs$. This is homogeneous under the mappings $\cS_\frs^\delta$ defined by
$$
\cS_\frs^\delta(x_1,\ldots,x_d)=(\delta^{-\frs_1}x_1,\ldots,\delta^{-\frs_d}x_d)
$$
in the sense that $\|\cS_\frs^\delta x\|_\frs=\delta^{-1}\|x\|_\frs$. The ball with center $x$ and radius $r$, in the above sense, is denoted by $B(x,r)$. We also define the mapping $\cS_{\frs,x}^\delta$, acting on $L_1(\R^d)$ by
$$
(\cS_{\frs,x}^\delta \varphi)(y)=\delta^{-|\frs|}\varphi(\cS_\frs^\delta(y-x)).
$$
We will also sometimes use the shortcut $\varphi_x^{\delta} = \cS_{\frs,x}^\delta \varphi$.

One important regularity structure is that of the polynomials in $d$ commuting variables, 
which we denote by $X_1,\ldots,X_d$. For any nonzero multiindex $k$, we denote
$$
X^k=X_1^{k_1}\cdots X_d^{k_d},
$$
and also use the notation $X^0=\bone.$ We define the index set $\bar A=\mathbb{N}$, for any $n\in\mathbb{N}$, the subspaces
$$
\bar T_n=\text{span}\{X^k:|k|_\frs=n\},
$$
and for any $h\in\R^d$, the linear operator $\bar\Gamma_h$ by
$$
(\bar\Gamma_h P)(X)=P(X+h).
$$
It is straightforward to verify that this defines a regularity structure $\bar \scT$, with structure group $\bar G=\{\bar\Gamma_h:h\in\R^d\}\approx\R^d$.

In most of the following we consider $d$, $\scT$, and $\frs$ to be fixed.
We will always assume that our regularity structures contain $\bar \scT$ in the sense of \cite[Sec.~2.1]{H0}.
A concise definition of the H\"older spaces of all (non-integer) exponents that are used in the sequel is the following.
\begin{definition}
A distribution $\xi \in \CD'(\R^d)$ is said to be of class $\cC^\alpha$, if for every compact set $\frK\subset\R^d$ it holds that
\begin{equation}\label{eq:holder def}
|\xi (\varphi_x^{\delta})|\lesssim\delta^\alpha
\end{equation}
uniformly over $\delta\leq 1$, $x\in\frK$, and over test functions $\varphi$ supported on $B(0,1)$ that furthermore
have all their derivatives up to order $(\lceil-\alpha\rceil+1) \vee 0$ bounded by $1$ and satisfy
$\int\varphi(x)x^k\,dx=0$ for every multiindex $|k| < \alpha$.
The best proportionality constant in \eqref{eq:holder def} is denoted by $\|\xi\|_{\alpha;\frK}$.
\end{definition}
We shall also use the notation $\cB^r$ for smooth functions $\varphi$ supported on $B(0,1)$ and having derivatives up to order $r$ bounded by $1$.

\begin{definition}
A model for a regularity structure $\scT$ on $\R^d$ with a scaling $\frs$ consists of the following elements.
\begin{claim}
\item A map $\Gamma:\R^d\times\R^d\rightarrow G$ such that $\Gamma_{xy}\Gamma_{yz}=\Gamma_{xz}$ for all $x$, $y$, $z\in\R^d$.
\item A collection of continuous linear maps $\Pi_x: T\rightarrow\cS'(\R^d)$ such that $\Pi_x=\Pi_y\circ\Gamma_{xy}$ for all $x$, $y\in\R^d$.
\end{claim}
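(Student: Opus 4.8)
The displayed ``statement'' that closes the excerpt is the opening of the definition of a \emph{model} $(\Pi,\Gamma)$ for a regularity structure; being a definition, it carries no proof obligation of its own. What genuinely requires argument is the assertion, made implicitly whenever a concrete pair $(\Pi,\Gamma)$ is exhibited later in the paper, that this pair satisfies the two algebraic constraints listed here (the cocycle identity for $\Gamma$ and the intertwining relation $\Pi_x=\Pi_y\circ\Gamma_{xy}$) together with the analytic bounds that complete the definition but have been cut from the excerpt. Below I sketch how that verification goes in general.

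For the identity $\Gamma_{xy}\Gamma_{yz}=\Gamma_{xz}$: in every instance we use, $\Gamma_{xy}$ is assembled from a twisted antipode, and in the polynomial sector $\bar\scT$ it is simply the shift $\bar\Gamma_{y-x}$, for which the identity is the group law $\bar\Gamma_{y-x}\bar\Gamma_{z-y}=\bar\Gamma_{z-x}$ of $\bar G\cong\R^d$. In the general case I would check it on homogeneous basis vectors by expanding both compositions; the only subtlety is that $\Gamma_{xy}a-a\in T_\alpha^-$ for $a\in T_\alpha$, so that the sums produced by composing two such operators are finite on each $T_\alpha$ by local finiteness of $A$, and the identity reduces to the group law in $G$. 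For $\Pi_x=\Pi_y\circ\Gamma_{xy}$ one tests both sides against $\varphi\in\cS(\R^d)$ and a homogeneous basis vector and, again using that $\Gamma_{xy}$ strictly lowers homogeneity, reduces the claim to the lowest-level generators, where the relation holds by construction (for the polynomials, $(\Pi_x X^k)(y)=(y-x)^k$ and the relation is simply the binomial theorem).

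The only real work, in any concrete realisation of this definition, is the analytic part that the excerpt stops short of: uniform estimates of the form $|(\Pi_x a)(\varphi_x^\delta)|\lesssim\delta^\alpha$ for $a\in T_\alpha$, and $\|\Gamma_{xy}a\|_\beta\lesssim\|x-y\|_\frs^{\alpha-\beta}$ for $\beta<\alpha$, both locally uniformly in $x,y$. The algebraic identities above are essentially free; the analytic bounds are where the difficulty lies, and in the boundary setting of this article they must moreover be obtained with constants that are uniform up to $\d D$. That is precisely the content of the later sections rather than of this definition.
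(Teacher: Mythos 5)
You are right that this is part of the definition of a model and carries no proof obligation; the paper indeed offers no proof here, and your remarks on how the algebraic identities and analytic bounds would be verified for concrete models are consistent with how the paper defers that work to the cited constructions.
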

Furthermore, for every $\gamma>0$ and compact $\frK\subset\R^d$, the bounds
\begin{equation}\label{eq: model bounds}
|(\Pi_x a)(\cS_{\frs,x}^\delta\varphi)|\lesssim\|a\|_l\delta^l,\quad\quad
\|\Gamma_{xy}a\|_m\lesssim\|a\|_l\|x-y\|_\frs^{l-m}
\end{equation}
hold uniformly in $x,y\in\frK$, $\delta\in(0,1]$, $\varphi\in\cB^r$, $l<\gamma$, $m<l$, and $a\in T_l$. Here, $r$ is the smallest integer such that $l>-r$ for all $l\in A$. 

The best proportionality constants in \eqref{eq: model bounds} are denoted by $\|\Pi\|_{\gamma,\frK}$ and $\|\Gamma\|_{\gamma,\frK}$, respectively.
\end{definition}
We shall always assume that all models under consideration are compatible with the polynomials in the sense that $(\Pi_x X^k)(y)=(y-x)^k$ for any multiindex $k$.
A central notion of the theory is that of a modelled distribution, spaces of which are defined as follows.
\begin{definition}
Let $V$ be a sector and $(\Pi,\Gamma)$ be a model. Then, for $\gamma\in\R$, the space $\cD^\gamma(V;\Gamma)$ consists of all functions $f:\R^d\rightarrow V_\gamma^-$ such that, for every compact set $\frK$,
\begin{equ}\label{eq:vnorm}
\vn{f}_{\gamma,\frK}= 
\sup_{\substack{x,y\in\frK \\ \|x-y\|_\frs\leq1}}\sup_{l<\gamma}\frac{\|f(x)-\Gamma_{xy}f(y)\|_l}{\|x-y\|_\frs^{\gamma-l}}<\infty,
\end{equ}
where the supremum in $l$ runs over elements of $A$. 
\end{definition}
Although the spaces $\cD^\gamma$ depend on $\Gamma$, in many situation, where there can be no confusion about the model, this dependence will be omitted in the notation. The name `modelled distribution' is justified by the following result.
\begin{theorem}\label{thm: standard reco}
Let $V$ be a sector of regularity $\alpha$ and let $r=\lceil-\alpha+1\rceil$. Then for any $\gamma>0$ there exists a continuous linear map $\cR:\cD^\gamma(V)\rightarrow\cC^\alpha$ such that for every $C>0$, the bound
\begin{equation}\label{eq:standard reco estimate}
|(\cR f-\Pi_yf(y))(\psi_x^{\lambda})|\lesssim\lambda^{\gamma}\vn{f}_{\gamma,\supp \psi_x^\lambda}\;,
\end{equation}
holds locally uniformly over $x \in \R^d$ and uniformly over $\psi\in\cB^r$, over $\lambda\in(0,1]$, over
$y \in \supp \psi_x^\lambda$, and over models satisfying $\|\Pi\|_{\gamma,B(x,2)}\leq C$. 
Furthermore, \eqref{eq:standard reco estimate} specifies $\cR f$ uniquely.
\end{theorem}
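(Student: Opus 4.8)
This is the reconstruction theorem in the setting of the paper's $\cD^\gamma$ spaces (which coincide with those of \cite{H0} when there is no boundary singularity), so the strategy is the one from \cite[Thm~3.10]{H0}: build $\cR f$ by a wavelet/Paley--Littlewood type approximation and estimate the successive corrections. The plan is to fix a compactly supported scaling function $\varphi$ generating a multiresolution analysis adapted to the scaling $\frs$, write $\varphi^n_x$ for its rescaled translates on the grid $\Lambda^n = (2^{-n}\Z)^d$ (in the $\frs$-sense), and define the level-$n$ approximation $\cR_n f = \sum_{x\in\Lambda^n} \big(\Pi_x f(x)\big)(\varphi^n_x)\,\varphi^n_x$. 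One then shows that $\cR_n f$ is Cauchy as $n\to\infty$ in the relevant space of distributions, so that a limit $\cR f$ exists; the bound \eqref{eq:standard reco estimate} is extracted from the telescoping estimate $\cR f - \cR_n f = \sum_{m\ge n}(\cR_{m+1}f - \cR_m f)$.

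The key steps, in order: (i) Record the two elementary consequences of the model bounds \eqref{eq: model bounds} and the definition \eqref{eq:vnorm} of $\vn{f}_{\gamma,\frK}$: first, $\big|\big(\Pi_x f(x) - \Pi_y f(y)\big)(\psi)\big|$ is controlled using $\Pi_x = \Pi_y\Gamma_{xy}$ together with $f(x)-\Gamma_{xy}f(y) = \sum_{l<\gamma}(\text{terms of size }\|x-y\|_\frs^{\gamma-l})$ and $|(\Pi_y b)(\psi^\lambda_y)|\lesssim \|b\|_l\lambda^l$, giving a gain of $\lambda^\gamma$ when $\|x-y\|_\frs\lesssim\lambda$; second, the analogous bound with $\Pi_y f(y)$ replaced by a polynomial-type remainder is what feeds the H\"older regularity claim $\cR f\in\cC^\alpha$. (ii) Estimate $\cR_{n+1}f - \cR_n f$: expressing $\varphi^n_x$ in terms of the finer scale via the refinement relation and using that the wavelet increments have vanishing moments up to the needed order, show $\big|(\cR_{n+1}f-\cR_n f)(\psi^\lambda_x)\big|\lesssim 2^{-n\gamma}\lambda^{-|\frs|}\cdots$ in the regime $2^{-n}\le\lambda$, and symmetrically for $2^{-n}>\lambda$; summing the geometric series in $m\ge n$ yields convergence (here $\gamma>0$ is exactly what makes the series converge) and the quantitative bound. (iii) Combine with the trivial bound on $\big(\cR_n f - \Pi_y f(y)\big)(\psi^\lambda_x)$ coming from (i) applied on the grid, choosing $n$ with $2^{-n}\sim\lambda$, to obtain \eqref{eq:standard reco estimate}. (iv) Linearity of $\cR$ is clear from the construction, and continuity in the model follows since every constant in the estimates depends on the model only through $\|\Pi\|_{\gamma,B(x,2)}\le C$. (v) Uniqueness: if $\cR'f$ also satisfies \eqref{eq:standard reco estimate}, then $(\cR f - \cR' f)(\psi^\lambda_x) = O(\lambda^\gamma)$ with $\gamma>0$ for all $\psi\in\cB^r$, and a standard argument (testing against a smooth function and decomposing it into wavelets at all scales) forces $\cR f - \cR' f = 0$.

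The main obstacle is step (ii): one has to be careful that the wavelet analysis is genuinely adapted to the anisotropic scaling $\frs$ (the multiresolution analysis must be built as a tensor product over the coordinate directions with the grid $2^{-n\frs_i}\Z$ in direction $i$), and that the number of vanishing moments and the regularity $r$ of the test functions match up with the index set $A$ — precisely, one needs $r=\lceil-\alpha+1\rceil$ as in the statement so that the bounds \eqref{eq: model bounds} can be applied to $\varphi^n_x$ and its increments. Once the wavelet framework is set up correctly, the estimates are the routine bookkeeping of summing geometric series in the scale parameter, split according to whether $2^{-n}$ is larger or smaller than the test-function scale $\lambda$. Since none of this interacts with a spatial boundary, the proof is essentially verbatim that of \cite[Thm~3.10]{H0}, and it would suffice to state it and refer there; I include the sketch above for completeness.
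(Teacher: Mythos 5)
Your proposal is correct and takes the same route the paper does: the paper simply cites \cite[Thm~3.10]{H0} and your sketch reproduces that wavelet-based construction. The only thing the paper itself adds is the remark immediately following the statement, explaining why the version with $y\in\supp\psi_x^\lambda$ (rather than $y=x$ as in \cite{H0}) follows from the original by rewriting $\psi_x^\lambda$ as $\bar\psi_y^{2\lambda}$ for some $\bar\psi\in\cB^r$; your step~(i), which directly estimates $(\Pi_xf(x)-\Pi_yf(y))(\psi)$, accomplishes the same thing from the inside of the proof, so both are fine.
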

It is clear from \eqref{eq:standard reco estimate} that the reconstruction operator $\cR$ is local, 
so in particular one can `reconstruct' modelled distributions that only locally lie in $\cD^\gamma$.

\begin{remark} 
While in \cite{H0} in the bound \eqref{eq:standard reco estimate}, $y=x$ is assumed, this version is essentially equivalent: for all $y\in\supp\psi_x^\lambda$, one can simply rewrite $\psi_x^\lambda$ as $\bar\psi_y^{2\lambda}$ with some $\bar\psi\in\cB^r$.

Let us also note that in the literature the use of the notation $\vn{\cdot}$ is slightly inconsistent: sometimes it is defined as in \eqref{eq:vnorm}, 
in some other instances it includes the term $\sup_{x\in\frK}\sup_{l<\gamma}\|f(x)\|_l$.
We will also be guilty of this: while for now, in the unweighted setting, \eqref{eq:vnorm} is convenient since that is what appears in 
the bounds for reconstructions like \eqref{eq:standard reco estimate} above and \eqref{eq:reco bound away negative m} below, the weighted versions of $\vn{\cdot}$ introduced in Section \ref{sec:def} \emph{do} include controls over $\|f(z)\|$.
\end{remark}

\begin{definition}
A continuous bilinear map $\star:T\times T\rightarrow T$ is called a product if,
for $a\in T_\alpha$ and $b\in T_\beta$, one has $a\star b\in T_{\alpha+\beta}$,
and $\bone\star a=a\star\bone$ for all $a\in T$. The products arising in this article
will always be associative and commutative, at least on some sufficiently large subspace.

A pair of sectors $(V,W)$ is said to be $\gamma$-regular with respect to the product $\star$ 
if $(\Gamma a)\star(\Gamma b)=\Gamma(a\star b)$ for all $\Gamma\in G$ and $a\in V_\alpha$, $b\in W_\beta$, 
satisfying $\alpha+\beta<\gamma$. A sector is called $\gamma$-regular, if the pair $(V,V)$ is $\gamma$-regular. 
Given two $T$-valued functions $f$ and $\bar f$, we also denote by $f\star_\gamma\bar f$ the function $x\rightarrow\cQ_\gamma^-(f(x)\star\bar f(x))$.
\end{definition}

%

For $\gamma>0$, a sector $V$ of regularity 0, a product $\star$ such that $V\star V\subset V$, and a smooth function $F:\R^n\rightarrow\R$ one can then define a function $\hat F_\gamma:V^n\rightarrow V$ by setting
\begin{equation}\label{def: F}
\hat F_\gamma(a)=\cQ_\gamma^-\sum_{k}\frac{D^kF(\bar a)}{k!}\tilde a^{\star k},
\end{equation}
where the sum runs over all possible $n$-dimensional multiindices, with the conventions $\bar a=\langle\bone,a\rangle$, $\tilde a=a-\bar a$, $k!=k_1!\cdots k_n!$, $\tilde{a}^{\star k}=\tilde a_1^{\star k_1}\star\cdots\star \tilde a_n^{\star k_n}$ for $k\neq 0$, and  $\tilde{a}^{\star0}=\bone$.

The abstract version of differentiation is quite straightforward.
\begin{definition}
Given a sector $V$, a family of operators $\scD_i: V\rightarrow V$ with $i=1,\ldots,d$
is called an abstract gradient if for every $i$, every $\alpha$ and every $a\in V_\alpha$, one has $\scD_i a\in T_{\alpha-\frs_i}$ 
and $\Gamma \scD_i a=\scD_i \Gamma a$ for all $\Gamma\in G$.

A model $(\Pi,\Gamma)$ is called compatible with $\scD$, if for all $a\in V$, $x\in\R^d$, and for all $i$, it holds that
$$
D_i\Pi_x a=\Pi_x\scD_i a,
$$
where $D_i$ is the usual distributional differentiation in the $i$-th unit direction.
\end{definition}

The final important operation on modelled distribution is the integration against singular kernels, the aim of which is to `lift' convolutions with Green functions to the abstract setting. The first ingredient is the abstract integral operator.
\begin{definition}
Given a sector $V$, a linear map $\cI:V\rightarrow T$ is an abstract integration map of order $\beta>0$ if:
\begin{claim}
\item $\cI(V_\alpha)\subset T_{\alpha+\beta}$ for all $\alpha\in A$.
\item $\cI a=0$ for all $a\in V\cap \bar T$.
\item $\cI\Gamma a-\Gamma\cI a\in \bar{T}$ for all $a\in V$ and $\Gamma\in G$. 
\end{claim}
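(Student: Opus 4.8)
Since the statement is the definition of what it means for $\cI$ to be an abstract integration map, the substantive fact it underwrites --- and the one the applications need --- is that the regularity structures built for the KPZ and generalised PAM equations (with $\beta = 2$, the order of $\d_t-\half\d_x^2$) do admit a map $\cI$ with the three listed properties, together with a structure group on the enlarged space making the commutation property hold. I would prove this by carrying out the level-by-level construction of \cite[Sec.~5]{H0}, noting at the outset that \emph{none of the three properties is sensitive to the boundary}: they are purely algebraic statements about $T$ and $G$, the boundary entering only in the weighted Schauder and reconstruction estimates proved later.

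\textbf{Construction.} Split $V$ into its polynomial part $V\cap\bar T$ and the span of the remaining basis symbols. On $V\cap\bar T$ set $\cI=0$; this is forced by the second property and delivers it. For a non-polynomial basis symbol $\tau$ of homogeneity $\alpha$, adjoin (or reuse, if already present) a symbol $\cI\tau$ of homogeneity $\alpha+\beta$ and let $\cI$ act by $\tau\mapsto\cI\tau$, extended linearly and to products $\tau_1\star\cdots\star\tau_k$ recursively; this gives the first property by construction. It remains to extend $G$ to the new symbols so that the third holds. Since $\Gamma\tau-\tau\in T_{<\alpha}$ by the structure-group axiom, the first property gives $\cI(\Gamma\tau-\tau)\in T_{<\alpha+\beta}$, so it is consistent to set
\[
\Gamma\cI\tau \;:=\; \cI\tau + \cI(\Gamma\tau-\tau) + J(\Gamma,\tau)\;,
\]
where $J(\Gamma,\tau)\in\bar T$ is a polynomial of homogeneity $<\alpha+\beta$; for the bare definition one may take $J\equiv0$, the nonzero choice mattering only when one later wants a model \emph{compatible} with $\cI$, where $J$ must carry the Taylor jet of the kernel. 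Extending linearly and recursively, one has $\Gamma\cI\tau-\cI\tau\in T_{<\alpha+\beta}$, so the enlarged $G$ still strictly lowers homogeneity, and rearranging the display gives $\cI\Gamma a-\Gamma\cI a\in\bar T$, which is the third property.

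\textbf{Main obstacle.} The real work is checking that this recursive prescription is \emph{self-consistent}: that $G$, so extended, remains a group --- the composition law $\Gamma_1(\Gamma_2 a)=(\Gamma_1\Gamma_2)a$ must survive on every $\cI\tau$ and on every nested symbol --- and that the action is unambiguous on all of $T$. I would run an induction on the number of integration operators occurring in a symbol: well-foundedness comes from the fact that the correction $\cI(\Gamma\tau-\tau)$ lives at a strictly smaller value of that count, and the inductive step reduces to a cocycle identity for the polynomial corrections $J(\Gamma,\tau)$ --- exactly the point at which, in the full theory, one invokes the explicit local structure of the Green's function. For the present paper it then suffices to note that the Dirichlet and Neumann heat kernels differ from the full-space one by a smooth remainder, which contributes nothing to this singular algebraic data, so the \emph{same} $\cI$ and extended $G$ serve all three cases verbatim and the three properties hold exactly as in \cite{H0}.
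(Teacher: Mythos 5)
The displayed statement is the \emph{definition} of an abstract integration map of order $\beta$, not a theorem: the paper offers no proof of it, and none is required. You correctly sensed this and instead sketched the substantive existence result lying behind it, namely that the regularity structures used in the applications admit such a map together with a compatible extension of the structure group. That is a reasonable reading of the exercise, and your construction is, in outline, the extension theorem of \cite[Sec.~5 and Thm~5.14]{H0}: kill polynomials, adjoin symbols $\cI\tau$ at homogeneity $\alpha+\beta$, and define $\Gamma\cI\tau$ as $\cI\Gamma\tau$ plus a polynomial correction $J(\Gamma,\tau)$ whose cocycle property guarantees that $G$ remains a group. The present paper does not redo any of this; it simply imports the structures, models and their convergence from \cite{H0,BHZ,CH}, and your closing observation --- that the Dirichlet/Neumann kernels differ from the whole-space kernel by a smooth remainder $Z\in\scZ_{\beta,P}$ which is handled separately and contributes nothing to the algebraic data --- is exactly the point of Example~\ref{example} and Section~\ref{subsec:int remainder}.

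Two small corrections to your sketch, should you ever need to carry it out in full. First, $\cI$ is not extended multiplicatively to products $\tau_1\star\cdots\star\tau_k$; each symbol containing an instance of $\cI$ is a fresh formal generator, and only the outermost application of $\cI$ is ``the'' integration map. Second, the induction making the recursive definition of $\Gamma\cI\tau$ well-founded should run over the (locally finite, bounded below) set of homogeneities rather than over the number of integration symbols: the components of $\Gamma\tau-\tau$ have strictly smaller homogeneity, but they can perfectly well contain as many instances of $\cI$ as $\tau$ itself does.
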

\end{definition}

In our applications $\beta$ will always be 2, but for most of the analysis the one important property  required of $\beta$ is that for each $\alpha\in A$, $\alpha+\beta\in\mathbb{Z}$ implies $\alpha\in\mathbb{Z}$. In particular, under this assumption, $\cI$ does not produce any components in integer homogeneities. The class of kernels we will want to lift is characterised as follows.
\begin{definition}\label{def: K}
For $\beta>0$ the class $\scK_\beta$ of functions $\R^d\times\R^d\setminus\{x=y\}\rightarrow\R$ consists of elements $K$ that can be decomposed as
$
K(x,y)=\sum_{n\geq0}K_n(x,y)
$,
where the functions $K_n$ have the following properties:
\begin{claim}
\item For all $n\geq0$, $K_n$ is supported on $\{(x,y):\|x-y\|_\frs\leq2^{-n}\}$.
\item For any two multiindices $k$ and $l$,
$
|D_1^kD_2^lK_n(x,y)|\lesssim 2^{n(|\frs|+|k+l|_\frs-\beta)}
$,
where the proportionality constant only depends on $k$ and $l$, but not on $n$, $x$, $y$.
\item For any two multiindices $k$ and $l$, $y\in\R^d$, $i=1,2$, it holds, for all $n\geq0$,
$$
\Big|\int_{R^d}(x-y)^lD_i^kK_n(x,y)dx\Big|\lesssim 2^{-\beta n}
$$
where the proportionality constant only depends on $k$ and $l$.
\item For a given $r>0$,
$
\int_{\R^d}K_n(x,y)P(y)dy=0
$,
for all $n\geq0$, $x\in\R^d$, and every polynomial $P$ of (scaled) degree at most $r$.
\end{claim}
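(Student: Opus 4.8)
\medskip
\noindent\textbf{Proof proposal.}
Definition~\ref{def: K} introduces the class $\scK_\beta$; the substance behind it, and what I would actually prove, is that this class is rich enough for the solution theory: if $K\colon\R^d\times\R^d\setminus\{x=y\}\to\R$ is smooth away from the diagonal, supported in $\{\|x-y\|_\frs\le1\}$, obeys $|D_1^kD_2^lK(x,y)|\lesssim\|x-y\|_\frs^{\beta-|\frs|-|k+l|_\frs}$ for all multiindices $k,l$, and annihilates in the $y$ variable all polynomials of scaled degree at most $r$, then $K\in\scK_\beta$; and, as a consequence, the Dirichlet and Neumann heat kernels on the square, resp.\ half-space, differ from an element of $\scK_2$ by a globally smooth remainder. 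The plan is a dyadic localisation followed by a telescoping correction that repairs the moment conditions, checking the four bullet points of the definition in turn.

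First I would take a smooth partition of unity $1=\sum_{n\ge0}\eta_n$ on $\{0<\|x-y\|_\frs\le1\}$ with $\eta_n$ supported where $\|x-y\|_\frs\sim2^{-n}$ (and $\eta_0$ where $\|x-y\|_\frs\le1$), and set $\mathring K_n=\eta_nK$. Property (i) is then immediate. Property (ii) for the $\mathring K_n$ follows from the singularity bound restricted to $\|x-y\|_\frs\sim2^{-n}$, where it reads $\lesssim2^{n(|\frs|+|k+l|_\frs-\beta)}$, together with the Leibniz rule and the fact that each derivative of $\eta_n$ costs only a harmless power of $2^{n}$. Property (iii) is then a routine scaling computation: integrating by parts in $x$ one either annihilates the integrand or is left with $\int\mathring K_n(x,y)(x-y)^{l-k}\,dx$, which is $\lesssim2^{-n|\frs|}\cdot\|\mathring K_n\|_\infty\cdot2^{-n|l-k|_\frs}\lesssim2^{-\beta n}$, and the derivative-in-$y$ case reduces to this together with the moment bounds.

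The content is property (iv): the $\mathring K_n$ do not individually annihilate polynomials, even though their sum $K$ does, so one must redistribute. Let $\theta_j$, $|j|_\frs\le r$, be fixed bumps supported in the unit ball with $\int\theta_j(y)y^{j'}\,dy=\delta_{jj'}$, let $\theta_j^{(n)}$ be their suitably normalised rescalings to scale $2^{-n}$, put $\mu_j^{(n)}(x)=\int\mathring K_n(x,y)(y-x)^j\,dy$, and set
\[
\rho_n(x,y)=\sum_{|j|_\frs\le r}\Big(\textstyle\sum_{k\ge n}\mu_j^{(k)}(x)\Big)\,\theta_j^{(n)}(y-x),\qquad K_n=\mathring K_n-\rho_n+\rho_{n+1}.
\]
By construction $K_n$ annihilates polynomials of scaled degree $\le r$ in $y$, so (iv) holds, while the telescoping yields $\sum_nK_n=K-\rho_0$. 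Since $K$ annihilates polynomials of scaled degree $\le r$, the tail sum $\sum_{k\ge n}\mu_j^{(k)}$ equals the \emph{finite} head sum $-\sum_{k<n}\mu_j^{(k)}$, so each $\rho_n$ is genuinely smooth, $\rho_0$ is a smooth kernel supported in the unit ball that itself annihilates these polynomials, and $\rho_0$ can be folded into $K_0$ without disturbing (i)--(iv). The estimate making everything consistent is the geometric decay $|\mu_j^{(n)}(x)|\lesssim2^{-n(\beta+|j|_\frs)}$ of the moments (and hence of their tails), which is exactly what the positive exponent $\beta$ buys: realising a $j$-th moment of that size with a function living at scale $2^{-n}$ forces amplitude $\lesssim2^{n(|\frs|-\beta)}$, so $\rho_n$ obeys the same bound $|D_1^{m_1}D_2^{m_2}\rho_n|\lesssim2^{n(|\frs|+|m_1+m_2|_\frs-\beta)}$ demanded of $K_n$ in (ii), and (i)--(iii) survive the correction. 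This reconciliation of the moment conditions with the scaling bounds is the only real obstacle in the argument.

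Finally, for the Dirichlet and Neumann heat kernels I would use the method of images to write each as a finite signed sum of translates and reflections of the full-space heat kernel $\CN$, truncated by a smooth cut-off away from the diagonal. The identity term has precisely the singularity $\|x-y\|_\frs^{2-|\frs|}$, and subtracting a fixed smooth bump to kill its low-order $y$-moments puts it into the framework above, so it contributes an element of $\scK_2$ plus something smooth. The image terms are smooth near the diagonal but singular along the boundary: this is exactly the codimension-one phenomenon flagged in the introduction (objects locally in $\cC^\alpha$ with $\alpha<-1$ having no canonical extension as bona fide distributions), and its treatment — which is where the genuinely new work of the paper lies — I would defer to the fixed point analysis of Section~\ref{sec:applications} rather than folding it into the generic membership statement above.
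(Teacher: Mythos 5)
What you were asked to prove is not a theorem but the list of defining properties of the class $\scK_\beta$ (Definition~\ref{def: K}); the paper contains no proof of it, and the substantive fact you have reconstructed --- that a kernel with the singularity bounds $|D_1^kD_2^lK(x,y)|\lesssim\|x-y\|_\frs^{\beta-|\frs|-|k+l|_\frs}$ which annihilates low-degree polynomials admits a decomposition $K=\sum_nK_n$ with these four properties --- is imported wholesale from \cite{H0} (Lemma~5.5 there), both here and in Example~\ref{example} where the paper asserts $G^0=K^0+R^0$ with $K^0\in\scK_\beta$ without argument. Your proposal is essentially that standard argument: dyadic truncation $\mathring K_n=\eta_nK$, followed by the telescoping moment correction $K_n=\mathring K_n-\rho_n+\rho_{n+1}$ built from rescaled dual bumps $\theta_j^{(n)}$, with the observation that $\sum_{k\ge n}\mu_j^{(k)}=-\sum_{k<n}\mu_j^{(k)}$ so that $\rho_0$ vanishes and the sum telescopes back to $K$. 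The construction and the key decay estimate $|\mu_j^{(n)}|\lesssim2^{-n(\beta+|j|_\frs)}$ are correct.

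Two spots in your sketch are thinner than they should be. First, property (iii) for $i=2$: integration by parts in $x$ does nothing for $D_2^k$, and the crude bound (support volume times sup norm) gives $2^{n(|k|_\frs-|l|_\frs-\beta)}$, which is \emph{not} $\lesssim2^{-\beta n}$ when $|k|_\frs>|l|_\frs$; you need to genuinely use the cancellation coming from the moment conditions (or argue as in \cite{H0} that after the correction in step (iv) the $y$-moments of $K_n$ vanish identically, which trivialises the $i=2$ case for $|l|_\frs\le r$). Second, the bound $|D_1^m\sum_{k\ge n}\mu_j^{(k)}|\lesssim2^{-n(\beta+|j|_\frs-|m|_\frs)}$ requires you to use the tail sum when the exponent is positive and the head sum when it is negative (with a logarithmic loss at equality that must be absorbed); you state the tail$=$head identity but do not say that this case split is where it is used. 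Finally, your last paragraph misplaces where the paper deals with the image terms of the reflected heat kernels: they are not deferred to the fixed point analysis but are packaged, in Example~\ref{example}, into the separate class $\scZ_{\beta,P}$ of smooth kernels with boundary singularities (Definition~\ref{def:Z}), whose calculus is developed in Section~\ref{subsec:int remainder}.
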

\end{definition}

To introduce the appropriate `remainder' terms, we set $\cJ(x) a$, for $a\in T_\alpha$ as 
\begin{equation}
\cJ(x) a=\sum_{n\geq 0}\cJ^{(n)}(x)a=\sum_{n\geq 0}\sum_{|k|_\frs<\alpha+\beta}\frac{X^k}{k!}(\Pi_x a)(D_1^kK_n(x,\cdot)).
\end{equation}

\begin{definition}
Given a sector $V$ and an abstract integration map $\cI$ acting on $V$ we say that a model $(\Pi,\Gamma)$ realises $K$ for $\cI$ if, for every $\alpha\in A$, every $a\in V_\alpha$, every $x\in\R^d$ one has the identity
$$
\Pi_x\cI a=\int_{\R^d}K(\cdot,z)(\Pi_xa)(dz)-\Pi_x\cJ(x)a.
$$
\end{definition}
Note that both sides are distributions, so the equality should be understood in the distributional sense. 
For $\gamma>0$ we also define an operator $\cN_\gamma$ which maps any $f\in\cD^\gamma$ into a $\bar T$-valued function by 
\begin{equation}
(\cN_\gamma f)(x)=\sum_{n\geq 0}(\cN_\gamma^{(n)} f)(x)=\sum_{n\geq 0}\sum_{|k|_\frs<\gamma+\beta}\frac{X^k}{k!}(\cR f-\Pi_x f(x))(D_1^kK_n(x,\cdot)).
\end{equation}

The key result on a Schauder-type estimate for integration on $\cD^\gamma$ then reads as follows.
\begin{theorem}\label{thm:standard int}
Let $K\in\scK_\beta$ for some $\beta>0$, let $\cI$ be an abstract integration map acting on $V$, and let $(\Pi,\Gamma)$ be a model realising $K$ for $\cI$. Then, for $\gamma>0$, the operator $\cK_\gamma$ defined by
\begin{equation}
(\cK_\gamma f)(x)=\cI f(x)+\cJ(x)f(x)+(\cN_\gamma f)(x),
\end{equation}
maps $\cD^{\gamma}(V)$ into $\cD^{\gamma+\beta}$ and the identity
\begin{equation}
\cR\cK_\gamma f=K\ast\cR f
\end{equation}
holds for every $f\in\cD^\gamma$.
\end{theorem}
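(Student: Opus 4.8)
The plan is to prove the two assertions of the theorem separately: first that $\cK_\gamma$ maps $\cD^\gamma(V)$ into $\cD^{\gamma+\beta}$, and then that $\cR\cK_\gamma f=K\ast\cR f$. The second assertion will be obtained by checking that $K\ast\cR f$ satisfies the bound \eqref{eq:standard reco estimate} that characterises $\cR(\cK_\gamma f)$, and then invoking the uniqueness clause of Theorem~\ref{thm: standard reco}; this is possible since $\gamma+\beta>0$.

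As a preliminary, I would record the basic size estimates for the kernel pieces: by properties (2) and (4) of Definition~\ref{def: K} (the latter providing enough vanishing moments once $r$ is taken large), $D_1^k K_n(x,\cdot)$ equals $2^{n(|k|_\frs-\beta)}$ times a rescaled element of $\cB^r$ supported in $B(x,2^{-n})$; hence the model bound \eqref{eq: model bounds} gives $|(\Pi_x\cQ_\alpha f(x))(D_1^k K_n(x,\cdot))|\lesssim 2^{-n(\alpha+\beta-|k|_\frs)}$ and the reconstruction bound \eqref{eq:standard reco estimate} gives $|(\cR f-\Pi_x f(x))(D_1^k K_n(x,\cdot))|\lesssim 2^{-n(\gamma+\beta-|k|_\frs)}$. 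Both are summable over $n$ precisely because of the constraints $|k|_\frs<\alpha+\beta$, resp. $|k|_\frs<\gamma+\beta$, built into the definitions of $\cJ$ and $\cN_\gamma$, so the defining series converge and $\cK_\gamma f$ is well defined with values in $T_{\gamma+\beta}^-$. The same estimates, applied to the increments $\cQ_\alpha\big(f(x)-\Gamma_{xy}f(y)\big)$ in place of $\cQ_\alpha f(x)$, already dispose of the part of $\cK_\gamma f(x)-\Gamma_{xy}\cK_\gamma f(y)$ lying in non-integer homogeneities: using that $\cI\Gamma_{xy}-\Gamma_{xy}\cI$ takes values in $\bar T$ and that $\cI$ produces no integer-homogeneity components, this part equals the projection of $\cI\big(f(x)-\Gamma_{xy}f(y)\big)$ onto the non-polynomial homogeneities, and is therefore bounded on each $T_l$ with $l\notin\Z$, $l<\gamma+\beta$, by $\|x-y\|_\frs^{\gamma+\beta-l}$ directly from $f\in\cD^\gamma$ and the fact that $\cI$ is graded with $\cI(V_\alpha)\subset T_{\alpha+\beta}$.

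The heart of the matter is the bound on the polynomial part of $\cK_\gamma f(x)-\Gamma_{xy}\cK_\gamma f(y)$, which mixes the contributions of $\cJ$, of $\cN_\gamma$, and of the polynomial correction $\Gamma_{xy}\cI f(y)-\cI\Gamma_{xy}f(y)$. Here I would use the defining identity of a model realising $K$, namely $\Pi_x\cI a=K\ast(\Pi_x a)-\Pi_x\cJ(x)a$, together with the cocycle relation $\Pi_x=\Pi_y\Gamma_{xy}$, to re-express, for each fixed $n$, all these $\bar T$-components through quantities of the form $(\cR f-\Pi_z f(z))(D_1^k K_n(z,\cdot))$ and $(\Pi_z\cQ_\alpha f(z))(D_1^k K_n(z,\cdot))$ with $z\in\{x,y\}$, up to Taylor remainders of $K_n$ in its arguments. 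One then splits $\sum_{n\ge0}$ at the scale $2^{-n}\sim\|x-y\|_\frs$: in the regime $2^{-n}\lesssim\|x-y\|_\frs$ each term is controlled individually by the preliminary estimates, and the resulting series is geometric with the right exponent because $|k|_\frs<\gamma+\beta$; in the regime $2^{-n}\gtrsim\|x-y\|_\frs$ one first Taylor-expands $K_n$ (and shifts the base points of $\cJ$) to sufficiently high order, trading each derivative for a factor $\|x-y\|_\frs^{\frs_i}$ at the cost of a worse power of $2^n$ coming from property (2) of Definition~\ref{def: K}, so that the sum over this finite range of $n$ is dominated by its top term $2^n\sim\|x-y\|_\frs^{-1}$ and produces exactly $\|x-y\|_\frs^{\gamma+\beta-|k|_\frs}$. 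Keeping track of all homogeneities through this double splitting, and checking that the would-be borderline contributions cancel between the $\cJ$, $\cN_\gamma$ and $\Gamma\cI$ terms rather than accumulate, is the step I expect to be the main obstacle.

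For the reconstruction identity I would compute $\Pi_y\cK_\gamma f(y)=\Pi_y\cI f(y)+\Pi_y\cJ(y)f(y)+\Pi_y\cN_\gamma f(y)$; the first two terms sum, by the model-realising-$K$ identity applied with base point $y$, to $K\ast(\Pi_y f(y))$, while $\Pi_y\cN_\gamma f(y)$ is, by construction and the compatibility of the model with polynomials, the Taylor expansion about $y$ in the first variable, to order $<\gamma+\beta$, of $\sum_n(\cR f-\Pi_y f(y))(K_n(\cdot,\cdot))=K\ast(\cR f-\Pi_y f(y))$. Hence $\Pi_y\cK_\gamma f(y)$ differs from $K\ast\cR f$ only by the sum over $n$ of the corresponding Taylor remainders. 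Testing this difference against $\psi_x^\lambda$ with $y\in\supp\psi_x^\lambda$ and splitting $\sum_n$ at $2^{-n}\sim\lambda$ exactly as before — bounding the function and its Taylor polynomial separately when $2^{-n}\lesssim\lambda$, and the remainder as a whole via a Taylor estimate when $2^{-n}\gtrsim\lambda$, in both cases using \eqref{eq:standard reco estimate} to control $\cR f-\Pi_y f(y)$ against the rescaled kernels — yields a bound of order $\lambda^{\gamma+\beta}$. This is precisely the estimate characterising $\cR(\cK_\gamma f)$ in Theorem~\ref{thm: standard reco}, so by uniqueness $\cR\cK_\gamma f=K\ast\cR f$.
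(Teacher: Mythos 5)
The paper does not prove this statement: it is Theorem~5.12 of \cite{H0}, recalled verbatim in Section~\ref{sec:H0} without proof, so there is no in-paper argument to compare against. Your outline does follow the structure of the original proof in \cite{H0}, and much of it is correct in spirit: the preliminary scaling estimates for $D_1^k K_n(x,\cdot)$ tested against $\Pi_x$ and against $\cR f-\Pi_x f(x)$, the observation that the non-integer homogeneity part of $\cK_\gamma f(x)-\Gamma_{xy}\cK_\gamma f(y)$ reduces to $\cI(f(x)-\Gamma_{xy}f(y))$ via the $\bar T$-valuedness of $\cI\Gamma-\Gamma\cI$, and the strategy for $\cR\cK_\gamma f=K*\cR f$ (show that $K*\cR f$ satisfies the characterising bound of Theorem~\ref{thm: standard reco} and invoke uniqueness, available since $\gamma+\beta>0$) are all exactly as in \cite{H0}.

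There is, however, a genuine gap precisely where you flag it. The claim that one can ``keep track of all homogeneities through the double splitting and check that the would-be borderline contributions cancel'' between the $\cJ$, $\cN_\gamma$ and $\Gamma\cI$ terms is not something achieved by bookkeeping alone: it requires a specific algebraic rearrangement that regroups the three sources of polynomial contributions into (i) a Taylor-remainder of the kernel tested against $\Pi_y f(y)-\cR f$, plus (ii) a sum over $\delta\leq|k|_\frs-\beta$ of terms in $\Pi_x\cQ_\delta(\Gamma_{xy}f(y)-f(x))$. This is precisely what the present paper later records as the identities \eqref{eq:rearrange K 1} and \eqref{eq:rearrange K 0}, together with the integral form of the Taylor remainder \eqref{eq:rearrange K Taylor}, and it is only through this identity that ``trading each derivative of $K_n$ for a factor of $\|x-y\|_\frs$'' produces the uniform exponent $\gamma+\beta-|k|_\frs$; the individual $\cJ^{(n)}$ summands scale like $2^{-n(\delta+\beta-|k|_\frs)}$ with $\delta+\beta-|k|_\frs$ possibly arbitrarily small, so they do not yield the required power on their own in either scale regime. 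Relatedly, your assertion that in the regime $2^{-n}\lesssim\|x-y\|_\frs$ ``each term is controlled individually'' is correct for the two $\cN_\gamma^{(n)}$ pieces but not for the $\cJ^{(n)}$ pieces: one must first use $\Gamma_{xy}\cJ^{(n)}(y)f(y)=\cJ^{(n)}(x)\Gamma_{xy}f(y)$ (the second line of \eqref{eq:rearrange K 1}) so that the difference tests $D_1^k K_n(x,\cdot)$ against $\Pi_x\cQ_\delta(f(x)-\Gamma_{xy}f(y))$, picking up the factor $\|x-y\|_\frs^{\gamma-\delta}$ needed to close the estimate. If you want a worked-out template for exactly this bookkeeping, including the splitting into scales and the role of \eqref{eq:rearrange K 0}, see the proof of the weighted extension in Lemma~\ref{lem:int} of the present paper.
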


\subsection{Preliminaries}

For negative values of $\gamma$, a statement similar to Theorem~\ref{thm: standard reco} still holds, but the
``uniqueness'' part is lost. 
It will be useful for our purposes to have a family of ``reconstruction operators'' defined
similarly to \cite[Eq.~3.38]{H0}, but depending additionally on some small cut-off scale. 
We define the sets
$
\Lambda^n_\frs=\big\{\sum_{j=1}^d2^{-n\frs_j}k_je_j:k_j\in\mathbb{Z}\big\},
$
where $e_j$ is the $j$-th unit vector of $\R^d$, $j=1,\ldots,d$, and we use the notation 
$$
\eta_x^{n,\frs}=2^{-n|\frs|/2}\eta_x^{2^{-n}}
$$
for locally integrable functions $\eta$. Then, as shown in \cite{Daub}, for any integer $r>0$, 
there exist a compactly supported $\cC^r$ function $\varphi$ and a finite family of compactly 
supported $\cC^r$ functions $\Psi$ with the following properties.
\begin{claim}
\item For each $m$, the set
$
\{\varphi_x^{m,\frs}:x\in\Lambda^m_\frs\}\cup\{\psi_x^{n,\frs}:n\geq m,x\in\Lambda_\frs^n,\psi\in\Psi\}
$
forms an orthonormal basis of $L^2(\R^d)$.
\item For every $\psi\in\Psi$ and polynomial $P$ of degree at most $r$, one has
$
\int\psi(x)P(x)dx=0
$.
\end{claim}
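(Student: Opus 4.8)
This is the standard construction of a compactly supported wavelet basis adapted to the scaling $\frs$, so I would only indicate the main steps. First, invoke the one–dimensional result of \cite{Daub}: for any integer $r>0$ there is an $N$ large enough that the Daubechies scaling function $\phi$ and mother wavelet $w$ of order $N$ are compactly supported and of class $\cC^r$, the integer translates of $\phi$ are orthonormal, the dyadic system $\{2^{k/2}\phi(2^k\cdot-j):j\in\Z\}\cup\{2^{n/2}w(2^n\cdot-j):n\ge k,\ j\in\Z\}$ is an orthonormal basis of $L^2(\R)$ for every $k\in\Z$, and $w$ annihilates all polynomials of degree at most $r$ (i.e.\ $w$ has $N\ge r+1$ vanishing moments; both the regularity and the number of vanishing moments grow with $N$, so it suffices to take $N$ sufficiently large). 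Write $V_k\subset L^2(\R)$ for the closed span of $\{2^{k/2}\phi(2^k\cdot-j)\}_j$ and $W_k=V_{k+1}\ominus V_k$, so $V_{k+1}=V_k\oplus W_k$.

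\textbf{Anisotropic multiresolution analysis.} Next I would assemble these into a multiresolution analysis of $L^2(\R^d)$ matched to $\frs$. Set $\mathbf V_n=\bigotimes_{j=1}^d V_{n\frs_j}$; since $\frs_j\in\mathbb N$, one has $\mathbf V_n\subset\mathbf V_{n+1}$, $\bigcap_n\mathbf V_n=\{0\}$ and $\overline{\bigcup_n\mathbf V_n}=L^2(\R^d)$. Taking $\varphi(y)=\prod_{j=1}^d\phi(y_j)$ and unwinding the definitions of $\cS_\frs^\delta$ and of $\eta_x^{n,\frs}$, the family $\{\varphi_x^{n,\frs}:x\in\Lambda^n_\frs\}$ is exactly the tensor-product orthonormal basis of $\mathbf V_n$; in particular the factor $2^{-n|\frs|/2}$ is precisely the $L^2$ normalisation for the $\frs$-dilation by $2^{-n}$, so each $\varphi_x^{n,\frs}$ has unit norm. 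For the complement $\mathbf W_n=\mathbf V_{n+1}\ominus\mathbf V_n$, expand
\[
\mathbf V_{n+1}=\bigotimes_{j=1}^d\Big(V_{n\frs_j}\oplus\bigoplus_{l=0}^{\frs_j-1}W_{n\frs_j+l}\Big),
\]
multiply out, and discard the single term $\bigotimes_j V_{n\frs_j}=\mathbf V_n$: each remaining summand is spanned by translates of a fixed product $\prod_j\chi_j$ in which each $\chi_j$ is a dyadic dilate of $\phi$ or of $w$ and at least one $\chi_j$ is a dilate of $w$. Regrouping the translates living on the various dyadic sublattices that appear here in terms of the single lattice $\Lambda^n_\frs$ together with finitely many offsets, one obtains a finite family $\Psi$ of compactly supported $\cC^r$ functions (each still a single such product, up to translation) for which $\{\psi_x^{n,\frs}:\psi\in\Psi,\ x\in\Lambda^n_\frs\}$ is an orthonormal basis of $\mathbf W_n$. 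The family $\Psi$ is built once (say from $\mathbf W_0$) and works at every level by self-similarity, and the first claim follows by telescoping $\mathbf V_N=\mathbf V_m\oplus\bigoplus_{n=m}^{N-1}\mathbf W_n$ and letting $N\to\infty$.

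\textbf{Vanishing moments.} For the second claim, fix $\psi\in\Psi$ and a monomial $y^k$ with $|k|\le r$. By the construction $\psi(y)=\prod_{j=1}^d\chi_j(y_j)$ with some $\chi_{j_0}$ a dilate/translate of the one-dimensional mother wavelet $w$, so by Fubini $\int_{\R^d}\psi(y)\,y^k\,dy=\prod_{j=1}^d\int_\R\chi_j(y_j)\,y_j^{k_j}\,dy_j$, and a change of variables reduces the $j_0$-th factor to a constant multiple of $\int_\R w(s)\,s^{k_{j_0}}\,ds=0$, since $k_{j_0}\le r$ and $w$ has $r+1$ vanishing moments. Hence $\int\psi\,P=0$ for every polynomial $P$ of degree at most $r$ (the same holds if ``degree'' is read in the scaled sense, since $|k|_\frs\le r$ still forces each $k_j\le r$).

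\textbf{Main obstacle.} The only genuinely non-routine point is the bookkeeping in the middle step: the intermediate one-dimensional scales $2^{n\frs_j+l}$ with $0<l<\frs_j$ live on dyadic lattices strictly finer than $\Lambda^n_\frs$, and one must match them to the single $\frs$-dilation $\cS_\frs^{2^{-n}}$ and the lattice $\Lambda^n_\frs$. This is exactly what forces $\Psi$ to be a finite family rather than a single function and requires the ``template plus offset'' regrouping of translates; the orthonormality, density, telescoping and vanishing-moment parts are all standard multiresolution arguments once the one-dimensional ingredients of \cite{Daub} are available.
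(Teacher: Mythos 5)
The paper does not prove this claim---it simply cites \cite{Daub} and takes the existence of such a wavelet family as a known fact, following the same convention as \cite{H0}, so there is no ``paper's proof'' to compare against. Your sketch correctly supplies the construction one would have to unwind from the reference: the identification of $\{\varphi_x^{n,\frs}:x\in\Lambda_\frs^n\}$ with the tensor-product basis of $\mathbf V_n=\bigotimes_j V_{n\frs_j}$ is right once one tracks the normalisation $2^{-n|\frs|/2}\cdot 2^{n|\frs|}=2^{n|\frs|/2}=\prod_j 2^{n\frs_j/2}$; the telescoping of $\mathbf V_{n+1}\ominus\mathbf V_n$ through the intermediate one-dimensional levels $n\frs_j,\ldots,(n+1)\frs_j-1$ and the regrouping of the finer sublattices $2^{-(n\frs_j+l)}\Z$ into $\Lambda^n_\frs$ with $2^l$ offsets (writing $k_j=2^{l_j}m_j+j'_j$) does yield an $n$-independent finite template family $\Psi$; and the reduction of the vanishing-moment property to the one-dimensional moments of $w$ via Fubini and a change of variables is correct under either reading of ``degree at most $r$'', since $\frs_j\geq 1$ forces $k_j\leq|k|_\frs\leq r$. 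In short, your argument is sound and is precisely the anisotropic adaptation of Daubechies' construction that the citation implicitly invokes.
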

In fact much more is known about these functions, but this will suffice for our purposes.
We then set
\begin{equation}\label{eq:Rm}
\cR^mf=\sum_{n\geq m}\sum_{x\in\Lambda_\frs^n}\sum_{\psi\in\Psi}(\Pi_xf(x))(\psi_x^{n,\frs})\psi_x^{n,\frs}+\sum_{x\in\Lambda_\frs^m}(\Pi_xf(x))(\varphi_x^{m,\frs})\varphi_s^{m,\frs}.
\end{equation}
With this notation, we have the following result which is a strengthening of the $\gamma < 0$ part of
\cite[Thm~3.10]{H0}.
\begin{lemma}
Let $\gamma<0$, $m\geq0$ be an integer, $f\in\cD^\gamma(V)$ with a sector $V$ of regularity $\alpha \le 0$. Then $\cR^mf\in\cC^\alpha$ and for every $r>|\alpha|$ there exists $c$ such that, uniformly over $\eta\in\cB^r$ and $\lambda\in(0,1]$ and locally uniformly over $x$, one has the bound
\begin{equation}\label{eq:reco bound away negative m}
|(\cR^mf-\Pi_xf(x))(\eta_x^\lambda)|\lesssim\lambda^{\gamma-\alpha}(\lambda\wedge2^{-m})^\alpha
\vn{f}_{\gamma,B(x,c\lambda+2^{-m})}\;.
\end{equation}
\end{lemma}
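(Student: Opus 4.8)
The plan is to mimic the proof of \cite[Thm~3.10]{H0} while tracking the cut-off scale $m$ carefully. First I would reduce the estimate to two regimes, namely $\lambda \le 2^{-m}$ and $\lambda > 2^{-m}$. In the first regime, the claimed bound is just $\lambda^\gamma \vn{f}_{\gamma,B(x,c\lambda)}$, which is essentially the statement of the $\gamma<0$ part of \cite[Thm~3.10]{H0} applied to the genuine reconstruction operator; the point is that on scales finer than $2^{-m}$, the operator $\cR^m$ coincides with the full wavelet reconstruction $\cR$ because the coarse block $\sum_{x\in\Lambda_\frs^m}(\Pi_xf(x))(\varphi_x^{m,\frs})\varphi_x^{m,\frs}$ only contributes on scale $\gtrsim 2^{-m}$ and, modulo the standard telescoping $\Pi_x f(x) - \Pi_y f(y) = \Pi_x(f(x)-\Gamma_{xy}f(y))$ argument, this contribution is harmless when tested against $\eta_x^\lambda$ with $\lambda$ small. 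So the work is concentrated in the regime $\lambda > 2^{-m}$, where the bound to prove becomes $\lambda^{\gamma-\alpha}2^{-m\alpha}\vn{f}_{\gamma,B(x,c\lambda+2^{-m})}$.

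In that regime I would expand $\eta_x^\lambda$ in the wavelet basis of item~(i) at scale $m$, writing
\begin{equ}
\eta_x^\lambda = \sum_{z\in\Lambda_\frs^m} \langle \eta_x^\lambda, \varphi_z^{m,\frs}\rangle \varphi_z^{m,\frs} + \sum_{n\ge m}\sum_{z\in\Lambda_\frs^n}\sum_{\psi\in\Psi} \langle \eta_x^\lambda, \psi_z^{n,\frs}\rangle \psi_z^{n,\frs}\;,
\end{equ}
and then pair $\cR^m f - \Pi_x f(x)$ against each basis element. For a basis function $\zeta_z^{n,\frs}$ (with $n\ge m$, and $n=m$ for the $\varphi$ piece) one has, by definition of $\cR^m$ and the telescoping identity, $|(\cR^m f - \Pi_x f(x))(\zeta_z^{n,\frs})| \lesssim \|z-x\|_\frs^{\gamma}\, 2^{-n(|\frs|/2 + \alpha)} \vn{f}_{\gamma, B(x,\dots)}$ when $\|z-x\|_\frs$ is comparable to $\lambda$, using the sector regularity $\alpha$ to control $\|f(z)\|_l$ for the $l\ge\alpha$ components and the $\cD^\gamma$ bound for the rest — this is exactly the computation behind \cite[Thm~3.10]{H0}, and I would reuse it verbatim. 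The number of relevant translates $z$ at scale $n$ intersecting $\supp\eta_x^\lambda$ is $O((2^n\lambda)^{|\frs|})$, and the wavelet coefficients satisfy $|\langle \eta_x^\lambda,\zeta_z^{n,\frs}\rangle| \lesssim 2^{-n|\frs|/2}(2^{-n}/\lambda)^{r}\lambda^{-|\frs|}\cdot\lambda^{|\frs|}$ type bounds — more precisely the standard estimate giving a gain of $(2^{-n}\lambda^{-1})^{\delta}$ for any $\delta \le r$ at fine scales $n$ large, and the vanishing-moment property of $\Psi$ (item~(ii)) is what makes the sum over $n\ge m$ converge. Summing the geometric series in $n$, the dominant term comes from $n=m$, producing the factor $2^{-m\alpha}$; combined with the $\lambda^\gamma$ from the $\|z-x\|_\frs^\gamma \lesssim \lambda^\gamma$ and the counting $\lambda^{|\frs|}$ against $2^{-m|\frs|/2}$ normalisations, one lands on $\lambda^{\gamma-\alpha}2^{-m\alpha}$ after bookkeeping. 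Note we crucially need $\alpha \le 0$ here, so that $2^{-m\alpha} \ge 1$ and the coarse scale genuinely dominates; this is where the hypothesis enters.

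The main obstacle I anticipate is the bookkeeping at the junction scale $n=m$ and the correct matching of the localisation radius in $\vn{f}_{\gamma,\cdot}$: one has to check that only translates $z$ with $d_\frs(z,x) \lesssim \lambda + 2^{-m}$ contribute (those outside are killed by the support condition on $\eta_x^\lambda$ together with the support of the wavelets), so that the right-hand side genuinely involves $\vn{f}$ over $B(x,c\lambda+2^{-m})$ and not a larger ball, and that the constant $c$ can be taken uniform. A secondary subtlety is that $\varphi$ and $\psi\in\Psi$ are only $\cC^r$, not smooth, so when invoking the model bounds \eqref{eq: model bounds} and the $\cD^\gamma$ estimates one must ensure $r > |\alpha|$ is large enough that the required number of vanishing moments and derivatives are available — this is precisely the hypothesis $r > |\alpha|$ in the statement, and the argument in \cite{H0} already handles it, so I would simply point to that. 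Finally, the fact that $\eta \in \cB^r$ rather than merely bounded is needed so the wavelet coefficients decay fast enough in $n$ to sum; no moment conditions on $\eta$ itself are needed since $\gamma - \alpha$ and the powers of $2^{-m}$ are arranged to converge regardless. Once these localisation and regularity matters are pinned down, the proof is a routine-but-careful repetition of the wavelet argument of \cite[Thm~3.10]{H0}.
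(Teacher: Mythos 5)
Your approach is essentially the same as the paper's: expand $\eta_x^\lambda$ in the wavelet basis with coarse scale $m$, use the telescoping identity $\Pi_y f(y)-\Pi_x f(x)=\Pi_y(f(y)-\Gamma_{yx}f(x))$ to bound $(\cR^m f-\Pi_x f(x))(\psi_y^{n,\frs})$ by $\sum_{l<\gamma}\|x-y\|_\frs^{\gamma-l}2^{-n(|\frs|/2+l)}$, then sum over $n$ and over translates $y$, splitting on $\lambda\lessgtr 2^{-m}$ and treating the $\varphi$-scaling block separately. The one place where your sketch is slightly off is the pointwise estimate $\lesssim\|z-x\|_\frs^{\gamma}2^{-n(|\frs|/2+\alpha)}$ (it should be the sum over $l$ above, whose dominant term in the relevant regime is $\|z-x\|_\frs^{\gamma-\alpha}2^{-n(|\frs|/2+\alpha)}$ — the extra $\|z-x\|^{-\alpha}$ is exactly where the $\lambda^{-\alpha}$ in the final bound comes from), but otherwise the bookkeeping you anticipate is precisely what the paper carries out.
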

\begin{proof}
The fact that $\cR^mf\in\cC^\alpha$ is immediate, since the above construction only differs by a $\cC^r$ function from the reconstruction operator given in \cite[Eq.~3.38]{H0}. 
To show \eqref{eq:reco bound away negative m}, we assume without loss of generality that 
$\vn{f}_{\gamma,B(x,\lambda+2^{-m})} \le 1$. Note first that
$$
|(\psi_y^{n,\frs},\eta_x^\lambda)|\lesssim 2^{n|\frs|/2} \bigl(2^n \lambda \vee 1\bigr)^{-|\frs|-r}\;,
$$
and that $(\psi_y^{n,\frs},\eta_x^\lambda)=0$ for $\|x-y\|_\frs\geq\lambda+c2^{-n}$ for some fixed constant $c$. 
We also have, for $n\geq m$, and for $\|x-y\|_\frs\leq\lambda+2^{-n}$,
\begin{align}
|(\cR^m f-\Pi_x f(x))(\psi_y^{n,\frs})|&=|(\Pi_yf(y)-\Pi_x f(x))(\psi_y^{n,\frs})|
=|(\Pi_y(f(y)-\Gamma_{yx} f(x))(\psi_y^{n,\frs})|
\nonumber\\&
\lesssim\sum_{l<\gamma}\|x-y\|_\frs^{\gamma-l}2^{-n|\frs|/2-nl}.\label{eq:Rm4}
\end{align}
Denoting the first (triple) sum in \eqref{eq:Rm} by $\cR^m_0$, and the projection of $\Pi_xf(x)$ to $\text{span}\{\psi_y^{n,\frs}:y\in\Lambda_\frs^n,n\geq m\}$ by $(\Pi_xf(x))_0$, we can write
\[
|(\cR^m_0f-(\Pi_xf(x))_0)(\eta_x^\lambda)|=\sum_{n\geq m}\sum_{y\in\Lambda_\frs^n}\sum_{\psi\in\Psi}|(\cR^mf-\Pi_xf(x))(\psi_y^{n,\frs})(\psi_y^{n,\frs},\eta_x^\lambda)|=:\sum_{n\geq m}I_n.
\]
We consider the cases $2^{-m}\gtrless\lambda$ separately. If $\lambda< 2^{-m}$, then considering that for $2^{-n}\leq\lambda$, the number of nonzero terms in the sum over $y\in\Lambda_\frs^n$ is of order $\lambda^{|\frs|}2^{n|\frs|}$, by estimating each of them using the bounds above, we have
\begin{equation}\label{eq:Rm0}
\sum_{2^{-n}\leq\lambda}I_n\lesssim
\sum_{2^{-n}\leq\lambda}\lambda^{|\frs|}2^{n|\frs|}2^{-n|\frs|/2-nr}\lambda^{-|\frs|-r}\sum_{l<\gamma}(\lambda+2^{-n})^{\gamma-l}2^{-n|\frs|/2-nl}\lesssim\lambda^\gamma,
\end{equation}
due to $r+l>0$. On the other hand, for $\lambda<2^{-n}$, the number of nonzero terms in the sum over $y$ is of order $1$, so we can write
\begin{equation}\label{eq:Rm1}
\sum_{\lambda<2^{-n}\leq2^{-m}}I_n\lesssim\sum_{\lambda<2^{-n}\leq2^{-m}}2^{n|\frs|/2}\sum_{l<\gamma}(\lambda+2^{-n})^{\gamma-l}2^{-n|\frs|/2-nl}\lesssim\lambda^\gamma,
\end{equation}
where we used the negativity of $\gamma$, and this bound is of the required order.

In the case $2^{-m}\leq\lambda$, then similarly to before
\begin{align}
\sum_{n\geq m}I_n&\lesssim
\sum_{n\geq m}\lambda^{|\frs|}2^{n|\frs|}2^{-n|\frs|/2-nr}\lambda^{-|\frs|-r}\sum_{l<\gamma}(\lambda+2^{-n})^{\gamma-l}2^{-n|\frs|/2-nl}\nonumber
\\&\lesssim\sum_{l<\gamma}2^{-m(r+l)}\lambda^{\gamma-l-r}\leq\sum_{l<\gamma}2^{-ml}\lambda^{\gamma-l},\label{eq:Rm2}
\end{align}
and since $l\geq\alpha$, this gives the required bound.

For the second sum in \eqref{eq:Rm}, denoted for the moment by $\cR^m_1$ and the projection of $\Pi_xf(x)$ to $\text{span}\{\varphi_y^{m,\frs}:y\in\Lambda_\frs^m\}$, denoted by $(\Pi_xf(x))_1$, we proceed similarly. This time, one has
$$
|(\varphi_y^{m,\frs},\eta_x^\lambda)|\lesssim  2^{m|\frs|/2} \bigl(2^m \lambda \vee 1\bigr)^{-|\frs|}\;,
$$
and $(\varphi_y^{m,\frs},\eta_x^\lambda)=0$ for $\|x-y\|_\frs\geq\lambda+c2^{-m}$, that is, for all but of order $2^{m|\frs|}\lambda^{|\frs|}$ instances of $y\in\Lambda_{\frs}^m$ in the case $2^{-m}\leq\lambda$, and for all but of order $1$ instances of $y\in\Lambda_\frs^m$ in the case $\lambda<2^{-m}$. The quantity $(\cR^m f-\Pi_x f(x))(\varphi_y^{m,\frs})$ can 
then be bounded exactly as in \eqref{eq:Rm4}. Combining these bounds, we arrive at
\begin{align}
|(\cR^m_1f-(\Pi_xf(x))_1)(\eta_x^\lambda)|&=\sum_{y\in\Lambda_\frs^m}|(\cR^mf-\Pi_yf(y))(\varphi_y^{m,\frs})(\varphi_y^{m,\frs},\eta_x^\lambda)|
\nonumber\\
&\lesssim2^{m|\frs|/2}\sum_{l<\gamma}(\lambda+2^{-m})^{\gamma-l}2^{-m|\frs|/2-ml}\nonumber
\\&
\lesssim \lambda^{\gamma-\alpha}2^{-m\alpha} \vee 2^{-m\gamma}
\lesssim \lambda^{\gamma-\alpha}2^{-m\alpha} \vee \lambda^\gamma\;,
\end{align}
as required. Here, the last inequality comes from the fact that $\gamma < 0$ and that the second term
dominates when $2^{-m} \ge \lambda$, so that $2^{-m\gamma} \le \lambda^\gamma$.
\end{proof}

Next we recall some results on extending dual elements of a space of smooth functions that are supported away from a submanifold, to distributions, at least locally. This is essentially the content of \cite[Prop.~6.9]{H0}, but we slightly reformulate the statements in order to fit the needs of Section~\ref{subsec:reco} below better.

Whenever here and in the sequel we refer to a `boundary' $P$, we mean the following. Assume that $\R^d$ is decomposed as 
$\R^d=\R^{d_1}\times\cdots\times\R^{d_m}$, such that $\frs_1=\cdots=\frs_{d_1}$, $\frs_{d_1+1}=\cdots=\frs_{d_2}$, etc. 
We then assume $P$ to be of the form
$$
P=M_1\times\cdots\times M_m
$$
where each $M_i$ is either $\R^{d_i}$ or is a piecewise $\cC^1$ boundary of a domain, satisfying the strong cone condition. Denoting the codimension of $M_i$ by $m_i$, the codimension of $P$ is then defined to be $\sum_{i=1}^m m_i\frs_{d_{i-1}+1}$, with the convention $d_0=0$. We will need the following version of a well-known 
``folklore'' fact:

\begin{proposition}\label{prop:extension}
Let $P$ be a boundary of codimension $\frm$, $D\subset\R^d$ be a bounded domain and let $\xi$ be an element of the dual of smooth functions compactly supported in $D\setminus P$. Suppose furthermore that $0\geq\alpha>-\frm$ and for an integer $r>|\alpha|$ one has
\begin{equation}\label{eq:extension}
|\xi(\psi_x^\lambda)|\lesssim\lambda^\alpha
\end{equation}
uniformly over $x \in D\setminus P$, over $\psi\in\cB^r$, and over $\lambda\in(0,1]$ satisfying
furthermore $2\lambda\leq d_\frs(x,P)$ and $\supp\psi_x^\lambda\subset D$. 
Then there exists a unique element $\xi'$ in the dual of smooth functions compactly supported in $D$ that agrees with $\xi$ on test functions supported away from $P$ and for which the bound \eqref{eq:extension} holds with $\xi'$ in place of $\xi$, uniformly in $x$, in $\psi\in\cB^r$, and in $\lambda\in(0,1]$ satisfying $\supp \psi_x^\lambda\subset D$.
\end{proposition}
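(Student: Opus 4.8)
The plan is to construct $\xi'$ explicitly by means of a dyadic partition of unity adapted to the $\frs$-distance from $P$, to obtain the extension property and the bound \eqref{eq:extension} for $\xi'$ from a covering argument together with the hypothesis, and to settle uniqueness by a rigidity statement for distributions supported on $P$. For \emph{uniqueness}: if $\xi_1',\xi_2'$ are two extensions as in the statement, then $\eta:=\xi_1'-\xi_2'$ is supported on $P\cap D$ and still satisfies $|\eta(\psi_x^\lambda)|\lesssim\lambda^\alpha$ for all $\psi\in\cB^r$ and $\lambda\in(0,1]$ with $\supp\psi_x^\lambda\subset D$. Near a point lying on a single $\cC^1$ piece of $P$, straightening that piece to a coordinate subspace (legitimate, since the scaling is constant within each block $\R^{d_i}$), the structure theorem for distributions supported on a subspace writes $\eta$ locally as a finite sum $\sum_\gamma u_\gamma\otimes\partial^\gamma\delta_0$ over multiindices $\gamma$ in the normal directions, with $u_\gamma\in\cD'$ in the tangential variables. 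Testing against $\psi_x^\lambda$ with $x\in P$ and a suitable $\psi$, the top-order term contributes a quantity of order $\lambda^{-\frm-|\gamma|_\frs}$, using that $\frm$ is exactly the sum of the scaling exponents of the normal directions; since $-\frm-|\gamma|_\frs\le-\frm<\alpha$, this is incompatible with the bound unless every $u_\gamma$ vanishes. The singular set of $P$ has strictly larger codimension, so the same argument (with an exponent that is even more negative) kills any contribution of $\eta$ there, whence $\eta=0$.

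For \emph{existence}, fix a partition of unity $1=\chi_{\mathrm{far}}+\sum_{n\ge n_0}\chi_n$ on $\R^d\setminus P$ with $\chi_{\mathrm{far}}$ supported where $d_\frs(\cdot,P)$ is bounded below, $\chi_n$ supported on $\{2^{-n-1}\le d_\frs(\cdot,P)\le 2^{-n+1}\}$, and $|D^k\chi_n|\lesssim 2^{n|k|_\frs}$ for $|k|_\frs\le r$; the construction of such a family with these anisotropic bounds is where the piecewise-$\cC^1$ and strong cone hypotheses on $P$ are used, as they allow one to replace $d_\frs(\cdot,P)$ by a comparable function with controlled derivatives away from $P$. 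For smooth $\varphi$ compactly supported in $D$ I would set $\xi'(\varphi)=\xi(\chi_{\mathrm{far}}\varphi)+\sum_{n\ge n_0}\xi(\chi_n\varphi)$, each summand being well-defined since $\chi_n\varphi$ is compactly supported in $D\setminus P$. The key estimate is $|\xi(\chi_n\varphi)|\lesssim 2^{-n(\frm+\alpha)}\|\varphi\|_{\cC^r}$: one covers $\supp(\chi_n\varphi)$ — contained in the intersection of a $2^{-n}$-tube around $P$ with a fixed bounded set, hence of Lebesgue measure $\lesssim 2^{-n\frm}$ — by $\lesssim 2^{n(|\frs|-\frm)}$ balls of radius $2^{-n-3}$, uses a subordinate partition of unity to write $\chi_n\varphi=\sum_i c_{n,i}\,\cS_{\frs,x_{n,i}}^{2^{-n-3}}\hat\phi_{n,i}$ with $\hat\phi_{n,i}\in\cB^r$, $|c_{n,i}|\lesssim 2^{-n|\frs|}\|\varphi\|_{\cC^r}$ and $d_\frs(x_{n,i},P)\ge 2\cdot2^{-n-3}$, and applies \eqref{eq:extension} to each bump with $\lambda=2^{-n-3}$. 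Since $\alpha>-\frm$ the series converges absolutely, $|\xi'(\varphi)|\lesssim\|\varphi\|_{\cC^r}$, so $\xi'$ is a distribution on $D$; and if $\supp\varphi$ avoids $P$ then only finitely many terms survive and $\xi'(\varphi)=\xi(\varphi)$, so $\xi'$ extends $\xi$.

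It remains to verify \eqref{eq:extension} for $\xi'$. Given $\psi\in\cB^r$ and $\lambda\in(0,1]$ with $\supp\psi_x^\lambda\subset D$, there are two cases. If $2\lambda\le d_\frs(x,P)$, then $\psi_x^\lambda$ is supported in $D\setminus P$, so $\xi'(\psi_x^\lambda)=\xi(\psi_x^\lambda)$ and the bound is \eqref{eq:extension} itself. Otherwise $\supp\psi_x^\lambda$ lies within $\frs$-distance $\lesssim\lambda$ of $P$, so $\chi_n\psi_x^\lambda$ vanishes unless $2^{-n}\lesssim\lambda$; for such $n$ the same covering argument applies, except that now the $2^{-n}$-tube meets $B(x,\lambda)$ in a set of measure $\lesssim\lambda^{|\frs|-\frm}2^{-n\frm}$, producing $\lesssim(2^n\lambda)^{|\frs|-\frm}$ bumps, while $|D^k(\chi_n\psi_x^\lambda)|\lesssim\lambda^{-|\frs|}2^{n|k|_\frs}$; this gives $|\xi(\chi_n\psi_x^\lambda)|\lesssim\lambda^{-\frm}2^{-n(\frm+\alpha)}$. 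Summing over all such $n$ and using $\frm+\alpha>0$ yields $\lesssim\lambda^{-\frm}\lambda^{\frm+\alpha}=\lambda^\alpha$, and the leftover term $\chi_{\mathrm{far}}\psi_x^\lambda$ is non-zero only when $\lambda\gtrsim1$, in which case $|\xi'(\psi_x^\lambda)|\lesssim1\lesssim\lambda^\alpha$ since $\alpha\le0$ and $\xi'$ is bounded on bounded sets of test functions. This establishes \eqref{eq:extension} for $\xi'$, with the claimed uniformity.

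The main obstacle is not the bookkeeping of the geometric series, which is elementary once everything is in place, but the two pieces of geometry that feed it: the construction of the dyadic partition of unity near $P$ with the correct $\frs$-anisotropic derivative estimates, and the covering-number (equivalently, volume) estimate for the intersection of a $\lambda$-ball with a $2^{-n}$-tube around $P$. Both are precisely what the structural hypotheses on $P$ — a product of factors that are either full subspaces or piecewise-$\cC^1$ boundaries satisfying the strong cone condition — are there to guarantee, by ruling out cusps and ensuring that such tubes behave, for counting purposes, like tubes around smooth submanifolds of codimension $\frm$ in the $\frs$-metric.
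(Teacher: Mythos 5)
Your existence argument is essentially the same as the paper's: both construct the extension from a dyadic partition of unity adapted to $d_\frs(\cdot,P)$, control each scale-$2^{-n}$ piece by covering the $2^{-n}$-tube around $P$ with $\lesssim 2^{n(|\frs|-\frm)}$ bumps and applying \eqref{eq:extension}, and sum a geometric series that converges precisely because $\alpha+\frm>0$. Your subsequent verification that the extension again satisfies \eqref{eq:extension} (splitting into the case $2\lambda\le d_\frs(x,P)$ and the complementary case with the refined covering count $\lesssim(2^n\lambda)^{|\frs|-\frm}$) is, if anything, spelled out more explicitly than in the paper, and it is correct.

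Your uniqueness argument, on the other hand, is a genuinely different route. The paper avoids any structure theorem: writing $\xi'(\psi)=\xi'(\psi(1-\Phi_n))+\xi'(\psi\Phi_n)$, the first term equals $\xi(\psi(1-\Phi_n))$, and the second is shown to vanish as $n\to\infty$ by decomposing $\Phi_n$ into $\lesssim \lambda^{|\frs|-\frm}2^{(|\frs|-\frm)n}$ bumps of scale $2^{-n}$ and applying the bound \eqref{eq:extension} \emph{for the putative extension $\xi'$} (which is allowed to be tested at centres near $P$), yielding $|\xi'(\psi\Phi_n)|\lesssim\lambda^{-\frm}2^{-n(\frm+\alpha)}\to0$. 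This is short, uses only the same geometric covering estimate as the existence part, and makes no smoothness demand on $P$ beyond the cone condition.

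Your alternative — structure theorem for distributions supported on a submanifold, plus a scaling bootstrap to kill each $u_\gamma$ — is morally sound but has several gaps that you would need to close. First, straightening a piecewise $\cC^1$ boundary produces only a $\cC^1$ diffeomorphism; pushing a test function $\psi\in\cB^r$ with $r>1$ through such a map does not in general yield a $\cB^r$ function, so your reduction to the flat case is not immediate (the issue you flag, constancy of the scaling within each block, is a different and more benign one). Second, the claim ``the top-order term contributes a quantity of order $\lambda^{-\frm-|\gamma|_\frs}$'' is a slogan rather than an estimate: what one actually extracts from \eqref{eq:extension} is that $|u_\gamma(\phi_{x'}^\lambda)|\lesssim\lambda^{\alpha+\frm+|\gamma|_\frs}$ for tangential bumps $\phi\in\cB^r$, and one then needs a separate covering argument (writing an arbitrary tangential test function as $\sum_i c_i\phi_{x_i}^\lambda$ with $|c_i|\lesssim\lambda^{|\frs|_t}$ and $\lesssim\lambda^{-|\frs|_t}$ terms) to conclude $u_\gamma=0$; you also need to isolate individual $\gamma$'s by an appropriate choice of the normal profile of $\psi$. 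Third, the statement that the singular set of $P$ ``has strictly larger codimension, so the same argument kills any contribution there'' is not a proof: once $\eta$ is shown to vanish off the singular stratum, one must run a fresh argument on a set that is no longer a $\cC^1$ submanifold, and Schwartz's structure theorem does not apply off the shelf. None of these are fatal in principle, but filling them requires more work than the two lines of the paper's uniqueness argument, which is why the partition-of-unity comparison is the preferable route here.
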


\begin{proof}
By considering a suitable partition of unity,
thanks to the strong cone condition,
 we see that for any compact set $K \subset D$ with diameter $\lambda$,
and any $n$ with $2^{-n} \le \lambda$, we can find  
smooth functions $\Phi_n \colon K \to [0,1]$ such that
$\Phi_n(y) = 1$ if $d_\frs(y,P) \in [2^{1-n},2^{2-n}]$, $\Phi_n(y) = 0$ if $d_\frs(y,P) \notin [2^{-n},2^{3-n}]$, and satisfying the following
property.
For every $n \ge 1$, one can find sequences $\{x_k\}_{k=1}^{N}$ with $N \le C \lambda^{|\frs|-\frm} 2^{(|\frs|-\frm)n}$ 
and functions $\phi_k , \tilde \phi_k \in \cB^r$ such that, setting $\mu = 2^{-n}$, one has
\begin{equ}[e:propPhi]
\Phi_n = \mu^{|\frs|}\sum_{k=1}^N \phi_{k,x_k}^\mu\;,\qquad
\Phi_n - \Phi_{n+1} = \mu^{|\frs|}\sum_{k=1}^N \tilde \phi_{k,x_k}^\mu\;.
\end{equ}

Fix now a test function of the type $\psi_x^\lambda$ with support $K \subset D$, then the sequence  
$\xi(\psi_x^\lambda (1-\Phi_n))$ is Cauchy since
\begin{equs}
|\xi(\psi_x^\lambda (\Phi_{n+1}-\Phi_n))| &\le \sum_{k=1}^N\mu^{|\frs|} \xi(\tilde\psi_x^\lambda \phi_{k,x_k}^\mu)
\lesssim \lambda^{-|\frs|} N \mu^{\alpha + |\frs|}
\le C \lambda^{-|\frs|} \lambda^{|\frs|-\frm} 2^{(|\frs|-\frm)n} \mu^{\alpha+|\frs|} \\
& = C \lambda^{-m} 2^{-(\frm + \alpha)n}\;\label{corr},
\end{equs}
where in the second inequality we made use of the bound \eqref{eq:extension}. Thanks to the assumption $\alpha+\frm>0$, the right-hand side of \eqref{corr} which converges to $0$ exponentially fast, as claimed.
 The same bound also shows that the limit is bounded
by some constant times $\lambda^{-\alpha}$ as required. The uniqueness of $\xi'$ follows in a similar way
by comparing $\xi'(\psi (1-\Phi_n))$ to $\xi'(\psi)$ and using the first identity of \eqref{e:propPhi}.
\end{proof}

\section{Definition of \texorpdfstring{$\cD_P^{\gamma,w}$}{DPgw} and basic properties}\label{sec:def}

Our main tool for dealing with domains is to introduce spaces analogous to the spaces $\CD^{\gamma,\eta}$
used in \cite{H0} to deal with initial conditions, but allowing for blow-ups at the boundary of 
the domain as well. One subtlety arises in the handling of the ``double singularity'' arising
on the boundary at time $0$.  
Let $P_0$ and $P_1$ be two fixed boundaries with respective codimensions $\frm_0$, $\frm_1$
and such that $P_\cap = P_0 \cap P_1$ is itself a boundary of codimension $\frm=\frm_0+\frm_1$.
We also write $P=P_0\cup P_1$ and we assume that $P$ satisfies the (uniform) cone condition, which 
forces the two boundaries to intersect in a transverse manner. 
For $i=1,2$, denote
$$
|x|_{P_i}=1\wedge d_\frs(x,P_i),\quad\quad|x,y|_{P_i}=|x|_{P_i}\wedge|y|_{P_i},
$$
and for any compact set $\frK$,
$$
\frK_P=\{(x,y)\in(\frK\setminus P)^2:\,x\neq y\,\,\,\text{and}\,\,\,2\|x-y\|_\frs\leq|x,y|_{P_0}\wedge|x,y|_{P_1}\}.
$$
To slightly ease notation, in the following $w$ will always stand for an element in $\R^3$, with coordinates $w=(\eta,\sigma,\mu)$, corresponding to exponents for the `weights' at $P_0$, $P_1$, and their intersection, respectively.
\begin{remark} It might be at first sight surprising to have not two, but three different orders of singularity. While in the subsequent calculus the use of exponent $\mu$ will become clear, it is worth mentioning a simple example when the singularities at the different boundaries do not in any way determine the one at the intersection: Consider the solution of $\partial_t u=\Delta u$, $u_0\equiv 1$, with $0$ Dirichlet boundary conditions on some domain $D$. Then, while away from the ``corner'' $\{(0,x):x\in\partial D\}$, all derivatives of $u$ are continuous up to both the temporal and the spatial boundaries, the $k$-th derivative exhibits a blow-up of order $|k|_\frs$ at the corner.
\end{remark}

\begin{definition}
Let $V$ be a sector, $\gamma>0$ and $w=(\eta,\sigma,\mu)\in\R^3$. Then the space $\cD_P^{\gamma,w}(V)$ consists of all functions $f:\R^d\setminus P\rightarrow V_\gamma^-$ such that for every compact set $\frK\subset\R^d$ one has
\begin{align}
\vn{f}_{\gamma,w;\frK}&:=
\sup_{(x,y)\in\frK_P}\sup_{l<\gamma}
\frac{\|f(x)-\Gamma_{xy}f(y)\|_l}
{\|x-y\|_{\frs}^{\gamma-l}|x,y|_{P_0}^{\eta-\gamma}|x,y|_{P_1}^{\sigma-\gamma}(|x,y|_{P_0}\vee|x,y|_{P_1})^{\mu-\eta-\sigma+\gamma}}\nonumber
\\
&\quad+\sup_{x\in\frK:\,0<|x|_{P_0}\leq|x|_{P_1}}\sup_{l<\gamma}\frac{\|f(x)\|_l}{|x|_{P_1}^{\mu-l}\left(\tfrac{|x|_{P_0}}{|x|_{P_1}}\right)^{(\eta-l)\wedge0}}\nonumber
\\
&\quad+\sup_{x\in\frK:\,0<|x|_{P_1}\leq|x|_{P_0}}\sup_{l<\gamma}\frac{\|f(x)\|_l}{|x|_{P_0}^{\mu-l}\left(\tfrac{|x|_{P_1}}{|x|_{P_0}}\right)^{(\sigma-l)\wedge0}}\,\,<\infty.\label{def:spaces}
\end{align}
The sum of the second and third term above will also be denoted by $\|f\|_{\gamma,w;\frK}$.
Similarly to before, these spaces do depend on the model, but if no confusion can arise, this dependence will not be denoted. 
For two models $(\Pi,\Gamma)$ and $(\bar\Pi,\bar\Gamma)$, and for $f\in\cD_P^{\gamma,w}(V;\Gamma)$ and $\bar f\in\cD_P^{\gamma,w}(V;\bar \Gamma)$, we also set
\begin{align*}
\vn{f;\bar f}_{\gamma,w;\frK}&=\sup_{(x,y)\in\frK_P}\sup_{l<\gamma}
\frac{\|f(x)-\bar f(x)-\Gamma_{xy}f(y)+\bar\Gamma_{xy}\bar f(y)\|_l}
{\|x-y\|_{\frs}^{\gamma-l}|x,y|_{P_0}^{\eta-\gamma}|x,y|_{P_1}^{\sigma-\gamma}(|x,y|_{P_0}\vee|x,y|_{P_1})^{\mu-\eta-\sigma+\gamma}}
\\
&\quad+\|f-\bar f\|_{\gamma,w;\frK}.
\end{align*}
\end{definition}

This notation is slightly ambiguous since the knowledge of $P$ does of course not imply the knowledge of 
$P_0$ and $P_1$. One should therefore really interpret the instance of $P$ appearing in $\cD_{P}^{\gamma,w}$
as meaning $P = \{P_0,P_1\}$ rather than $P = P_0 \cup P_1$, which is used whenever we view $P$
as a subset of $\R^d$.
It will also sometimes be useful to consider functions in $\cD_{P}^{\gamma,w}$ that are slightly better behaved when
approaching one of the two boundaries. This is the purpose of the following definition.
\begin{definition}
We denote by $\cD_{P,\{0\}}^{\gamma,w}$ the set of those elements $f \in \cD_{P}^{\gamma,w}$ for which the map $x\mapsto \cQ_\eta^- f(x)$ extends continuously to $\R^d\setminus P_1$ in such a way that $\cQ_\eta^-f(x)=0$ for all $x\in P_0\setminus P_1$. 
The space $\cD_{P,\{1\}}^{\gamma,w}$ is defined analogously. 
Finally, writing $\frK_0 = \{x\in\frK:\, 0<|x|_{P_0}\leq|x|_{P_1}\}$ and similarly for $\frK_1$,
we set
\begin{align*}
\bn{f}_{\gamma,w,\{0\};\frK}&=\sup_{x\in\frK_0}\sup_{l<\gamma}\frac{\|f(x)\|_l}{|x|_{P_1}^{\mu-l}\left(\tfrac{|x|_{P_0}}{|x|_{P_1}}\right)^{\eta-l}}\nonumber
+\sup_{x\in\frK_1}\sup_{l<\gamma}\frac{\|f(x)\|_l}{|x|_{P_0}^{\mu-l}\left(\tfrac{|x|_{P_1}}{|x|_{P_0}}\right)^{(\sigma-l)\wedge0}},
\end{align*}
and also define $\bn{f}_{\gamma,w,\{1\};\frK}$ in the same way, but with the exponents $\eta-l$ 
and $(\sigma-l)\wedge 0$ replaced
by $(\eta-l)\wedge 0$ and $\sigma-l$ respectively.
\end{definition}

We shall assume throughout the article that these exponents satisfy $\eta\vee\sigma\vee\mu\leq\gamma$.

\begin{remark}\label{remark:mu}
Denoting the regularity of the sector $V$ by $\alpha$, the definition is set up so that, when $\mu\leq\alpha$ and there exists an $x$ with $|x|_{P_0}\sim|x|_{P_1}\sim 1$ and
$
\sup_{l<\gamma}\|f(x)\|_{l}\sim1
$,
then the first term in \eqref{def:spaces} bounds the second and third. For $\mu>\alpha$, one would actually need to add $|x|_{P_1}^{(\mu-l)\wedge0}$ to the denominator in the second term and $|x|_{P_0}^{(\mu-l)\wedge0}$ in the third. As this would 
make the calculations significantly longer, we omit this modification and deal with the slight difficulties arising from this 
restriction later.

\end{remark}

\begin{proposition}\label{prop:1}
Let $V$ be a sector of regularity $\alpha$, and $f\in\cD_{P,\{1\}}^{\gamma,w}(V)$. Suppose furthermore that $\frK$ is a compact set such that for each $x\in\frK$ the line connecting $x$ and the closest point to $x$ on $P_1$ is contained in $\frK$. Then it holds that 
\begin{equation}\label{eq:vmi0}
\bn{f}_{\gamma,w,\{1\};\frK}\lesssim\vn{f}_{\gamma,w;\frK}.
\end{equation}
If $(\bar \Pi,\bar\Gamma)$ is another model for $\scT$ and $\bar f\in\cD_{P,\{1\}}^{\gamma,w}(V;\bar\Gamma)$, then one has
\begin{equation}\label{eq:vmi1}
\bn{f-\bar f}_{\gamma,w,\{1\};\frK}\lesssim\vn{f;\bar f}_{\gamma,w;\frK}+\|\Gamma-\bar\Gamma\|_{\gamma;\frK}(\vn{f}_{\gamma,w;\frK}+\vn{\bar f}_{\gamma,w;\frK}),
\end{equation}
and, for any $\kappa \in [0,1]$,
\begin{equation}\label{eq:vmi2}
\vn{f;\bar f}_{\bar\gamma,\bar w;\frK}\lesssim\bn{f-\bar f}_{\bar \gamma,w,\{1\};\frK}^\kappa(\vn{f}_{\gamma,w;\frK}+\vn{\bar f}_{\gamma,w;\frK})^{1-\kappa},
\end{equation}
where $\bar\gamma=(1-\kappa)\gamma+\kappa\alpha$ and $\bar w=(\bar \eta,\sigma,\mu)$ with $\bar\eta=\eta+\kappa((\alpha-\eta)\wedge0)$.
\end{proposition}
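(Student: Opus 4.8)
\textbf{Proof plan for Proposition~\ref{prop:1}.}

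The plan is to prove the three bounds in order, since each is progressively less elementary. For \eqref{eq:vmi0}, the point is that the ``extra'' information encoded in $\cD_{P,\{1\}}^{\gamma,w}$ — namely that $\cQ_\sigma^-f$ extends continuously and vanishes on $P_1 \setminus P_0$ — lets us upgrade the exponent $(\sigma-l)\wedge 0$ in the second term of \eqref{def:spaces} to $\sigma - l$ when $\sigma - l > 0$. Fix $x \in \frK_1$ (the case $x \in \frK_0$ is already contained in \eqref{def:spaces}, so there is nothing to do there) and let $\bar x$ be the nearest point to $x$ on $P_1$, which by hypothesis lies in $\frK$ together with the whole segment $[x,\bar x]$. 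First I would handle the components with $l < \sigma$: writing $f(x) = \bigl(f(x) - \Gamma_{x\bar x} f(\bar x)\bigr) + \Gamma_{x\bar x} f(\bar x)$, the vanishing of $\cQ_\sigma^- f$ at $\bar x$ kills the projection of the second term onto $T_l$ for $l < \sigma$ (here one uses that $\Gamma_{x\bar x}$ is lower-triangular so it cannot push higher homogeneities down into $T_l$), and the first term is controlled by $\vn{f}_{\gamma,w;\frK}$ using the defining seminorm evaluated along the segment — one needs to iterate over a dyadic chain of points $x_j \to \bar x$ with $\|x_j - x_{j+1}\|_\frs \sim 2^{-j}|x|_{P_1}$, at each step staying inside $\frK_P$ because $P_0$ is far away, and sum the resulting geometric series, which converges precisely because $\gamma > \sigma \ge l$ is false — wait, rather because the weight exponents combine to give a net positive power of $2^{-j}$; the relevant inequality is $\sigma - l > 0$. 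For $l \ge \sigma$ the required exponent is $(\sigma-l)\wedge 0 = \sigma - l$ which is exactly what \eqref{def:spaces} already gives, so no argument is needed.

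For \eqref{eq:vmi1} I would run the identical telescoping argument but on the difference $f - \bar f$. The new feature is that the chain identity now reads
\begin{equ}
f(x) - \bar f(x) = \bigl(f(x) - \bar f(x) - \Gamma_{x\bar x}f(\bar x) + \bar\Gamma_{x\bar x}\bar f(\bar x)\bigr) + \Gamma_{x\bar x}f(\bar x) - \bar \Gamma_{x\bar x}\bar f(\bar x)\;,
\end{equ}
and the last two terms are rewritten as $\Gamma_{x\bar x}\bigl(f(\bar x) - \bar f(\bar x)\bigr) + (\Gamma_{x\bar x} - \bar\Gamma_{x\bar x})\bar f(\bar x)$. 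The first of these vanishes in homogeneities $< \sigma$ by the $\cD_{P,\{1\}}$ property applied to both $f$ and $\bar f$; the second is estimated by $\|\Gamma - \bar\Gamma\|_{\gamma;\frK}$ against $\vn{\bar f}$, which accounts for the second group of terms on the right of \eqref{eq:vmi1}. The telescoped increments are controlled by $\vn{f;\bar f}_{\gamma,w;\frK}$ exactly as before. One should be a little careful that along the chain one also picks up contributions where $f(\bar x) - \bar f(\bar x)$ has already been peeled off, forcing a term $\|\Gamma-\bar\Gamma\|_\gamma \vn{f}_{\gamma,w;\frK}$ as well; collecting everything gives the stated bound.

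The main obstacle is \eqref{eq:vmi2}, which is an interpolation statement rather than a telescoping one. Here I would argue pointwise: fix $(x,y) \in \frK_P$ and $l < \bar\gamma$, and we must bound $\|f(x) - \bar f(x) - \Gamma_{xy}f(y) + \bar\Gamma_{xy}\bar f(y)\|_l$ by the claimed product. The strategy is the standard convexity trick — estimate the quantity in two ways and take the geometric mean with weights $\kappa$ and $1-\kappa$. On one hand, the triangle inequality plus the model bound on $\Gamma_{xy}$ bounds it by $\|f(x)-\bar f(x)\|$ plus $\|f(y) - \bar f(y)\|$ times a power of $\|x-y\|_\frs$; using $\bn{f - \bar f}_{\bar\gamma,w,\{1\};\frK}$ and the fact that $\|x-y\|_\frs$ is comparable to the relevant weights on $\frK_P$, this gives a bound by $\bn{f-\bar f}$ times the appropriate product of weights with exponents governed by $\bar\gamma$ and $\bar w$. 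On the other hand, the same quantity is bounded by $\vn{f;\bar f}_{\gamma,w;\frK}$ times the weights with exponents governed by $\gamma$ and $w$, and $\vn{f;\bar f}$ is in turn $\lesssim \vn{f} + \vn{\bar f}$ trivially. Raising the first bound to the power $\kappa$, the second to the power $1-\kappa$, and multiplying, the exponent of $\|x-y\|_\frs$ becomes $\kappa(\bar\gamma - l) + (1-\kappa)(\gamma - l) = \gamma - l - \kappa(\gamma - \bar\gamma)$, and since $\gamma - \bar\gamma = \kappa(\gamma - \alpha)$, bookkeeping shows this matches $\bar\gamma - l$ up to powers of the weights; the crux is checking that the weight exponents also interpolate correctly, in particular that the $P_0$-weight exponent $\bar\eta = \eta + \kappa((\alpha - \eta)\wedge 0)$ is exactly what comes out, and that the modified $P_0$-exponent appearing in the $\bn{\cdot}_{\bar\gamma,w,\{1\}}$ seminorm (which uses $(\eta - l)\wedge 0$ on $\frK_1$) combines with the clean $\eta - l$ appearing — no, here one must be careful since for $\bn{\cdot}_{\{1\}}$ the $\frK_1$ part uses $(\sigma - l)\wedge 0$, wait, $(\eta-l)\wedge 0$ — with the $\vn{\cdot}$-side exponents to produce $(\bar\eta - l)\wedge 0$ or $\bar\eta - l$ as appropriate. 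This case analysis over the signs of $\eta - l$, $\sigma - l$, $\alpha - \eta$, and over which of the two boundaries is closer, is the genuinely fiddly part, and is where I expect most of the work to lie; everything else is a routine chaining argument.
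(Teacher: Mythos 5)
Your overall strategy — dyadic chaining toward $P_1$ for \eqref{eq:vmi0}--\eqref{eq:vmi1} and an interpolation/convexity argument for \eqref{eq:vmi2} — is in the same spirit as the paper's, which slices $\frK_1 := \frK\cap\{|x|_{P_1}\le|x|_{P_0}\}$ into shells $\{2^{-n}\le|x|_{P_0}\le 2^{-n+1}\}$ to reduce to the single-boundary Lemmas 6.5--6.6 of \cite{H0}, and treats the $\frK_0$-part of \eqref{eq:vmi2} by exactly the convexity trick you describe. But the chaining step contains a genuine error. You split $f(x)=\bigl(f(x)-\Gamma_{x\bar x}f(\bar x)\bigr)+\Gamma_{x\bar x}f(\bar x)$ and claim $\cQ_l\Gamma_{x\bar x}f(\bar x)=0$ for $l<\sigma$ because ``$\Gamma_{x\bar x}$ is lower-triangular so it cannot push higher homogeneities down into $T_l$''. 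This is backwards: the axiom $\Gamma a-a\in\bigoplus_{\beta<\alpha}T_\beta$ says precisely that $\Gamma$ \emph{does} push homogeneities down, and only down, so $\cQ_l\Gamma_{x\bar x}f(\bar x)=\sum_{m\ge\sigma}\cQ_l\Gamma_{x\bar x}\cQ_m f(\bar x)$ has no reason to vanish. Worse, $f(\bar x)$ is not even defined, since $\bar x\in P_1$ is outside the domain of $f$; only the truncation $\hat f:=\cQ_\sigma^- f$ extends continuously and vanishes there.

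The correct argument telescopes $\hat f$, not $f$. The increment $\hat f(x_n)-\Gamma_{x_nx_{n+1}}\hat f(x_{n+1})$ equals $\cQ_\sigma^-\bigl(f(x_n)-\Gamma_{x_nx_{n+1}}f(x_{n+1})\bigr)$, controlled by $\vn{f}$, \emph{minus} the correction $\cQ_\sigma^-\Gamma_{x_nx_{n+1}}(f-\hat f)(x_{n+1})$. The latter is summable only because the matrix elements of $\Gamma_{x_nx_{n+1}}$ mapping $T_m\to T_l$ ($m\ge\sigma>l$) carry the small factor $\|x_n-x_{n+1}\|_\frs^{m-l}\sim(2^{-n}|x|_{P_1})^{m-l}$, which compensates the blow-up $\|f(x_{n+1})\|_m\lesssim|x|_{P_0}^{\mu-\sigma}(2^{-n}|x|_{P_1})^{\sigma-m}$ of the high components, giving a net geometric factor $(2^{-n}|x|_{P_1})^{\sigma-l}$. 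Your own hesitation about the convergence (``converges precisely because $\gamma>\sigma\ge l$ is false --- wait, rather because \ldots'') is the symptom: if one chains $f$ directly and only estimates $\|x-x_n\|_\frs\lesssim|x|_{P_1}$, the contributions from $m\ge\sigma$ do not decay in $n$ and the series diverges. Keeping track of the step sizes (rather than total distance) and telescoping $\hat f$ rather than $f$ is what makes \cite{H0}'s Lemma 6.5 work, and the same care would be needed for \eqref{eq:vmi1} and the $\frK_1$-part of \eqref{eq:vmi2}.
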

\begin{proof}
We prove separately for $\frK_i=\frK\cap\{|x|_{P_i}\leq|x|_{P_{1-i}}\}$.

For $\frK_1$, further introducing $\frK_1^n=\frK_1\cap\{2^{-n}\leq|x|_{P_0}\leq 2^{-n+1}\}$, the bounds for $\frK_1^n$ in place of $\frK$ follow immediately from Lemmas 6.5 and 6.6, \cite{H0}, uniformly in $n$. Since there is no dependence on $n$ in the bounds, and for any pair $(x,y)\in(\frK_1)_P$, the indices $n_x$ and $n_y$ for which $x\in\frK_1^{n_x}$, $y\in\frK_1^{n_y}$, differ by at most $1$, the estimates carry through for $\frK_1$.

For $\frK_0$, the bounds \eqref{eq:vmi0} and \eqref{eq:vmi1} are trivial. As for \eqref{eq:vmi2}, we have
$$
\|f(x)-f(x)-\Gamma_{xy}f(y)+\bar\Gamma_{xy}f(y)\|_l\leq(\vn{f}_{\gamma,w;\frK_0}+\vn{\bar f}_{\gamma,w;\frK_0})\|x-y\|_\frs^{\gamma-l}|x,y|_{P_0}^{\eta-\gamma}|x,y|_{P_1}^{\mu-\eta}
$$
as well as
$$
\|f(x)-f(x)-\Gamma_{xy}f(y)+\bar\Gamma_{xy}f(y)\|_l\lesssim\bn{f-\bar f}_{\gamma,w,\{1\};\frK_0}|x,y|_{P_1}^{\mu-l}\left(\frac{|x,y|_{P_0}}{|x,y|_{P_1}}\right)^{(\eta-l)\wedge0}.
$$
Therefore, we can bound the quantity $\|f(x)-f(x)-\Gamma_{xy}f(y)+\bar\Gamma_{xy}f(y)\|_l$ by the right-hand side of \eqref{eq:vmi2} times
\begin{align*}
\|x-y&\|_\frs^{(1-\kappa)(\gamma-l)}|x,y|_{P_1}^{(1-\kappa)(\mu-\eta)+\kappa(\mu-l)-\kappa((\eta-l)\wedge0)}|x,y|_{P_0}^{(1-\kappa)(\eta-\gamma)+\kappa((\eta-l)\wedge0)},
\\
&\lesssim \|x-y\|_\frs^{\bar\gamma-l}\|x-y\|_{\frs}^{\kappa(l-\alpha)}|x,y|_{P_1}^{\mu-\eta-\kappa(l-\eta+(\eta-l)\wedge0)}|x,y|_{P_0}^{\eta-\bar\gamma+\kappa(\alpha-\eta+(\eta-l)\wedge0)}.
\end{align*}
Considering that $\|x-y\|_\frs\leq|x,y|_{P_0}$ and that the minimum value of $a_l:=(l-\eta+(\eta-l)\wedge0)$ is $a_\alpha=(\alpha-\eta)\wedge 0$, we can estimate the right-hand side above by
$$
\|x-y\|_\frs^{\bar\gamma-l}|x,y|_{P_1}^{\mu-\bar\eta}|x,y|_{P_1}^{\kappa(a_\alpha-a_l)}|x,y|_{P_0}^{\bar\eta-\bar\gamma}|x,y|_{P_0}^{\kappa(a_l-a_\alpha)},
$$
and since we are in the situation $|x,y|_{P_0}\leq|x,y|_{P_1}$, this gives the required bound. The estimate for $\|f(x)-\bar f(x)\|_l$, $x\in\frK_0$ is straightforward, since one has the bound
$$
\frac{\|f(x)-\bar f(x)\|_l}{|x|_{P_1}^{\mu-l}\left(\tfrac{|x|_{P_0}}{|x|_{P_1}}\right)^{(\eta-l)\wedge0}}\lesssim \bn{f-\bar f}_{ \gamma,w,\{1\};\frK}\wedge(\vn{f}_{\gamma,w,\frK}+\vn{\bar f}_{\gamma,w;\frK})\;,
$$
thus concluding the proof.
\end{proof}

\begin{proposition}\label{prop3}
If $f\in\cD_{P,\{1\}}^{\gamma,w}$ then, for any $\delta>0$ and compact $\frK\subset\{|x|_{P_1}\vee\delta\leq|x|_{P_0} \le 2\delta\}$, it holds that
\begin{equation}\label{eq:bode1}
\vn{\hat f}_{\sigma;\frK}\lesssim\delta^{\mu-\sigma}\vn{f}_{\gamma,w;\frK}\;,
\end{equation}
with $\hat f = \cQ_\sigma^- f$. In particular, away from $P_0$, $\hat f$ locally belongs to $\cD^\sigma$.
\end{proposition}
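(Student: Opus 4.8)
The plan is to reduce the weighted estimate \eqref{eq:bode1} to the unweighted definition of $\vn{\cdot}_{\sigma;\frK}$ by exploiting the fact that on the set $\frK$ the distance $|x|_{P_0}$ is comparable to $\delta$, so all the weight factors involving $P_0$ are frozen at scale $\delta$, while the $P_1$-weights are harmless since $|x|_{P_1} \le \delta \le 1$ and we only project onto homogeneities $l < \sigma$. First I would note that $\hat f = \cQ_\sigma^- f$ takes values in $V_\sigma^-$, so $\vn{\hat f}_{\sigma;\frK}$ is a supremum over $l < \sigma$ of $\|\hat f(x) - \Gamma_{xy}\hat f(y)\|_l / \|x-y\|_\frs^{\sigma-l}$, and since $\Gamma_{xy}$ preserves the filtration and acts triangularly, $\cQ_\sigma^-(\Gamma_{xy} f(y)) = \Gamma_{xy}\cQ_\sigma^- f(y) + (\text{lower-order corrections from components of }f(y)\text{ in }V_{\geq\sigma})$; one must be a little careful here. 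Actually the cleaner route is: for $l < \sigma$ one has $\cQ_l(\hat f(x) - \Gamma_{xy}\hat f(y)) = \cQ_l(f(x) - \Gamma_{xy} f(y))$ because $\Gamma_{xy}$ maps $V_{\ge \sigma}$ into $V_{\ge\sigma}$ plus lower terms — no, $\Gamma$ lowers homogeneity, so a component of $f(y)$ in $V_{l'}$ with $l' \ge \sigma$ can contribute to $\cQ_l(\Gamma_{xy} f(y))$ for $l < \sigma \le l'$. So one genuinely needs to track those contributions.

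Given this, the key steps are as follows. Write $f(x) - \Gamma_{xy} f(y) = \bigl(f(x) - \Gamma_{xy}f(y)\bigr)$ and split $f(y) = \cQ_\sigma^- f(y) + \cQ_\sigma^+ f(y)$; then for $l < \sigma$,
\begin{equ}
\cQ_l\bigl(\hat f(x) - \Gamma_{xy}\hat f(y)\bigr) = \cQ_l\bigl(f(x) - \Gamma_{xy} f(y)\bigr) + \cQ_l\bigl(\Gamma_{xy}\cQ_\sigma^+ f(y)\bigr)\;.
\end{equ}
The first term is controlled directly by the first term in the definition \eqref{def:spaces} of $\vn{f}_{\gamma,w;\frK}$: since $(x,y) \in \frK_P$ is the relevant regime (one checks $2\|x-y\|_\frs \le |x,y|_{P_0}\wedge|x,y|_{P_1}$ is exactly the condition $\frK \subset \{|x|_{P_1}\vee\delta \le |x|_{P_0} \le 2\delta\}$ together with the ambient assumption that $\frK$ is small enough; if not, one restricts to that regime and handles nearby pairs as in Proposition~\ref{prop:1} since indices for consecutive dyadic shells differ by at most one), we get a bound by $\|x-y\|_\frs^{\gamma-l} |x,y|_{P_0}^{\eta-\gamma}|x,y|_{P_1}^{\sigma-\gamma}(|x,y|_{P_0}\vee|x,y|_{P_1})^{\mu-\eta-\sigma+\gamma}\vn{f}_{\gamma,w;\frK}$. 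On $\frK$ we have $|x,y|_{P_0} \sim \delta$ and $|x,y|_{P_1} \le \delta$, so $(|x,y|_{P_0}\vee|x,y|_{P_1}) \sim \delta$, and the combined $P_0$-and-corner exponent is $(\eta-\gamma)+(\mu-\eta-\sigma+\gamma) = \mu-\sigma$; thus this term is $\lesssim \|x-y\|_\frs^{\gamma-l}\delta^{\mu-\sigma}|x,y|_{P_1}^{\sigma-\gamma}\vn{f}_{\gamma,w;\frK} \le \|x-y\|_\frs^{\sigma-l}\delta^{\mu-\sigma}\vn{f}_{\gamma,w;\frK}$, where in the last step we used $\|x-y\|_\frs \le |x,y|_{P_1}$ and $\gamma - \sigma \ge 0$ to absorb $\|x-y\|_\frs^{\gamma-\sigma}|x,y|_{P_1}^{\sigma-\gamma} \le 1$.

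For the second term, $\cQ_l(\Gamma_{xy}\cQ_\sigma^+ f(y))$, I would use the model bound $\|\Gamma_{xy} a\|_l \lesssim \|a\|_{l'} \|x-y\|_\frs^{l'-l}$ for $a \in T_{l'}$ with $l' \ge \sigma > l$, combined with the pointwise bound on $\|f(y)\|_{l'}$ coming from the second/third terms of \eqref{def:spaces}: since $y \in \frK$ lies in the regime $|y|_{P_1} \le |y|_{P_0}$, we have $\|f(y)\|_{l'} \lesssim |y|_{P_0}^{\mu-l'}(|y|_{P_1}/|y|_{P_0})^{(\sigma-l')\wedge 0}\vn{f}_{\gamma,w;\frK} \sim \delta^{\mu-l'}(|y|_{P_1}/\delta)^{\sigma-l'}\vn{f}_{\gamma,w;\frK}$ (here $(\sigma-l')\wedge 0 = \sigma - l'$ since $l' \ge \sigma$). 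This gives $\|x-y\|_\frs^{l'-l} \delta^{\mu-l'}\delta^{l'-\sigma}|y|_{P_1}^{\sigma-l'}\vn{f}_{\gamma,w;\frK} = \|x-y\|_\frs^{l'-l}\delta^{\mu-\sigma}|y|_{P_1}^{\sigma-l'}\vn{f}_{\gamma,w;\frK}$, and since $\|x-y\|_\frs \le |x,y|_{P_1} \le |y|_{P_1}$ (up to the constant in $\frK_P$) and $l'-l \ge \sigma - l$ with $l'- \sigma \ge 0$, we bound $\|x-y\|_\frs^{l'-l}|y|_{P_1}^{\sigma-l'} \le \|x-y\|_\frs^{\sigma-l}$. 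Summing over the finitely many $l' \in A$ with $l' \ge \sigma$ in the image of $f$ (which is $V_\gamma^-$, so finitely many) yields the desired $\lesssim \|x-y\|_\frs^{\sigma-l}\delta^{\mu-\sigma}\vn{f}_{\gamma,w;\frK}$. Dividing by $\|x-y\|_\frs^{\sigma-l}$ and taking the supremum gives \eqref{eq:bode1}. The main obstacle, and the one point requiring genuine care rather than bookkeeping, is the treatment of pairs $(x,y) \in \frK^2$ with $x \ne y$ that do \emph{not} satisfy the separation condition $2\|x-y\|_\frs \le |x,y|_{P_0}\wedge|x,y|_{P_1}$ defining $\frK_P$ — these are precisely pairs close to $P_1$ relative to their separation — which is why the hypothesis that $\frK$ be contained in $\{|x|_{P_1}\vee\delta \le |x|_{P_0}\le 2\delta\}$ matters: it forces $|x,y|_{P_0}\sim\delta$ to be large, so the only obstruction is proximity to $P_1$, and there one argues exactly as in Proposition~\ref{prop:1} by a dyadic-shell decomposition in $|x|_{P_1}$, noting that for a pair in the same or adjacent shells all the estimates above hold with $n$-independent constants, and then the statement "$\hat f$ locally belongs to $\cD^\sigma$ away from $P_0$" follows by covering a neighbourhood of any point of $P_1\setminus P_0$ by such sets $\frK$ with $\delta$ fixed.
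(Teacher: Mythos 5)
Your argument for the separated pairs $(x,y)\in\frK_P$ (with $2\|x-y\|_\frs\le|x,y|_{P_1}$) is correct, and in fact the splitting $\cQ_l(\hat f(x)-\Gamma_{xy}\hat f(y)) = \cQ_l(f(x)-\Gamma_{xy}f(y)) + \cQ_l(\Gamma_{xy}\cQ_\sigma^+ f(y))$ is written out more carefully than the paper's one-line ``by definition'' step, which silently absorbs exactly that correction term. The gap is in the other regime, which you yourself flag as ``the one point requiring genuine care'': pairs with $|x,y|_{P_1}\le 2\|x-y\|_\frs$. You propose to resolve it ``by a dyadic-shell decomposition in $|x|_{P_1}$\ldots as in Proposition~\ref{prop:1}'', but this does not work. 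The dyadic-shell trick in Proposition~\ref{prop:1} is used to promote a \emph{pointwise} estimate (the $\bn{\cdot}$ norm) from a shell to all of $\frK_1$; that lift only uses the fact that the two members of a pair in $\frK_P$ lie in adjacent shells. Here one needs a \emph{pair-wise} bound for pairs that do \emph{not} lie in $\frK_P$: even when $x$ and $y$ sit in the same $|\cdot|_{P_1}$-shell, one can have $\|x-y\|_\frs\sim|x|_{P_1}$, and then neither the first term of \eqref{def:spaces} nor the model bound on $\Gamma_{xy}$ alone gives any positive power of $\|x-y\|_\frs$ to spare. Restriction to a shell is therefore not the right mechanism.

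What is actually needed here is the triangle inequality plus the pointwise bound upgraded by the $\cD_{P,\{1\}}^{\gamma,w}$ hypothesis, and this is precisely where Proposition~\ref{prop:1} enters in the paper — through \eqref{eq:vmi0}, not through the shell decomposition. For such pairs, $|x|_{P_1}\vee|y|_{P_1}\le 3\|x-y\|_\frs$, and \eqref{eq:vmi0} gives $\|\hat f(x)\|_l\lesssim\delta^{\mu-\sigma}|x|_{P_1}^{\sigma-l}$ for $l<\sigma$ (the exponent is $\sigma-l>0$ rather than $(\sigma-l)\wedge 0 = 0$, which is the whole content of the $\{1\}$ condition). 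Then
\begin{equ}
\|\hat f(x)-\Gamma_{xy}\hat f(y)\|_l\le\|\hat f(x)\|_l+\sum_{l\le m<\sigma}\|x-y\|_\frs^{m-l}\|\hat f(y)\|_m\lesssim\delta^{\mu-\sigma}\|x-y\|_\frs^{\sigma-l}\;,
\end{equ}
which closes the estimate. Without this step your argument never uses the hypothesis $f\in\cD_{P,\{1\}}^{\gamma,w}$, only $f\in\cD_P^{\gamma,w}$, which should itself be a warning sign: for generic $f\in\cD_P^{\gamma,w}$ the conclusion is false, since the pointwise bound $\|\hat f(x)\|_l\lesssim\delta^{\mu-l}$ for $l<\sigma$ gives no smallness as $|x|_{P_1}\to 0$ and the unweighted $\cD^\sigma$ seminorm would blow up.
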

\begin{proof}
We assume without loss of generality $\vn{f}_{\gamma,w;\frK}\leq 1$.
For $2\|x-y\|_\frs\leq|x,y|_{P_1}$, simply by the definition of the spaces $\cD_{P}^{\gamma,w}$ we get
\begin{equation}\label{eq:bode2}
\frac{\|\hat f(x)-\Gamma_{xy}\hat f(y)\|_{l}}{\|x-y\|_\frs^{\sigma-l}}\lesssim\|x-y\|_\frs^{\gamma-\sigma}|x,y|_{P_1}^{\sigma-\gamma}|x,y|_{P_0}^{\mu-\sigma}\leq\delta^{\mu-\sigma}.
\end{equation}
since $\sigma\leq\gamma$. In the case $|x,y|_{P_1}\leq 2\|x-y\|_\frs$, then noting that $|x|_{P_1}\vee|y|_{P_1}\leq3\|x-y\|_\frs$ we can write, using the estimate \eqref{eq:vmi0} again
\begin{align*}
\|\hat f(x)-\Gamma_{xy}\hat f (y)\|_l&\leq \|\hat f(x)\|_l+\sum_{l\leq m<\sigma}\|x-y\|_\frs^{m-l}\|\hat f(y)\|_m
\\
&\leq \delta^{\mu-\sigma}|x|_{P_1}^{\sigma-l}+\sum_{l\leq m<\sigma}\|x-y\|_\frs^{m-l}\delta^{\mu-\sigma}|y|_{P_1}^{\sigma-m}
\lesssim\delta^{\mu-\sigma}\|x-y\|^{\sigma-l},
\end{align*}
as required.

The fact that $\hat f$ is locally in $\cD^\sigma$ then follows, since on 
$\{\delta\leq|x|_{P_0}\leq|x|_{P_1}\}$, $ f$ actually belongs to $\cD^\gamma$, 
so its projection $\hat f$ belongs to $\cD^\sigma$,
and $\delta>0$ was arbitrary.
\end{proof}

%

\begin{remark}
One simplification that we will often use is based on the fact that for pairs $(x,y)\in\frK_P$, we have
$$
|x|_{P_i}\sim|y|_{P_i}\sim|x,y|_{P_i}
$$
for $i=0,1$. As a consequence, in the proofs of Section~\ref{sec:calculus} below we will repeatedly interchange the above quantities without much explanation. Also, for such pairs, even though $|x|_{P_0}\leq|x|_{P_1}$ does not imply $|y|_{P_0}\leq|y|_{P_1}$ or $|x,y|_{P_0}\leq|x,y|_{P_1}$, it holds that
$$
\|f(y)\|_l\lesssim\|f\|_{\gamma,w,\frK}|y|_{P_1}^{\mu-l}\left(\frac{|y|_{P_0}}{|y|_{P_1}}\right)^{(\eta-l)\wedge0},
$$
and
$$
\|f(x)-\Gamma_{xy}f(y)\|_l\lesssim\vn{f}_{\gamma,w,\frK}\|x-y\|_\frs^{\gamma-l}|x,y|_{P_0}^{\eta-\gamma}|x,y|_{P_1}^{\mu-\eta}.
$$
This, and the corresponding symmetric implications (swapping the roles of $P_0$ and $P_1$), 
will also often be used.
\end{remark}

\section{Calculus of the spaces \texorpdfstring{$\cD_P^{\gamma,w}$}{DPgw}}\label{sec:calculus}

In order to reformulate our stochastic PDEs as fixed point problems in $\cD_P^{\gamma,w}$, 
one first needs to know how the standard operations like multiplication, differentiation, 
or convolution with singular kernels, act on these spaces. The aim of this section is to 
recover the calculus of \cite{H0} in the present context.

\begin{remark}
This of course means that repetition of arguments to a certain degree is inevitable. We shall try to 
minimise the overlap and concentrate on the aspects that are different due to the additional 
weights and don't just follow trivially from \cite{H0}. This in particular applies to the 
continuity statements: since the space of models is not linear, boundedness of the operations do 
not imply their continuity. However, in practice they usually follow from the same principles, with 
an added level of notational inconvenience. We therefore only give the complete proof of 
continuity for the multiplication, after which the reader is hopefully convinced that obtaining 
the other similar continuity results is a lengthy but straightforward combination of the 
corresponding arguments in \cite{H0} and the treatment of the additional weights as described 
in the `boundedness' part of the corresponding statements. Alternatively, the continuity statements
can also be obtained by using the trick introduced in the proof of \cite[Prop.~3.11]{HP15}, which
allows to some extent to ``linearise'' the space of models.
\end{remark}

\begin{remark}
Let us mention an important point on how integration against singular kernels will be handled.
While Green's functions of boundary value problems are not translation invariant, they typically can be decomposed into a translation invariant part and a smooth one, which however is singular at the boundary. The most simple example of this is the $1+1$-dimensional Neumann heat kernel on $(\R_+)^2$:
\begin{equ}
G((t,x),(s,y))=\frac{1}{\sqrt{4\pi(t-s)}}\Big(e^{-\frac{(x-y)^2}{4(t-s)}}+e^{-\frac{(x+y)^2}{4(t-s)}}\Big),
\end{equ}
for a more general discussion see Example \ref{example} below.
The advantage of such a decomposition is that only the former part plays a role in constructing the regularity structure itself and the corresponding admissible models, for which one can use the general machinery of \cite{BHZ, CH, H0}. Integration against the latter part simply produces functions described by polynomial symbols, albeit with blow-ups at the boundaries which need to be sufficiently controlled.
\end{remark}

\subsection{Multiplication}
\begin{lemma}\label{lem:mult}
For $i=1,2$, let $f_i\in \cD_P^{\gamma_i,w_i}(V_i)$ with $\gamma_i > 0$, where $V_i$ is a sector of regularity $\alpha_i \le 0$. Suppose furthermore that the pair $(V_1,V_2)$ is $\gamma:=(\gamma_1+\alpha_2)\wedge(\gamma_2+\alpha_1)$-regular with respect to the product $\star$. Then $f:=f_1\star_\gamma f_2$ belongs to $\cD_P^{\gamma,w}$, where
$
w=(\eta,\sigma,\mu)
$
with $\mu=\mu_1+\mu_2$ and
\begin{equs}
\eta &=(\eta_1+\alpha_2)\wedge(\eta_2+\alpha_1)\wedge(\eta_1+\eta_2)\;,\\
\sigma&=(\sigma_1+\alpha_2)\wedge(\sigma_2+\alpha_1)\wedge(\sigma_1+\sigma_2)\;.
\end{equs}
Moreover, if $(\bar\Pi,\bar\Gamma)$ is another model for $\scT$, and $g_i\in\cD_P^{\gamma_i,w_i}(V_i;\bar\Gamma)$ for $i=1,2$, then, for $g=g_1\star_\gamma g_2$ and any $C>0$
\begin{equation}\label{eq:multi cont}
\vn{f;g}_{\gamma,w;\frK}\lesssim\vn{f_1;g_1}_{\gamma_1,w_1;\frK}+\vn{f_2;g_2}_{\gamma_2,w_2;\frK}+\|\Gamma-\bar\Gamma\|_{\gamma_1+\gamma_2;\frK},
\end{equation}
holds uniformly in $f_i$ and $g_i$ with $\vn{f_i}_{\gamma_i,w_i;\frK}+\vn{g_i}_{\gamma_i,w_i;\frK}\leq C$ and models with $\|\Gamma\|_{\gamma_1+\gamma_2;\frK}+\|\bar \Gamma\|_{\gamma_1+\gamma_2;\frK}\leq C$.
\end{lemma}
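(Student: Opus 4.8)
The plan is to follow the strategy used for the unweighted product in \cite[Thm~4.7]{H0} together with its weighted single-boundary counterpart \cite[Prop.~6.12]{H0}, the main new ingredient being the bookkeeping of the three weight exponents $w=(\eta,\sigma,\mu)$ and a case distinction according to the position of the points relative to $P_0$, $P_1$ and the corner $P_\cap$. Throughout I would use freely that $|x|_{P_0}\sim|y|_{P_0}$, $|x|_{P_1}\sim|y|_{P_1}$ and $\|x-y\|_\frs\le|x,y|_{P_0}\wedge|x,y|_{P_1}$ for $(x,y)\in\frK_P$.

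First I would dispose of the pointwise part of $\vn{f}_{\gamma,w;\frK}$ (the second and third terms in \eqref{def:spaces}). For $l<\gamma$ one has $\|f(x)\|_l=\|\sum_{l_1+l_2=l}(f_1(x))_{l_1}\star(f_2(x))_{l_2}\|$, so on $\{0<|x|_{P_0}\le|x|_{P_1}\}$ the claim reduces, after multiplying the pointwise bounds from $\cD_P^{\gamma_1,w_1}$ and $\cD_P^{\gamma_2,w_2}$, to the elementary relations
\[
(\mu_1-l_1)+(\mu_2-l_2)=\mu-l\;,\qquad (\eta_1-l_1)\wedge 0+(\eta_2-l_2)\wedge 0\ \ge\ (\eta-l)\wedge 0\;,
\]
the second of which follows from the definition of $\eta$ as a three-fold minimum together with $l_i\ge\alpha_i$, by distinguishing which of the two $\wedge 0$'s are active. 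The region $\{0<|x|_{P_1}\le|x|_{P_0}\}$ is symmetric.

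The increment term is the heart of the argument. Fixing $(x,y)\in\frK_P$ and abbreviating $h(z)=f_1(z)\star f_2(z)$, one writes
\[
f(x)-\Gamma_{xy}f(y)=\cQ_\gamma^-\big(h(x)-\Gamma_{xy}h(y)\big)+\cQ_\gamma^-\Gamma_{xy}\big(h(y)-\cQ_\gamma^-h(y)\big)\;,
\]
and then invokes $\gamma$-regularity of $(V_1,V_2)$ to replace, on homogeneities below $\gamma$, $\Gamma_{xy}(f_1(y)\star f_2(y))$ by $(\Gamma_{xy}f_1(y))\star(\Gamma_{xy}f_2(y))$, at the cost of another truncation-type error. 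The main contribution then telescopes as $\big(f_1(x)-\Gamma_{xy}f_1(y)\big)\star f_2(x)+(\Gamma_{xy}f_1(y))\star\big(f_2(x)-\Gamma_{xy}f_2(y)\big)$, and each summand is estimated by pairing the increment bound for one factor with the pointwise bound for the other, the operator $\Gamma_{xy}$ being absorbed through the model bound $\|\Gamma_{xy}a\|_m\lesssim\sum_{l'\ge m}\|x-y\|_\frs^{l'-m}\|a\|_{l'}$. For the two truncation errors (the one above and the one from $\gamma$-regularity) one uses $\|x-y\|_\frs\le|x,y|_{P_i}$ and the identity $l_i+(\eta_i-l_i)\wedge 0=\eta_i\wedge l_i$ to reduce matters to $(\eta_1\wedge l_1)+(\eta_2\wedge l_2)\ge\eta$ whenever $l_1+l_2\ge\gamma$, which again follows from the defining formula for $\eta$ and from $\eta\le\gamma$. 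In every case one then splits according to whether $|x,y|_{P_0}\le|x,y|_{P_1}$ or not --- so that the corner weight $(|x,y|_{P_0}\vee|x,y|_{P_1})^{\mu-\eta-\sigma+\gamma}$ becomes explicit --- and matches the resulting monomial in $\|x-y\|_\frs$, $|x,y|_{P_0}$, $|x,y|_{P_1}$ against the denominator in \eqref{def:spaces}, repeatedly trading powers of $\|x-y\|_\frs$ for powers of $|x,y|_{P_i}$. The three-fold minima defining $\eta$ and $\sigma$ are exactly what is required: the terms $\eta_i+\alpha_{3-i}$ surface in the two single-boundary regimes and $\eta_1+\eta_2$ in the corner regime.

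For the continuity bound \eqref{eq:multi cont} I would run the same computation on $f-g=(f_1-g_1)\star_\gamma f_2+g_1\star_\gamma(f_2-g_2)$, expanding $f(x)-g(x)-\Gamma_{xy}f(y)+\bar\Gamma_{xy}g(y)$ into a sum of products in which exactly one factor is a ``difference'' --- either of the form $f_i(x)-g_i(x)-\Gamma_{xy}f_i(y)+\bar\Gamma_{xy}g_i(y)$, controlled by $\vn{f_i;g_i}_{\gamma_i,w_i;\frK}$, or of the form $(\Gamma_{xy}-\bar\Gamma_{xy})$ applied to a term controlled by $\|\Gamma-\bar\Gamma\|_{\gamma_1+\gamma_2;\frK}$ --- while the remaining factors are bounded using $\vn{f_i}_{\gamma_i,w_i;\frK}+\vn{g_i}_{\gamma_i,w_i;\frK}\le C$; the weight bookkeeping is identical to the one in the boundedness part. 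I expect the main obstacle to be precisely this bookkeeping: one must check that in \emph{every} regime (near $P_0$, near $P_1$, near the corner $P_\cap$, and away from $P$) and for \emph{every} admissible decomposition $l=l_1+l_2$ of the homogeneity, the monomial produced by the two factors dominates the one demanded by \eqref{def:spaces}; the presence of the extra ``corner'' weight with exponent $\mu-\eta-\sigma+\gamma$ and of the various $\wedge 0$'s is what makes this delicate, and is precisely what dictates the form of $\gamma$, $\eta$, $\sigma$ and $\mu$ in the statement.
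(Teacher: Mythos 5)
Your proposal is correct and follows essentially the same strategy as the paper: the pointwise part is handled identically via the elementary inequality $(\eta_1-l_1)\wedge 0+(\eta_2-l_2)\wedge 0\ge(\eta-l)\wedge 0$, the increment is split into a $\gamma$-regularity truncation error plus a telescoped product of increments and pointwise factors (the paper uses the symmetric ``double-increment'' split $a_1a_2-b_1b_2=-(b_1-a_1)(b_2-a_2)-(b_1-a_1)a_2-a_1(b_2-a_2)$, you the asymmetric $a_1a_2-b_1b_2=(a_1-b_1)a_2+b_1(a_2-b_2)$, a harmless algebraic variant which merely makes $\|\Gamma\|$ appear in one more place), and the key inequality $\eta_1\wedge m+\eta_2\wedge n\ge\eta$ for $m+n\ge\gamma$ together with $\|x-y\|_\frs\le|x,y|_{P_i}$ is exactly what the paper uses to match the corner weight. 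The continuity estimate is likewise handled as in the paper by expanding the four-term difference so that each summand has exactly one ``difference'' factor.
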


\begin{proof}
We fix a compact $\frK$ and assume, without loss of generality, that both $f_1$ and $f_2$ are of norm $1$ on $\frK$. Then, for $|x|_{P_0}\leq|x|_{P_1}$ and $l < \gamma$,
\begin{align*}
\|f(x)\|_l&\leq\sum_{l_1+l_2=l}\|f_1(x)\|_{l_1}\|f_2(x)\|_{l_2}
\leq\sum_{l_1+l_2=l}|x|_{P_1}^{\mu_1+\mu_2-l_1-l_2}\left(\frac{|x|_{P_0}}{|x|_{P_1}}\right)^{(\eta_1-l_1)\wedge0+(\eta_2-l_2)\wedge0}
\\
&\leq|x|_{P_1}^{\mu-l}\sum_{l_1+l_2=l}\left(\frac{|x|_{P_0}}{|x|_{P_1}}\right)^{{-l+\eta_1\wedge l_1+\eta_2\wedge l_2}}.
\end{align*}
It remains to notice that, since for $i=0,1$, $l_i\geq\alpha_i$, we have $\eta_1\wedge l_1+\eta_2\wedge l_2\geq \eta\wedge l$, by construction, and hence
$$
\|f(x)\|_l\lesssim |x|_{P_1}^{\mu-l}\left(\frac{|x|_{P_0}}{|x|_{P_1}}\right)^{(\eta-l)\wedge0}.
$$

Next we bound $f(x)-\Gamma_{xy}f(y)$. As usual, we assume $|x,y|_{P_0}\leq|x,y|_{P_1}$.  
For $l < \gamma$, the triangle inequality yields
\begin{align}
\|f(x)-\Gamma_{xy}f(y)\|_l&\leq\|\Gamma_{xy}f(y)-(\Gamma_{xy}f_1(y))\star(\Gamma_{xy}f_2(y))\|_l
\nonumber\\
&\quad+\|(\Gamma_{xy}f_1(y)-f_1(x))\star(\Gamma_{xy}f_2(y)-f_2(x)\|_l
\nonumber\\
&\quad+\|(\Gamma_{xy}f_1(y)-f_1(x))\star f_2(x)\|_l
\nonumber\\
&\quad+\|f_1(x)\star\big(\Gamma_{xy}f_2(y)-f_2(x)\big)\|_l.\label{multi1}
\end{align}
Thanks to the $\gamma$-regularity of $(V_1,V_2)$, the first term in this expression can be bounded by
\begin{align}
A&:=\|\Gamma_{xy}f(y)-(\Gamma_{xy}f_1(y))\star(\Gamma_{xy}f_2(y))\|_l
\nonumber
\\&
\leq\Big\|\sum_{m+n\geq\gamma}(\Gamma_{xy}\cQ_mf_1(y))\star(\Gamma_{xy}\cQ_nf_2(y))\Big\|_l
\nonumber
\\
&\leq\sum_{m+n\geq\gamma}\sum_{\beta_1+\beta_2=l}\|\Gamma_{xy}\cQ_mf_1(y)\|_{\beta_1}\|\Gamma_{xy}\cQ_nf_2(y)\|_{\beta_2}
\nonumber
\\
&\leq\sum_{m+n\geq\gamma}\sum_{\beta_1+\beta_2=l}\|\Gamma\|_{\gamma_1+\gamma_2}^2\|f_1(y)\|_m\|f_2(y)\|_n\|x-y\|_\frs^{m+n-\beta_1-\beta_2}.\label{multi3}
\end{align}
The factor $\|\Gamma\|_{\gamma_1+\gamma_2}^2$ can course be incorporated into the proportionality constant, 
but it will be useful in the sequel to view the dependence on it as above. We can continue by writing
\begin{align}
A&\lesssim\sum_{m+n\geq\gamma}\|x-y\|_\frs^{m+n-l}\|f_1(y)\|_m\|f_2(y)\|_n
\nonumber\\
&\leq\|x-y\|_\frs^{\gamma-l}\sum_{m+n\geq\gamma}\|x-y\|_\frs^{m+n-\gamma}
|y|_{P_1}^{\mu_1+\mu_2-m-n}
\left(\frac{|y|_{P_0}}{|y|_{P_1}}\right)^{(\eta_1-m)\wedge0+(\eta_2-n)\wedge0}
\nonumber\\
&\leq\|x-y\|_\frs^{\gamma-l}|y|_{P_1}^{\mu}|y|_{P_0}^{-\gamma}\sum_{m+n\geq\gamma}|y|_{P_0}^{m+n}|y|_{P_1}^{-m-n}\left(\frac{|y|_{P_0}}{|y|_{P_1}}\right)^{(\eta_1-m)\wedge0+(\eta_2-n)\wedge0}
\nonumber\\
&=\|x-y\|_\frs^{\gamma-l}|y|_{P_1}^{\mu}|y|_{P_0}^{-\gamma}\sum_{m+n\geq\gamma}\left(\frac{|y|_{P_0}}{|y|_{P_1}}\right)^{\eta_1\wedge m+\eta_2\wedge n},\label{multi2}
\end{align}
where we used $\|x-y\|\leq|y|_{P_0}$ to get the third line. As before, we have $\eta_1\wedge m+\eta_2\wedge n\geq\eta\wedge\gamma=\eta$, and recalling that $|y|_{P_i}\sim|x,y|_{P_i}$ we see that this is indeed the bound we need in \eqref{def:spaces}.  The second term on the right-hand side of \eqref{multi1} is bounded by a constant times
\begin{align*}
\sum_{m+n=l}&\|\Gamma_{xy}f_1(y)-f_1(x)\|_m\|\Gamma_{xy}f_2(y)-f_2(x)\|_n
\\
&\leq\sum_{m+n=l}\|x-y\|_\frs^{\gamma_1+\gamma_2-m-n}|x|_{P_1}^{\mu_1+\mu_2-\eta_1-\eta_2}|x|_{P_0}^{\eta_1+\eta_2-\gamma_1-\gamma_2}
\\
&\lesssim\|x-y\|_\frs^{\gamma-l}|x|_{P_1}^{\mu-\eta_1-\eta_2}|x|_{P_0}^{\eta-\eta_1-\eta_2}|x|_{P_0}^{\eta-\gamma_1-\gamma_2}\|x-y\|_s^{\gamma_1+\gamma_2-\gamma}.
\end{align*}
Since $\gamma_1+\gamma_2\geq\gamma$, $\eta_1+\eta_2\geq\eta$, and $\|x-y\|_s\leq|x|_{P_0}\leq|x|_{P_1}$, this gives the required bound. The third term on the right-hand side of \eqref{multi1} is bounded by a constant times
\begin{align}
\sum_{m+n=l}&\|\Gamma_{xy}f_1(y)-f_1(x)\|_m\|f_2(x)\|_n
\nonumber\\
&\lesssim\sum_{m+n=l}\|x-y\|_\frs^{\gamma_1-m}|x|_{P_1}^{\mu_1-\eta_1}|x|_{P_0}^{\eta_1-\gamma_1}|x|_{P_1}^{\mu_2-n}\left(\frac{|x|_{P_0}}{|x|_{P_1}}\right)^{(\eta_2-n)\wedge0}
\nonumber\\
&\leq \|x-y\|^{\gamma-l}\sum_{m+n=l}\|x-y\|_\frs^{\gamma_1+n-\gamma}|x|_{P_1}^{\mu-\eta_1-\eta_2\wedge n}|x|_{P_0}^{\eta_1-\gamma_1+\eta_2\wedge n-n}
\nonumber\\
&\lesssim\|x-y\|_\frs^{\gamma-l}|x|_{P_1}^{\mu-\eta}
\sum_{m+n=l}\|x-y\|_\frs^{\gamma_1+n-\gamma}|x|_{P_1}^{\eta-\eta_1-\eta_2\wedge n}|x|_{P_0}^{\eta_1-\gamma_1+\eta_2\wedge n-n}.\label{multi4}
\end{align}
Inside the sum, the exponent of $\|x-y\|_\frs$ is nonnegative, due to the relation $\gamma\leq\gamma_1+\alpha_2$, while the exponent of $|x|_{P_1}$ is nonpositive, due to $\eta\leq\eta_1+\eta_2\wedge\alpha_2$. Using $\|x-y\|_\frs\leq|x|_{P_0}\leq|x|_{P_1}$ as before, we get the required bound. Finally, the fourth term on the right-hand side of \eqref{multi1} is bounded similarly, reversing the roles played by $f_1$ and $f_2$.

To prove the continuity estimate \eqref{eq:multi cont}, we of course need only consider the first part of the definition of $\vn{\cdot;\cdot}$, the bound on the second  already follows from above by linearity. We then write
\begin{align}
f(x)-g(x)&-\Gamma_{xy}f(y)+\bar\Gamma_{xy}g(y)
\nonumber\\
&=-\Gamma_{xy}f(y)+\Gamma_{xy}g(y)+\Gamma_{xy}f_1(y)\star\Gamma_{xy}f_2(y)-\bar\Gamma_{xy}g_1(y)\star\bar\Gamma_{xy}g(y)
\nonumber\\
&\quad+(f_1(x)-g_1(x)-\Gamma_{xy}f_1(y)+\bar\Gamma_{xy}g_1(y))\star f_2(x)
\nonumber\\
&\quad+\Gamma_{xy}f_1(y)\star(f_2(x)-g_2(x)-\Gamma_{xy}f_2(y)+\bar\Gamma_{xy}g_2(y))
\nonumber\\
&\quad+\bar\Gamma_{xy}(g_1(y)-f_1(y))\star(\bar\Gamma_{xy}g_2(y)-g_2(x))
\nonumber\\
&\quad+(\bar\Gamma_{xy}f_1(y)-\Gamma_{xy}f_1(y))\star(\bar \Gamma_{xy}g_2(y)-g_2(y))
\nonumber\\
&\quad+(g_1(y)-\bar \Gamma_{xy}g_1(y))\star(f_2(x)-g_2(x)).
\nonumber\\
&=:T_0+T_1+T_2+T_3+T_4+T_5
\end{align}
For $T_0$, repeating the argument in \eqref{multi3}, we need to estimate, for $m+n\geq\gamma$, terms of the form
\begin{align*}
\Gamma_{xy}\cQ_mf_1(y)\star\Gamma_{xy}\cQ_nf_2(y)&-\bar\Gamma_{xy}\cQ_mg_1(y)\star\bar\Gamma_{xy}\cQ_ng_2(y)
\\
&=\Gamma_{xy}\cQ_mf_1(y)\star(\Gamma_{xy}(\cQ_nf_2(y)-\cQ_ng_2(y))
\\
&\quad+\Gamma_{xy}\cQ_mf_1(y)\star(\Gamma_{xy}\cQ_ng_2(y)-\bar\Gamma_{xy}\cQ_ng_2(y))
\\
&\quad+\Gamma_{xy}(\cQ_mf_1(y)-\cQ_mg_1(y))\star\bar\Gamma_{xy}\cQ_ng_2(y)
\\
&\quad+(\Gamma_{xy}\cQ_mg_1(y)-\bar\Gamma_{xy}\cQ_mg_1(y))\star\bar\Gamma_{xy}\cQ_ng_2(y).
\end{align*}
Continuing as in \eqref{multi3}, we get
\begin{align*}
\|T_0\|_l\lesssim\sum_{m+n\geq\gamma}\|x-&y\|_\frs^{m+n-l}\Big[\|f_1(y)\|_m\|f_2(y)-g_2(y)\|_n+\|f_1(y)\|_m\|\Gamma-\bar\Gamma\|_{\gamma_1+\gamma_2}\|g_2(y)\|_n
\\
&\quad\quad
+\|f_1(y)-g_1(y)\|_m\|g_2(y)\|_n+\|\Gamma-\bar\Gamma\|_{\gamma_1+\gamma_2}\|g_1(y)\|_m\|g_2(y)\|_n\Big].
\end{align*}
From here we get the desired bound \eqref{eq:multi cont} by repeating the calculation in \eqref{multi2}.

For the further terms, we shall make use of the fact that for any $\bar\gamma,$ $\bar w$, $h\in\cD_P^{\bar \gamma,\bar w}$, and for pairs $(x,y)$ under consideration, $\Gamma_{xy}h(y)$ satisfies analogous bounds to $h(x)$:
\begin{align}
\|\Gamma_{xy}h(y)\|_l&\leq\sum_{m\geq l}\|x-y\|_\frs^{m-l}\|h(y)\|_m
\lesssim\sum_{m\geq l}\|x-y\|_\frs^{m-l}|y|_{P_1}^{\bar\mu-m}
\left(\frac{|y|_{P_0}}{|y|_{P_1}}\right)^{(\bar\eta-m)\wedge0}
\nonumber\\
&\lesssim |x|_{P_1}^{\bar\mu-l}\left(\frac{|x|_{P_0}}{|x|_{P_1}}\right)^{(\bar\eta-l)\wedge0}.\label{eq:multi Gamma}
\end{align}
For $T_1$, we write
\begin{align*}
\|T_1\|_l\lesssim\vn{f_1;g_1}_{\gamma_1,w_1}\sum_{m+n=l}\|x-y\|^{\gamma_1-m}|x|_{P_1}^{\mu_1-\eta_1}|x|_{P_0}^{\eta_1-\gamma_1}|x|_{P_1}^{\mu_2-n}\left(\frac{|x|_{P_0}}{|x|_{P_1}}\right)^{(\eta_2-n)\wedge0},
\end{align*}
and as we recognise the sum from \eqref{multi4}, the required bound follows.

For $T_2$, we use \eqref{eq:multi Gamma} with $h=f_1$, and then proceed just like for $T_1$, with the role of the indices reversed.

To bound $T_3$, we use \eqref{eq:multi Gamma}, this time with $h=g_1-f_1$, to get
\begin{align*}
\|T_3\|_{l}\leq\|f_1-g_1\|_{\gamma_1,w_1}\sum_{m+n=l}|y|_{P_1}^{\mu_1-m}\left(\frac{|y|_{P_0}}{|y|_{P_1}}\right)^{(\eta_1-m)\wedge0}\|x-y\|_\frs^{\gamma_2-n}|x|_{P_1}^{\mu_2-\eta_2}|x|_{P_0}^{\eta_2-\gamma_2},
\end{align*}
and the sum is again of the same form.

The bound for the term $T_5$ goes similarly to $T_3$, with the indices reversed, and so does $T_4$, with the only difference that the prefactor of the sum is $\|\Gamma-\bar\Gamma\|_{\gamma_1+\gamma_2}\vn{f_1}_{\gamma_1,w_1}$. 
\end{proof}

\subsection{Composition with smooth functions}
\begin{lemma}\label{lem:comp}
Let $V$ be a sector of regularity $0$ with $V_0 = \scal{\bone}$ 
that is $\gamma$-regular with respect to the product $\star$ and furthermore $V\star V\subset V$.

Let $f_1,\ldots,f_n\in\cD_P^{\gamma,w}(V)$ with $w=(\eta,\sigma,\mu)$ such that $\eta,\sigma,\mu\geq0.$ Let furthermore $F:\R^n\rightarrow\R$ be a smooth function. Then $\hat F_\gamma(f)$ belongs to $\cD_P^{\gamma,w}(V)$. Furthermore, $\hat F_\gamma:\cD_P^{\gamma,w}\rightarrow\cD_P^{\gamma,w}$ is locally Lipschitz continuous in any of the seminorms $\|\cdot\|_{\gamma,w;\frK}$ and $\vn{\cdot}_{\gamma,w;\frK}$.
\end{lemma}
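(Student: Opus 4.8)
The plan is to reduce the statement to the corresponding result in \cite{H0} (Proposition 6.13 there, or the composition lemma for the spaces $\cD^{\gamma,\eta}$) localised in appropriate dyadic annuli around the two boundaries, exactly as is done for Proposition~\ref{prop:1}. First I would recall from Section~\ref{sec:calculus}'s multiplication lemma that $V\star V\subset V$ together with $\gamma$-regularity guarantees that the formal Taylor-type sum \eqref{def: F} defining $\hat F_\gamma$ only involves finitely many terms landing in $V_\gamma^-$, so that $\hat F_\gamma(f)$ is at least a well-defined $V_\gamma^-$-valued function off $P$. Since $\eta,\sigma,\mu\ge 0$ and $V$ has regularity $0$, the `$\|\cdot\|$' part of the $\cD_P^{\gamma,w}$-norm is controlled: each $f_i(x)$ is bounded (its $\bone$-component is bounded and its $\tilde f_i(x)$ part vanishes at order $|x|_{P_0}^{\eta}\wedge\ldots$ near the boundary, so $\tilde f_i(x)\to 0$), hence $F(\bar f(x))$ stays in a fixed compact set where $F$ and its derivatives are bounded, and the $\cQ_\gamma^-$-truncated product structure gives $\|\hat F_\gamma(f)(x)\|_l$ the right weighted bound by the multiplication lemma applied repeatedly to the monomials $\tilde a^{\star k}$.

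The substantive estimate is the increment $\hat F_\gamma(f)(x)-\Gamma_{xy}\hat F_\gamma(f)(y)$ for $(x,y)\in\frK_P$. Here I would use the standard algebraic identity (as in \cite[Prop.~6.13]{H0}) rewriting this increment as a combination of: (i) increments $f_i(x)-\Gamma_{xy}f_i(y)$ times bounded factors, which by definition of $\vn{f_i}_{\gamma,w;\frK}$ carry the weight $\|x-y\|_\frs^{\gamma-l}|x,y|_{P_0}^{\eta-\gamma}|x,y|_{P_1}^{\sigma-\gamma}(\cdots)^{\mu-\eta-\sigma+\gamma}$; (ii) ``higher order'' remainder terms coming from the fact that $\Gamma_{xy}$ does not commute exactly with $\hat F_\gamma$ because of the truncation $\cQ_\gamma^-$ and because $F$ is only smooth, not polynomial — these are estimated by a Taylor remainder in $\bar f(x)-\bar f(y)$, which is itself an increment and so again carries a factor $\|x-y\|_\frs^{\gamma}$ up to the boundary weights. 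Throughout, I would systematically use the simplification recorded in the Remark at the end of Section~\ref{sec:def}, namely $|x|_{P_i}\sim|y|_{P_i}\sim|x,y|_{P_i}$ on $\frK_P$, so that one never has to distinguish which of the two points is closer to which boundary; one then splits $\frK$ according to $|x|_{P_0}\lessgtr|x|_{P_1}$ and tracks the two weight exponents separately, combining them into the mixed $\mu$-weight at the very end. Because $\eta,\sigma,\mu\ge 0$ and each $f_i$ lives in the \emph{same} space with the \emph{same} weights, the arithmetic of exponents is much easier than in Lemma~\ref{lem:mult}: no $\alpha_i$'s enter, the product of $k$ copies just multiplies the (nonnegative) boundary exponents, and $\cQ_\gamma^-$ only improves things.

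For the local Lipschitz continuity in a fixed model, I would argue as usual by writing $\hat F_\gamma(f)-\hat F_\gamma(g)$ as an integral $\int_0^1 D\hat F_\gamma(g+t(f-g))\cdot(f-g)\,dt$ at the level of the abstract Taylor expansion, or more concretely by telescoping the difference of the two finite sums \eqref{def: F} and using that $D^kF$ is locally Lipschitz; the extra factor one extracts is always an increment or a value of $f_i-g_i$, so the same weighted bounds as above apply with $\vn{f-g}_{\gamma,w;\frK}$ (resp.\ $\|f-g\|_{\gamma,w;\frK}$) in front and the other seminorms appearing only through the local bound $C$. I expect the main obstacle to be purely bookkeeping: carefully writing the algebraic decomposition of the increment so that every term is manifestly a product of one ``increment factor'' and several ``bounded value factors'', and then checking that in each term the sum of the $P_0$-exponents is $\ge\eta-\gamma$ (resp.\ the $P_1$-exponents $\ge\sigma-\gamma$) and the total is $\ge\mu$ in the appropriate mixed sense — but since all the relevant exponents are nonnegative and shared across the $f_i$, these inequalities hold termwise essentially by inspection, and no delicate optimisation over $l_1+l_2=l$ of the kind needed in Lemma~\ref{lem:mult} is required. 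I would therefore present the proof as: (1) finiteness of the sum and the $\|\cdot\|$ bound via the multiplication lemma; (2) the increment bound via the \cite{H0} decomposition plus the boundary-weight arithmetic; (3) continuity by differentiating in the coefficients and reusing (2).
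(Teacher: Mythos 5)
Your opening plan --- localise to dyadic annuli $\frK_1^n = \frK_1 \cap \{2^{-n}\leq |x|_{P_0}\leq 2^{-n+1}\}$ and invoke the single--weight composition result of \cite{H0} uniformly in $n$, as in the proof of Proposition~\ref{prop:1} --- does not survive the nonlinearity of $\hat F_\gamma$. The dyadic reduction works in Proposition~\ref{prop:1} because the quantities being compared there are \emph{linear} in $f$, so the common prefactor introduced by freezing $|x|_{P_0}\sim 2^{-n}$ (roughly $2^{-n(\mu-\sigma)}$) cancels on both sides of the estimate. For $\hat F_\gamma$ it does not cancel: if $\vn{f}_{\gamma,w;\frK}\leq 1$, the local $\cD^{\gamma,\sigma}$-seminorm of $f$ on $\frK_1^n$ is of order $2^{-n(\mu-\sigma)}$, and the composition estimate of \cite{H0} applied on that annulus returns a bound that is \emph{superlinear} in this quantity; in the regime $\mu<\sigma$ (which is precisely the one used in the applications of Section~\ref{sec:applications}, where $\mu=0<\sigma$), this quantity grows exponentially in $n$ and no uniform bound can be extracted. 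The paper accordingly does not attempt such a reduction and instead works directly with the three-parameter weight.

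The remainder of your proposal, despite being framed as carrying out the annulus plan, in fact describes the direct route the paper takes: reproduce the decomposition of \cite[Prop.~6.12]{H0} into the four quantities $R_1$ (the $\gamma$-regularity defect of the powers $\tilde f^{\star k}$), $R_f$, $R_2$, and $R_3$ (the Taylor remainder controlled by $|\bar f(x)-\bar f(y)|^{\gamma/\zeta-|k|}$), and bound each one by $\|x-y\|_\frs^{\gamma-\beta}|x,y|_{P_0}^{\eta-\gamma}|x,y|_{P_1}^{\mu-\eta}$, i.e.\ by \eqref{eq:comp bound}. One caveat on your claim that the exponent arithmetic then holds ``essentially by inspection'': the bound on $R_1$ indeed only needs the easy superadditivity \eqref{eq:comp1}, but the bound on $R_3$ rests on the decidedly less transparent inequality \eqref{eq:comp2}, which involves a simultaneous play-off between the multiindex $k$, the count $m$, and the partition $\sum_i l_i=\beta$. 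This is the one genuinely delicate piece of exponent bookkeeping in the proof, which the paper imports verbatim from \cite[Prop.~6.12]{H0} rather than re-deriving. Your telescoping argument for the local Lipschitz continuity is the standard one and matches what the paper alludes to.
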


\begin{remark}\label{remark:comp boxnorm}
If two modelled distributions $f$, $\bar f$ are such that $f-\bar f\in\cD_{P,\{1\}}^{\gamma,w}$, then clearly $\hat F_\gamma(f)-\hat F_\gamma(\bar f)$ also has $0$ limit at $P_1\setminus P_0$. In this case the analogous Lipschitz bound for $\hat F$ in the seminorms $\bn{\cdot}_{\gamma,w;\frK}$ also holds.
\end{remark}

\begin{remark}
One can use the same construction as in \cite[Prop.~3.11]{HP15} to obtain local Lipschitz continuity when 
comparing two modelled distributions modelled on two different models.
\end{remark}

\begin{proof}
We only give a sketch of the proof, as the majority of the argument is exactly the same as that of the proof of Theorem~4.16 and Proposition~6.12 in \cite{H0}. We prove the main estimates which are somewhat different due to the additional weights and refer the reader to \cite{H0} to confirm that these indeed imply the theorem. 

As usual, we consider the situation $2\|x-y\|_\frs\leq|x,y|_{P_0}\leq|x,y|_{P_1}$. We denote $L=\lfloor \gamma/\zeta\rfloor$, where $\zeta$ is either the lowest nonzero homogeneity such that $V_\zeta\neq\{0\}$, or if that index is larger than $\gamma$, then we set $\zeta=\gamma$. The essential quantities to bound are
\begin{align*}
R_1&:=\sum_{l:\sum l_i\geq\gamma}\Gamma_{xy}\cQ_{l_1}\tilde{f}(y)\star\cdots\star\Gamma_{xy}\cQ_{l_n}\tilde{f}(y),
\\
R_f&:=\Gamma_{yx}f(x)-f(y),
\\
R_2&:=\sum_{|k|\leq L}(\Gamma_{yx}\tilde{f}(x))^{\star k}-(\Gamma_{yx}\tilde{f}(x)+R_f)^{\star k},
\\
R_3&:=\sum_{|k|\leq L}|\bar f(x)-\bar f(y)|^{\gamma/\zeta-|k|}(\tilde{f}(y)-(\bar f(y)-\bar f(x))\bone)^{\star k},
\end{align*}
each of which has to be estimated in the following way, for all $\beta<\gamma$:
\begin{equation}\label{eq:comp bound}
\|R_i\|_\beta\lesssim\|x-y\|_\frs^{\gamma-\beta}|x,y|_{P_1}^{\mu-\eta}|x,y|_{P_0}^{\eta-\gamma}.
\end{equation}
Note that there is a slight abuse of notation here in that $R_f$ is vector-valued. By \eqref{eq:comp bound} we then understand that such an estimate holds for each coordinate, and this convention is applied in the other analogous situations below whenever vector-valued functions are considered.

We further invoke two elementary inequalities from the proof of \cite[~Prop 6.12]{H0}: for $\eta\geq0$, $n\in\mathbb{N}$, $l_1,\ldots,l_n\in\mathbb{N}$, we have
\begin{equation}\label{eq:comp1}
\sum_{i=1}^n (\eta-l_i)\wedge0\geq \Big(\eta-\sum_{i=1}^n l_i\Big)\wedge0,
\end{equation}
and for any multiindex $k$ with $|k|\leq L,$ integer $0\leq m\leq |k|$, real numbers $0<\zeta\leq\gamma$, $0\leq \beta,\eta\leq\gamma$, and integers $l_1,\ldots,l_m$ satisfying $\sum l_i=\beta$ and $l_i\geq\zeta$, it holds
\begin{align}
N+M:=\Big[&(|k|\zeta-\gamma-|k|\eta+(\gamma\eta/\zeta))\wedge0\Big]
\nonumber\\
&+\Big[\beta-\zeta m+(|k|-m)((\eta-\zeta)\wedge0)+\sum_{i=1}^m(\eta-l_i)\wedge0\Big]\geq\eta-\gamma.\label{eq:comp2}
\end{align}

The term $R_1$ looks very similar to what we encountered in \eqref{multi3}, and indeed by the same argument we can write
\begin{align*}
\|R_1\|_\beta&\lesssim\sum_{\sum l_i\geq\gamma}\|x-y\|_\frs^{\sum l_i-\beta}\prod_i\|\tilde{f}(y)\|_{l_i}
\\
&\lesssim\|x-y\|_\frs^{\gamma-\beta}\sum_{\sum l_i\geq\gamma}\|x-y\|_\frs^{\sum l_i-\gamma}\prod_i|y|_{P_1}^{\mu-l_i}\left(\frac{|y|_{P_0}}{|y|_{P_1}}\right)^{(\eta-l_i)\wedge0}
\\
&\lesssim\|x-y\|_\frs^{\gamma-\beta}\sum_{\sum l_i\geq\gamma}|y|_{P_0}^{-\gamma}|y|_{P_1}^{n\mu}\left(\frac{|y|_{P_0}}{|y|_{P_1}}\right)^{\sum(\eta-l_i)\wedge0+\sum l_i}.
\end{align*}
By \eqref{eq:comp1}, the exponent of the fraction above is bounded from below by $\eta\wedge\sum l_i=\eta$, and since $n\mu\geq\mu$ due to $\mu$ being nonnegative, this yields the required bound.

The bound for $R_f$ follows from the definition. For $R_2$, notice that
\begin{align*}
\|\Gamma_{yx}\tilde{f}(x)\|_l&\lesssim\sum_{l'\geq l}\|x-y\|_\frs^{l'-l}\|\tilde{f}(x)\|_{l'}
\\&
\lesssim\sum_{l'\geq l}\|x-y\|_\frs^{l'-l}|x|_{P_1}^{\mu-l'}\left(\frac{|y|_{P_0}}{|y|_{P_1}}\right)^{(\eta-l')\wedge0}
\lesssim\|x-y\|_\frs^{-l}.
\end{align*}
Therefore, for any nonzero multiindex $m$ and any multiindex $m'$,
\begin{align*}
\|R_f^{\star m}\star(\Gamma_{yx}\tilde{f}(x))^{\star m'}\|_\beta&\lesssim\sum_{\substack{l_1+\ldots +l_m \\ +l'_1+\ldots+l'_{m'}=\beta}}\prod_{i=1}^{|m|}\|x-y\|_\frs^{\gamma-l_i}|x|_{P_1}^{\mu-\eta}|x|_{P_0}^{\eta-\gamma}\prod_{i'=1}^{|m'|}\|x-y\|^{-l'_{i'}}
\\
&\lesssim\|x-y\|_\frs^{\gamma-\beta}|x|_{P_1}^{\mu-\eta}|x|_{P_0}^{\eta-\gamma}\left(\|x-y\|_\frs^{\gamma}|x|_{P_1}^{\mu-\eta}|x|_{P_0}^{\eta-\gamma}\right)^{|m|-1},
\end{align*}
and since the quantity in the parentheses is of order one due to $\gamma,\eta,\mu\geq0$ and $\|x-y\|_\frs\leq|x,y|_{P_0}\leq|x,y|_{P_1}$, the bound \eqref{eq:comp bound} for $R_2$ follows.

For $R_3$, fix $k$ and first write
\begin{align}
|\bar f(x)-\bar f(y)|&\leq\|\Gamma_{xy}\tilde{f}(y)\|_0+\|f(x)-\Gamma_{xy}f(y)\|_0
\nonumber\\
&\lesssim\sum_{\zeta\leq l\leq\gamma}\|x-y\|_\frs^l|x|_{P_1}^{\mu-l}\left(\frac{|x|_{P_0}}{|x|_{P_1}}\right)^{(\eta-l)\wedge0}\label{eq:semmmi}
,\end{align}
where $l$ runs over indices in $A\cup\{\gamma\}$ in the specified range. If the exponent of $\|x-y\|_\frs$ were $l-\zeta$ instead of $l$, we would be in the exact same situation as in \eqref{eq:multi Gamma}. Taking this extra $\|x-y\|_\frs^\zeta$ out of the sum, we therefore get the bound
\begin{equation}\label{eq:comp4}
|\bar f(x)-\bar f(y)|\lesssim\|x-y\|^{\zeta}|x|_{P_1}^{\mu-\zeta}\left(\frac{|x|_{P_0}}{|x|_{P_1}}\right)^{(\eta-\zeta)\wedge0},
\end{equation}
and, recalling the notation $N$ from \eqref{eq:comp2},
\begin{equation}\label{eq:comp3}
|\bar f(x)-\bar f(y)|^{\gamma/\zeta-|k|}\lesssim\|x-y\|_\frs^{\gamma-|k|\zeta}|x|_{P_1}^{(\gamma/\zeta-|k|)(\mu-\zeta)}\left(\frac{|x|_{P_0}}{|x|_{P_1}}\right)^{N}.
\end{equation}
Moving to the other constituent of $R_3$, by \eqref{eq:comp4} and the bounds on $\tilde{f}(y)$ from the definition of the spaces $\cD_P^{\gamma,w}$, the we can write
\begin{align*}
\|(\tilde{f}&(y)-(\bar f(y)-\bar f(x))\bone)^{\star k}\|_\beta
\\
&\lesssim\sum_{0\leq m\leq|k|}\sum_{\substack{\sum_{i=1}^m l_i=\beta \\ l_i\geq\zeta}}
\|x-y\|_\frs^{\zeta(|k|-m)}|x|_{P_1}^{(\mu-\zeta)(|k|-m)}\left(\frac{|x|_{P_0}}{|x|_{P_1}}\right)^{((\eta-\zeta)\wedge0)(|k|-m)}
\\
&\quad\quad\times\prod_{i=1}^m|x|_{P_1}^{\mu-l_i}\left(\frac{|x|_{P_0}}{|x|_{P_1}}\right)^{(\eta-l_i)\wedge0}.
\end{align*}
As the sum has finitely many terms, it suffices to treat them separately, and therefore we fix $m$ and $l_i$ as above. Then, since $\beta=\sum l_i\geq\sum\zeta=m\zeta$, we can get a bound
\begin{align*}
\|x-y\|_\frs^{\zeta|k|-\beta}|x|_{P_0}^{\beta-m\zeta}|x|_{P_1}^{|k|\mu-\zeta|k|}|x|_{P_1}^{m\zeta-\beta}\left(\frac{|x|_{P_0}}{|x|_{P_1}}\right)^{((\eta-\zeta)\wedge0)(|k|-m)+\sum(\eta-l_i)\wedge0}
\end{align*}
Moving the second and fourth factor into the fifth one, we get that the exponent of the fraction above becomes $M$, as defined in \eqref{eq:comp2}. Combining this with \eqref{eq:comp3}, we get
\begin{align*}
\|R_3\|_\beta\lesssim\|x-y\|^{\gamma-\beta}|x|_{P_1}^{(\gamma/\zeta)\mu-\gamma}\left(\frac{|x|_{P_0}}{|x|_{P_1}}\right)^{N+M},
\end{align*}
and by \eqref{eq:comp2} and the fact $(\gamma/\zeta)\mu\geq\mu$, we arrive at \eqref{eq:comp bound} for $R_3$.
\end{proof}

\subsection{Reconstruction}\label{subsec:reco}
Recall that, since reconstruction is a local operation, there exists an element $\tilde{\cR}f$ in the dual of smooth functions supported away from $P$ such that the bound \eqref{eq:standard reco estimate} is satisfied if $\lambda\ll|x|_{P_0}\wedge|x|_{P_1}$. A natural guess for the target space of the extension of the reconstruction operator acting on $\cD^{\gamma,w}_P(V)$ would be $\cC^{\eta\wedge\sigma\wedge\mu\wedge\alpha}$.
While this certainly does hold, we need some finer control over the behaviour at the different boundaries. To this end, we introduce weighted versions of H\"older spaces as follows. 

\begin{definition}\label{def:weightedHolder}
Let $a = (a_0,a_1,a_\cap) \in \R_-^3$, write $a_\wedge = a_0\wedge a_1\wedge a_{\cap}$, and let $P = (P_0,P_1)$ as above. Then, we define $\cC^{a}_P$ as the set of distributions $u\in\cC^{a_\wedge}$ that furthermore satisfy the following two properties.
\begin{enumerate}[(a)]
\item For any $x\in\{|x|_{P_0}\leq2|x|_{P_1}\}$, $\lambda\in(0,1]$ satisfying $2\lambda\leq |x|_{P_1}$, and every $\psi\in\cB^r$, where $r=\lceil-a_0+1\rceil$,
\begin{equation}\label{eq:defHolder1}
|u(\psi_x^{\lambda})|\lesssim |x|_{P_1}^{a_{\cap}-a_0}\lambda^{a_0}.
\end{equation}
\item For any $x\in\{|x|_{P_1}\leq2|x|_{P_0}\}$, $\lambda\in(0,1]$ satisfying $2\lambda\leq |x|_{P_0}$, and every $\psi\in\cB^r$, where $r=\lceil-a_1+1\rceil$,
\begin{equation}\label{eq:defHolder2}
|u(\psi_x^{\lambda})|\lesssim |x|_{P_0}^{a_{\cap}-a_1}\lambda^{a_1}.
\end{equation}
\end{enumerate}
For a compact $\frK$, the maximum of the best proportionality constants in \eqref{eq:defHolder1} and \eqref{eq:defHolder2} over $x\in\frK$ is denoted by $\|u\|_{a;\frK}$.
\end{definition}

\begin{proposition}\label{prop:extension3}
Let $u \in \CD'(\R^d\setminus (P_0\cap P_1))$ be such that the bounds \eqref{eq:defHolder1}-\eqref{eq:defHolder2} are satisfied. 
Then, provided $a_\wedge>-\frm$, there exists a unique distribution 
$u'\in\cC^{a}_P$ that agrees with $u$ on test functions supported away from $P_0\cap P_1$. 
\end{proposition}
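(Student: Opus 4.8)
The plan is to deduce the statement from Proposition~\ref{prop:extension} applied to the \emph{single} boundary $P_\cap:=P_0\cap P_1$, which has codimension $\frm$. First I would record the elementary geometric fact that, since $P_0$ and $P_1$ intersect transversally and $P$ satisfies the cone condition, one has $d_\frs(x,P_\cap)\sim |x|_{P_0}\vee|x|_{P_1}$ on any fixed bounded set; more precisely $d_\frs(x,P_\cap)\ge |x|_{P_0}\vee|x|_{P_1}$ always (because $P_\cap\subset P_0\cap P_1$) while $d_\frs(x,P_\cap)\le C\bigl(d_\frs(x,P_0)+d_\frs(x,P_1)\bigr)$ for $x$ near $P_\cap$, with $C$ the cone constant.

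The main step is to verify the hypothesis \eqref{eq:extension} of Proposition~\ref{prop:extension} with $P=P_\cap$ and $\alpha=a_\wedge$ (this is where the assumption $a_\wedge>-\frm$ is used), i.e.\ to show that \eqref{eq:defHolder1}--\eqref{eq:defHolder2} imply
\[
|u(\psi_x^\lambda)|\lesssim\lambda^{a_\wedge}
\]
uniformly over $x\in D\setminus P_\cap$, over $\psi\in\cB^r$, and over $\lambda\in(0,1]$ with $2\lambda\le d_\frs(x,P_\cap)$ and $\supp\psi_x^\lambda\subset D$. Assume without loss of generality that $|x|_{P_0}\le|x|_{P_1}$, so $x$ lies in the region of \eqref{eq:defHolder1}. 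If moreover $2\lambda\le|x|_{P_1}$, then \eqref{eq:defHolder1} applies directly, and using $|x|_{P_1}\le 1$ together with $|x|_{P_1}\ge 2\lambda\ge\lambda$ to compare the weight $|x|_{P_1}^{a_\cap-a_0}$ against $1$ or against $\lambda^{a_\cap-a_0}$ according to the sign of $a_\cap-a_0$, one obtains $|u(\psi_x^\lambda)|\lesssim\lambda^{a_0\wedge a_\cap}\le\lambda^{a_\wedge}$. It remains to treat the ``critical'' scales $2\lambda>|x|_{P_1}$; by the first paragraph this forces $2\lambda\le d_\frs(x,P_\cap)< 4C\lambda$, so that $\psi_x^\lambda$ may straddle \emph{both} $P_0$ and $P_1$ but is supported at distance $\gtrsim\lambda$ from $P_\cap$. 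Covering its support by boundedly many balls of radius $\lambda/N$, for a fixed integer $N$ chosen large relative to $C$, one writes $\psi_x^\lambda=\sum_k c_k(\phi_k)_{x_k}^{\lambda/N}$ with $\phi_k\in\cB^r$ and $\sum_k|c_k|\lesssim 1$; each relevant centre satisfies $d_\frs(x_k,P_\cap)\gtrsim\lambda$, hence, by transversality and the choice of $N$, $|x_k|_{P_0}\vee|x_k|_{P_1}\gtrsim\lambda\ge 2(\lambda/N)$. Thus whichever of $P_0$, $P_1$ is the farther boundary of $x_k$, the corresponding bound \eqref{eq:defHolder1} or \eqref{eq:defHolder2}, which is precisely designed to control test functions straddling the \emph{other} boundary, applies to $(\phi_k)_{x_k}^{\lambda/N}$ and yields $\lesssim\lambda^{a_\wedge}$; summing over $k$ gives the claim.

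With \eqref{eq:extension} verified, Proposition~\ref{prop:extension}, applied with $D$ any bounded domain containing $\frK+B(0,1)$ (the resulting extensions agreeing on overlaps by the uniqueness statement), produces a unique $u'\in\CD'(\R^d)$ that agrees with $u$ on test functions supported away from $P_\cap$ and satisfies $|u'(\psi_x^\lambda)|\lesssim\lambda^{a_\wedge}$ whenever $\supp\psi_x^\lambda\subset D$; in particular $u'\in\cC^{a_\wedge}$. Finally, to see that $u'\in\cC^a_P$ I would check the refined bounds \eqref{eq:defHolder1}--\eqref{eq:defHolder2} directly: whenever $x$ lies in the region of \eqref{eq:defHolder1} with $2\lambda\le|x|_{P_1}$, the support of $\psi_x^\lambda$ is at distance $\ge\lambda$ from $P_\cap$ (since $d_\frs(x,P_\cap)\ge|x|_{P_1}\ge 2\lambda$), so $u'(\psi_x^\lambda)=u(\psi_x^\lambda)$ and the bound is exactly \eqref{eq:defHolder1} for $u$; the case \eqref{eq:defHolder2} is symmetric. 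Uniqueness of $u'$ within $\cC^a_P$ follows from the uniqueness part of Proposition~\ref{prop:extension}.

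The main obstacle is the critical-scale case $\lambda\sim d_\frs(x,P_\cap)$ in the second paragraph, where a single test function can cross both boundaries simultaneously: one must exploit both the transversality of the intersection (to keep each piece of the partition of unity at distance $\gtrsim$ its own support scale from the nearest component of $P$) and the fact that \eqref{eq:defHolder1}/\eqref{eq:defHolder2} are tailored to control test functions straddling one boundary while staying away from the other. The only genuinely delicate bookkeeping is tracking the constants, i.e.\ the cone constant $C$ and the choice of $N$.
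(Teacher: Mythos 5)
Your proof is correct and takes essentially the same approach as the paper: reduce to Proposition~\ref{prop:extension} applied to the codimension-$\frm$ boundary $P_\cap$ by establishing the $\cC^{a_\wedge}$ bound for test functions supported away from $P_\cap$, via a partition of unity together with \eqref{eq:defHolder1}--\eqref{eq:defHolder2} applied to each piece. The paper implements the critical-scale partition slightly more economically — it splits $\psi_x^\lambda$ into just two pieces via cutoffs $\phi_0^{(\lambda)},\phi_1^{(\lambda)}$ supported away from $P_0$ and $P_1$ respectively, and rewrites each as a single $\cB^r$ function at scale $2\lambda$ centred at a point $z_i$ with $2|z_i|_{P_i}\ge |z_i|_{P_{1-i}}\vee 8\lambda$ — rather than covering by $O(N^{|\frs|})$ balls of radius $\lambda/N$, but the underlying idea and bookkeeping are the same.
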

\begin{proof}
Such a $u'$ clearly satisfies (a)-(b) of Definition~\ref{def:weightedHolder}, so it only needs to be shown that there exists a unique extension of $u$ in $\cC^{a_\wedge}$. 
By Proposition~\ref{prop:extension}, it suffices to obtain the bound
\begin{equation}\label{eq:reco2}
|u(\psi_x^{\lambda})|\lesssim \lambda^{a_\wedge}\;,
\end{equation}
uniformly over $\psi \in \cB^r$ (for some fixed large enough $r$)
and $\lambda \in (0,1]$, for $c\lambda\leq d_\frs(x,P_0\cap P_1)$ with some fixed $c>1$. For sufficiently large $c$ (depending only on the dimension), one can find smooth functions $\phi_i^{(\lambda)}$ with $i = 0,1$ with the following properties: 
\begin{enumerate}[(i)]
\item The $\phi_i^{(\lambda)}$ are supported on $\{x: |x|_{P_i}\geq4\lambda,2 |x|_{P_i}\geq |x|_{P_{1-i}}\}$.
\item If $x\in\R^d$ is such that $d_\frs(x,P_0\cap P_1)\geq (c-1)\lambda,$ then $
\phi_0^{(\lambda)}(x)+\phi_1^{(\lambda)}(x)=1$.
\item For any multiindex $k$, the bound $|D^k\phi_i^{(\lambda)}(x)|\lesssim \lambda^{-|k|_\frs}$ is satisfied for all $x\in\R^d$. 
\end{enumerate}
The functions $\psi_x^\lambda\phi_i^{(\lambda)}$ then satisfy the bounds
$$
\sup_{y\in\R^d}|D^k(\psi_x^\lambda\phi_i^{(\lambda)})(y)|\lesssim \lambda^{-|\frs|-|k|_\frs}
$$
and have support with diameter less than $2\lambda^{|\frs|}$. One can therefore find points $z_i$ with $2|z_i|_{P_i}\geq|z_i|_{P_{1-i}} \vee 8\lambda$, as well as 
functions $\xi^{(i,\lambda)} \in \cB^r$ such that $
\psi_x^\lambda\phi_i^{(\lambda)}=\xi_{z_i}^{(i,\lambda),2\lambda}
$. Applying the estimates \eqref{eq:defHolder1} and \eqref{eq:defHolder2} to $\xi^{(1,\lambda)}$ and $\xi^{(0,\lambda)}$, respectively, we get
$$
|u(\psi_x^\lambda)| \le |u(\xi_{z_0}^{(0,\lambda),2\lambda})|+|u(\xi_{z_1}^{(1,\lambda),2\lambda})|\lesssim\lambda^{(a_\cap-a_1)\wedge0+a_1}+\lambda^{(a_\cap-a_0)\wedge0+a_0},
$$
and since the minimum of the two exponents on the right-hand side is $a_\wedge$, \eqref{eq:reco2} holds indeed.
\end{proof}

\begin{theorem}\label{thm:reco}
Let $f\in\cD_P^{\gamma,w}(V)$, where $V$ is a sector of regularity $\alpha$ and suppose that 
\begin{equation}\label{eq:exponents}
\eta\wedge\alpha>-\frm_0,\quad\sigma\wedge\alpha>-\frm_1,\quad\mu>-\frm\;.
\end{equation}
Then, setting $a=(\eta\wedge\alpha,\sigma\wedge\alpha,\mu)$, there exists a unique distribution 
$$
\cR f\in \cC^{a}_P
$$ 
such that $(\cR f)(\psi)=(\tilde{\cR} f)(\psi)$ for smooth test functions that are compactly supported away from $P$. In particular, $\cR f\in\cC^{a_\wedge}$.

Moreover, if $(\bar\Pi,\bar\Gamma)$ is another model for $\scT$ and $f\in\cD_{P}^{\gamma,w}(V,\Gamma)$, $\bar f\in\cD_{P}^{\gamma,w}(V,\bar\Gamma)$, then one has the bounds, for any $C>0$ and $\frK$ compact
\begin{equation}\label{eq:reco9}
\|\cR f-\cR\bar f\|_{a;\frK}\lesssim \vn{f;\bar f}_{\gamma,w;\bar\frK}+\|\Pi-\bar\Pi\|_{\gamma,\bar\frK}+\|\Gamma-\bar\Gamma\|_{\gamma,\bar\frK},
\end{equation}
uniformly in $f,\bar f$, and the two models being bounded by $C$, where $\bar\frK$ denotes the $1$-fattening of $\frK$.
\end{theorem}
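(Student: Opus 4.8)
\emph{Overview.} The plan is to build $\cR f$ in three steps mirroring the hypotheses \eqref{eq:exponents} — reconstruct in the interior $\R^d\setminus P$, extend across each of $P_0\setminus P_1$ and $P_1\setminus P_0$, and finally extend across the corner $P_\cap$ — and then to re-run the whole argument with the pair $(f,\bar f)$ in place of $f$ for the continuity bound \eqref{eq:reco9}. For the interior step, since reconstruction is local and $f\in\cD^\gamma(V)$ locally on $\R^d\setminus P$, Theorem~\ref{thm: standard reco} directly produces $\tilde{\cR}f\in\CD'(\R^d\setminus P)$ together with the estimate \eqref{eq:standard reco estimate}.

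\emph{Weighted bounds near a single boundary.} Fix a compact $\frK$ and, by the symmetry $P_0\leftrightarrow P_1$, consider $x$ with $\delta_0:=|x|_{P_0}\le 2|x|_{P_1}=:2\delta_1$; I will establish \eqref{eq:defHolder1} for the eventual reconstruction. First, for $2\lambda\le\delta_0$ (so $\supp\psi_x^\lambda$ avoids $P$), the definition \eqref{def:spaces} gives $\vn{f}_{\gamma;B(x,c\lambda)}\lesssim\delta_0^{\eta-\gamma}\delta_1^{\mu-\eta}\vn{f}_{\gamma,w;\bar\frK}$, and combining this with \eqref{eq:standard reco estimate} (taking $y=x$), the model bound on $\Pi_xf(x)$, and the bound on $\|f(x)\|_l$ from \eqref{def:spaces}, while using $\lambda\le\delta_0\le2\delta_1$, $\alpha\le l$ and $\eta\vee\mu\le\gamma$, yields \eqref{eq:defHolder1} for $\tilde{\cR}f$. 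Next, for $\delta_0<2\lambda\le\delta_1$ the support of $\psi_x^\lambda$ meets $P_0$ but stays at distance $\gtrsim\delta_1$ from $P_1$, and I reproduce the telescoping of the proof of Proposition~\ref{prop:extension}: with $\Phi_n$ cutting off the $2^{-n}$-neighbourhood of $P_0$, each $\psi_x^\lambda(\Phi_n-\Phi_{n+1})$ splits into $O(\lambda^{|\frs|-\frm_0}2^{(|\frs|-\frm_0)n})$ bumps at scale $\sim 2^{-n}\le\lambda$, each located at distance $\sim 2^{-n}$ from $P_0$ and $\sim\delta_1$ from $P_1$, hence each controlled by the interior bound just obtained by $\lesssim\delta_1^{\mu-\eta\wedge\alpha}(2^{-n})^{\eta\wedge\alpha}$; since $\eta\wedge\alpha>-\frm_0$, the sum over $n$ converges, which shows that $\tilde{\cR}f(\psi_x^\lambda(1-\Phi_n))$ is Cauchy and gives \eqref{eq:defHolder1} for its limit. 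Carrying this out near $P_0\setminus P_1$ and, symmetrically, near $P_1\setminus P_0$, one obtains a distribution $u\in\CD'(\R^d\setminus P_\cap)$ agreeing with $\tilde{\cR}f$ off $P$ and satisfying \eqref{eq:defHolder1}--\eqref{eq:defHolder2}.

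\emph{Corner, uniqueness, continuity.} Since $\frm\ge\frm_0\vee\frm_1$, the hypotheses \eqref{eq:exponents} force $a_\wedge>-\frm$, so Proposition~\ref{prop:extension3} supplies a unique $\cR f\in\cC^a_P$ extending $u$ (hence $\tilde{\cR}f$) across $P_\cap$; in particular $\cR f\in\cC^{a_\wedge}$. For the uniqueness claim of the theorem, any competing $v\in\cC^a_P$ agreeing with $\tilde{\cR}f$ off $P$ lies, by \eqref{eq:defHolder1}--\eqref{eq:defHolder2}, in $\cC^{\eta\wedge\alpha}_{\mathrm{loc}}$ near $P_0\setminus P_1$ and in $\cC^{\sigma\wedge\alpha}_{\mathrm{loc}}$ near $P_1\setminus P_0$, so the uniqueness in Proposition~\ref{prop:extension} (applicable since $\eta\wedge\alpha>-\frm_0$ and $\sigma\wedge\alpha>-\frm_1$) forces $v=u$ off $P_\cap$, whence $v=\cR f$ by the uniqueness in Proposition~\ref{prop:extension3}. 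For \eqref{eq:reco9} I repeat the three steps with $f$ replaced by $(f,\bar f)$: in the interior step I use the two-model versions of \eqref{eq:standard reco estimate} and of the model bound from \cite[Thm~3.10]{H0}, which generate exactly the three terms $\vn{f;\bar f}_{\gamma,w;\bar\frK}$, $\|\Pi-\bar\Pi\|_{\gamma,\bar\frK}$, $\|\Gamma-\bar\Gamma\|_{\gamma,\bar\frK}$ (weighted by the same powers of $\delta_0,\delta_1$ as above); the telescoping decomposition and the corner extension are linear in the distribution being extended, so these bounds propagate verbatim to $\|\cR f-\cR\bar f\|_{a;\frK}$.

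\emph{Main obstacle.} The crux is the single-boundary step for the \emph{full} range $\delta_0<2\lambda\le\delta_1$: one must run the Proposition~\ref{prop:extension} telescoping while simultaneously carrying the $P_1$-weight, checking that every dyadic piece genuinely stays at distance $\sim\delta_1$ from $P_1$ (this is precisely where $2\lambda\le\delta_1$ enters) so that the interior estimate applies with the correct power of $\delta_1$, and that the geometric sum in $n$ collapses to $\lambda^{\eta\wedge\alpha}$ rather than a worse exponent. Matching the buffers in the cut-offs $\Phi_n$ so that the interior bound — which requires twice the scale to be $\le|\cdot|_{P_0}$ — is legitimately applicable on each shell, and keeping the two weights from contaminating one another, is where essentially all of the technical work sits.
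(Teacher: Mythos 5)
Your proposal is correct and follows essentially the same strategy as the paper: establish the weighted interior bound combining \eqref{eq:standard reco estimate} with the model estimate on $\Pi_xf(x)$, extend separately across $P_0\setminus P_1$ and $P_1\setminus P_0$ via (the telescoping argument of) Proposition~\ref{prop:extension}, glue, and finish with Proposition~\ref{prop:extension3} at the corner. The only difference is one of presentation: you unpack the dyadic telescoping to show explicitly how the $|x|_{P_1}^{\mu-\eta\wedge\alpha}$-weight is carried through each shell, whereas the paper simply cites Proposition~\ref{prop:extension} and leaves this (harmless) detail implicit.
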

\begin{proof}
By virtue of Proposition~\ref{prop:extension3}, it suffices to extend $\tilde\cR f$ to an element of 
$\CD'(\R^d \setminus (P_0\cap P_1))$ in such a way that \eqref{eq:defHolder1}-\eqref{eq:defHolder2} hold with the 
desired exponents.

By \eqref{eq:standard reco estimate}, it holds, uniformly in $x\in\{|x|_{P_0}\leq2|x|_{P_1}\}$ over compacts, uniformly in $\psi\in\cB^r$, and uniformly in $\lambda\in(0,1]$ such that $4\lambda\leq|x|_{P_0}$, that
\begin{equation}\label{eq:reco6}
|(\tilde{\cR}f-\Pi_xf(x))(\psi_x^\lambda)|\lesssim \lambda^\gamma|x|_{P_0}^{\eta-\gamma}|x|_{P_1}^{\mu-\eta}\lesssim\lambda^{\eta}|x|_{P_1}^{\mu-\eta}.
\end{equation}
Also, in the same situation, we have
\begin{equation}
|(\Pi_xf(x))(\psi_x^\lambda)|\lesssim \sum_l\lambda^l|x|_{P_1}^{\mu-l}\left(\frac{|x|_{P_0}}{|x|_{P_1}}\right)^{(\eta-l)\wedge0}.
\end{equation}
Since $\lambda\lesssim|x|_{P_0}\wedge|x|_{P_1}$, this sum is of the same form that we encountered before, for example in \eqref{eq:semmmi}. By the same argument we get
\begin{equation}\label{eq:reco again-1}
|(\Pi_xf(x))(\psi_x^\lambda)|\lesssim \lambda^\alpha|x|_{P_1}^{\mu-\alpha}\left(\frac{|x|_{P_0}}{|x|_{P_1}}\right)^{(\eta-\alpha)\wedge0}\lesssim \lambda^{\eta\wedge\alpha}|x|_{P_1}^{\mu-(\eta\wedge\alpha)}.
\end{equation}
Combining this with \eqref{eq:reco6}, by Proposition~\ref{prop:extension} we can extend $\tilde\cR f$ to an element $\tilde\cR_0 f\in \CD'(\R^d \setminus P_1)$ such that the bound
\begin{equation}\label{eq:rec again0}
|(\tilde\cR_0f)(\psi_x^\lambda)|\lesssim\lambda^{\eta\wedge\alpha}|x|_{P_1}^{\mu-(\eta\wedge\alpha)}
\end{equation}
holds uniformly in $x\in\{|x|_{P_0}\leq2|x|_{P_1}\}$ over compacts, uniformly in $\psi\in\cB^r$, and uniformly in $\lambda\in(0,1]$ such that $2\lambda\leq|x|_{P_1}$.

One can similarly construct $\tilde{\cR}_1 f \in \CD'(\R^d \setminus P_0)$ such that
$
|(\tilde\cR_1f)(\psi_x^\lambda)|\lesssim\lambda^{\sigma\wedge\alpha}|x|_{P_1}^{\mu-(\sigma\wedge\alpha)}
$
holds in the symmetric situation. Since $\tilde \cR_0 f$ and $\tilde\cR_1 f$ agree on the intersection of their domains, they can be pieced together to get the claimed extension of $\tilde{\cR} f$.
The proof of continuity is again analogous and is omitted here.
\end{proof}

Keeping in mind that our goal will be to apply this calculus for singular SPDEs with  boundary conditions on some domain $D$, $P_1$ will typically stand for $\R\times \partial D$. With a parabolic scaling we have $\frm_1=1$ and so condition \eqref{eq:exponents}, in particular requiring $\sigma\wedge\alpha>-1$ is rather strict and will often be violated. In these situations, a $\cC^{(\eta\wedge\alpha,\sigma\wedge\alpha,\mu)}_P$ extension $\tilde\cR f$ is not unique and hence sometimes it will be more suggestive to write $\hat\cR f$ for particular choices of such extensions. 
On some occasions this choice will be made `by hand', but there is also another generic situation when a canonical choice can be made, as follows.

\begin{theorem}\label{thm:reco hat'}
Let $f\in\cD_{P,\{1\}}^{\gamma,w}$, where $V$ is a sector of regularity $\alpha$ and let $\gamma>0$ and $w$ be such that 
\begin{equs}[eq:exponents']
0>\sigma>-\frm_1\geq\alpha,\quad
\eta\wedge\alpha>-\frm_0,\quad\mu>-\frm\;.
\end{equs}
Then  there exists a unique distribution 
$
\hat \cR f\in\cC^{(\eta\wedge\alpha,\alpha,\mu)}_P
$ 
such that for smooth functions $\psi$ compactly supported away from $P$, $\hat\cR f(\psi)=\tilde \cR f(\psi)$ and that furthermore, 
\begin{equation}\label{eq:reco on P1}
|\hat\cR f(\psi_x^\lambda)|\lesssim \lambda^\sigma|x|_{P_0}^{\mu-\sigma}
\end{equation}
holds uniformly in $x$ over relatively compact subsets of $P_1\setminus P_0$, in $\psi\in\cB^r$, and in $\lambda\in(0,1]$ such that $2\lambda\leq|x|_{P_0}$.  

 Moreover, if $(\bar\Pi,\bar\Gamma)$ is another model for $\scT$ and $f\in\cD_{P,\{1\}}^{\gamma,w}(V,\Gamma)$, $\bar f\in\cD_{P,\{1\}}^{\gamma,w}(V,\bar\Gamma)$, then one has the bound, for all $C>0$ and compact $\frK$
\begin{equation}\label{eq:reco7'}
\|\hat\cR f-\hat{\bar{\cR}} \bar f\|_{\eta\wedge\alpha,\alpha,\mu;\frK}\lesssim \vn{f;\bar f}_{\gamma,w;\bar\frK}+\|\Pi-\bar\Pi\|_{\gamma,\bar\frK}+\|\Gamma-\bar\Gamma\|_{\gamma,\bar\frK}.
\end{equation}
uniformly in $f,\bar f$, and the two models being bounded by $C$, where $\bar\frK$ denotes the $1$-fattening of $\frK$.

Finally, if for all $a\in V$, $\Pi_x a$ is a continuous function, then 
\begin{equation}\label{eq:reco8'}
\hat\cR f(\psi)=\int_{\R^d\setminus P}(\Pi_xf(x))(x)\,\psi(x)\,dx\;.
\end{equation} 

\end{theorem}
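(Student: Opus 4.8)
\noindent\emph{Proof proposal.} The plan is to mimic the proof of Theorem~\ref{thm:reco}, replacing only the argument near $P_1\setminus P_0$, where the hypothesis $\sigma\le-\frm_1$ rules out a direct appeal to Proposition~\ref{prop:extension}. As in that proof it suffices, by Proposition~\ref{prop:extension3}, to construct $\hat\cR f\in\CD'(\R^d\setminus(P_0\cap P_1))$ agreeing with $\tilde\cR f$ on test functions supported away from $P$, satisfying \eqref{eq:defHolder1}--\eqref{eq:defHolder2} with exponents $(\eta\wedge\alpha,\alpha,\mu)$, and satisfying \eqref{eq:reco on P1}; the extension across $P_0\cap P_1$ is then automatic, since \eqref{eq:exponents'} gives $a_\wedge=(\eta\wedge\alpha)\wedge\mu>-\frm$. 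Away from $P_1$ I would simply take over the construction from the proof of Theorem~\ref{thm:reco}, whose first half uses only $\eta\wedge\alpha>-\frm_0$ and yields $\tilde\cR_0f\in\CD'(\R^d\setminus P_1)$ extending $\tilde\cR f$ and satisfying \eqref{eq:rec again0} (hence \eqref{eq:defHolder1}).

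The new ingredient is the behaviour near $P_1\setminus P_0$. Fix a relatively compact open $U\subset\R^d\setminus P_0$ meeting $P_1$, on which $|x|_{P_0}\sim\delta$, and put $\hat f=\cQ_\sigma^-f$. By Proposition~\ref{prop3}, $\hat f$ belongs to $\cD^\sigma$ on the interior side of $P_1$ inside $U$, with $\cD^\sigma$-seminorm $\lesssim\delta^{\mu-\sigma}\vn f_{\gamma,w}$; and since $f\in\cD_{P,\{1\}}^{\gamma,w}$, the bound \eqref{eq:vmi0} of Proposition~\ref{prop:1} gives $\|f(x)\|_l\lesssim\delta^{\mu-\sigma}|x|_{P_1}^{\sigma-l}\vn f_{\gamma,w}$ for \emph{all} $l<\gamma$ on $U$, so that $\hat f$ extends continuously by $0$ across $P_1$. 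Calling $\bar g$ this extension, the two preceding facts (together with the elementary estimate $\|x-y\|_\frs\gtrsim|x|_{P_1}$ whenever $x$ and $y$ lie on opposite sides of $P_1$) show $\bar g\in\cD^\sigma$ on a full neighbourhood $\tilde U$ of $U\cap P_1$. Let $\cR\bar g$ be any reconstruction of $\bar g$ (which exists by the $\gamma<0$ theory, e.g.\ \cite[Thm~3.10]{H0}); then $|(\cR\bar g-\Pi_x\bar g(x))(\psi_x^\lambda)|\lesssim\lambda^\sigma\delta^{\mu-\sigma}$, and since $\bar g$ vanishes on $P_1$ this gives $|\cR\bar g(\psi_x^\lambda)|\lesssim\lambda^\sigma\delta^{\mu-\sigma}$ for $x\in U\cap P_1$. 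To reconcile this with $\tilde\cR f$ off $P_1$, the key observation is that $\tilde\cR f$ is itself a valid reconstruction of $\hat f=\bar g$ off $P_1$ — the homogeneities $\sigma\le l<\gamma$ that were discarded contribute only $O(\lambda^\sigma\delta^{\mu-\sigma})$ once the pointwise bound on $\|f(x)\|_l$ is invoked — so that the distribution $w:=\tilde\cR f-\cR\bar g$, defined off $P_1$, satisfies $|w(\psi_x^\lambda)|\lesssim\lambda^\sigma\delta^{\mu-\sigma}$ for $2\lambda\le|x|_{P_1}$; here the awkward term $(\Pi_x\bar g(x))(\psi_x^\lambda)$ cancels. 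Since $\sigma>-\frm_1$, Proposition~\ref{prop:extension} extends $w$ to $w'\in\CD'(\tilde U)$ with the same bound, and I would set $\hat\cR f:=\cR\bar g+w'$ on $U$. By construction $\hat\cR f=\tilde\cR f$ off $P_1$, while $|\hat\cR f(\psi_x^\lambda)|\lesssim\lambda^\sigma\delta^{\mu-\sigma}$ for $x\in U\cap P_1$, which is \eqref{eq:reco on P1}; recentring a test function with $|x|_{P_1}\le2|x|_{P_0}$ onto $P_1$ and combining with the off-$P_1$ estimate for $\hat\cR f=\tilde\cR f$ (valid on scales $2\lambda\le|x|_{P_1}$ by the same pointwise bound) then yields \eqref{eq:defHolder2}, using $\sigma\ge\alpha$ and $|x|_{P_1}\le|x|_{P_0}$. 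The crux of the whole proof is the pointwise bound $\|f(x)\|_l\lesssim|x|_{P_0}^{\mu-\sigma}|x|_{P_1}^{\sigma-l}$ for $l<\sigma$: it is exactly here that $\cD_{P,\{1\}}$, rather than merely $\cD_P$, is used, and it is what makes $\sigma>-\frm_1$, not $\alpha>-\frm_1$, the operative threshold; carrying the $|x|_{P_0}$-weight through every estimate is the main source of bookkeeping.

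Gluing $\tilde\cR_0f$ on $\R^d\setminus P_1$ with the local pieces near $P_1\setminus P_0$ (which agree on overlaps, each being $\tilde\cR f$ there), verifying \eqref{eq:defHolder1}--\eqref{eq:defHolder2} by direct estimation of $\tilde\cR f$ exactly as in the proof of Theorem~\ref{thm:reco}, and applying Proposition~\ref{prop:extension3}, produces $\hat\cR f\in\cC^{(\eta\wedge\alpha,\alpha,\mu)}_P$. For uniqueness, if $\hat\cR_1f$ and $\hat\cR_2f$ both satisfy the conclusion, their difference $v$ is supported on $P$; it vanishes near $P_1\setminus P_0$ by the uniqueness part of Proposition~\ref{prop:extension} applied with exponent $\sigma$ (allowed because $\sigma>-\frm_1$) via \eqref{eq:reco on P1}, vanishes near $P_0\setminus P_1$ because $\eta\wedge\alpha>-\frm_0$, and vanishes near $P_0\cap P_1$ because $a_\wedge>-\frm$, so $v\equiv0$. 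The continuity bound \eqref{eq:reco7'} follows along the same lines, replacing each estimate by its ``difference'' version and invoking the corresponding continuity statements for the standard reconstruction and for Propositions~\ref{prop3}, \ref{prop:1} and \ref{prop:extension}; I would not spell this out, as it is the same lengthy-but-routine bookkeeping already alluded to for the other operations.

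Finally, for \eqref{eq:reco8'}: when every $\Pi_xa$ is a continuous function, $\tilde\cR f$ is represented on test functions supported away from $P$ by the function $x\mapsto(\Pi_xf(x))(x)$, exactly as for $\cD^\gamma$ in \cite{H0}. Combining the $\cD_P^{\gamma,w}$-bounds on $\|f(x)\|_l$ with Proposition~\ref{prop:1} and with $\sigma>-\frm_1$, $\eta\wedge\alpha>-\frm_0$, $\mu>-\frm$ shows that this function is locally integrable on $\R^d$, so that $\Xi:\psi\mapsto\int_{\R^d\setminus P}(\Pi_xf(x))(x)\,\psi(x)\,dx$ is a well-defined distribution; estimating $\int_{B(x,\lambda)}|(\Pi_yf(y))(y)|\,dy$ by the same weighted bounds shows that $\Xi$ satisfies \eqref{eq:defHolder1}--\eqref{eq:defHolder2} with exponents $(\eta\wedge\alpha,\alpha,\mu)$ and also \eqref{eq:reco on P1}. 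Since $\Xi$ agrees with $\tilde\cR f$ away from $P$, the uniqueness just proved forces $\hat\cR f=\Xi$, which is \eqref{eq:reco8'}.
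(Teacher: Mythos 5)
Your proposal is correct and follows essentially the same architecture as the paper's proof: reduce to constructing a one-sided extension $\hat\cR_1 f\in\CD'(\R^d\setminus P_0)$ satisfying \eqref{eq:reco01}, work annulus-by-annulus at distance $\sim\delta$ from $P_0$, build a local reconstruction near $P_1$ there, extend the difference $w=\tilde\cR f - (\text{local reconstruction})$ uniquely across $P_1$ via Proposition~\ref{prop:extension} (this is where $\sigma>-\frm_1$ is used), glue by a partition-of-unity argument, bound $\Pi_x\hat f(x)$ via Propositions~\ref{prop3} and~\ref{prop:1} to pass from the $\lambda^\sigma$-bound to the $\lambda^\alpha$-bound of \eqref{eq:defHolder2}, and finish with Proposition~\ref{prop:extension3}. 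The uniqueness and the integral-representation arguments coincide with the paper's.

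The one genuine variation: you build the local reconstruction near $P_1$ as $\cR\bar g$, where $\bar g$ is the zero-extension of $\hat f=\cQ^-_\sigma f$ across $P_1$ — a legitimate $\cD^\sigma$ modelled distribution on a full neighbourhood of $P_1\setminus P_0$ by Propositions~\ref{prop3} and~\ref{prop:1} — whereas the paper uses the wavelet-truncated reconstruction $\cR^m\hat f$ from the preparatory Lemma~2.14, with $2^{-m}\sim|x|_{P_0}$. Your version is conceptually a touch cleaner (no refined $(\lambda\wedge 2^{-m})^\alpha$ bound is needed, only $\lambda^\sigma$), but be aware that ``let $\cR\bar g$ be any reconstruction'' glosses over a real point: \cite[Thm~3.10]{H0} produces a \emph{global} reconstruction, and $\bar g$ only satisfies the $\cD^\sigma$ bound $\lesssim\delta^{\mu-\sigma}$ on a $\delta$-sized patch, deteriorating as one approaches $P_0$. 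The correct local version is precisely the truncated $\cR^m$ of the paper's Lemma~2.14, which caps the wavelet scales so that the relevant $\cD^\sigma$-seminorm is only evaluated on a ball of radius $\sim 2^{-m}$ staying in the annulus. So your argument is not really independent of that lemma; you should cite it (or some equivalent localisation mechanism) rather than \cite[Thm~3.10]{H0}. You also correctly observe that the final object $\cR\bar g+w'$ is independent of the choice of $\cR\bar g$ (two such reconstructions differ by an element whose $\cC^\sigma$-bound makes the compensating extensions match), which takes care of the gluing consistency; this point deserves a sentence since $\gamma<0$ reconstructions are non-unique.
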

\begin{proof}
First notice that such a $\hat\cR f$ has to be unique: any two extensions of $\tilde{\cR} f$ differ by
a distribution concentrated on $P$, which, due to the conditions on the exponents and the constraint \eqref{eq:reco on P1}, has to vanish.

An extension $\tilde\cR_0 f$ with the `right behaviour' on $\R^d\setminus P_1$ is constructed in the proof of Theorem~\ref{thm:reco}. Concerning the behaviour outside $P_0$ we claim that, with $\hat f = \cQ_\sigma^- f$, it suffices to construct an extension $\hat \cR_1 f\in\cD'(\R^d\setminus P_0)$ of $\tilde{\cR} f$ that satisfies the bound
\begin{equation}\label{eq:reco01}
|(\hat\cR_1 f-\Pi_x\hat f(x))(\psi_x^\lambda)|\lesssim \lambda^\sigma|x|_{P_0}^{\mu-\sigma}
\end{equation}
uniformly in $x\in\{|x|_{P_1}\leq2|x|_{P_0}\}$ over compacts, uniformly in $\psi\in\cB^r$, and uniformly in $\lambda\in(0,1]$ such that $2\lambda\leq|x|_{P_0}$.
Indeed, \eqref{eq:reco on P1} then follows from the fact that $\hat f(x) = 0$ for $x \in P_1\setminus P_0$
by the definition of $\cD_{P,\{1\}}^{\gamma,w}$.
Furthermore, by Propositions~\ref{prop3} and~\ref{prop:1}, we have
$$
|\Pi_x\hat f(x)(\psi_x^\lambda)|\lesssim\sum_{\alpha\leq l<\sigma}|x|_{P_0}^{\mu-\sigma}|x|_{P_1}^{\sigma-l}\lambda^l
\lesssim|x|_{P_0}^{\mu-\alpha}\lambda^\alpha\;,
$$
where the last bound follows from the facts that  $|x|_{P_1} \le |x|_{P_0}$, $\alpha \le l$, and $\lambda \le |x|_{P_0}$. 
Therefore, by \eqref{eq:reco01}, the same bound holds for $\hat \cR_1 f$, and so piecing $\tilde\cR_0 f$ and $\hat \cR_1 f$ together, the resulting element of $\cD'(\R^d\setminus (P_0\cap P_1))$ satisfies the conditions of Proposition~\ref{prop:extension3} with $a_0=\eta\wedge\alpha$, $a_1=\alpha$, and $a_\cap=\mu$. Applying the proposition, we get the claimed $\hat\cR f$. Further notice, that in fact it is enough to show \eqref{eq:reco01} for each $m\in\mathbb{N}$ in the case where $x$ is further restricted to run over $A_m:=\{|x|_{P_0}\in[2^{-m-2},2^{-m}]\}$. Indeed, all functions $\psi_x^\lambda$ that are considered in \eqref{eq:reco01} have support that intersects at most two $A_m$'s, and therefore a straightforward partition of unity argument, like for instance the one in the proof of Proposition~\ref{prop:extension3} completes the proof.

To get $\hat\cR_1f$ on $A_m$, first consider $\cR^m\hat f$ defined as in \eqref{eq:Rm}, 
which is a meaningful expression thanks to Proposition~\ref{prop3}. Furthermore, by
\eqref{eq:reco bound away negative m} and using Proposition~\ref{prop3}, one has the bound
\begin{equation}\label{eq:rec again2'}
|(\cR^m \hat f-\Pi_x\hat f(x))(\psi_x^\lambda)|\lesssim\lambda^{\sigma-\alpha}(\lambda\wedge|x|_{P_0})^{\alpha}|x|_{P_0}^{\mu-\sigma}\lesssim\lambda^\sigma|x|_{P_0}^{\mu-\sigma},
\end{equation}
uniformly in $x\in\{|x|_{P_1}\leq2|x|_{P_0}\}\cap A_m$ over compacts, uniformly over $\psi\in\cB^r$, and uniformly over $\lambda\in(0,1]$ such that $4\lambda\leq|x|_{P_1}$. One also has,  by \eqref{eq:standard reco estimate} and the basic properties of the model,
\begin{align}
|(\tilde{\cR}f-\Pi_x\hat f(x))(\psi_x^\lambda)|&\leq|(\tilde{\cR}f-\Pi_x f(x))(\psi_x^\lambda)|+|(\Pi_xf(x)-\Pi_x\hat f(x))(\psi_x^\lambda)|\nonumber\\
&\lesssim \lambda^\gamma|x|_{P_1}^{\sigma-\gamma}|x|_{P_0}^{\mu-\sigma}+\sum_{l>\sigma}\lambda^l|x|_{P_0}^{\mu-\sigma}|x|_{P_1}^{\sigma-l}
\lesssim \lambda^{\sigma}|x|_{P_0}^{\mu-\sigma}\label{eq:rec again1'}
\end{align}
with the same uniformity. Thus the same bound holds for the difference $\tilde\cR f-\cR^m\hat f$, which therefore, by Proposition~\ref{prop:extension}, has a unique extension $\Delta_m \cR f\in\cD'(\{|x|_{P_1}\leq2|x|_{P_0}\}\cap A_m)$ for which the same bound holds even when $\lambda$ is only restricted by $2\lambda\leq|x|_{P_0}$. Hence $\cR^m f+\Delta_m\cR f$ satisfies the required bound \eqref{eq:reco01} (on $A_m$), and it trivially agrees with $\tilde{\cR} f$ on functions supported away from $P$.

As for the last statement of the theorem, one simply has to check that the right-hand side of \eqref{eq:reco8'} satisfies the claimed properties. It trivially coincides with $\tilde\cR f$ away from $P$, and the bound \eqref{eq:reco on P1} follows from the fact that, thanks to Proposition~\ref{prop:1}
$$
|(\Pi_x f(x))(x)|\lesssim|x|_{P_0}^{\mu-\sigma}|x|_{P_1}^\sigma
$$
if $|x|_{P_1}\leq|x|_{P_0}$, where in this particular case the proportionality constant also depends on the local supremum bounds of the continuous functions $\Pi_x a$. Since this additional dependency doesn't affect the uniqueness part of the statement, the proof is complete.
\end{proof}

\subsection{Differentiation}

\begin{lemma}\label{lem:diff}
Let $\mathscr{D}$ be an abstract gradient and let $f\in\cD_P^{\gamma,w}(V)$, where $\gamma>\frs_i$ and $w=(\eta,\sigma,\mu)\in\R^3$. Then $\mathscr{D}_{i} f\in\cD_P^{\gamma-\frs_i,(\eta-\frs_i,\sigma-\frs_i,\mu-\frs_i)}$.
\end{lemma}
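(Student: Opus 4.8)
The plan is to use only the two structural properties of an abstract gradient: that $\scD_i$ maps $V_\alpha$ into $T_{\alpha-\frs_i}$ (so that it is homogeneous of degree $-\frs_i$) and that it commutes with the structure group, $\Gamma\scD_i a=\scD_i\Gamma a$ for all $\Gamma\in G$. Write $g=\scD_i f$ and $w'=(\eta-\frs_i,\sigma-\frs_i,\mu-\frs_i)$. Since $f(x)\in V_\gamma^-$, homogeneity of $\scD_i$ gives $g(x)\in V_{\gamma-\frs_i}^-$, so $g$ has the correct target space; moreover, because $A$ is locally finite and only the finitely many homogeneities $l\in A$ with $l+\frs_i<\gamma$ are relevant, there is a single constant $C$, namely the maximum of the operator norms of $\scD_i\colon T_{l+\frs_i}\to T_l$ over those $l$, such that $\|\scD_i a\|_l\le C\|a\|_{l+\frs_i}$ for every $a\in T$ and every $l<\gamma-\frs_i$ (with the convention that $\|a\|_{l+\frs_i}=0$ when $l+\frs_i\notin A$, in which case the left side vanishes too).

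First I would treat the first, ``Hölder-type'', term of $\vn{g}_{\gamma-\frs_i,w';\frK}$. For $(x,y)\in\frK_P$ the commutation identity yields $g(x)-\Gamma_{xy}g(y)=\scD_i\bigl(f(x)-\Gamma_{xy}f(y)\bigr)$, so for $l<\gamma-\frs_i$ one gets $\|g(x)-\Gamma_{xy}g(y)\|_l\le C\,\|f(x)-\Gamma_{xy}f(y)\|_{l+\frs_i}$, and since $l+\frs_i<\gamma$ the definition of $\cD_P^{\gamma,w}$ bounds the latter by $C\vn{f}_{\gamma,w;\frK}$ times
\begin{equ}
\|x-y\|_\frs^{\gamma-(l+\frs_i)}\,|x,y|_{P_0}^{\eta-\gamma}\,|x,y|_{P_1}^{\sigma-\gamma}\,\bigl(|x,y|_{P_0}\vee|x,y|_{P_1}\bigr)^{\mu-\eta-\sigma+\gamma}.
\end{equ}
The remaining point is pure bookkeeping: writing $\gamma'=\gamma-\frs_i$, one has $\gamma-(l+\frs_i)=\gamma'-l$, $\eta-\gamma=(\eta-\frs_i)-\gamma'$, $\sigma-\gamma=(\sigma-\frs_i)-\gamma'$ and $\mu-\eta-\sigma+\gamma=(\mu-\frs_i)-(\eta-\frs_i)-(\sigma-\frs_i)+\gamma'$, so the displayed quantity is exactly the weight appearing in the definition of $\cD_P^{\gamma',w'}$. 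Hence the first term of $\vn{g}_{\gamma',w';\frK}$ is bounded by a constant times $\vn{f}_{\gamma,w;\frK}$.

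The second and third terms are handled identically: for $x$ with $0<|x|_{P_0}\le|x|_{P_1}$ and $l<\gamma-\frs_i$ one has $\|g(x)\|_l\le C\|f(x)\|_{l+\frs_i}$, and since $l+\frs_i<\gamma$ this is at most $C\vn{f}_{\gamma,w;\frK}$ times $|x|_{P_1}^{\mu-(l+\frs_i)}\bigl(|x|_{P_0}/|x|_{P_1}\bigr)^{(\eta-(l+\frs_i))\wedge0}$, which is precisely the weight $|x|_{P_1}^{(\mu-\frs_i)-l}\bigl(|x|_{P_0}/|x|_{P_1}\bigr)^{((\eta-\frs_i)-l)\wedge0}$ of the definition of $\cD_P^{\gamma',w'}$; the case $0<|x|_{P_1}\le|x|_{P_0}$ is the same with the roles of $P_0$ and $P_1$ exchanged. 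Combining the three estimates gives $\vn{\scD_i f}_{\gamma-\frs_i,w';\frK}\lesssim\vn{f}_{\gamma,w;\frK}<\infty$ for every compact $\frK$, which is the claim; note also that the standing assumption $\eta\vee\sigma\vee\mu\le\gamma$ is automatically preserved, since it becomes $(\eta-\frs_i)\vee(\sigma-\frs_i)\vee(\mu-\frs_i)\le\gamma-\frs_i$.

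There is essentially no serious obstacle here. The only thing requiring care is checking that the uniform shift $l\mapsto l+\frs_i$ in every weight exponent does convert the defining bounds for $(\gamma,w)$ into those for $(\gamma-\frs_i,w')$; this works because each exponent occurring in the definition of $\cD_P^{\gamma,w}$ is affine in $l$ with slope $0$ or $\pm1$, and because subtracting $\frs_i$ simultaneously from $\gamma,\eta,\sigma,\mu$ leaves the ``correction'' exponents $\eta-\gamma$, $\sigma-\gamma$ and $\mu-\eta-\sigma+\gamma$ unchanged. Finally, continuity of $\scD_i$ as a map $\cD_P^{\gamma,w}\to\cD_P^{\gamma-\frs_i,w'}$ for a fixed model is immediate, since the above argument exhibits it as a bounded linear operator.
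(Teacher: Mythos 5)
Your proof is correct and spells out in full the argument that the paper omits (the paper just refers to it as a ``trivial modification'' of \cite[Prop.~5.28]{H0}). The key observations you use are exactly the right ones: the commutation $\Gamma\scD_i=\scD_i\Gamma$ reduces the Hölder-type term to a bound on $f$, the uniform boundedness of $\scD_i$ on the finitely many relevant graded pieces supplies the constant, and the simultaneous shift of $\gamma,\eta,\sigma,\mu$ by $\frs_i$ leaves the correction exponents $\eta-\gamma$, $\sigma-\gamma$, $\mu-\eta-\sigma+\gamma$, $(\eta-l)\wedge 0$, $(\sigma-l)\wedge 0$ invariant under $l\mapsto l+\frs_i$.
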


This lemma is a direct consequence of the definition of abstract gradients, and since the proof is a trivial modification of that of \cite[~Prop 5.28]{H0}, it is omitted here.

\subsection{Integration against singular kernels}
\label{sec:kernel}

As seen above, in certain situations the distribution $\cR f$ is not uniquely defined as there might be many distributions $\zeta$ with the appropriate regularity that extend $\tilde\cR f$. For any such $\zeta$, let us denote by $\cN_\gamma^\zeta f$ and $\cK^\zeta_\gamma f$ the modelled distributions defined analogously to $\cN_\gamma f$ and $\cK_\gamma f$, but with $\cR f$ replaced by $\zeta$.

Before stating the result on the integration operator in the weighted spaces, let us recall the following identities from \cite{H0}, which hold for any multiindex $k$,
with the usual convention that empty sums vanish
\begin{equs}[eq:rearrange K 1]
(\Gamma_{xy}\cN_\gamma^{\zeta,(n)}f(y))_k&=\frac{1}{k!}\sum_{|k+l|_\frs<\gamma+\beta}
\frac{(x-y)^l}{l!}(\zeta-\Pi_x f(x))(D_1^{k+l}K_n(y,\cdot)),
\\
(\Gamma_{xy}\cJ^{(n)}(y)f(y))_k&=(\cJ^{(n)}(x)\Gamma_{xy}f(y))_k=\frac{1}{k!}\sum_{\delta>|k|_\frs-\beta}(\Pi_x\cQ_\delta\Gamma_{xy}f(y))(D_1^kK_n(x,\cdot)).\nonumber
\end{equs}
In particular, choosing $x=y$, these identities also cover the formulas for the coefficient of $X^k$ in $\cN_{\gamma}^{\zeta,(n)} f(x)$ and $\cJ^{(n)}(x)f(x)$, respectively.

Another nontrivial rearrangement of terms gives
\begin{align}
k!(\Gamma_{xy}\cN_\gamma^{\zeta,(n)}f(y)&+\Gamma_{xy}\cJ^{(n)}(y)f(y)-\cN_\gamma^{\zeta,(n)}f(x)-\cJ^{(n)}(x)f(x))_k
\nonumber\\
&=(\Pi_y f(y)-\zeta)(K_{n;xy}^{k,\gamma})
\nonumber\\
&\quad-\sum_{\delta\leq|k|_\frs-\beta}(\Pi_x\cQ_\delta(\Gamma_{xy}f(y)-f(x)))(D_1^k K_n(x,\cdot)),
\label{eq:rearrange K 0}
\end{align}
where we define, for $\alpha\in\R$,
$$
K_{n;xy}^{k,\alpha}(z)=D_1^kK_n(y,z)-\sum_{|k+l|_\frs<\alpha+\beta}\frac{(y-x)^l}{l!}D_1^{k+l}K_n(x,z).
$$
We will also make use of the fact that following Taylor remainder formula holds:
\begin{equation}\label{eq:rearrange K Taylor}
K_{n;xy}^{k,\alpha}(z)=\sum_{l\in \partial A_{\alpha}}\int_{\R^d}D_1^{k+l}K_n(\bar y,z)Q^l(x-y,d\bar y),
\end{equation}
where all we need from the yet undefined objects is that $\partial A_{\alpha}$ is a finite set of multiindices $l$ which all satisfy $|l|_\frs\geq\alpha+\beta-|k|_\frs$ and that $Q^l(x-y,\cdot)$ is a measure supported on the set $\{\bar y:\|x-\bar y\|_\frs\leq\|x-y\|_\frs\}$, with total mass bounded by a constant times $\|x-y\|_\frs^{|l|_\frs}$.
For a proof of this, see for example  \cite[Appendix~A]{H0}.

\begin{lemma}\label{lem:int}
Fix $\gamma > 0$, $w = (\eta,\sigma,\mu)$, let $V$ be a sector of regularity $\alpha$, and set $a = (\eta\wedge\alpha,\sigma\wedge\alpha,\mu)$.

(i) Let $f\in\cD_P^{\gamma,w}(V)$ and let $K$ be as in Theorem~\ref{thm:standard int} for some $\beta>0$ and abstract integration map $\cI$. 
Let $\zeta\in\cC^{a}$ such that $\zeta(\psi)=(\tilde \cR f)(\psi)$ for all $\psi\in C_0^\infty(\R^d\setminus P)$
and set
\begin{equation}\label{eq:exponents2}
\bar \gamma = \gamma + \beta,\quad
\bar\eta=(\eta\wedge\alpha)+\beta, \quad
\bar\sigma=(\sigma\wedge\alpha)+\beta,\quad
\bar\mu\leq(a_\wedge+\beta)\wedge0,\quad
\bar\alpha=(\alpha+\beta)\wedge0.
\end{equation}
Suppose furthermore that none of  $\bar \gamma$, $\bar\eta$, $\bar\sigma$, or $\bar\mu$ are integers and that 
these exponents satisfy the condition \eqref{eq:exponents}. Then $\cK^\zeta_\gamma f\in\cD_P^{\bar \gamma,\bar w}$, where $\bar w=(\bar\eta,\bar\sigma,\bar\mu)$.

Furthermore, if $(\bar\Pi,\bar\Gamma)$ is a second model realising $K$ for $\cI$ and $\bar f\in\cD_P^{\gamma,w}(V,\bar\Gamma)$, $\bar \zeta\in\cC^a$ are as above, then for any $C>0$ the bound
$$
\vn{\cK^\zeta_\gamma f;\bar\cK^{\bar \zeta}_\gamma\bar f}_{\bar\gamma,\bar w;\frK}\lesssim\vn{f;\bar f}_{\gamma,w;\bar \frK}+\|\Pi-\bar\Pi\|_{\gamma;\bar\frK}+\|\Gamma-\bar\Gamma\|_{\bar\gamma;\bar\frK}+\|\zeta-\bar\zeta\|_{a,\bar\frK}
$$
holds uniformly in models and modelled distributions both satisfying $\vn{f}_{\gamma,w;\bar \frK}+\|\Pi\|_{\gamma;\bar\frK}+\|\Gamma\|_{\bar\gamma;\bar\frK}+\|\zeta\|_{a,\bar \frK}\leq C$, where $\bar\frK$ denotes the $1$-fattening of $\frK$.

Finally, the identity
\begin{equation}\label{eq:reco identity}
\cR\cK_\gamma^\zeta f=K\ast\zeta
\end{equation}
holds.

(ii) If $f\in\cD_{P,\{1\}}^{\gamma,w}$ and the coordinates of $w$ satisfy \eqref{eq:exponents'}, then choosing $\hat \cR f$ in the above in place of $\zeta$, the same conclusions hold, but with the definition of $\bar \sigma$
 in \eqref{eq:exponents2} replaced by $\bar\sigma=\sigma+\beta$.
\end{lemma}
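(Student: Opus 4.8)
The plan is to follow the proof of the multi-level Schauder estimate of \cite{H0} (our Theorem~\ref{thm:standard int}), keeping careful track at every step of the three weights $|x,y|_{P_0}$, $|x,y|_{P_1}$ and $|x,y|_{P_0}\vee|x,y|_{P_1}$. Write $\cK_\gamma^\zeta f(x)=\cI f(x)+\cJ(x)f(x)+(\cN_\gamma^\zeta f)(x)$. The hypothesis on $\beta$ together with $\cI|_{\bar T}=0$ guarantees that $\cI f$ has no components in integer homogeneities, that $\cJ(\cdot)f+\cN_\gamma^\zeta f$ is $\bar T$-valued, and that $\cK_\gamma^\zeta f$ is $V_{\bar\gamma}^-$-valued. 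For a homogeneity $l=l'+\beta\notin\mathbb{N}$ only $\cI f$ contributes, and since $\cI\Gamma-\Gamma\cI\in\bar T$ one has $\cQ_l\cK_\gamma^\zeta f(x)=\cI\,\cQ_{l'}f(x)$ and $\cQ_l\bigl(\cK_\gamma^\zeta f(x)-\Gamma_{xy}\cK_\gamma^\zeta f(y)\bigr)=\cI\,\cQ_{l'}\bigl(f(x)-\Gamma_{xy}f(y)\bigr)$; as $\cI\colon V_{l'}\to T_l$ is bounded and $\gamma-l'=\bar\gamma-l$, the $\cD_P^{\bar\gamma,\bar w}$-bounds at these homogeneities follow from those of $f$. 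The only thing to verify is that the weight exponents transfer, and using $|x,y|_{P_i}\le 1$ together with $\|x-y\|_\frs\le|x,y|_{P_0}\wedge|x,y|_{P_1}$ this reduces to the inequalities $\eta\ge\eta\wedge\alpha$, $\sigma\ge\sigma\wedge\alpha$ and $\bar\mu\le a_\wedge+\beta$, which is precisely what the choices $\bar\eta=(\eta\wedge\alpha)+\beta$, $\bar\sigma=(\sigma\wedge\alpha)+\beta$, $\bar\mu\le(a_\wedge+\beta)\wedge0$ in \eqref{eq:exponents2} provide.

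The core of the argument is the estimate of the integer homogeneities, which come only from $\cJ(\cdot)f+\cN_\gamma^\zeta f$. Here the identities \eqref{eq:rearrange K 1}--\eqref{eq:rearrange K 0} recalled above control the $X^k$-component of $\cK_\gamma^\zeta f(x)-\Gamma_{xy}\cK_\gamma^\zeta f(y)$ by $\sum_{n\ge0}$ of the two basic quantities $(\Pi_y f(y)-\zeta)(K_{n;xy}^{k,\gamma})$ and $\sum_{\delta\le|k|_\frs-\beta}(\Pi_x\cQ_\delta(\Gamma_{xy}f(y)-f(x)))(D_1^kK_n(x,\cdot))$. For the first quantity I would use the Taylor-remainder representation \eqref{eq:rearrange K Taylor}, writing $K_{n;xy}^{k,\gamma}$ as an average of rescaled test functions $D_1^{k+l}K_n(\bar y,\cdot)\simeq 2^{n(|k+l|_\frs-\beta)}\psi_{\bar y}^{2^{-n}}$ with $\psi\in\cB^r$, integrated against a measure of total mass $\lesssim\|x-y\|_\frs^{|l|_\frs}$ supported within $\|x-y\|_\frs$ of $x$, and then feed in the weighted reconstruction bound for $\zeta-\Pi_{\bar y}f(\bar y)$ from Theorem~\ref{thm:reco} --- supplemented, at scales $2^{-n}$ that are not small relative to $d_\frs(\bar y,P)$, by the plain $\cC^{a_\wedge}$-bound on $\zeta$ --- splitting the sum over $n$ at $2^{-n}\sim\|x-y\|_\frs$. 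For the second quantity one uses the model bound $|(\Pi_x a)(\psi_x^{2^{-n}})|\lesssim\|a\|_\delta 2^{-n\delta}$ and the $\cD_P^{\gamma,w}$-bound on $f(x)-\Gamma_{xy}f(y)$. The size bounds $\|\cK_\gamma^\zeta f(x)\|_l$ for $l\in\mathbb{N}$ come out of the $x=y$ specialisations of \eqref{eq:rearrange K 1} in the same way. In all of these, the bookkeeping of the weights reduces --- via $2\|x-y\|_\frs\le|x,y|_{P_0}\wedge|x,y|_{P_1}$, the simplifications recorded in the Remark after Proposition~\ref{prop3}, and the relations in \eqref{eq:exponents2} --- to elementary manipulations of powers of $|x,y|_{P_0}$ and $|x,y|_{P_1}$ of exactly the kind already carried out in the proofs of Lemmas~\ref{lem:mult} and~\ref{lem:comp}. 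I expect this weight bookkeeping --- in particular matching the three regimes $|x|_{P_0}\gtrless|x|_{P_1}$ and the double singularity at $P_0\cap P_1$ to the dyadic split over $n$ --- to be the main obstacle, even though no single estimate is deeper than the corresponding one in \cite{H0}.

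For the identity \eqref{eq:reco identity} I would argue as in the proof of Theorem~\ref{thm:standard int}: since the model realises $K$ for $\cI$, one has $\Pi_x\cK_\gamma^\zeta f(x)=\sum_{n\ge0}\bigl(\int K_n(\cdot,z)(\Pi_xf(x))(dz)+\Pi_x\cN_\gamma^{\zeta,(n)}f(x)\bigr)$, hence $K\ast\zeta-\Pi_x\cK_\gamma^\zeta f(x)=\sum_{n\ge0}\bigl(\int K_n(\cdot,z)(\zeta-\Pi_xf(x))(dz)-\Pi_x\cN_\gamma^{\zeta,(n)}f(x)\bigr)$, each summand being a kernel with enough vanishing moments tested against $\zeta-\Pi_xf(x)$; summing the corresponding reconstruction-type bounds shows that $K\ast\zeta$ satisfies \eqref{eq:standard reco estimate} for $\cK_\gamma^\zeta f$, so it agrees with the reconstruction of $\cK_\gamma^\zeta f$ on test functions supported away from $P$. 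Since $\zeta\in\cC^{a_\wedge}$ and $\beta>0$, the translation-invariant Schauder estimate gives $K\ast\zeta\in\cC^{a_\wedge+\beta}\subset\cC^{\bar a_\wedge}$ (as $\bar a_\wedge\le\bar\mu\le a_\wedge+\beta$), so that, $\bar a_\wedge>-\frm$ being part of the standing assumption \eqref{eq:exponents} for the barred exponents, uniqueness of the $\cC^{\bar a_\wedge}$-extension (Proposition~\ref{prop:extension}) forces $K\ast\zeta=\cR\cK_\gamma^\zeta f$, where the latter is the element of $\cC^{a}_P$ provided by Theorem~\ref{thm:reco}. The continuity estimate is obtained by running the whole decomposition on $\cK_\gamma^\zeta f-\bar\cK_\gamma^{\bar\zeta}\bar f$, expanding every difference so that each resulting term has exactly one ``difference'' factor ($f-\bar f$, $\Pi-\bar\Pi$, $\Gamma-\bar\Gamma$ or $\zeta-\bar\zeta$) and bounded remaining factors, exactly as in the proof of Lemma~\ref{lem:mult}; following the convention of Section~\ref{sec:calculus} I would only sketch this.

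Finally, for part (ii): when $f\in\cD_{P,\{1\}}^{\gamma,w}$ and $\zeta$ is taken to be the canonical extension $\hat\cR f$ of Theorem~\ref{thm:reco hat'}, the only modification needed is that wherever the behaviour near $P_1$ was previously controlled through the generic reconstruction bound --- which produces regularity $\sigma\wedge\alpha$ at $P_1$ --- one now invokes instead the sharper bound \eqref{eq:reco on P1} for $\hat\cR f$ together with Proposition~\ref{prop3} (so that $\hat f=\cQ_\sigma^-f$ is locally in $\cD^\sigma$ away from $P_0$, with $|\hat\cR f(\psi_x^\lambda)|\lesssim\lambda^\sigma|x|_{P_0}^{\mu-\sigma}$ near $P_1\setminus P_0$). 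Feeding this improved input into exactly the same estimates upgrades the resulting exponent at $P_1$ from $(\sigma\wedge\alpha)+\beta$ to $\sigma+\beta$, while every other bound is literally unchanged.
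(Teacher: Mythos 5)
Your decomposition into non-integer and integer homogeneities, your use of \eqref{eq:rearrange K 1}--\eqref{eq:rearrange K Taylor} combined with a dyadic split over $n$, and your treatment of part (ii) via the improved bounds from Theorem~\ref{thm:reco hat'} and Proposition~\ref{prop3}, all follow the paper's route, and the weight bookkeeping you describe is indeed what has to be done. One imprecision worth flagging: in the intermediate regime $|x|_{P_0}\lesssim 2^{-n}\lesssim|x|_{P_1}$ (or symmetrically) the bound to be fed in is the weighted bound \eqref{eq:defHolder1}--\eqref{eq:defHolder2} of Definition~\ref{def:weightedHolder} for $\zeta$, not the ``plain $\cC^{a_\wedge}$-bound''; the latter is only adequate once $2^{-n}\gtrsim|x|_{P_0}\vee|x|_{P_1}$, and conflating the two would destroy the refined exponents $\bar\eta,\bar\sigma$.

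The genuine gap is in your proof of \eqref{eq:reco identity}. Having established that $K*\zeta$ agrees with $\tilde\cR\cK_\gamma^\zeta f$ on test functions supported away from $P$ and that $K*\zeta\in\cC^{\bar a_\wedge}$, you invoke ``uniqueness of the $\cC^{\bar a_\wedge}$-extension (Proposition~\ref{prop:extension})'' on the strength of $\bar a_\wedge>-\frm$. But $\frm=\frm_0+\frm_1$ is the codimension of the \emph{intersection} $P_0\cap P_1$, not of $P=P_0\cup P_1$: to get a unique $\cC^{\bar a_\wedge}$-extension from $\cD'(\R^d\setminus P)$ directly out of Proposition~\ref{prop:extension} you would need $\bar a_\wedge>-\min(\frm_0,\frm_1)$, and that does \emph{not} follow from \eqref{eq:exponents}. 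In the standard parabolic setting $\frm_1=1$, one is allowed $\bar a_\wedge<-1$, in which case distributions such as $g(t)\,\delta_{\partial D}(x)$ are nonzero elements of $\cC^{\bar a_\wedge}$ supported in $P_1$, so the two extensions could in principle differ by such a term. This is exactly the codimension-$1$ obstruction that the paper singles out in the introduction as the crux of the whole construction, and it cannot be argued away by a regularity count alone. What is actually required is to check that $K*\zeta$ belongs to the weighted space $\cC^{\bar a}_P$ --- i.e.\ that it obeys the refined bounds \eqref{eq:defHolder1}--\eqref{eq:defHolder2} near each of $P_0\setminus P_1$ and $P_1\setminus P_0$ --- using the fact that $\zeta\in\cC^{a}_P$ and not merely $\zeta\in\cC^{a_\wedge}$; only then does the uniqueness statement of Theorem~\ref{thm:reco} (which sits on top of Proposition~\ref{prop:extension3}, whose own application of Proposition~\ref{prop:extension} is across $P_0\cap P_1$, where the codimension really is $\frm$) yield $K*\zeta=\cR\cK_\gamma^\zeta f$. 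Your unweighted Schauder bound $K*\zeta\in\cC^{a_\wedge+\beta}$ does not supply this structure, so the conclusion as you have argued it does not go through.
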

\begin{proof}
The argument showing that $\cN^\zeta_\gamma f$ (and therefore $\cK^\zeta_\gamma f$) is actually well-defined 
is exactly the same as in \cite{H0}. Also, the fact that the required bounds trivially hold for components of $(\cK^\zeta_\gamma f)(x)$ and $(\cK^\zeta_\gamma f)(x)-\Gamma_{yx}(\cK^\zeta_{\gamma}f)(y)$,  whose homogeneity is non-integer, does not change in our setting.

For integers homogeneities, we shall make use of the decomposition of $K$ and use different arguments on different scales. We start by bounding the second term in \eqref{def:spaces}. First consider the case $2^{-n+2}\leq|x|_{P_0}\leq|x|_{P_1}$. We then have, for any multiindex $l$, due to \eqref{eq:standard reco estimate}
\begin{equation}\label{eq:int1}
|(\tilde \cR f-\Pi_x f(x))(D_1^lK_n(x,\cdot))|\lesssim 2^{n(|l|_\frs-\beta-\gamma)}|x|_{P_0}^{\eta-\gamma}|x|_{P_1}^{\mu-\eta}.
\end{equation}
After summation over the relevant values of $n$, we get a bound of order
$$
|x|_{P_0}^{\eta+\beta-|l|_\frs}|x|_{P_1}^{\mu-\eta}\leq|x|_{P_1}^{\mu+\beta-|l|_\frs}\left(\frac{|x|_{P_0}}{|x|_{P_1}}\right)^{\eta+\beta-|l|_\frs},
$$
as required, since $\bar\mu\leq\mu+\beta$.
As for $\cJ^{(n)}(x)f(x)$, for any integer $l$ we have
$$
\|\cJ^{(n)}(x)f(x)\|_l\lesssim\sum_{\delta>l-\beta}2^{n(l-\beta-\delta)}|x|_{P_1}^{\mu-\delta}\left(\frac{|x|_{P_0}}{|x|_{P_1}}\right)^{(\eta-\delta)\wedge0}.
$$
Summing over $n$, we get
\begin{align*}
\sum_{2^{-n+2}\leq|x|_{P_0}} &\|\cJ^{(n)}(x)f(x)\|_l\lesssim\sum_{\delta>l-\beta}|x|_{P_0}^{\delta+\beta-l}|x|_{P_1}^{\mu-\delta}\left(\frac{|x|_{P_0}}{|x|_{P_1}}\right)^{(\eta-\delta)\wedge0}
\\
&=\sum_{\delta>l-\beta}|x|_{P_0}^{\beta-l}|x|_{P_1}^{\mu}\left(\frac{|x|_{P_0}}{|x|_{P_1}}\right)^{\eta\wedge\delta}
\lesssim|x|_{P_1}^{\mu+\beta-l}\left(\frac{|x|_{P_0}}{|x|_{P_1}}\right)^{\eta\wedge\alpha+\beta-l}
,\end{align*}
where we made use of $\delta\geq\alpha$ in the last step.

Next, consider the case $|x|_{P_0}\leq 2^{-n+2}\leq|x|_{P_1}$. Since then $d_{\frs}(\supp D^l_1K_n(x,\cdot),P_1)\sim|x|_{P_1}$, we can invoke part (a) of Definition~\ref{def:weightedHolder}. For any multiindex $l$, we get
\begin{align*}
|(\zeta-\Pi_x f(x))&(D_1^lK_n(x,\cdot))+(\cJ^{(n)}(x)f(x))_l|
\\
&\leq|\zeta(D_1^lK_n(x,\cdot))|+\sum_{\delta\leq|l|_\frs-\beta}|(\Pi_x\cQ_\delta f(x))(D_1^l K_n(x,\cdot)|
\\
&\lesssim 2^{n(|l|_\frs-\beta-\eta\wedge\alpha)}|x|_{P_1}^{\mu-\eta\wedge\alpha}+\sum_{\delta\leq l-\beta}2^{n(|l|_\frs-\beta-\delta)}|x|_{P_1}^{\mu-\delta}\left(\frac{|x|_{P_0}}{|x|_{P_1}}\right)^{(\eta-\delta)\wedge0}.
\end{align*}
Notice that here in fact we only use estimates of $\zeta$ tested against functions centred on the boundary, this observation useful in particular in the proof of part (ii) of the lemma. Let us denote the two terms above by $A_n$ and $B_n$. Summing $A_n$ over the relevant values of $n$, we have two cases, depending on the sign of $|l|_\frs-\beta-(\eta\wedge\alpha)=|l|_\frs-\bar\eta.$ If this exponent is positive, we get, after summation
$$
|x|_{P_0}^{(\eta\wedge\alpha)+\beta-|l|_\frs}|x|_{P_1}^{\mu-(\eta\wedge\alpha)}\leq
|x|_{P_1}^{\mu+\beta-|l|_\frs}\left(\frac{|x|_{P_0}}{|x|_{P_1}}\right)^{\bar\eta-|l|_{\frs}},
$$
which gives the required bound. If, on the other hand, $|l|_\frs-\bar\eta<0$ (equality cannot occur, by assumption), then the sum of the $A_n$'s over the relevant values of $n$ is bounded by a constant times
$$
|x|_{P_1}^{(\eta\wedge\alpha)+\beta-|l|_\frs+\mu-\eta},
$$
which is also of the required order. The treatment of $B_n$ is momentarily postponed.

In the final case, $|x|_{P_0}\leq|x|_{P_1}\leq 2^{-n+2}$. Similarly as above, recalling that $\zeta\in\cC^{a_\wedge}$, we get
\begin{align}
|(\zeta-\Pi_x f(x))&(D_1^lK_n(x,\cdot))+(\cJ^{(n)}(x)f(x))_l|
\nonumber\\
&\leq|\zeta(D_1^lK_n(x,\cdot))|+\sum_{\delta\leq|l|_\frs-\beta}|(\Pi_x\cQ_\delta f(x))(D_1^l K_n(x,\cdot)|
\nonumber\\
&\lesssim 2^{n(|l|_\frs-\beta-a_\wedge)}+\sum_{\delta\leq l-\beta}2^{n(|l|-\beta-\delta)}|x|_{P_1}^{\mu-\delta}\left(\frac{|x|_{P_0}}{|x|_{P_1}}\right)^{(\eta-\delta)\wedge0}.\label{eq:int0}
\end{align}
Recognising the second term as $B_n$, we consider its sum over the values of $n$ in both this and in the second case. Notice that the exponent of $2^n$ is strictly positive: indeed, $\delta+\beta\in\mathbb{N}$ implies $\delta\in\mathbb{N}$, but since $K_n$ and its derivatives annihilate polynomials, such terms have no contribution to the sum. The resulting quantity is bounded by a constant times
\begin{align*}
\sum_{\delta\leq l-\beta}|x|_{P_0}^{\beta+\delta-|l|_\frs}|x|_{P_1}^{\mu-\delta}\left(\frac{|x|_{P_0}}{|x|_{P_1}}\right)^{(\eta-\delta)\wedge0}&\leq 
\sum_{\delta\leq l-\beta}|x|_{P_1}^{\mu+\beta-|l|_\frs}\left(\frac{|x|_{P_0}}{|x|_{P_1}}\right)^{(\eta\wedge\delta)+\beta-|l|_\frs}
\\
&\lesssim |x|_{P_1}^{\mu+\beta-|l|_\frs}\left(\frac{|x|_{P_0}}{|x|_{P_1}}\right)^{(\eta\wedge\alpha)+\beta-|l|_\frs}
\end{align*}
as required. Moving on to the first term on the right-hand side of \eqref{eq:int0},  recall that $\bar\mu\leq a_\wedge+\beta$, and hence
\begin{equation}\label{eq:mu1}
\sum_n2^{n(|l|_\frs-\beta-a_\wedge)}\leq\sum_n2^{n(|l|_\frs-\bar\mu)}\lesssim|x|_{P_1}^{\bar\mu-|l|_\frs},
\end{equation}
where the sum runs over the relevant values of $n$, and we also made use of the fact that $\bar\mu\leq0$ holds, and in fact, by assumption, with strict inequality. This concludes the estimation of the second, and by symmetry, third term in \eqref{def:spaces}.

Turning to bounding $\|\cK^\zeta_\gamma f(x)-\Gamma_{xy}\cK_\gamma^\zeta f(y)\|$, recall that we need only consider pairs $(x,y)$ where $2\|x-y\|_\frs\leq|x,y|_{P_0}\leq|x,y|_{P_1}$. As before, this implies $|x|_{P_i}\sim|y|_{P_i}\sim|x,y|_{P_i}$.

We separate into different scales again, starting by $2^{-n+2}\leq2\|x-y\|_\frs\leq|x,y|_{P_0}\leq|x,y|_{P_1}$. As in \eqref{eq:int1}, we have
$$
|(\cN_{\gamma}^{\zeta,(n)}f(x))_l|\lesssim 2^{n(|l|_\frs-\beta-\gamma)}|x|_{P_0}^{\eta-\gamma}|x|_{P_1}^{\mu-\eta}.
$$
Summing over the relevant values of $n$, we get a bound of order
$$
\|x-y\|_s^{\gamma+\beta-|l|_\frs}|x|_{P_0}^{\eta-\gamma}|x|_{P_1}^{\mu-\eta},
$$
as required. Similarly,
$$
|(\Gamma_{xy}\cN_\gamma^{\zeta,(n)}f(x))_l|\lesssim \sum_{|k+l|_\frs<\gamma+\beta}\|x-y\|_\frs^{|k|_\frs}2^{n(|k+l|_\frs-\beta-\gamma)}|x|_{P_0}^{\eta-\gamma}|x|_{P_1}^{\mu-\eta},
$$
which, after summation, yields an estimate of order
$$
\sum_{|k+l|_\frs<\gamma+\beta}\|x-y\|_\frs^{|k|_\frs}\|x-y\|_\frs^{\gamma+\beta-|k+l|_\frs}|x|_{P_0}^{\eta-\gamma}|x|_{P_1}^{\mu-\eta},
$$
which is again of the required order. Next, using \eqref{eq:rearrange K 1}, we have
\begin{align*}
|(\cJ^{(n)}(x)f(x)-\Gamma_{xy}\cJ^{(n)}(y)f(y))_l|&\leq\sum_{\delta>|l|_\frs-\beta}(\Pi_x\cQ_\delta(f(x)-\Gamma_{xy}f(y))(D_1^{l}K_n(x,\cdot))
\\
&\lesssim\sum_{\delta>|l|_\frs-\beta}\|x-y\|^{\gamma-\delta}_\frs|x|_{P_0}^{\eta-\gamma}|x|_{P_1}^{\mu-\eta}2^{n(|l|_\frs-\beta-\delta)}.
\end{align*}
Summing over the relevant values $n$, we get the bound
$$
\sum_{\delta>|l|_\frs-\beta}\|x-y\|^{\gamma-\delta}_\frs|x|_{P_0}^{\eta-\gamma}|x|_{P_1}^{\mu-\eta}\|x-y\|^{\delta+\beta-|l|_\frs},
$$
as required.

Moving on to larger scales, we will then use the identity \eqref{eq:rearrange K 0}. Starting with the second term,
\begin{align*}
|\sum_{\delta\leq|l|_\frs-\beta}(\Pi_x\cQ_\delta(\Gamma_{xy}f(y)-f(x)))(D_1^l K_n(x,\cdot))|\lesssim\sum_{\delta\leq|l|_\frs-\beta}\|x-y\|^{\gamma-\delta}|x|_{P_0}^{\eta-\gamma}|x|_{P_1}^{\mu-\eta}2^{n(|l|_\frs-\beta-\delta)}.
\end{align*}
This can be treated for all the remaining scales at once: summing over $n$ such that $\|x-y\|_\frs\leq2^{-n+2}$ (the strict positivity of the exponent of $2^n$ can be argued exactly as in the previous similar situation), we get a bound of order
$$
\sum_{\delta\leq|l|_\frs-\beta}\|x-y\|^{\gamma-\delta}|x|_{P_0}^{\eta-\gamma}|x|_{P_1}^{\mu-\eta}\|x-y\|^{\delta+\beta-|l|_\frs},
$$
which is of required order.

We are left to estimate
$$
|(\Pi_y f(y)-\zeta)(K_{n;xy}^{k,\gamma})|.
$$
Rewriting the above quantity as in the formula \eqref{eq:rearrange K Taylor}, and making use of the properties mentioned following it, we have
\begin{align*}
|(\Pi_y f(y)&-\zeta)(K_{n;xy}^{l,\gamma})|
\\
&\leq\sum_{|k|_\frs\geq\gamma+\beta-|l|_\frs}\|x-y\|^{|k|_\frs}_\frs\sup_{\|x-\bar y\|_\frs\leq\|x-y\|_\frs}|(\Pi_y f(y)-\zeta)(D_1^{k+l}K_n(\bar y,\cdot))|
\\
&\leq \|x-y\|^{\gamma+\beta-|l|_\frs}_\frs\sum_{|k|_\frs\geq\gamma+\beta-|l|_\frs}\|x-y\|_{\frs}^{|k+l|_\frs-\gamma-\beta}\sup_{\|x-\bar y\|_\frs\leq\|x-y\|_\frs}|(\Pi_y f(y)-\zeta)(D_1^{k+l}K_n(\bar y,\cdot))|.
\end{align*}
Therefore it remains to show that, for any $k$ multiindex satisfying $|k|_\frs\geq\gamma+\beta-|l|_\frs$ and any $\bar y$ satisfying $\|x-\bar y\|_\frs\leq\|x-y\|_\frs$, the following bound holds.
\begin{equation}\label{eq:int2}
\|x-y\|_{\frs}^{|k+l|_\frs-\gamma-\beta}|(\Pi_y f(y)-\zeta)(D_1^{k+l}K_n(\bar y,\cdot))|\lesssim|x|_{P_0}^{\bar\eta-\bar\gamma}|x|_{P_1}^{\bar\mu-\bar\eta}.
\end{equation}
Notice that in particular, as before, $|x|_{P_i}\sim|\bar y|_{P_i}\sim|x,\bar y|_{P_i}$. To show \eqref{eq:int2} we again treat the remaining different scales separately. First, take $n$ such that $\|x-y\|_\frs\leq2^{-n+2}\leq|x,y|_{P_0}\leq|x,y|_{P_1}$. We write 
\begin{align}
|(\Pi_y f(y)-\zeta)(D_1^{k+l}K_n(\bar y,\cdot))|&\leq|(\Pi_{\bar y}f(\bar y)-\zeta)(D_1^{k+l}K_n(\bar y,\cdot)|\nonumber
\\
&\quad+|(\Pi_{\bar y}(\Gamma_{\bar y y}f(y)-f(\bar y))(D_1^{k+l}K_n(\bar y,\cdot))|.\label{eq:int3}
\end{align}
Summing the first term over the relevant values of $n$, we get a bound of order
$$
\sum_{n}2^{n(|k+l|_\frs-\beta-\gamma)}|x|_{P_0}^{\eta-\gamma}|x|_{P_1}^{\mu-\eta}\lesssim \|x-y\|_{\frs}^{-|k+l|_\frs+\gamma+\beta}|x|_{P_0}^{\eta-\gamma}|x|_{P_1}^{\mu-\eta},
$$
so the prefactor in \eqref{eq:int2} cancels and we get the required bound. Similarly to before, we used that while we only required $|k|_\frs\geq\gamma+\beta-|l|_\frs$, in fact equality can not occur due to the assumptions of the theorem, so the exponent of $2^n$ is strictly positive. The second term in \eqref{eq:int3} is estimated by 
$$
\sum_{\delta\leq\gamma}\|x-y\|_\frs^{\gamma-\delta}|x|_{P_0}^{\eta-\gamma}|x|_{P_1}^{\mu-\eta}2^{n(|k+l|_\frs-\beta-\delta)}.
$$
After summation over $n$, we get the bound
$$
\sum_{\delta\leq\gamma}\|x-y\|_\frs^{\gamma-\delta}|x|_{P_0}^{\eta-\gamma}|x|_{P_1}^{\mu-\eta}\|x-y\|_\frs^{-|k+l|_\frs+\beta+\delta},
$$
which, just as before, is of required order.

Turning to the scale $\|x-y\|_\frs\leq|x,y|_{P_0}\leq2^{-n+2}$, we estimate the the actions of the two distributions acting on the left-hand side of \eqref{eq:int2} separately. First,
$$
|(\Pi_y f(y))(D_1^{k+l}K_n(\bar y,\cdot))|\lesssim\sum_{\alpha\leq\delta\leq\gamma}|x|_{P_1}^{\mu-\delta}\left(\frac{|x|_{P_0}}{|x|_{P_1}}\right)^{(\eta-\delta)\wedge0}2^{n(|k+l|_\frs-\beta-\delta)}.
$$
As before, the exponent of $2^n$ is strictly positive. Therefore
\begin{align*}
\sum_{|x,y|_{P_0}\leq 2^{-n+2}}&\|x-y\|_{\frs}^{|k+l|_\frs-\gamma-\beta}(\Pi_y f(y))(D_1^{k+l}K_n(\bar y,\cdot))
\\
&\lesssim\sum_{\alpha\leq\delta\leq\gamma}|x|_{P_0}^{|k+l|_\frs-\gamma-\beta}|x|_{P_1}^{\mu-\delta}\left(\frac{|x|_{P_0}}{|x|_{P_1}}\right)^{(\eta-\delta)\wedge0}|x|_{P_0}^{-|k+l|_\frs+\beta+\delta}
\\
&\lesssim\sum_{\alpha\leq\delta\leq\gamma}|x|_{P_1}^{\mu-\delta-(\eta-\delta)\wedge0}|x|_{P_0}^{\eta\wedge\delta-\gamma}
\lesssim|x|_{P_1}^{\mu-\eta}|x|_{P_0}^{\eta\wedge\alpha-\gamma}\;,
\end{align*}
as required.

To treat the other distribution in \eqref{eq:int2}, we further divide the scales, and consider first $\|x-y\|_\frs\leq|x,y|_{P_0}\leq2^{-n+2}\leq|x,y|_{P_1}$. In this case the support of $K_n(\bar y,\cdot)$ is separated away from $P_1$, so we have
$$
|\zeta (D_1^{k+l}K_n(\bar y,\cdot))|\lesssim 2^{n(|k+l|_\frs-\beta-(\eta\wedge\alpha))}|x|_{P_1}^{\mu-(\eta\wedge\alpha)}.
$$
After summation on $n$ and multiplying by the prefactor in \eqref{eq:int2}, using $\|x-y\|_\frs\leq|x|_{P_0}$, we obtain a bound of order
$$
|x|_{P_0}^{|k+l|-\gamma-\beta}|x|_{P_0}^{(\eta\wedge\alpha)+\beta-|k+l|_\frs}|x|_{P_1}^{\mu-(\eta\wedge\alpha)},
$$
which is again of required order.

Finally, when $\|x-y\|_\frs\leq|x,y|_{P_0}\leq|x,y|_{P_1}\leq2^{-n+2}$, we can write
$$
|\zeta(D_1^{k+l}K_n(\bar y,\cdot))|\lesssim 2^{n(|k+l|_\frs-\beta-a_\wedge)}\leq 2^{n(|k+l|_\frs-\bar \mu)}.
$$
Summing over $n$ and multiplying by the prefactor in \eqref{eq:int2}, we arrive at the bound
\begin{equation}\label{eq:mu2}
|x|_{P_0}^{|k+l|-\gamma-\beta}|x|_{P_1}^{\bar \mu-|k+l|_\frs}=
|x|_{P_0}^{\bar\eta-\bar\gamma}|x|_{P_0}^{|k+l|_\frs-\bar\eta}|x|_{P_1}^{\bar \mu-|k+l|_\frs},
\end{equation}
and since $|k+l|_\frs-\bar\eta\geq0$, the middle term can be estimated by $|x|_{P_1}^{|k+l|_\frs-\bar\eta}$, and the proof is finished.

The proof of continuity again goes in an analogous way and is omitted here.

As for the identity \eqref{eq:reco identity}, inspecting the proof of \cite[~Thm 5.12]{H0}, one can notice that this boils down to obtaining the estimate
$$
\Big|\sum_{n\geq 0}\int(\Pi_x f(x)-\tilde\cR f)(\cK_{n,yx}^{0,\gamma})\psi_{x}^\lambda(y)\,dy\Big|\lesssim\lambda^{\gamma+\beta}
$$
for $\lambda\ll|x|_{P_0}\wedge|x|_{P_1}$. This however is a local statement and therefore the argument in \cite{H0} carries through for our case virtually unchanged.

(ii) In the $f\in\cD_{P,\{1\}}^{\gamma,w}$ case, when repeating the above arguments, one should only pay attention in order to get the improved exponent $\bar\sigma=\sigma+\beta$ in place of $(\sigma\wedge\alpha)+\beta=\alpha+\beta$. This improvement is the consequence of the improved bound on $\|f(x)\|_l$ near $P_1$, thanks to Proposition~\ref{prop:1}, and of the improved regularity of $\hat \cR f$ when tested against functions centred on $P_1$, thanks to \eqref{eq:reco on P1}.
\end{proof}

\begin{remark}\label{remark:mu2}
The ``slight difficulty'' foreshadowed in Remark~\ref{remark:mu} is the constraint $\bar\mu\leq0$ in the above 
lemma. Indeed, in all three of the concrete examples mentioned in the introduction, it turns out one needs to 
choose $\bar\mu > 0$. Note that the only two places in the proof where the condition $\bar\mu\leq0$ was used 
are \eqref{eq:mu1} with $l=0$ and \eqref{eq:mu2}. In the latter case one, actually only needs $\bar\mu\leq\bar\gamma$, which holds as soon as we choose $\gamma$ sufficiently large so that $\mu\leq\gamma$. Therefore, provided that $\zeta$
is such that the bound
$$
\sum_{2^{-n+2}\geq|x|_{P_1}}|\zeta(D_1^lK_n(x,\cdot))|\lesssim|x|_{P_1}^{\bar \mu-|l|_\frs}\;,
$$
holds for $|x|_{P_0}\leq|x|_{P_1}$, and the corresponding symmetric bound holds for $|x|_{P_1}\leq|x|_{P_0}$, 
for all $|l|_\frs\leq\bar\mu$, and $\bar\mu\leq a_\wedge+\beta$, then the conclusions of Lemma~\ref{lem:int} 
still hold. This appears to be a very strong condition, but in the standard case where $K$ is a non-anticipative
kernel and $\zeta$ is supported on positive times, it is actually quite reasonable, see Proposition~\ref{prop:improved mu} below. 
\end{remark}

\subsection{Integration against smooth remainders with singularities at the boundary}
\label{subsec:int remainder}

From this point on we move to a more concrete setting, and in particular $P_0$ and $P_1$ will play different roles. We shall view $\R^d$ as $\R\times\R^{d-1}$, denoting its points by either $z$ or by $(t,x)$, where $t\in\R$, $x\in \R^{d-1}$. Furthermore we assume that $P_0$ is given by $\{(0,x):x\in\R^{d-1}\}$

\begin{definition}\label{def:Z}
Denote by $\scZ_{\beta,P}$ the set of functions $Z:(\R^d\setminus P)^2 \to \R$ that can be written in the form
$
Z(z,z')=\sum_{n\geq0}Z_n(z,z')
$
where, for each $n$, $Z_n$ satisfies the following
\begin{claim}
\item $Z_n$ is supported on $\{(z,z')=((t,x),(t',x')):\,|z|_{P_1}+|z'|_{P_1}+|t-t'|^{1/\frs_0}\leq C 2^{-n}\}$, 
where $C$ is a fixed constant depending only on the domain $D$.
\item For any ($d$-dimensional) multiindices $k$ and $l$,
$$
|D_1^kD_2^lZ_n(z,z')|\lesssim2^{n(|\frs|+|k+l|_\frs-\beta)},
$$
where the proportionality constant may depend on $k$ and $l$, but not on $n$, $z$, $z'$.
\end{claim}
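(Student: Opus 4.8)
The plan is to produce the decomposition $Z=\sum_{n\ge0}Z_n$ directly from the method of images. Write the Green's function of the initial/boundary value problem on the domain under consideration (a half-space, an interval, or the square $(-1,1)^2$) as $G=\hat K+Z$, where $\hat K(z,z')=\Phi(t-t',x-x')\bone_{t>t'}$ is the translation-invariant part, with $\Phi(t,x)=(4\pi t)^{-(d-1)/2}e^{-|x|^2/(4t)}$ the Euclidean heat kernel; after a cut-off near the diagonal $\hat K$ is a kernel in $\scK_\beta$ with $\beta=\frs_0=2$ (up to a globally smooth remainder handled by the standard theory), and $Z$ is the sum of the \emph{nontrivial} image terms, $Z(z,z')=\sum_{\sigma\in\mathcal{G},\ \sigma\ne\mathrm{id}}\epsilon_\sigma\,\Phi(t-t',x-\sigma x')$, where $\mathcal{G}$ is the group generated by the reflections across the bounding hyperplanes $\{x_i=\pm1\}$ and $\epsilon_\sigma\in\{-1,1\}$ is the sign dictated by the boundary condition. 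It is this $Z$ for which Definition~\ref{def:Z} must be verified, with $\beta=2$.

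The key geometric observation is a coordinatewise identity. If $\sigma\ne\mathrm{id}$ flips coordinate $i$ (i.e.\ acts there as $x_i\mapsto\pm2-x_i$) without translating it, then for all $x,x'\in D$ one has $|(x-\sigma x')_i|=d(x,H)+d(x',H)$ exactly, where $H=\{x_i=\pm1\}$ is the reflecting hyperplane, and consequently $|x-\sigma x'|\ge d(x,\partial D)+d(x',\partial D)\ge|z|_{P_1}+|z'|_{P_1}$; if on the other hand $\sigma$ translates some coordinate by a nonzero multiple of the period, then $|x-\sigma x'|\ge2$ for all $x,x'\in D$. Only finitely many $\sigma$ — those that are compositions of reflections across mutually perpendicular faces meeting at a common corner — are of the first type; call this set $\mathcal{G}'$. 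The terms with $\sigma\notin\mathcal{G}'\cup\{\mathrm{id}\}$, together with all their derivatives, are bounded by $e^{-1/(t-t')}$ on $\{0<t-t'\le1\}$ (the Gaussian absorbing the polynomial prefactor), and the series over them converges because of the lattice gap; their sum is thus a smooth remainder whose derivatives decay faster than any power of $2^{-n}$ on each dyadic shell, so it is harmless and can be absorbed into $Z_0$. To split the remaining finite sum $\tilde Z:=\sum_{\sigma\in\mathcal{G}'}\epsilon_\sigma\Phi(t-t',x-\sigma x')$, fix a smooth partition of unity $\sum_{n\ge0}\chi_n\equiv1$ on $(0,\infty)$ with $\chi_n$ supported in $[c2^{-n},C2^{-n}]$ and $|\chi_n^{(j)}(r)|\lesssim2^{jn}$, and a smooth surrogate $\rho(z,z')$ for $|z|_{P_1}+|z'|_{P_1}+|t-t'|^{1/\frs_0}$, comparable to it and satisfying $|D_1^kD_2^l\rho|\lesssim\rho^{1-|k+l|_\frs}$ (such a $\rho$ exists by the strong cone condition on $P_1$). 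Setting $Z_n:=\chi_n(\rho)\tilde Z$ (with the above remainder adjoined to $Z_0$), property (a) of Definition~\ref{def:Z} holds by construction.

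For property (b) we invoke the standard heat-kernel bound $|D_1^kD_2^l\,\Phi(t-t',x-\sigma x')|\lesssim(t-t')^{-(d-1+|k+l|_\frs)/2}\,e^{-|x-\sigma x'|^2/(8(t-t'))}$, valid for $\sigma\in\mathcal{G}'$. On $\supp\chi_n(\rho)$ one has $t-t'\lesssim2^{-2n}$, and we split into two regimes according to which term of $\rho$ dominates. If $|z|_{P_1}+|z'|_{P_1}\gtrsim2^{-n}$, then by the geometric inequality $|x-\sigma x'|\gtrsim2^{-n}$; writing $t-t'=2^{-2n}u$ with $u\in(0,C]$, the bound becomes $\lesssim2^{n(d-1+|k+l|_\frs)}u^{-(d-1+|k+l|_\frs)/2}e^{-c/u}\lesssim2^{n(d-1+|k+l|_\frs)}$. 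Otherwise $|t-t'|^{1/\frs_0}\gtrsim2^{-n}$, so $t-t'\gtrsim2^{-2n}$ and the prefactor $(t-t')^{-(d-1+|k+l|_\frs)/2}$ is by itself $\lesssim2^{n(d-1+|k+l|_\frs)}$. Hence $|D_1^kD_2^l\tilde Z|\lesssim2^{n(d-1+|k+l|_\frs)}$ on $\supp\chi_n(\rho)$; combining with $|D_1^kD_2^l\chi_n(\rho)|\lesssim2^{n|k+l|_\frs}$ through the Leibniz rule, and using $|\frs|=\frs_0+(d-1)=d+1$, gives $|D_1^kD_2^lZ_n|\lesssim2^{n(|\frs|+|k+l|_\frs-2)}$, which is exactly property (b) with $\beta=2$ (the remainder in $Z_0$ satisfying a far better bound). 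I expect the main obstacle to be the geometric bookkeeping for the square: isolating the finite family $\mathcal{G}'$, proving the exact identity $|(x-\sigma x')_i|=d(x,H)+d(x',H)$ in each flipped coordinate and deducing the lower bound on $|x-\sigma x'|$, and — more technically — constructing the smooth surrogate $\rho$ with the correct homogeneous derivative bounds near the corners of $P_1$; once these ingredients are in place the heat-kernel estimates themselves are routine.
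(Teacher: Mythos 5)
Your argument is correct, but it verifies the definition by a genuinely different mechanism than the paper's Example~\ref{example}. The paper never re-proves any Gaussian estimates: it starts from the already given dyadic decomposition $K^0=\sum_n K^0_n$ of the singular part of the \emph{free} kernel, composes each piece with the finitely many relevant group elements $g_i$, and multiplies by cutoffs $\varphi_n$ supported where $d_\frs(x,\partial D)\vee d_\frs(y,\partial D)\leq 2^{-n+1}$; the support property then follows from the abstract hypothesis on $\cG$ (namely that $\|x-g(y)\|_\frs\leq 2^{-n}$ forces both points within $2^{-n}$ of $\partial D$ --- your ``key geometric observation'' is exactly the verification of this hypothesis for the interval and the square), and the derivative bounds are simply inherited from those of $K^0_n$ and $\varphi_n$ via Leibniz. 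You instead decompose from scratch into dyadic shells of $|z|_{P_1}+|z'|_{P_1}+|t-t'|^{1/2}$ and establish the bounds directly from the explicit heat-kernel estimates, with the correct dichotomy (boundary distance large, so the Gaussian factor kills the prefactor, versus time separation large, so the prefactor is already of the right order); I checked the exponent bookkeeping and it matches $2^{n(|\frs|+|k+l|_\frs-\beta)}$ with $\beta=2$, $|\frs|=d+1$. What the paper's route buys is brevity and generality: it applies to any $G^0$ admitting a decomposition $K^0+R^0$ with $K^0\in\scK_\beta$, with no explicit formula needed, and it avoids the one genuinely delicate ingredient of your construction, namely the smooth surrogate $\rho$ for the distance function with the homogeneous derivative bounds $|D^\alpha\rho|\lesssim\rho^{1-|\alpha|_\frs}$ near the corners of the square, which you flag but do not construct. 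Your route buys transparency about \emph{why} the bounds hold and does not require formulating the abstract group condition, but it is tied to the explicit Gaussian. Neither gap is fatal; if you keep your approach you should actually carry out the construction of $\rho$ (or, more simply, replace the shells in $\rho$ by the paper's cutoffs $\varphi_n(x,y)$ combined with a dyadic decomposition in $t-t'$ alone, which achieves the same support property without a regularized corner distance).
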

\end{definition}

The relevance of this definition is illustrated by the following example, which shows that if we consider a
heat kernel on a domain obtained by the reflection principle, then it can always be decomposed into
an element of $\scK_\beta$ and an element of $\scZ_{\beta,P}$.

\begin{example}\label{example}
Our main example will be of the following form. Suppose that $G^0$ is a function on $\R^d\times\R^d\setminus\{(z,z'):z=z'\}$ with the following properties:
\begin{claim}
\item We have a decomposition $G^0=K^0+R^0$, where $K^0\in\scK_\beta$, while $R^0$ is a globally smooth function.
\item For any two multiindices $k$ and $l$ and any number $a$, there exists a constant $C_{k,l,a}$ such that it holds that
$
|D_1^kD_2^lR^0(z,z')|\leq C_{k,l,a}(|x-x'|\vee 1)^a
$.
\end{claim}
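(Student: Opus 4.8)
The plan is to prove that the Green's function $G$ on $\R\times D$ obtained from $G^0$ by the method of images decomposes as $G=K+Z$ with $K\in\scK_\beta$ and $Z\in\scZ_{\beta,P}$, where $P_1=\R\times\partial D$. Writing $\Sigma$ for the group of affine isometries of $\R^{d-1}$ generated by the reflections in the flat faces of $\partial D$, extended to $\R^d=\R\times\R^{d-1}$ so as to fix the time coordinate, one has $G(z,z')=\sum_{\sigma\in\Sigma}\eps_\sigma\,G^0(z,\sigma z')$ for suitable signs $\eps_\sigma\in\{\pm1\}$ with $\eps_{\mathrm{id}}=1$. First I would verify that this series is well defined on $(\R\times D)^2$ off the diagonal: by Definition~\ref{def: K}(i) the term $K^0(z,\sigma z')$ vanishes unless $\|z-\sigma z'\|_\frs\le1$, which for $z,z'\in D$ restricts $\sigma$ to a finite set of ``short'' isometries — the identity, the reflections in the faces bordering the region in question, and the point reflections in its corners — while every remaining term involves only $R^0$, which by the hypothesis (with $a$ arbitrarily negative) decays faster than any inverse power of the spatial separation of its two arguments; since the images $\sigma z'$ form a locally finite set of at most polynomial growth, the series and all its derivatives converge absolutely and locally uniformly.

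Next I would take $K:=K^0\in\scK_\beta$, which carries the entire diagonal singularity of $G$ (for $\sigma\neq\mathrm{id}$ one has $\{z=\sigma z'\}\cap(D\times D)=\emptyset$, so none of the other terms is singular on the diagonal), and then show that $Z:=G-K^0$ belongs to $\scZ_{\beta,P}$. For the image pieces $K^0(\cdot,\sigma\cdot)$ with $\sigma\neq\mathrm{id}$ I would set $Z^\sigma_n(z,z'):=K^0_n(z,\sigma z')$ with $K^0=\sum_nK^0_n$; since $\sigma$ is an affine $d_\frs$-isometry fixing time, the bound of Definition~\ref{def: K}(ii) transfers verbatim to the one demanded in Definition~\ref{def:Z}(ii), and $Z^\sigma_n$ is supported on $\{\|z-\sigma z'\|_\frs\le2^{-n}\}$, so the crux is that on $D\times D$ this set lies inside $\{|z|_{P_1}+|z'|_{P_1}+|t-t'|^{1/\frs_0}\le C2^{-n}\}$. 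The time part is immediate because $\sigma$ fixes time; for the spatial part one checks that when $\sigma z'$ is this close to $z\in D$ the isometry $\sigma$ acts as a reflection in one face of $\partial D$, or as a point reflection in one corner, with $z$ and $z'$ lying on the interior side; the midpoint $w$ of $z$ and $\sigma z'$ then lies within $O(2^{-n})$ of that face (resp.\ corner), and the convexity of $D$ at such a face/corner forces $d_\frs(z,\partial D)$ and $d_\frs(z',\partial D)$ to be dominated by $\|z-\sigma z'\|_\frs$ as well, with constants independent of $\sigma$. Summing over $n$ gives $K^0(\cdot,\sigma\cdot)\in\scZ_{\beta,P}$, and only finitely many such $\sigma$ contribute, so their sum remains in $\scZ_{\beta,P}$.

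Finally, the purely smooth remainder $R:=R^0(\cdot,\cdot)+\sum_{\sigma\neq\mathrm{id}}\eps_\sigma R^0(\cdot,\sigma\cdot)$ is, by the first step, a globally smooth function with all derivatives bounded (take $a=0$); using that $R^0$ is supported on $\{0<t-t'<1\}$ — as we may assume of $G^0$, the complementary piece being an honest smooth kernel that the Schauder machinery of \cite{H0} absorbs trivially — every relevant $(z,z')$ satisfies $|z|_{P_1}+|z'|_{P_1}+|t-t'|^{1/\frs_0}\le3$ since $|z|_{P_1},|z'|_{P_1}\le1$, so $R$ serves as the single level $Z_0$ of a $\scZ_{\beta,P}$-decomposition (with $Z_n=0$ for $n\ge1$) and hence $R\in\scZ_{\beta,P}$. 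Adding the three contributions yields $G=K+Z$ with $K\in\scK_\beta$ and $Z\in\scZ_{\beta,P}$. The step I expect to be the main obstacle is the geometric estimate of the second paragraph: for each contributing $\sigma$ in the (infinite, when $D=(-1,1)^2$) reflection group one must single out the face or corner ``responsible'' for it and show, with constants independent of $\sigma$, that $\|z-\sigma z'\|_\frs$ dominates each of $d_\frs(z,\partial D)$, $d_\frs(z',\partial D)$ and $|t-t'|^{1/\frs_0}$ up to a fixed multiple — and it is precisely here that the piecewise flatness of $\partial D$ and the transversality of its faces (guaranteed by the cone condition) must be used.
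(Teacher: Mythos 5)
Your argument is essentially the paper's own (Example~\ref{example}): the identity term of the image sum supplies $K=K^0$, the finitely many ``short'' image terms of $K^0$ together with the rapidly convergent sum of $R^0$-terms supply $Z$, and everything hinges on the geometric fact that $g\neq\id$, $x,y\in D$ and $\|x-g(y)\|_\frs\leq 2^{-n}$ force both points to within $O(2^{-n})$ of $\partial D$ --- a fact the paper simply \emph{postulates} as a hypothesis on the group $\cG$ and verifies for the interval/cube, whereas you propose to derive it from convexity at the responsible face, which is fine for the piecewise-flat domains actually used. The one technical point you gloss over is that $(z,z')\mapsto K^0_n(z,\sigma z')$, viewed on all of $(\R^d\setminus P)^2$, does \emph{not} satisfy the support condition of Definition~\ref{def:Z} (its support contains points far from $P_1$ inside the reflected copies of $D$); since the conclusion is only an identity on $([0,1]\times D)^2$, the paper multiplies each such term, and the smooth remainder, by cutoff functions $\varphi_n$, $\varphi$ adapted to $\partial D$ and to $D\times D$, and your construction needs the same truncation to land literally in $\scZ_{\beta,P}$.
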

As it is shown in \cite{H0}, the heat kernel in any dimension satisfies these conditions with $\beta=2$. Suppose then that we have a discrete group $\cG$ of isometries of $\R^{d-1}$ with a bounded fundamental domain $D$, and with the property that the following implication holds
$$
g\in\cG\setminus\{\id\},\; x,y\in D,\; \|x-g(y)\|_{\frs}\leq 2^{-n}\quad\Rightarrow\quad d_\frs(x,\partial D)\vee d_{\frs}(y,\partial D)\leq 2^{-n}.
$$
Let $a \colon \cG \to \{-1,1\}$ be a group morphism and write
\begin{equation}\label{eq:G}
G((t,x),(s,y))=\sum_{g\in\cG}a_g G^0((t,x),(s,g(y))).
\end{equation}
A concrete example to have in mind is when $D=[-1,1]$ and $\cG$ is generated by the maps $y\mapsto -2-y$ and 
$y\mapsto 2-y$. Then, the trivial morphism $a_g\equiv 1$ yields the Neumann heat kernel on $D$, while the 
morphism with kernel given 
by the orientation-preserving $g$'s yields the Dirichlet heat kernel. Obvious higher dimensional analogues include
the Neumann and Dirichlet heat kernels on $(d-1)$-dimensional cubes.

For functions $f$ and $g$ on $(\R^d)^2$, write $f \sim g$ if $f(z) = g(z)$ for $z \in ([0,1]\times D)^2$.
We claim that, setting $P_1=\R\times\partial D$, there exist $K\in\scK_\beta$, $Z\in\scZ_{\beta,P}$, such that
$G \sim K + Z$.
First, due to the decay properties of $R^0$, the sum $\tilde R = \sum_g a_gR^0((t,x),(s,g(y)))$ converges and defines a globally smooth function which we can truncate in a smooth way outside of $([0,1]\times D)^2$, so that it belongs to 
$\scZ_{\beta,P}$. For $K^0$, we divide the sum
$$\sum_{g\in\cG}a_g K^0((t,x),(s,g(y)))$$
into three parts. For $g=\id$, we simply set $K=K^0$ which belongs to $\scK_\beta$ by assumption. 
The terms with $g$ such that $y\in D$ 
implies $d_\frs(g(y),D)>1$ may safely be discarded since they are supported outside of $([0,1]\times D)^2$. 
For the remaining finitely many terms, say $g_1,g_2,\ldots,g_m$, we use our assumption on $\cG$, by which we can write
\begin{equ}
K^0_n((t,x),(s,g_m(y))) \sim \varphi_n(x,y) K^0_n((t,x),(s,g_m(y)))\;.
\end{equ}
where $\varphi_n$ is $1$ on $\{(x,y):d_\frs(x,\partial D)\vee d_{\frs}(y,\partial D)\leq 2^{-n}\}$, is supported on $\{(x,y):d_\frs(x,\partial D)\vee d_{\frs}(y,\partial D)\leq 2^{-n+1}\}$, and for all multiindices $k$ and $l$, $D_1^kD_2^l\varphi$ is bounded by $2^{n(|k+l|_\frs)}$, up to a universal constant. Let furthermore $\varphi$ be a smooth compactly supported function that equals $1$ on $D\times D$. We can then set
$$
Z_0((t,x),(s,y))=\sum_{i=1}^m\varphi(x,y)K_0^0((t,x),(s,g_i(y)))+\varphi(x,y)\tilde R,
$$
and for $n>0$
$$
Z_n((t,x),(s,y))=\sum_{i=1}^m\varphi_n(x,y)K_n^0((t,x),(s,g_i(y))),
$$
which does indeed yield an element of $\scZ_{\beta,P}$.
\end{example}

\begin{lemma}\label{lem:Z}
Let $a\in\R_-^3$ and $a_\wedge$ be as in Definition~\ref{def:weightedHolder}, $u\in\cC_P^{a}$ and $Z\in\scZ_{\beta,P}$. Then the function
\begin{equation}\label{eq:Z}
v \colon z\mapsto\sum_{n\geq 0}\scal{u ,Z_n(z,\cdot)}
\end{equation}
is a smooth function on $\R^d\setminus P$, and its lift to $\bar T$ via its Taylor expansion, 
which we also denote by $v$, belongs to $\cD_P^{\gamma,w}(\bar T)$, where $\sigma=a_1+\beta$, 
$\gamma\geq\sigma\vee 0$, and $\eta$ and $\mu$ satisfy
\begin{equation}\label{eq:mu3}
\eta\leq\gamma, \quad\mu\leq(a_\wedge+\beta)\wedge0,
\end{equation}
provided neither of $\sigma$ nor $\mu$ are integers. 

If $u$ furthermore satisfies $\scal{u,\psi_z^\lambda}\lesssim\lambda^{\bar a_1}|z|_{P_0}^{a_\cap-\bar a_1}$ for $z\in P_1\setminus P_0$ and $2\lambda\leq|z|_{P_0}$ with some $\bar a_1\geq a_1$, then the conclusions hold with the definition of $\sigma$ replaced by $\sigma=\bar a_1+\beta$.
\end{lemma}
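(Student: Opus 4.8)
The plan is to treat $v$ as a classical function, reduce the $\cD_P^{\gamma,w}$-membership to pointwise bounds on its derivatives and their Taylor increments, and extract those bounds from the $\cC^a_P$-estimates on $u$ via the dyadic decomposition $Z(z,\cdot)=\sum_n Z_n(z,\cdot)$, exploiting that each $Z_n(z,\cdot)$ concentrates near $P_1$ and — through the constraint $|t-t'|^{1/\frs_0}\le C2^{-n}$ — near the time-slice through $z$.

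\textbf{Smoothness and reduction.} Since $Z_n(z,\cdot)\equiv 0$ unless $|z|_{P_1}\le C2^{-n}$, on any compact subset of $\R^d\setminus P$ only finitely many summands in $\sum_n\scal{u,Z_n(z,\cdot)}$ are nonzero, and each is smooth in $z$ because $Z_n$ is smooth and $u$ is a fixed distribution; hence $v\in C^\infty(\R^d\setminus P)$ and $D^k v(z)=\sum_n\scal{u,D_1^kZ_n(z,\cdot)}$. To conclude that the Taylor lift of $v$ lies in $\cD_P^{\gamma,w}(\bar T)$ it then suffices, exactly as in the polynomial-sector computations of \cite[Lems.~6.5 and 6.6]{H0}, to prove the pointwise weight bounds
\[
 |D^k v(z)|\lesssim |z|_{P_0}^{\mu-|k|_\frs}\Big(\tfrac{|z|_{P_1}}{|z|_{P_0}}\Big)^{(\sigma-|k|_\frs)\wedge 0}\text{ on }\{|z|_{P_1}\le|z|_{P_0}\}
\]
together with the symmetric bound with $\eta$ on $\{|z|_{P_0}\le|z|_{P_1}\}$, for all $|k|_\frs$ below a fixed order exceeding $\gamma$: the increment bound over $\frK_P$ then follows from Taylor's theorem together with $|x|_{P_i}\sim|y|_{P_i}$ and $\|x-y\|_\frs\le|x,y|_{P_0}\wedge|x,y|_{P_1}$. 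On the part of $\{|z|_{P_0}\le|z|_{P_1}\}$ where $|z|_{P_1}$ is bounded below, $v$ is just a finite sum of smooth functions with bounded derivatives, which is why nothing beyond $\eta\le\gamma$ is imposed at $P_0$.

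\textbf{The core estimate.} Fix $k$ and a point $z$ with, say, $|z|_{P_1}\le|z|_{P_0}$, and split $\sum_n\scal{u,D_1^kZ_n(z,\cdot)}$ into the ranges $2^{-n}\le|z|_{P_1}$ (empty, by the support property), $|z|_{P_1}<2^{-n}\le|z|_{P_0}$, and $|z|_{P_0}<2^{-n}$. In each surviving range I would decompose $D_1^kZ_n(z,\cdot)$, as in the proof of Proposition~\ref{prop:extension}, into a bounded-overlap family of rescaled test functions $c\,\psi_y^{2^{-n}}$ with $|c|\lesssim 2^{n(|k|_\frs-\beta)}$ and centres $y$ within $C2^{-n}$ of $P_1$; by the time-constraint the collection of centres fills a $C2^{-n}$-neighbourhood of the codimension-$\frm$ set $P_1\cap\{t'=t\}$, not of all of $P_1$. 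For $2^{-n}\le|z|_{P_0}$ the centres satisfy $|y|_{P_0}\sim|z|_{P_0}\ge 2\cdot2^{-n}$ and $|y|_{P_1}\le 2|y|_{P_0}$, so estimate~(b) of Definition~\ref{def:weightedHolder} applies and contributes the factors $|z|_{P_0}^{a_\cap-a_1}2^{-n a_1}$; summing over this range produces the exponent $\sigma=a_1+\beta$ at $P_1$ and an exponent at $P_\cap$ bounded above by $a_\cap+\beta$. For $2^{-n}>|z|_{P_0}$, the point $z$ and the whole support of $Z_n(z,\cdot)$ lie within $C2^{-n}$ of the corner $P_\cap$, and there I would only use $u\in\cC^{a_\wedge}$; this regime forces $\mu\le(a_\wedge+\beta)\wedge 0$, and $a_\wedge>-\frm$ is precisely what makes the geometric series over these $n$ converge. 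Excluding integer values of $\sigma$ and $\mu$ rules out the borderline logarithmically divergent cases, and the continuity estimates follow by running the same bookkeeping on differences, which — as the paper does for analogous statements — I would only sketch.

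\textbf{The improved statement, and the main difficulty.} For the last assertion only the range $|z|_{P_1}<2^{-n}\le|z|_{P_0}$ changes: each bump $\psi_y^{2^{-n}}$ with $y$ within $C2^{-n}$ of $P_1$ can be rewritten as $\bar\psi_{\tilde y}^{2^{1-n}}$ with $\bar\psi\in\cB^r$ and $\tilde y\in P_1$ the point of $P_1$ nearest to $y$; since $|\tilde y|_{P_0}\sim|z|_{P_0}$, the hypothesised bound $\scal{u,\psi_z^\lambda}\lesssim\lambda^{\bar a_1}|z|_{P_0}^{a_\cap-\bar a_1}$ applies in place of~(b) and replaces $a_1$ by $\bar a_1\ge a_1$ throughout this range, hence $\sigma=a_1+\beta$ by $\sigma=\bar a_1+\beta$, with everything else unchanged. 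I expect the hard part to be the matching at the interface $2^{-n}\sim|z|_{P_0}$ between the ``near $P_1$, away from $P_0$'' behaviour (ruled by $a_1$, resp.\ $\bar a_1$) and the ``near $P_\cap$'' behaviour (ruled by $a_\wedge$): one must check that the factors $|z|_{P_0}^{a_\cap-a_1}$, the powers of $|z|_{P_1}$, and the powers of $|z|_{P_0}$ recombine into exactly the product weight of \eqref{def:spaces}, and that the fact that $Z_n(z,\cdot)$ is spread along $P_1$ rather than supported in a single $2^{-n}$-ball is rendered harmless by its thinness in the normal and time directions together with $a_\wedge>-\frm$.
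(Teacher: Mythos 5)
Your global architecture — differentiate under the (finite) sum, prove pointwise derivative bounds from the two parts of Definition~\ref{def:weightedHolder} on separate dyadic ranges, and deduce the increment bound by Taylor's theorem — is the same as the paper's. The genuine gap is in the core estimate, and you in fact flag it yourself without resolving it: Definition~\ref{def:Z} only constrains $|z'|_{P_1}$ and $|t-t'|$, not $\|x-x'\|$, so as written $D_1^lZ_n(z,\cdot)$ may be supported on a full $2^{-n}$-tube around the codimension-$\frm$ set $P_1\cap\{t'=t\}$. Decomposing such a function into rescaled elements of $\cB^r$ at scale $2^{-n}$ requires $N\sim 2^{n(|\frs|-\frm)}$ bumps, and applying (b) of Definition~\ref{def:weightedHolder} bump by bump then yields
\begin{equ}
|\scal{u,D_1^lZ_n(z,\cdot)}| \lesssim 2^{n(|\frs|-\frm)}\,2^{n(|l|_\frs-\beta-a_1)}\,|z|_{P_0}^{a_\cap - a_1}\;,
\end{equ}
i.e.\ an extra $2^{n(|\frs|-\frm)}$ beyond the bound \eqref{eq:Zproof} the paper asserts. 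Summing over $2|z|_{P_1}\le 2^{-n}\le|z|_{P_0}$ would then produce an exponent $a_1+\beta-(|\frs|-\frm)$ at $P_1$ rather than $\sigma=a_1+\beta$, and the lemma would fail. Your claim that this is ``rendered harmless by thinness together with $a_\wedge>-\frm$'' is not substantiated: the condition $a_\wedge>-\frm$ controls the convergence of a series with exponent $\frm+a_\wedge$ (as in Proposition~\ref{prop:extension}), but it does nothing to absorb the multiplicative factor $2^{n(|\frs|-\frm)}$ in a scale-by-scale derivative bound — and indeed, your own subsequent sentence ``summing over this range produces the exponent $\sigma=a_1+\beta$'' contradicts the bump count you just set up.

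What actually makes the proof go through — and what the paper's direct application of (b) in \eqref{eq:Zproof} tacitly relies on — is that the kernels $Z$ actually used have an additional locality property not explicitly recorded in Definition~\ref{def:Z}: in the construction of Example~\ref{example}, $Z_n(z,\cdot)$ is a finite sum (over reflections $g_1,\ldots,g_m$) of functions supported in $\frs$-balls of radius $\sim 2^{-n}$, so for each fixed $z$ the function $D_1^lZ_n(z,\cdot)$ is $O(1)$ bumps of scale $2^{-n}$ centred at points $y$ with $|y|_{P_0}\sim|z|_{P_0}$ and $|y|_{P_1}\lesssim 2^{-n}$. With $N=O(1)$ the bound \eqref{eq:Zproof} follows directly from (b), and the rest of the computation then goes exactly as you sketch: summing the geometric series in the two cases $|l|_\frs\gtrless\sigma$ produces the $|z|_{P_1}^{\sigma-|l|_\frs}|z|_{P_0}^{a_\cap - a_1}$ and $|z|_{P_0}^{a_\cap+\beta-|l|_\frs}$ factors, the regime $2^{-n}\ge|z|_{P_0}\vee|z|_{P_1}$ uses $u\in\cC^{a_\wedge}$ with $\mu\le (a_\wedge+\beta)\wedge 0$ and $\mu\ne 0$ to sum, and the Taylor argument plus the embedding $\cD_P^{\lceil\gamma\rceil,w}\subset\cD_P^{\gamma,w}$ gives the increment bound for non-integer $\gamma$. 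Your treatment of the improved-$\sigma$ variant (recentring the bumps on $P_1$, using the extra hypothesis on $u$) is correct given this fix, and is essentially what the paper does in its last sentence.
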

\begin{proof}
Notice that in \eqref{eq:Z} only the terms where $2^{-n}\geq|z|_{P_1}$ give nonzero contributions. In particular, since the sum is finite, any differentiation on $v$ can be carried inside. If $|z|_{P_0}\leq 2|z|_{P_1}$, then we simply use the fact that $u\in\cC^{a_\wedge}$, to get, for any multiindex $l$
\begin{equation}\label{eq:badmuagain1}
|D^l v(z)|\lesssim\sum_{2^{-n}\geq|z|_{P_1}}2^{n(|l|_\frs-\beta-a_\wedge)}
\leq\sum_{2^{-n}\geq|z|_{P_1}}2^{n(|l|_\frs-\mu)}\leq|z|_{P_1}^{\mu-|l|_\frs},
\end{equation}
where we used $\mu\leq a_\wedge+\beta$ as well as $\mu<0$. If $2|z|_{P_1}\leq|z|_{P_0}$, then we distinguish two cases. First, if $2|z|_{P_1}\leq2^{-n}\leq|z|_{P_0}$, then the support of $Z_n(z,\cdot)$ is away from $P_0$, and so we make use of part (b) of the definition of $\cC_P^{a}$: 
\begin{equ}\label{eq:Zproof}
|\scal{u,D_1^lZ_n(z,\cdot)}|\leq 2^{n(|l|_\frs-\beta-a_1)}|z|_{P_0}^{a_\cap-a_1}.
\end{equ}
If $\sigma=a_1+\beta<|l|_\frs$, then the summing up yields
$$
\sum_{2|z|_{P_1}\leq2^{-n}\leq|z|_{P_0}}|\scal{u,D_1^lZ_n(z,\cdot)}|\lesssim |z|_{P_1}^{\sigma-|l|_\frs}|z|_{P_0}^{a_\cap-a_1}=|z|_{P_0}^{a_\cap-a_1+\sigma
-|l|_\frs}\left(\frac{|z|_{P_1}}{|z|_{P_0}}\right)^{\sigma-|l|_\frs},
$$
which is as required, since $-a_1+\sigma=\beta$. If, on the other hand, $\sigma>|l|_\frs,$ then
$$
\sum_{2|z|_{P_1}\leq2^{-n}\leq|z|_{P_0}}|\scal{u,D_1^lZ_n(z,\cdot)}|\lesssim|z|_{P_0}^{a_\cap+\beta-|l|_\frs}.
$$
On the scale, $2|z|_{P_1}\leq|z|_{P_0}\leq 2^{-n}$, when we simply use the fact $u\in\cC^{a_\wedge}$ again in the same way as before, to get
\begin{equation}\label{eq:badmuagain2}
\sum_{|z|_{P_0}\leq 2^{-n}}|\scal{u,D_1^lZ_n(z,\cdot)}|\lesssim \sum_{|z|_{P_0}\leq 2^{-n}}2^{n(|l|_\frs-\beta-a_\wedge)}\leq|z|_{P_0}^{\mu-|l|_\frs}.
\end{equation}
Putting the above estimates together, we conclude that
\begin{equation}\label{eq:ZZ}
\|v(z)\|_{|l|_\frs} = {1\over k!} |D^l v(z)| \lesssim |z|_{P_1}^{\mu-|l|_\frs}\left(\frac{|z|_{P_0}}{|z|_{P_1}}\right)^{(\eta-|l|_\frs)\wedge0}
\end{equation}
if $|z|_{P_0}\leq|z|_{P_1}$, and the corresponding symmetric estimate holds when $|z|_{P_1}\leq|z|_{P_0}$. In particular, the second and third terms in \eqref{def:spaces} are finite for any finite $\gamma$. To bound the first term, it remains to recall that since $v$ is the lift of a smooth function, for any positive integer $\gamma$ and $(z,z')\in\frK_P$
$$
\|v(z)-\Gamma_{zz'}v(z')\|_l\leq \|z-z'\|_\frs^{\gamma-l}\sup_{\bar z\in\frK:|z|_{P_i}\sim|\bar z|_{P_i}\sim|z,z'|_{P_i}}|D^\gamma v(\bar z)|.
$$
Applying \eqref{eq:ZZ} (and its symmetric counterpart) with $l=\gamma,$ we get
$$
|D^\gamma v(\bar z)|\lesssim|z|_{P_0}^{\eta-\gamma}|z|_{P_1}^{\sigma-\gamma}(|z|_{P_0}\vee|z|_{P_1})^{\mu-\eta-\sigma+\gamma},
$$
as required. For $\gamma$ non-integer, it suffices to apply the above with $\gamma$ 
replaced by $\bar \gamma = \lceil \gamma \rceil$ and to note that, for every $\gamma \in (\bar\gamma-1,\bar \gamma)$,
one has $\cD_P^{\bar \gamma,w} \subset \cD_P^{\gamma,w}$.
(To see this, write $f = f \star_{\gamma} \bone$ and
apply Lemma~\ref{lem:mult}, noting that $\bone \in \cD_P^{\gamma,\bar w}$ with
$\bar \eta = \eta \vee 0$, $\bar \sigma = \sigma \vee 0$ and $\bar \mu = 0$.)
For the last statement of the lemma, one can simply notice that in \eqref{eq:Zproof} $u$ is tested against functions centred on $P_1\setminus P_0$, and use the additional assumption on $u$.
\end{proof}

\begin{remark}
The mapping $u\rightarrow\cQ_{\gamma+\beta}^-v$, where $v$ is as in \eqref{eq:Z}, will also be denoted by $Z_\gamma$. As all models that we consider act the same on polynomials, the usual continuity estimates are in this case direct consequences of the above result.
\end{remark}
\begin{remark}\label{remark:mu3}
It is again worth pointing out that the $\mu<0$ condition, used in \eqref{eq:badmuagain1} and \eqref{eq:badmuagain2}, can be omitted if one can derive 
$$
\sum_{2^{-n+2}\geq|z|_{P_1}\vee|z|_{P_0}}|\scal{u,D_1^lZ_n(z,\cdot)}|\lesssim(|z|_{P_1}\vee|z|_{P_0})^{ \mu-|l|_\frs}
$$
for $|l|_\frs\leq\mu$ by some other means.
\end{remark}
One can easily verify that the action of $\cK_\gamma$ and $Z_\gamma$ are compatible in the following sense: take $f\in\cD_P^{\gamma,w}$ and an extension $\zeta$ of $\tilde\cR f$ as in Lemma~\ref{lem:int} (i). Then $Z_{\gamma+\beta}\zeta\in \cD_{P}^{\gamma+\beta,\bar w}$, where $\bar w$ is as in Lemma~\ref{lem:int} (i).

\section{Solving the abstract equation}\label{sec:FPP}
In addition to the setting of Section~\ref{subsec:int remainder} we now assume that, for a bounded domain $D\subset\R^{d-1}$ with a Lipschitz boundary $\partial D$ satisfying the cone condition, $P_1$ is given by $P_1=\R\times\partial D$. We shall denote by $\bar D$ the $1$-fattening of the closure of $D$, and we introduce the $T$-valued function
$$
\bR^{D}_+(t,x)=\left\{\begin{array}{lr}
        \bone, & \text{if } t>0,x\in D,\\
        0, & \text{otherwise.} 
        \end{array}\right.
$$
It is straightforward to see that $\bR^{D}_+\in\cD_P^{\infty,(\infty,\infty,0)}$, and in particular that
multiplication by $\bR^{D}_+$ maps any $\cD_P^{\gamma,w}$ space into itself.

\subsection{Non-anticipative kernels}
In a typical situation of an application of the theory to SPDEs, one important property of the kernel $K$ that we have, further to the quite general setting in Definition~\ref{def: K}, is that it is non-anticipative in the sense that
\begin{equation}
t<s\quad\Rightarrow\quad K((t,x),(s,y))=0.
\end{equation}
We shall use the notations $O=[-1,2]\times \bar D$ and $O_\tau=(-\infty,\tau]\times\bar D$ as well as the shorthand $\vn{f}_{\gamma,w;\tau}$ for $\vn{f}_{\gamma,w;O_\tau}$ and similarly for other norms involving dependence on compact sets. 

First of all, this allows us to improve our conditions on $\mu$.
\begin{proposition}\label{prop:improved mu}
\begin{enumerate}[(i)]
\item In the setting of Lemma~\ref{lem:int} (i), suppose that $K$ is non-anticipative, that $f$ is of the form $\bR^{D}_+g$ for some $g\in\cD_P^{\gamma,w}$, and that $\zeta$ annihilates test functions supported on negative times. Let furthermore $\varepsilon>0$ such that $\frm_0-\beta+\varepsilon>0$ and assume $a_\wedge+\frm_0\geq 0$. Then, modifying the condition on $\bar \mu$ from \eqref{eq:exponents2} to
$$
\bar\mu\leq a_\wedge+\beta-\varepsilon,
$$
the conclusions of Lemma~\ref{lem:int} (i) still hold.

\item The analogous statement holds for Lemma~\ref{lem:int} (ii), where the modified condition on $\bar\mu$ reads as
$$
\bar\mu\leq\eta\wedge\mu\wedge\alpha+\beta-\varepsilon.
$$
\item In the setting of Lemma~\ref{lem:Z}, suppose that $Z$ is non-anticipative and that $u$ annihilates test functions supported on negative times and let $\varepsilon>0$ be as above. Then, modifying the condition on $\mu$ from \eqref{eq:mu3} to
$$
\mu\leq a_\wedge +\beta-\varepsilon,
$$
the conclusion of Lemma~\ref{lem:Z} still hold.
\end{enumerate}
\end{proposition}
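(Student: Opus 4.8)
The plan is to revisit the three places in the proofs of Lemmas~\ref{lem:int} and~\ref{lem:Z} where the hypothesis $\bar\mu\leq 0$ (or $\mu<0$) was actually used, and to show that the non-anticipativity, combined with the fact that $\zeta$ (resp.\ $u$) annihilates test functions supported on negative times, yields an improved bound on the ``largest scale'' sums that replaces the need for the sign condition.

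\textbf{Proof of (iii).} First I would treat the $\scZ$-case, as it is the cleanest and isolates the mechanism. Recall from the proof of Lemma~\ref{lem:Z} that the condition $\mu<0$ was invoked only in \eqref{eq:badmuagain1} and \eqref{eq:badmuagain2}, i.e.\ in estimating $\sum_{2^{-n+2}\geq|z|_{P_1}\vee|z|_{P_0}}|\scal{u,D_1^lZ_n(z,\cdot)}|$ for $|l|_\frs\leq\mu$. By Remark~\ref{remark:mu3}, it suffices to bound this sum by $(|z|_{P_1}\vee|z|_{P_0})^{\mu-|l|_\frs}$. Write $z=(t,x)$. If $t\leq 0$, then since $Z$ is non-anticipative all the relevant $Z_n(z,\cdot)$ are supported on negative times (here I use that $P_0=\{t=0\}$, so the support condition \ref{def:Z}(1) forces the time argument to be within $C2^{-n}$ of $t\leq0$, hence negative once $2^{-n}\leq|t|\sim|z|_{P_0}$... more carefully, one localises further in the time variable as in the proof of Theorem~\ref{thm:reco hat'}), and $u$ annihilates them, so the sum is $0$. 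For the genuine contribution one localises $Z_n(z,\cdot)$ in time to the scale $|z|_{P_0}$: writing $D_1^lZ_n(z,\cdot)=\mu_n^{|\frs|}\sum_k\phi_{k}$ with $O(2^{(|\frs|-\frm_0)n}|z|_{P_0}^{|\frs|-\frm_0})$ bump functions $\phi_k\in\cB^r$ at scale $\mu_n=2^{-n}$ centred at points at distance $\gtrsim|z|_{P_0}$ from $P_0$ — only those contribute since $u$ kills the rest — and using $|\scal{u,(\phi_k)}|\lesssim 2^{n\beta}2^{-n|l|_\frs}2^{-na_\wedge}$ (from $u\in\cC^{a_\wedge}$... here one actually wants to use that the bump is at distance $\gtrsim|z|_{P_0}$ from $P_0\cap P_1$ combined with $a_\wedge+\frm_0\geq0$ to extend across $P_0$, exactly as in Proposition~\ref{prop:extension} with codimension $\frm_0$), one gains a factor $2^{-n\varepsilon}|z|_{P_0}^\varepsilon$-worth of room, so that $\sum_{2^{-n}\geq|z|_{P_1}\vee|z|_{P_0}}|\scal{u,D_1^lZ_n(z,\cdot)}|\lesssim |z|_{P_0}^{a_\wedge+\beta-\varepsilon-|l|_\frs}\lesssim(|z|_{P_1}\vee|z|_{P_0})^{\mu-|l|_\frs}$ using $\mu\leq a_\wedge+\beta-\varepsilon$ and $|l|_\frs\leq\mu\leq0$... (one still needs $\mu-|l|_\frs\geq$ the exponent obtained, which holds since the exponent is $\geq\mu-|l|_\frs$). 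The condition $\frm_0-\beta+\varepsilon>0$ is what guarantees the geometric series over the annulus $|z|_{P_0}\leq2^{-n}$ converges at its lower end after the localisation.

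\textbf{Proof of (i) and (ii).} For Lemma~\ref{lem:int}, the condition $\bar\mu\leq0$ entered precisely at \eqref{eq:mu1} with $l=0$ and at \eqref{eq:mu2}. As already noted in Remark~\ref{remark:mu2}, \eqref{eq:mu2} only needs $\bar\mu\leq\bar\gamma$, which is automatic for $\gamma$ large. So the only genuine issue is \eqref{eq:mu1} with $l=0$, i.e.\ the bound $\sum_{2^{-n+2}\geq|x|_{P_1}}|\zeta(K_n(x,\cdot))|\lesssim|x|_{P_1}^{\bar\mu}$ for $|x|_{P_0}\leq|x|_{P_1}$ (and the symmetric one). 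By Remark~\ref{remark:mu2}, this is exactly what we must establish. This is now the same argument as in (iii): since $f=\bR^D_+g$, the reconstruction $\zeta$ is supported on positive times, so $\zeta(K_n(x,\cdot))=0$ once $x$ is at negative time and $2^{-n}\lesssim|x|_{P_0}$; for $x$ at positive time one localises $K_n(x,\cdot)$ in time to scale $|x|_{P_0}$ and splits it into $O(2^{(|\frs|-\frm_0)n}|x|_{P_0}^{|\frs|-\frm_0})$ bumps at distance $\gtrsim|x|_{P_0}$ from $P_0$, each paired against $\zeta\in\cC^{a_\wedge}$ (extended across $P_0$ using $a_\wedge+\frm_0\geq0$) giving $\lesssim 2^{n(\beta-a_\wedge)}$; summing yields $\lesssim|x|_{P_0}^{a_\wedge+\beta}\sum_{|x|_{P_1}\leq2^{-n}}2^{-n(\frm_0 - |\mathrm{const}|)}$-type geometric sum, convergent by $\frm_0-\beta+\varepsilon>0$, with total $\lesssim|x|_{P_1}^{a_\wedge+\beta-\varepsilon}|x|_{P_0}^{?}$... arranging exponents gives $\lesssim|x|_{P_1}^{\bar\mu}$ whenever $\bar\mu\leq a_\wedge+\beta-\varepsilon$. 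For part (ii) one runs the same estimate but using the improved behaviour of $\hat\cR f$ near $P_1$ from \eqref{eq:reco on P1} and the improved bound on $\|f(x)\|$ near $P_1$ from Proposition~\ref{prop:1}, which is what turns $a_\wedge=\eta\wedge\mu\wedge\alpha$ into the stated $\eta\wedge\mu\wedge\alpha$ in the $\bar\mu$ constraint (here $\sigma$ is replaced by its ``correct'' value and no longer enters the minimum).

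\textbf{Main obstacle.} The delicate point is the time-localisation of $K_n(x,\cdot)$ (resp.\ $Z_n(z,\cdot)$) near $P_0=\{t=0\}$: one must split the kernel into a part whose time-support lies in negative times (annihilated by $\zeta$, resp.\ $u$) and a part that is genuinely distance $\gtrsim|z|_{P_0}$ from $P_0$, with control on $\cB^r$ norms of the pieces, exactly in the spirit of the partition-of-unity construction in Propositions~\ref{prop:extension} and~\ref{prop:extension3} but now adapted to the single boundary $P_0$ rather than $P_0\cap P_1$; getting the combinatorial count of bumps right (the factor $2^{(|\frs|-\frm_0)n}|z|_{P_0}^{|\frs|-\frm_0}$) together with the $\cC^{a_\wedge}$-extendability across $P_0$ (which needs $a_\wedge+\frm_0\geq0$, whence the hypothesis) is where all the work is, and the rôle of $\varepsilon$ is simply to keep the resulting geometric series summable at the $2^{-n}\sim|z|_{P_0}$ end, which requires $\frm_0-\beta+\varepsilon>0$.
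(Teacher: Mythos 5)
Your overall strategy is the paper's: reduce, via Remarks~\ref{remark:mu2} and~\ref{remark:mu3}, to the single bound $\sum_{2^{-n+2}\geq|z|_{P_0}\vee|z|_{P_1}}|\zeta(D_1^lK_n(z,\cdot))|\lesssim(|z|_{P_0}\vee|z|_{P_1})^{\bar\mu-|l|_\frs}$ for $|l|_\frs\le\bar\mu$ (not only $l=0$), and then exploit non-anticipativity together with the fact that $\zeta$ kills negative times by inserting a partition of unity localised at the temporal scale $|z|_{P_0}$. But the central counting estimate, which is the entire content of the proposition, is garbled as written. First, your bumps are taken at scale $2^{-n}$; since $K_n(z,\cdot)$ has support of diameter $2^{-n}$, $O(1)$ such bumps cover it and you recover exactly the naive bound $2^{n(|l|_\frs-\beta-a_\wedge)}$ of \eqref{eq:badmuagain1}, with no gain. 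The bumps must be at scale $2^{-m}$ with $2^{-m}\sim|z|_{P_0}$ (the paper's grid $\Lambda_m$). Second, your count $(2^n|z|_{P_0})^{|\frs|-\frm_0}$ is the reciprocal of the correct one: at scale $2^{-m}$ the spatial directions contribute $(2^{m-n})^{|\frs|-\frm_0}\geq 1$ bumps, while the time direction contributes only $O(1)$ because the surviving times lie in $[0,t]$ with $t^{1/\frs_0}\sim 2^{-m}$ --- this is precisely where the factor $\frm_0$ is gained. Each piece $\varphi_y^{m,\frs}D_1^lK_n(z,\cdot)$ has support of size $2^{-m|\frs|}$ and derivative bounds $2^{n(|\frs|+|l|_\frs-\beta)}2^{m|k|_\frs}$, so testing against $\zeta\in\cC^{a_\wedge}$ gives $2^{-m(a_\wedge+|\frs|)}2^{n(|\frs|+|l|_\frs-\beta)}$ per piece and $2^{-m(a_\wedge+\frm_0)}2^{n(\frm_0+|l|_\frs-\beta)}$ in total; $\frm_0-\beta+\varepsilon>0$ then makes the exponent of $2^n$ positive after multiplying by $2^{n\varepsilon}$, and the geometric sum yields $|z|_{P_0}^{a_\wedge+\frm_0}(|z|_{P_0}\vee|z|_{P_1})^{\beta-\varepsilon-\frm_0-|l|_\frs}$.

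A related misattribution: $a_\wedge+\frm_0\geq0$ is not used for any extension of $\zeta$ across $P_0$ --- in the setting of Lemma~\ref{lem:int}(i), $\zeta$ is already a globally defined element of $\cC^{a_\wedge}$ and may be tested directly against bumps straddling $P_0$. The hypothesis enters only in the final comparison $|z|_{P_0}^{a_\wedge+\frm_0}\leq(|z|_{P_0}\vee|z|_{P_1})^{a_\wedge+\frm_0}$, which lands you on $(|z|_{P_0}\vee|z|_{P_1})^{a_\wedge+\beta-\varepsilon-|l|_\frs}\leq(|z|_{P_0}\vee|z|_{P_1})^{\bar\mu-|l|_\frs}$. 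You are also right that \eqref{eq:mu2} only needs $\bar\mu\le\bar\gamma$ and that \eqref{eq:mu1} is the genuine obstruction; once the count above is corrected, parts (ii) and (iii) follow exactly as you indicate.
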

\begin{proof}
(i) By Remark~\ref{remark:mu2}, we only need to obtain the bound
\begin{equation}\label{eq:ind1}
\sum_{2^{-n+2}\geq|z|_{P_1}\vee|z|_{P_0}}|\zeta(D_1^lK_n(z,\cdot))|\lesssim(|z|_{P_1}\vee|z|_{P_0})^{\bar \mu-|l|_\frs}
\end{equation}
for $|l|_\frs\leq\bar\mu$.
For all $m\in\mathbb{N}$, define the grid 
$$
\Lambda_m=\{(s,y):s=2^{-m\frm_0 },y=\sum_{j=1}^{d-1}2^{-m\frs_j}k_j e_j, k_j\in\mathbb{Z}\},
$$
where $e_j$ is the $j$-th unit vector of $\R^{d-1}$, $j=1,\ldots,d-1$. Let furthermore $\varphi$ be a function that satisfies
$$
\sum_{y\in\Lambda_0}\varphi(t,x-y)=1\quad\forall t\in[-1,2],x\in\R^{d-1},
$$
and define $\varphi_y^{m,\frs}=2^{-m|\frs|}\varphi_y^{2^{-m}}$.

To show \eqref{eq:ind1}, we first write, with setting $2^{-m}\leq|z|_{P_0}\leq2^{-m+1}$,
$$
\zeta(D_1^lK_n(z,\cdot))=\sum_{y\in\Lambda_m}\zeta(\varphi_y^{m,\frs}(\cdot)D_1^lK_n(z,\cdot)).
$$
Indeed, the function $D_1^lK_n(z,\cdot)-\sum_{y\in\Lambda_m}\varphi_y^{m,\frs}D_1^lK_n(z,\cdot)$ is supported on strictly negative times, and therefore vanishes under the action of $\zeta$. Each of the functions $\varphi_y^{m,\frs}D_1^lK_n(z,\cdot)$ has support of size of order $2^{-m|\frs|}$ and its $k$th derivative is bounded by $2^{n(|\frs|+|l|_\frs-\beta)}2^{m|k|_\frs}$. Recalling that $\zeta\in\cC^{a_\wedge}$, this yields
$$
|\zeta(\varphi_y^{m,\frs}(\cdot)D_1^lK_n(z,\cdot))|\lesssim 2^{-m a_\wedge}2^{-m|\frs|}2^{n(|\frs|+|l|_\frs-\beta)}.
$$
Combining this with the fact that the number of points $y\in\Lambda_m$ for which the support of $\varphi_y^{m,\frs}$ actually intersects the support of $D_1^lK_n(z,\cdot)$, is of order $2^{-n(|\frs|-\frm_0)}2^{m(|\frs|-\frm_0)}$, we get
$$
|\zeta(D_1^lK_n(z,\cdot))|\lesssim 2^{-m(a_\wedge+\frm_0)}2^{n(\frm_0+|l|_\frs-\beta)}.
$$
By multiplying with $2^{n\varepsilon}$, we only increase the right-hand side, and by our assumptions this guarantees that the exponent of $2^n$ becomes positive. Therefore, recalling that $2^{-m}\sim|z|_{P_0}$, we obtain
$$
\sum_{2^{-n+2}\geq|z|_{P_1}\vee|z|_{P_0}}|\zeta(D_1^lK_n(z,\cdot))|\lesssim|z|_{P_0}^{a_\wedge+\frm_0}(|z|_{P_1}\vee|z|_{P_0})^{\beta+\varepsilon-\frm_0-|l|_\frs},
$$
which, using $a_\wedge+\frm_0\geq0$, gives the required bound.

The proof of (ii) goes in the same way, and, in light of Remark~\ref{remark:mu3}, so does that of (iii).
\end{proof}

The other important consequence of the non-anticipativity of our kernel is the following short-time control.

\begin{lemma}\label{lem:short time K}
In the setting of Proposition~\ref{prop:improved mu} (i), suppose that $K$ is non-anticipative. Set, for a $\kappa>0,$ $w'=(\eta',\sigma',\mu'):=(\bar\eta-\kappa,\bar\sigma,\bar\mu-\kappa)$. Then it holds, for any $C>0$
\begin{align}
\vn{\cK^\zeta_\gamma\bR^{D}_+g}_{\bar\gamma,w';\tau}&\lesssim \tau^{\kappa/\frs_0}(\vn{g}_{\gamma,w;\tau}+\|\zeta\|_{a;\tau}), 
\nonumber\\
\vn{\cK^\zeta_\gamma\bR^{D}_+g;\bar\cK^{\bar\zeta}_\gamma\bR^{D}_+\bar g}_{\bar\gamma,w';\tau}&\lesssim \tau^{\kappa/\frs_0}(\vn{g;\bar g}_{\gamma,w;\tau}+\|\Pi-\bar\Pi\|_{\gamma,O}+\|\Gamma-\bar\Gamma\|_{\gamma,O}
\nonumber\\
&\quad\quad\quad+\|\zeta-\bar\zeta\|_{a;\tau})\label{eq:short time}
\end{align}
uniformly in $\tau\in(0,1]$ and in models bounded by $C$. For the second bound, $g$ and $\bar g$ are also assumed to be bounded by $C$.

If we are instead in the situation of Proposition~\ref{prop:improved mu} (ii), then the analogous statement holds, with $\zeta$ replaced by $\hat \cR f$, and hence the last term on the right-hand side of \eqref{eq:short time} can be omitted.
\end{lemma}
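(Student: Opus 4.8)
The plan is to bootstrap from the estimate already available from Lemma~\ref{lem:int}(i) combined with Proposition~\ref{prop:improved mu}(i): writing $h:=\cK^\zeta_\gamma\bR^D_+g$, we have $h\in\cD_P^{\bar\gamma,\bar w}$ with $\vn{h}_{\bar\gamma,\bar w;\tau}\lesssim\vn{g}_{\gamma,w;\tau}+\|\zeta\|_{a;\tau}$ (and the analogous continuity bound). The only difference between $w'$ and $\bar w$ is that the two exponents attached to $P_0$, namely $\bar\eta$ and $\bar\mu$, are lowered by $\kappa$, while on $O_\tau$ one has $|x|_{P_0}=1\wedge t^{1/\frs_0}\le\tau^{1/\frs_0}$ for every $x=(t,\tilde x)\in O_\tau$ with $t\ge0$; moreover $h$ vanishes identically for $t<0$, since $K$ is non-anticipative and both $\bR^D_+g$ and $\zeta$ are supported on positive times. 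So the statement is really just a matter of trading the lowered $P_0$-exponents against the smallness of $|x|_{P_0}$ on $O_\tau$.

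First I would dispose of the two contributions to $\vn{\cdot}_{\bar\gamma,w';\tau}$ that only see the $P_0$-weights. A direct computation, carried out separately in the regimes $|x,y|_{P_0}\lessgtr|x,y|_{P_1}$, shows that the denominator in the first term of $\vn{\cdot}_{\bar\gamma,w'}$ equals $|x,y|_{P_0}^{-\kappa}$ times the corresponding denominator of $\vn{\cdot}_{\bar\gamma,\bar w}$; hence this part of $\vn{h}_{\bar\gamma,w';\tau}$ is at most $\sup_{(x,y)\in(O_\tau)_P}|x,y|_{P_0}^{\kappa}\cdot\vn{h}_{\bar\gamma,\bar w;\tau}\le\tau^{\kappa/\frs_0}\vn{h}_{\bar\gamma,\bar w;\tau}$. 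Likewise, on $\frK_1=\{x\in O_\tau:|x|_{P_1}\le|x|_{P_0}\}$ the relevant $w'$-denominator $|x|_{P_0}^{\bar\mu-\kappa-l}(|x|_{P_1}/|x|_{P_0})^{(\bar\sigma-l)\wedge0}$ is $|x|_{P_0}^{-\kappa}$ times the $\bar w$-one, so the $\|h(x)\|_l$-term over $\frK_1$ is again controlled by $\tau^{\kappa/\frs_0}\vn{h}_{\bar\gamma,\bar w;\tau}$.

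The remaining, and genuinely non-trivial, term is $\|h(x)\|_l$ over $\frK_0=\{x\in O_\tau:|x|_{P_0}\le|x|_{P_1}\}$: a plain comparison of denominators is insufficient when $l<\bar\eta$, since then the $w'$-denominator exceeds the $\bar w$-one only by the factor $|x|_{P_1}^{-\kappa}$, which need not be small. To gain there I would use the vanishing at $P_0$. Namely, $h\in\cD_{P,\{0\}}^{\bar\gamma,\bar w}$: it vanishes for $t<0$ by non-anticipativity, and the Schauder gain $\beta$ makes $\cQ^-_{\bar\eta}h$ continuous up to $\{t=0\}$ away from $P_1$ — this is the one step needing a short separate argument, essentially a rerun of (the relevant part of) the proof of Theorem~\ref{thm:standard int}/\cite{H0}. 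Since $O_\tau$ contains, for each of its points, the segment joining it to its closest point on $P_0$, the $\{0\}$-analogue of Proposition~\ref{prop:1} then yields, for $x\in\frK_0$,
\begin{equ}
\|h(x)\|_l\lesssim|x|_{P_1}^{\bar\mu-l}\Bigl(\tfrac{|x|_{P_0}}{|x|_{P_1}}\Bigr)^{\bar\eta-l}\vn{h}_{\bar\gamma,\bar w;\tau}\;,
\end{equ}
i.e.\ with the full exponent $\bar\eta-l$ in place of $(\bar\eta-l)\wedge0$. A short case distinction on the sign of $\bar\eta-\kappa-l$, using only $|x|_{P_0}\le|x|_{P_1}$ and $|x|_{P_0}\le\tau^{1/\frs_0}$, shows the right-hand side is $\lesssim\tau^{\kappa/\frs_0}$ times the $w'$-denominator on $\frK_0$, which completes the first bound in \eqref{eq:short time}.

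Finally, the continuity bound follows by the same scheme applied to the difference of the two modelled distributions, now invoking the continuity bounds of Lemma~\ref{lem:int} and Theorem~\ref{thm:reco} instead of their boundedness versions; and in the setting of Proposition~\ref{prop:improved mu}(ii) one uses $\hat\cR f$ throughout, which is canonically determined by $f$, so that the term $\|\zeta-\bar\zeta\|_{a;\tau}$ is simply absent, exactly as in Lemma~\ref{lem:int}(ii). As an alternative to invoking $h\in\cD_{P,\{0\}}^{\bar\gamma,\bar w}$, one can revisit the proof of Lemma~\ref{lem:int} directly: the contributions of the scales $2^{-n}\lesssim|x|_{P_0}$ to $\|h(x)\|_l$ already carry a factor $|x|_{P_0}^{\bar\eta-l}$, while on the coarser scales the counting argument from the proof of Proposition~\ref{prop:improved mu} — using that the supports of the relevant kernels meet $\{t>0\}$ in a slab of thickness $\sim|x|_{P_0}^{\frs_0}$ — produces the same decay, so the displayed inequality can also be obtained by hand. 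I expect establishing $h\in\cD_{P,\{0\}}^{\bar\gamma,\bar w}$, or equivalently the coarse-scale bookkeeping in this alternative route, to be the main obstacle; everything else is routine.
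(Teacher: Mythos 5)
Your proposal reproduces the paper's own argument essentially verbatim: first establish the $\bar w$-bound with $\tau$-dependence made explicit via non-anticipativity, then gain the factor $\tau^{\kappa/\frs_0}$ by trading the lowered $P_0$-exponents against $|z|_{P_0}\le\tau^{1/\frs_0}$, noting that the plain denominator comparison suffices everywhere except the $\|h(z)\|_l$-term on $\frK_0$, which is handled by establishing $h\in\cD_{P,\{0\}}^{\bar\gamma,\bar w}$ (citing the initial-time analysis of \cite{H0}) and invoking the $\{0\}$-analogue of Proposition~\ref{prop:1}. You also correctly identify that step as the one genuine piece of work; the only cosmetic difference is that the paper points to \cite[Thm~7.1]{H0} for it rather than to Theorem~\ref{thm:standard int}.
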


\begin{proof}
First, by the fact that $K$ is non-anticipative, using \eqref{eq:standard reco estimate} 
we can improve Lemma~\ref{lem:int} to
$$
\vn{\cK^\zeta_\gamma\bR^{D}_+g}_{\bar\gamma,\bar w;\tau}\lesssim \vn{g}_{\gamma,w;\tau}+\|\zeta\|_{a;\tau}.
$$
This already takes care of bounding the first and third term in \eqref{def:spaces}, since, using the shorthand $F=\cK^\zeta_\gamma\bR^{D}_+g$, for $(z,z')\in (O_\tau)_P$
$$
\frac{\|F(z)-\Gamma_{zz'}F(z')\|_l}
{\|z-z'\|_{\frs}^{\bar\gamma-l}|z,z'|_{P_0}^{\eta'-\bar\gamma}|z,z'|_{P_1}^{\bar\sigma-\bar\gamma}(|z,z'|_{P_0}\vee|z,z'|_{P_1})^{\mu'-\eta'-\bar\sigma+\bar\gamma}}
\lesssim|z,z'|_{P_0}^{\bar\eta-\eta'}\vn{F}_{\bar\gamma,\bar w;\tau},
$$
where we used that $\mu'-\eta'=\bar\mu-\bar\eta$. Similarly, for $z\in O_\tau\cap\{|z|_{P_1}\leq|z|_{P_0}\}$, 
$$
\frac{\|F(z)\|_l}{|z|_{P_0}^{\mu'-l}\left(\tfrac{|z|_{P_1}}{|z|_{P_0}}\right)^{(\bar\sigma-l)\wedge0}}\lesssim|z|_{P_0}^{\mu'-\bar\mu}\vn{F}_{\bar\gamma,\bar w;\tau}.
$$
Keeping in mind that $|z|_{P_0}\leq t^{1/\frs_0}$, by the definition of the exponents $w'$, these are indeed the required bounds. Similarly, we have for $z\in O_\tau\cap\{|z|_{P_0}\leq|z|_{P_1}\}$
$$
\frac{\|F(z)\|_l}{|z|_{P_1}^{\mu'-l}\left(\tfrac{|z|_{P_0}}{|z|_{P_1}}\right)^{(\eta'-l)\wedge0}}\leq
\frac{\|F(z)\|_l}{|z|_{P_1}^{\mu'-\eta'}|z|_{P_0}^{\eta'-l}}\lesssim
|z|_{P_0}^{\eta'-\bar\eta}\bn{F}_{\bar\gamma,\bar w,\{0\};\tau},
$$
and hence, by virtue of Proposition~\ref{prop:1}, the proof is complete if we can show that $F=\cK^\zeta_\gamma\bR^{D}_+g\in\cD_{P,\{0\}}^{\bar\gamma,\bar w}$. This, on the other hand, follows from the proof of \cite[~Thm 7.1]{H0}, given that away from $P_1$, $\zeta$ belongs to $\cC^{\eta\wedge\alpha}$, which is exactly the situation considered therein.
The bound on the difference again follows in an analogous way.
\end{proof}
The corresponding results hold for the singular remainder as well.

\begin{lemma}\label{lem:short time Z,R}
Let $Z\in\scZ_{\beta,P}$, $f$, $\zeta$, $\gamma$, $\bar\gamma$, $w$, and $w'$ be as in Lemma~\ref{lem:short time K}.
Then it holds, for any $C>0$
\begin{align*}
\vn{Z_{\gamma}\zeta}_{\bar\gamma,w';\tau}&\lesssim \tau^{\kappa/\frs_0}\|\zeta\|_{a;\tau},
\end{align*}
uniformly in $\tau\in(0,1]$.
\end{lemma}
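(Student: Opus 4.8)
The plan is to mimic the proof of Lemma~\ref{lem:short time K}, with Lemma~\ref{lem:Z} in the role of Lemma~\ref{lem:int}~(i). Write $v$ for the function of \eqref{eq:Z} built from $u=\zeta$, as well as for its Taylor lift, so that $Z_\gamma\zeta=\cQ_{\bar\gamma}^- v$ with $\bar\gamma=\gamma+\beta$. First, Lemma~\ref{lem:Z}, supplemented by Proposition~\ref{prop:improved mu}~(iii) when $\bar\mu>0$, yields $v\in\cD_P^{\bar\gamma,\bar w}$ together with the \emph{a priori} bound $\vn{v}_{\bar\gamma,\bar w;\tau}\lesssim\|\zeta\|_{a;\tau}$, where $\bar w=(\bar\eta,\bar\sigma,\bar\mu)$ is the weight triple from Lemma~\ref{lem:short time K}; the hypotheses of Lemma~\ref{lem:Z} hold since $\bar\sigma=(\sigma\wedge\alpha)+\beta=a_1+\beta$, $\bar\eta\le\bar\gamma$, and $\bar\mu\le(a_\wedge+\beta)\wedge 0$ (respectively $\bar\mu\le a_\wedge+\beta-\varepsilon$ after Proposition~\ref{prop:improved mu}~(iii)).

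Next comes the key observation. Since $Z$ is non-anticipative and $\zeta$ annihilates test functions supported on negative times, every term $\scal{\zeta,Z_n(z,\cdot)}$ vanishes for $t<0$, hence $v\equiv 0$ on $\{t<0\}$. In particular, on $O_\tau$ the function $v$ is supported in $\{0\le t\le\tau\}$, where $|z|_{P_0}\le t^{1/\frs_0}\le\tau^{1/\frs_0}$; moreover, for a pair $(z,z')\in(O_\tau)_P$ the inequality $2\|z-z'\|_\frs\le|z,z'|_{P_0}$ forces $t$ and $t'$ to have the same sign, so that $|z,z'|_{P_0}\le\tau^{1/\frs_0}$ for every pair contributing to the first term of $\vn{\cdot}_{\bar\gamma,w';\tau}$. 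In addition, re-examining the near-$P_0$ estimates in the proof of Lemma~\ref{lem:Z} shows that, away from $P_1$, only boundedly many scales contribute to $v(z)$ and that $v$ together with all its derivatives tends to $0$ as $z\to P_0$ — the relevant test functions concentrating, as $t\downarrow 0$, on $\{t'\le 0\}$, which is not charged by $\zeta$ (here one uses $\zeta\in\cC^{a_\wedge}$ with $a_\wedge>-\frm_0$, exactly the mechanism already exploited in the proof of Lemma~\ref{lem:short time K}). Consequently $v\in\cD_{P,\{0\}}^{\bar\gamma,\bar w}$, and Proposition~\ref{prop:1} (with the roles of $P_0$ and $P_1$ exchanged) gives $\bn{v}_{\bar\gamma,\bar w,\{0\};\tau}\lesssim\vn{v}_{\bar\gamma,\bar w;\tau}\lesssim\|\zeta\|_{a;\tau}$.

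Finally, I would estimate the three terms making up $\vn{v}_{\bar\gamma,w';\tau}$, with $w'=(\bar\eta-\kappa,\bar\sigma,\bar\mu-\kappa)$, verbatim as in the proof of Lemma~\ref{lem:short time K}: using $\mu'-\eta'=\bar\mu-\bar\eta$ and $\sigma'=\bar\sigma$, each of them is bounded by the matching term of $\vn{v}_{\bar\gamma,\bar w;\tau}$ — and, for the $\{|z|_{P_0}\le|z|_{P_1}\}$ term, of $\bn{v}_{\bar\gamma,\bar w,\{0\};\tau}$ — times a factor $|z,z'|_{P_0}^{\kappa}$ or $|z|_{P_0}^{\kappa}$, which is at most $\tau^{\kappa/\frs_0}$ by the previous paragraph; combining with the first step gives the claim, and the case corresponding to Proposition~\ref{prop:improved mu}~(ii) (with $\zeta$ replaced by $\hat\cR f$) is handled identically. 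The one point requiring genuine work beyond Lemma~\ref{lem:short time K} is the membership $Z_\gamma\zeta\in\cD_{P,\{0\}}^{\bar\gamma,\bar w}$ of the second step — the analogue here of the appeal to \cite[Thm~7.1]{H0} in the proof of Lemma~\ref{lem:short time K} — and in particular the vanishing of $v$ and its derivatives at the temporal boundary $P_0$; everything else is bookkeeping already carried out there.
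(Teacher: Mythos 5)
Your proposal is correct and follows the same route as the paper: it reduces the estimate to the argument of Lemma~\ref{lem:short time K}, identifies the membership $Z_\gamma\zeta\in\cD_{P,\{0\}}^{\bar\gamma,\bar w}$ as the one point where the appeal to \cite[Thm~7.1]{H0} is unavailable, and establishes it directly via the observation that $z\mapsto\zeta(D^kZ(z,\cdot))$ is continuous away from $P_1$ and vanishes for $t<0$. Your write-up is considerably more detailed than the paper's two-line proof but the substance is the same; one small nitpick is that only the components $(F)_k$ with $|k|_\frs<\bar\eta$ need to vanish on $P_0\setminus P_1$, not ``all derivatives'' of $v$.
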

\begin{proof}
The proof goes precisely as in the previous lemma, with the only difference that we cannot refer to \cite{H0} 
to argue that $F:=Z_{\bar\gamma}\zeta\in\cD_{P,\{0\}}^{\gamma,\bar w}$. We therefore need to show that $(F)_k$ has limit $0$ at points of 
$P_0\setminus P_1$ whenever $|k|_\frs\leq\eta\wedge\alpha+\beta$. This is simply due to the fact that, for such $k$, the function
$$
z\rightarrow\zeta(Z(z,\cdot))
$$
is continuous away from $P_1$, and is $0$ for negative times. 
\end{proof}

\subsection{On initial conditions}

The class of admissible initial conditions depends on the particular choice of the kernel in that in addition to the regularity, some boundary behaviour may be required. In the setting of Example~\ref{example}, which is general enough to cover all of our examples, this can be formalised as follows.

\begin{lemma}\label{lem:initial 2}
Let $\cG$ and $G$ be as in Example~\ref{example} and let $u_0$ be a function on $D$ such that the function $\bar u_0$ defined by
$$
\bar u_0(x)=a_g\bar u_0(g^{-1}x)
$$
for the $g\in\cG$ such that $g^{-1}x\in D$, has a continuous extension that belongs to $\cC^\alpha(\R^{d-1})$. Then the function
$$
v(t,x)=\int_DG((t,x),(0,y)) u_0(y)dy
$$
is smooth on $(0,\infty)\times D$ and extending it by $0$ to $\R^d\setminus(0,\infty)\times D$, for any multiindex $l$, the pointwise lift of its $l$-th derivative via its Taylor expansion belongs to $\cD_P^{\gamma,(\alpha-|l|_\frs,\sigma,(\alpha-|l|_\frs)\wedge0)}$ for any  $0\leq\sigma\leq\gamma$. 
\end{lemma}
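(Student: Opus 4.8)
The plan is to use the reflection principle to identify $v$ with the image of $\bar u_0$ under the (spatially translation invariant) heat-type semigroup generated by $G^0$, for which the behaviour at the initial-time slice is classical, and then to upgrade that one-boundary estimate to the required $\cD_P^{\gamma,w}$ bound by a routine bookkeeping of exponents. \emph{Step 1 (reduction).} Set $\bar v(t,x)=\int_{\R^{d-1}}G^0((t,x),(0,y))\bar u_0(y)\,dy$. Performing the substitution $z=g(y)$ in each summand of $v=\int_D\sum_g a_gG^0((t,x),(0,g(y)))u_0(y)\,dy$ and using that the $g\in\cG$ are isometries, that the sets $g(D)$ tile $\R^{d-1}$ up to a null set, and that $a_gu_0(g^{-1}z)=\bar u_0(z)$ on $g(D)$ (which is the defining relation of $\bar u_0$, together with $\bar u_0|_D=u_0$), one obtains $v(t,x)=\bar v(t,x)$ for all $(t,x)$. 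After the extension by $0$, the function $v$ thus coincides with the smooth function $\bar v$ on $\{t>0\}\cap(\R\times D)$ and vanishes off $(0,\infty)\times D$, where every bound in \eqref{def:spaces} is trivial; moreover every pair $(z,z')\in\frK_P$ (and every point entering the second or third term of \eqref{def:spaces} at which $v$ does not vanish) has $t>0$, $x\in D$, with the whole segment $[z,z']$ staying in $\{t>0\}$ and within comparable distance of $P_0$ and of $P_1$. Hence it suffices to bound the Taylor lift $f$ of $D^l\bar v$.

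\emph{Step 2 (semigroup bound).} By \cite[Lem.~7.5]{H0} applied to $G^0$ (writing $G^0=K^0+R^0$ as in Example~\ref{example}, the smooth remainder $R^0$ contributes a globally smooth, uniformly bounded function, and the singular part $K^0$ is exactly covered by that lemma; mixed time--space derivatives are converted to purely spatial ones via the equation), one has, for every multiindex $k$,
\begin{equ}
|D^k\bar v(t,x)|\lesssim|x|_{P_0}^{(\alpha-|k|_\frs)\wedge 0}
\end{equ}
uniformly over $x\in\R^{d-1}$ and over $t$ in any fixed bounded interval. The essential structural point is that, $G^0$ being translation invariant in the spatial variable, the right-hand side carries no spatial weight whatsoever: $\bar v$ has no singular behaviour at $P_1$. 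With $\eta:=\alpha-|l|_\frs$ this gives, for every homogeneity $m<\gamma$, the pointwise bound $\|f(z)\|_m\lesssim|z|_{P_0}^{(\eta-m)\wedge 0}$; and since $f$ is the lift of a smooth function, the Taylor remainder formula together with the above bound at intermediate points of $[z,z']$ yields
\begin{equ}
\|f(z)-\Gamma_{zz'}f(z')\|_m\lesssim\|z-z'\|_\frs^{\gamma'-m}|z,z'|_{P_0}^{\eta-\gamma'}\;,
\end{equ}
where $\gamma'=\lceil\gamma\rceil$ and we used $\gamma'\ge\gamma\ge\eta$ (from the standing assumption $\eta\vee\sigma\vee\mu\le\gamma$).

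\emph{Step 3 (bookkeeping and the main point).} It remains to deduce from the two displayed bounds that $f\in\cD_P^{\gamma,(\eta,\sigma,\eta\wedge 0)}$ for every $\sigma\in[0,\gamma]$. One splits according to whether $|z|_{P_0}\le|z|_{P_1}$ or $|z|_{P_1}\le|z|_{P_0}$ (and similarly for pairs), uses $\|z-z'\|_\frs\le\tfrac12(|z,z'|_{P_0}\wedge|z,z'|_{P_1})$ and $|z|_{P_i}\sim|z'|_{P_i}\sim|z,z'|_{P_i}$, and in each case the required estimate collapses to an elementary inequality $u^av^b\lesssim1$ with $0<u\le v\le1$ --- e.g.\ in the region $|z|_{P_0}\le|z|_{P_1}$ the pointwise bound amounts to the exponent in $|z|_{P_1}^{(\eta\wedge0)-m-(\eta-m)\wedge0}$ being non-positive, the increment bound (after the $\sigma$-dependent powers of $|z,z'|_{P_1}$ cancel) to $|z,z'|_{P_1}^{-(\eta\vee0)}\gtrsim1$, and in the region $|z|_{P_1}\le|z|_{P_0}$ one meets exponent pairs with $a=\gamma'-\sigma\ge0$ and $a+b=\eta\vee0\ge0$. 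These verifications are lengthy but entirely routine; the substantive content lies in Steps~1--2. The one point that needs real care is the interplay with the extension by zero in Step~1: one must check that on the part of $\R^d\setminus P$ actually seen by the $\cD_P^{\gamma,w}$-norm one never crosses $\partial D$ or $\{t=0\}$, so that $v$ may legitimately be replaced by the smooth $\bar v$. Passing from $l=0$ to general $l$ introduces nothing new, since taking the lift of $D^l\bar v$ merely relabels homogeneities by $|l|_\frs$, i.e.\ shifts $\alpha\mapsto\alpha-|l|_\frs$ in all three weights; $\sigma$ stays free because $\bar v$ is not singular at $P_1$, and the corner exponent is the initial-time exponent capped at $0$ because of the way $\mu$ enters \eqref{def:spaces}.
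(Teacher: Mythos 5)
Your proof follows essentially the same route as the paper's: both rewrite $v$ as the convolution of the free kernel $G^0$ with the reflected/symmetrised data $\bar u_0$, apply \cite[Lem.~7.5]{H0} to obtain the pointwise derivative bound $|D^k v(z)|\lesssim|z|_{P_0}^{(\alpha-|k|_\frs)\wedge 0}$ (which carries no $P_1$-weight since $G^0$ is translation invariant in space), and then pass from these pointwise bounds to the $\cD_P^{\gamma,w}$ norm by Taylor's formula, exactly as in the proof of Lemma~\ref{lem:Z}. The paper compresses Step~1 into a single displayed identity and delegates Step~3 to the proof of Lemma~\ref{lem:Z}; you spell both out, which is harmless --- your substitution argument verifying $v=\bar v$ and your exponent bookkeeping (the cancellation of the $\sigma$-dependent powers of $|z,z'|_{P_1}$, the cap $\mu=\eta\wedge0$, and the passage from integer $\lceil\gamma\rceil$ to general $\gamma$ via $\|z-z'\|_\frs\le|z,z'|_{P_0}$) are all correct and match what the paper invokes implicitly.
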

\begin{proof}
We can write
$$
v(t,x)=\int_{\R^d}G^0((t,x),(0,y))\bar u_0(y)dy.
$$
By assumption, the conditions of \cite[~Lem 7.5]{H0} are satisfied, and hence $v$ satisfies the bounds
$$
|D^l v(t,x)|\lesssim|z|_{P_0}^{(\alpha-|l|_\frs)\wedge0}.
$$
This already gives the right bounds for $\|D^l v(z)\|_k$, $k=0,1,\ldots$. From this one can deduce the bound for the quantity $\|D^lv(z)-\Gamma_{zz'}D^lv(z')\|_k$ precisely as in the proof of Lemma~\ref{lem:Z}.
\end{proof}

\subsection{The fixed point problem}

At this point everything is in place to solve the abstract equations that will arise as `lifts' of equations 
similar to the ones in Section~\ref{subsec:applications}.
As the notation is already quite involved, we refrain from the full generality concerning the kernel $K+Z$ and the 
scaling $\frs$ and only state the result in a form that is sufficient to treat nonlinear perturbations of 
the stochastic heat equation with some boundary conditions. Our main goal is to formulate a fixed point 
argument that is just general enough to cover the examples mentioned in the introduction, as well as some
related problems.

Our setup will involve families of Banach spaces
depending on some parameter $\tau>0$ (which will represent the time over which we solve our equation).
We will henceforth talk of a ``time-indexed space $\CV$'' for a family $\CV = \{\CV_\tau\}_{\tau > 0}$ of Banach
spaces as well as contractions $\pi_{\tau'\leftarrow \tau}\colon \CV_\tau \to \CV_{\tau'}$ for all $\tau' < \tau$ with the property
that $\pi_{\tau''\leftarrow \tau'} \circ \pi_{\tau'\leftarrow \tau} = \pi_{\tau''\leftarrow \tau}$. We consider $\CV$ itself as a
Fr\'echet space whose elements are collections $\{v_\tau\}_{\tau > 0}$ satisfying the consistency
condition $v_{\tau'} = \pi_{\tau'\leftarrow \tau} v_\tau$ and with the topology given by the collections
of seminorms $\|\cdot\|_\tau$ inherited by the spaces $\CV_\tau$. We will write $\pi_\tau\colon \CV \to \CV_\tau$ for the
natural projection. 

Given a bounded and piecewise $\cC^1$ domain $D \subset \R^{d-1}$,
a typical example of a time-indexed space
is given by the space $\CV = \CD^{\gamma,w}_P$ with $\pi_\tau$ given by the restriction to $[0,\tau] \times D$
and norms $\|\cdot\|_\tau$ given by $\vn{\cdot}_{\gamma,w;D_\tau}$, where $D_\tau = [0,\tau]\times D$.
Similarly, we write again $\cC^w_P$ for the time-indexed space consisting of distributions
on $\R^d$ which vanish outside of $\R_+ \times D$, endowed with the norms 
of Definition~\ref{def:weightedHolder}, but restricted to test functions $\psi$, points $x$
and constants $\lambda$ such that the support of $\psi_x^\lambda$ lies in $(-\infty,\tau]\times \R^{d-1}$.

Given two time-indexed spaces $\CV$ and $\bar \CV$, we call a map
$A \colon \CV \to \bar \CV$ `adapted' if there are
maps $A_\tau \colon \CV_\tau \to \bar \CV_\tau$ such that $\pi_\tau A = A_\tau \pi_\tau$.
If $A$ is linear, we will furthermore assume that the norms of $A_\tau$ are uniformly bounded
over bounded subsets of $\R_+$. Similarly, we call $A$ ``locally Lipschitz'' if each of the $A_\tau$
is locally Lipschitz continuous and, for every $K>0$ and $\tau>0$, the Lipschitz constant of  
$A_{\tau'}$ over the centred ball of radius $K$ in $A_{\tau'}$ is bounded, uniformly over $\tau' \in (0,\tau]$.

With these preliminaries in place, our setup is the following.
\begin{claim}
\item Fix $d\geq 2$, $\beta=2$, the scaling $\frs=(2,1,\ldots,1)$ on $\R^d=\{(t,x):t\in\R,x\in\R^{d-1}\}$, and a regularity structure $\scT$.
\item Let $\gamma$, $\gamma_0$ be two positive numbers satisfying $\gamma<\gamma_0 + 2$ and
let $V$ be a sector of regularity $\alpha \le 0$ and such that $\bar T \subset V$. 
\item Set $P_0=\{(0,x):x\in\R^{d-1}\}$ and $P_1=\{(t,x):t\in\R,x\in\partial D\}$, where $D$ is a domain in $\R^{d-1}$ with a piecewise $\cC^1$ boundary, satisfying the cone condition.
\item We assume that we have an abstract integration map $\cI$ of order $2$ as well as non-anticipative
kernels $K\in\scK_2$ and $Z\in\scZ_{2,P}$. We then construct the operator $Z_\gamma$ and, for every admissible model $(\Pi,\Gamma)$, 
the operator $\cK_\gamma$ as in Sections~\ref{sec:kernel} and~\ref{subsec:int remainder}.
\item We fix a family $((\Pi^\varepsilon,\Gamma^\varepsilon))_{\varepsilon\in(0,1]}$ of admissible models converging to $(\Pi^0,\Gamma^0)$ as $\varepsilon\rightarrow0$.

\item We fix a collection of time-indexed spaces $\cV_\eps$ with $\eps \in [0,1]$ endowed
with adapted linear maps $\hat \CR^\eps \colon \cV_\eps \to \bigoplus_{i=0}^n\cC^{w_i}_P$ 
and $\iota_\eps\colon \cV_\eps \to \bigoplus_{i=0}^n\cD^{\gamma_0,w_i}_P(V_i,\Gamma^\eps)$, 
where $V_i$ are sectors of regularity $\alpha_i$, satisfying $\cI(V_i) \subset V$ and $w_i\in\R^3$.
Finally, we assume that for every $\eps \in [0,1]$ and every  $v \in \cV_\eps$, one has
\begin{equ}\label{eq:hatR0}
\bigl(\tilde \cR\bR^D_+\iota_\eps v\bigr)(\psi) = \bigl(\hat \CR^\eps v\bigr)(\psi)
\end{equ}
for any $\psi \in \cC_0^\infty(\R^d \setminus P)$. 
Denote $\tilde \cC=\bigoplus_{i=0}^n\cC^{w_i}_P$ and $\tilde \cD=\bigoplus_{i=0}^n\cD^{\gamma_0,w_i}_P(V_i,\Gamma^\eps)$, which are themselves time-indexed spaces equipped with the natural norms.
\item We fix a collection of time-indexed spaces $\CW_\eps$ of modelled distributions
such that the linear maps
\begin{equ}
\cP_\gamma^{(\eps)} v = \sum_{i=0}^n \bigl(\cK_\gamma^{(\hat\cR^\varepsilon v)_i} (\bR_+^D\iota_\eps v)_i + Z_\gamma (\hat\cR^\varepsilon v)_i \bigr)\;,
\end{equ}
are bounded from $\cV_\eps$ into $\cW_\eps$ with a bound of order $\tau^\theta$ for some $\theta>0$ for its restriction to time $\tau \in (0,1]$,
uniformly over $\eps \in [0,1]$.
\item For $\eps \in [0,1]$, we fix a collection of adapted locally Lipschitz continuous maps
$F_\eps:\cD^{\gamma,w}_P(V,\Gamma^\eps) \to \cV_\eps$.
\item There are `distances' $\vn{\cdot;\cdot}_{\cW;\tau}$ (possibly also depending on $\eps$) 
defined on $\cW_\eps \times \cW_0$
that are compatible 
with the maps $F_\eps$ and $\cP_\gamma$ in the sense that, for $u\in\cV_\eps$, $v\in\cV_0$, 
and $\tau\in(0,1]$, one has
\begin{equ}
\tau^{-\theta}\vn{\cP^{(\eps)}_\gamma u;\cP^{(0)}_\gamma v}_{\cW;\tau}\lesssim
\vn{\iota_\eps u; \iota_0v}_{\tilde\cD; D_\tau} 
+ \|\hat\CR^\eps u- \hat \CR^0v\|_{\tilde\cC;D_\tau} +o(1)\;,
\end{equ}
as $\eps \to 0$. Similarly, uniformly over modelled distributions $f\in\cW_\eps$, $g\in\cW_0$
bounded by an arbitrary constant $C$ and uniformly over $\tau\in(0,1]$, one has
\begin{equ}[e:conteps]
\vn{\iota_\eps F_\eps(f); \iota_0 F_0(g)}_{\tilde\cD; D_\tau} 
+ \|\hat\CR^\eps F_\eps(f)- \hat \CR^0 F_0(g)\|_{\tilde\cC;D_\tau} \lesssim\vn{f;g}_{\cW;\tau} + o(1)
\;,
\end{equ}
as $\eps \to 0$.
\end{claim}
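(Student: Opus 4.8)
The statement to be established is the abstract fixed point and stability theorem: in the setting described above, together with a fixed inhomogeneous term --- call it $u^\star_\eps \in \cW_\eps$, e.g.\ the lift of the solution of the linearised initial/boundary problem as in Lemma~\ref{lem:initial 2}, with $u^\star_\eps \to u^\star_0$ --- there exists $T \in (0,1]$, \emph{independent of} $\eps \in [0,1]$, such that the equation $u = \cP_\gamma^{(\eps)}\bigl(F_\eps(u)\bigr) + u^\star_\eps$ has a unique solution $u^\eps$ in the restriction of $\cW_\eps$ to $[0,T]$, and moreover $\vn{u^\eps; u^0}_{\cW;T} \to 0$ as $\eps \to 0$. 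First I would run a Banach fixed point argument fibrewise: for $\tau \in (0,1]$ set $\cM^\eps_\tau(w) := \pi_\tau\bigl(\cP_\gamma^{(\eps)} F_\eps(w) + u^\star_\eps\bigr)$, which by adaptedness of $F_\eps$ and $\cP_\gamma^{(\eps)}$ factors through $\pi_\tau$ and hence defines a map $\cW_{\eps,\tau} \to \cW_{\eps,\tau}$. Since $F_\eps$ is locally Lipschitz --- thus bounded on bounded sets with constants uniform in $\eps$ --- and $\cP_\gamma^{(\eps)}$ has operator norm $O(\tau^\theta)$ uniformly in $\eps$, on the centred ball of radius $R := 1 + 2\sup_{\eps}\|u^\star_\eps\|_1$ one gets, with $C(R)$ independent of $\eps$ and $\tau$,
\begin{equs}
\|\cM^\eps_\tau w\|_\tau &\le C(R)\,\tau^\theta + \|u^\star_\eps\|_\tau\;,\\
\|\cM^\eps_\tau w - \cM^\eps_\tau \bar w\|_\tau &\le C(R)\,\tau^\theta\,\|w-\bar w\|_\tau\;.
\end{equs}
Choosing $T$ so small (uniformly in $\eps$) that $C(R)\,T^\theta \le \half$, the map $\cM^\eps_T$ sends $B_T(R)$ into itself and contracts it, hence has a unique fixed point there; uniqueness in all of $\cW_{\eps,T}$ follows since any solution satisfies $\|u^\eps\|_\tau \le \|u^\star_\eps\|_\tau + C(R)\tau^\theta \le R$ for all $\tau \le T$.

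For the stability statement, write $\delta_\tau := \vn{u^\eps; u^0}_{\cW;\tau}$ and use the triangle inequality for $\vn{\cdot;\cdot}$ together with $u^\eps - u^\star_\eps = \cP_\gamma^{(\eps)}F_\eps(u^\eps)$, $u^0 - u^\star_0 = \cP_\gamma^{(0)}F_0(u^0)$. Feeding the first compatibility hypothesis of the setup (which bounds $\tau^{-\theta}\vn{\cP^{(\eps)}_\gamma u; \cP^{(0)}_\gamma v}_{\cW;\tau}$ by $\vn{\iota_\eps u;\iota_0 v}_{\tilde\cD;D_\tau} + \|\hat\CR^\eps u - \hat\CR^0 v\|_{\tilde\cC;D_\tau} + o(1)$) with $u = F_\eps(u^\eps)$, $v = F_0(u^0)$, and then the second hypothesis \eqref{e:conteps} --- legitimate since $u^\eps, u^0 \in B_T(R)$ --- yields
\begin{equ}
\delta_\tau \;\lesssim\; \vn{u^\star_\eps; u^\star_0}_{\cW;\tau} + \tau^\theta\,\delta_\tau + o(1)
\end{equ}
as $\eps \to 0$. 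Shrinking $T$ once more so that the implied prefactor of $\tau^\theta$ is $<1$, the $\delta_\tau$-term on the right is absorbed into the left, and since $u^\star_\eps \to u^\star_0$ the remaining right-hand side tends to $0$; hence $\delta_T \to 0$, which is the asserted convergence.

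In this abstract form the argument is little more than bookkeeping; the single point that genuinely needs care is that the horizon $T$ and the radius $R$ be chosen \emph{simultaneously for all} $\eps \in [0,1]$, which is exactly what the uniform-in-$\eps$ phrasing of the hypotheses (uniform operator bounds of $\cP^{(\eps)}_\gamma$, uniform local Lipschitz constants of $F_\eps$, the uniform $\tau^\theta$ decay, and the uniform compatibility estimates) is designed to provide. The substantive labour has been relegated to the verification of these hypotheses for each concrete equation of Section~\ref{sec:applications}, which relies on the weighted multiplication, composition and Schauder estimates of Sections~\ref{sec:calculus}--\ref{subsec:int remainder} --- in particular Lemmas~\ref{lem:mult}, \ref{lem:int}, \ref{lem:Z}, \ref{lem:short time K} and \ref{lem:short time Z,R}, together with Proposition~\ref{prop:improved mu} for the sharp choice of the exponent $\mu$ --- and on the reconstruction results of Theorems~\ref{thm:reco} and~\ref{thm:reco hat'} to produce the maps $\hat\CR^\eps$ consistent with \eqref{eq:hatR0}.
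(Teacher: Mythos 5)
Your fixed-point argument is essentially the paper's: Banach contraction on the centred ball of radius $R$ in $\cW_{\eps,\tau}$, with the contraction constant $C(R)\,\tau^\theta$ uniform in $\eps$ thanks to the uniform operator-norm bound on $\cP^{(\eps)}_\gamma$ and the uniform local Lipschitz bound on $F_\eps$, and then the stability estimate obtained by feeding the two compatibility hypotheses into the fixed-point relations for $u^\eps$ and $u^0$ and absorbing the $\tau^\theta\,\delta_\tau$ term. That part is fine and matches the paper.

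However, your argument for uniqueness in \emph{all} of $\cW_{\eps,T}$ has a genuine gap. You write that any fixed point $u^\eps$ satisfies $\|u^\eps\|_\tau \le \|u^\star_\eps\|_\tau + C(R)\,\tau^\theta$, but the constant $C(R)$ was manufactured from the local Lipschitz/boundedness constant of $F_\eps$ \emph{on the ball of radius $R$}. Applying it to an arbitrary fixed point presupposes $\|u^\eps\|_\tau \le R$, which is exactly what you are trying to show — the estimate is circular. The correct inequality would read $\|u^\eps\|_\tau \le \|u^\star_\eps\|_\tau + C(\|u^\eps\|_\tau)\,\tau^\theta$, and without further control on the growth of $C(\cdot)$ this does not close. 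The paper argues differently and non-circularly: if $\bar u$ were a second fixed point with $\|\bar u\|_\tau = M > C$, then its restriction to any time $\tau' < \tau$ is still a fixed point; the operator-norm decay $\CO((\tau')^\theta)$ yields uniqueness of the fixed point in a ball of radius $\bar C(\tau')$ with $\bar C(\tau')\to\infty$ as $\tau'\to 0$, so taking $\tau'$ small enough that $\bar C(\tau') > M$ forces $\bar u|_{\tau'} = u|_{\tau'}$ and hence a contradiction (modulo a standard continuation step). You should replace your a priori bound by this ``shrink the time, enlarge the uniqueness radius'' argument, or else spell out a continuity-in-$\tau$ bootstrap that avoids the circularity.

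A cosmetic remark: you state the result with $\eps$-dependent inhomogeneous data $u^\star_\eps\to u^\star_0$, which is slightly more general than the theorem as written (where the data $v$ is held fixed in the continuity assertion at $(v,0)$). This is harmless and in fact what is used in the applications, but worth noting that it is an extension of the literal statement.
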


\begin{remark}
The reader may wonder what the point of this rather complicated setup is. By choosing for
$\CV_\eps$ a direct sum of spaces of the type defined in Section~\ref{sec:def}, it 
allows us to decompose the right hand side
of our equation into a sum of terms with well-controlled behaviour at the boundary.
This gives us the flexibility to exploit different features of each term to control the
corresponding ``reconstruction operator'' $\hat \CR^\eps_i$. 
For example, in the case of 2D gPAM, the term  $\hat f_{ij}(u) \star \cD_i(u)\star \cD_j(u)$ can be 
reconstructed because the corresponding weight exponents are sufficiently large, the term $(\hat g(u) - g(0)\bone) \star \Xi$ 
can be reconstructed because it vanishes on the boundary, and 
the term $g(0)\Xi$ can be reconstructed because it corresponds to (a constant times) white noise, 
multiplied by an indicator function.
\end{remark}

We then have the following result.

\begin{theorem}\label{thm:FPP}
In the above setting, there exists $\tau>0$ such that, for every $\varepsilon\in[0,1]$ and every $v \in \cW_\eps$, the equation
\begin{equation}\label{eq:main eq}
u=\cP^{(\varepsilon)}_{\gamma_0} F_\eps(u)+ v\;,
\end{equation}
admits a unique solution $u^\varepsilon\in \cW_\eps$ on $(0,\tau)$. The solution map $\cS_\tau:(v,\varepsilon)\mapsto u^\varepsilon$ is furthermore jointly continuous at $(v,0)$. 
\end{theorem}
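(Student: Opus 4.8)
Looking at this theorem, it is a fixed-point-plus-continuity statement in a Banach (really Fréchet) setting, so the natural strategy is Banach's fixed point theorem with parameters, combined with the short-time smallness estimates already established.

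\textbf{Proof strategy.} The plan is to apply the contraction mapping principle to the map
$$
\Phi^{(\eps)}_v(u) = \cP^{(\eps)}_{\gamma_0} F_\eps(u) + v
$$
on a small ball in $\cW_\eps$, with the time horizon $\tau$ chosen small enough (uniformly in $\eps$) to make $\Phi^{(\eps)}_v$ a contraction. First I would fix $\eps \in [0,1]$ and $v \in \cW_\eps$, and observe that the composition $u \mapsto F_\eps(u) \mapsto \cP^{(\eps)}_{\gamma_0}F_\eps(u)$ maps $\cW_\eps$ into itself: indeed $F_\eps$ is adapted and locally Lipschitz from the appropriate $\cD^{\gamma,w}_P(V,\Gamma^\eps)$ into $\cV_\eps$, and by assumption $\cP^{(\eps)}_{\gamma_0}$ is bounded from $\cV_\eps$ into $\cW_\eps$ with operator norm of order $\tau^\theta$ for its restriction to time $\tau$. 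Here one must check that the space $\cW_\eps$ embeds into (or at least that its elements can be fed as arguments to) $F_\eps$; this is implicit in the setup since $\cW_\eps$ is a space of modelled distributions valued in $V$ and $F_\eps$ is defined on $\cD^{\gamma,w}_P(V,\Gamma^\eps)$ — so one uses that $\cW_\eps \hookrightarrow \cD^{\gamma,w}_P(V,\Gamma^\eps)$ on $[0,\tau]\times D$ with $\tau$-uniform embedding constants (this follows from $\gamma < \gamma_0+2$ together with Lemma~\ref{lem:mult} applied as in the proof of Lemma~\ref{lem:Z}, writing $f = f\star_\gamma \bone$). Combining the local Lipschitz bound for $F_\eps$ with the $\tau^\theta$ bound for $\cP^{(\eps)}_{\gamma_0}$, one gets that on the ball of radius $2\|v\|$ in $\cW_\eps$, for $\tau$ small enough depending only on an upper bound for $\|v\|$ and on the Lipschitz constants (which are uniform over $\eps \in [0,1]$ since the models converge), $\Phi^{(\eps)}_v$ maps this ball into itself and is a contraction with constant $\le 1/2$. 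Banach's theorem then gives the unique solution $u^\eps$ on $(0,\tau)$.

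\textbf{Joint continuity at $(v,0)$.} For the second part I would estimate $\vn{u^\eps; u^0}_{\cW;\tau}$ directly from the fixed point equations. Writing $u^\eps = \cP^{(\eps)}_{\gamma_0}F_\eps(u^\eps) + v^\eps$ and $u^0 = \cP^{(0)}_{\gamma_0}F_0(u^0) + v^0$ and subtracting, one obtains
$$
\vn{u^\eps; u^0}_{\cW;\tau} \le \vn{\cP^{(\eps)}_{\gamma_0}F_\eps(u^\eps); \cP^{(0)}_{\gamma_0}F_0(u^0)}_{\cW;\tau} + \vn{v^\eps; v^0}_{\cW;\tau}.
$$
For the first term on the right I would insert the intermediate quantity $\cP^{(\eps)}_{\gamma_0}F_\eps(u^0)$: the difference $\cP^{(\eps)}_{\gamma_0}F_\eps(u^\eps) - \cP^{(\eps)}_{\gamma_0}F_\eps(u^0)$ is bounded by $\tfrac12 \vn{u^\eps;u^0}_{\cW;\tau}$ by the contraction property (here one needs that the distance $\vn{\cdot;\cdot}_{\cW;\tau}$ behaves consistently when comparing two elements modelled on the \emph{same} model $\Gamma^\eps$, which it does since $\vn{\cdot;\cdot}$ reduces to the ordinary seminorm in that case), while $\cP^{(\eps)}_{\gamma_0}F_\eps(u^0) - \cP^{(0)}_{\gamma_0}F_0(u^0)$ is controlled using the compatibility hypotheses: by the second displayed compatibility bound \eqref{e:conteps} applied with $f = u^0 \in \cW_0$ (noting $u^0$ is bounded by the a priori radius $2\sup_\eps\|v^\eps\|$), together with the first compatibility bound applied to $u = F_\eps(u^0)$, $v = F_0(u^0)$, one gets
$$
\tau^{-\theta}\vn{\cP^{(\eps)}_{\gamma_0}F_\eps(u^0); \cP^{(0)}_{\gamma_0}F_0(u^0)}_{\cW;\tau} \lesssim \vn{u^0;u^0}_{\cW;\tau} + o(1) = o(1)
$$
as $\eps \to 0$ (the leading term vanishes since we are comparing $u^0$ with itself). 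Absorbing the $\tfrac12\vn{u^\eps;u^0}$ term into the left-hand side yields $\vn{u^\eps;u^0}_{\cW;\tau} \lesssim \vn{v^\eps;v^0}_{\cW;\tau} + o(1)$, which tends to $0$ as $(v^\eps,\eps) \to (v^0,0)$, giving the claimed joint continuity.

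\textbf{Main obstacle.} The genuinely delicate point is not the abstract contraction argument — that is routine — but rather the bookkeeping of \emph{uniformity in $\eps$}: one must verify that the radius of the ball, the time $\tau$, the embedding constants $\cW_\eps \hookrightarrow \cD^{\gamma,w}_P$, and the Lipschitz constants of $F_\eps$ can all be chosen uniformly over $\eps \in [0,1]$. This relies on the convergence $(\Pi^\eps,\Gamma^\eps) \to (\Pi^0,\Gamma^0)$ (so the model norms are uniformly bounded), on the assumed uniform-in-$\eps$ operator bound $\tau^\theta$ for $\cP^{(\eps)}_\gamma$, and on the ``adapted''/uniform-Lipschitz conventions built into the definitions of the time-indexed spaces. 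A secondary subtlety is that $u^\eps$ \emph{a priori} lives in a ball whose radius depends on $\|v^\eps\|$, so for the continuity statement one first fixes a neighbourhood of $(v^0,0)$ on which $\|v^\eps\|$ is uniformly bounded, shrinks $\tau$ accordingly once and for all, and only then runs the comparison; this ordering of quantifiers is what makes the argument go through.
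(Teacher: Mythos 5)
Your overall strategy --- Banach's fixed point theorem with the time horizon shrunk to make the right-hand side a contraction, followed by a comparison argument for continuity --- is precisely the one used in the paper. However, there are two points where your proposal is incomplete or goes slightly wrong.

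First, you claim ``Banach's theorem then gives the unique solution $u^\eps$ on $(0,\tau)$,'' but the contraction argument only yields uniqueness \emph{within the ball} of radius $C$ in $\cW_\eps$, whereas the theorem asserts uniqueness in all of $\cW_\eps$. The paper devotes a full paragraph to closing exactly this gap: assuming a second fixed point $\bar u$ (necessarily of norm $> C$), one restricts both fixed points to a shorter time $\tau'$; since the operator norm of $\cP^{(\eps)}_{\gamma_0}$ restricted to time $\tau'$ decays like $(\tau')^\theta$, the radius $\bar C(\tau')$ of the ball in which one has uniqueness diverges as $\tau'\to 0$, which forces $u$ and $\bar u$ to agree on a short initial time interval and thereby produces a contradiction. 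Your proposal omits this step entirely, so as written it does not establish the uniqueness claimed in the statement.

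Second, in the continuity argument you insert the ``intermediate quantity'' $\cP^{(\eps)}_{\gamma_0}F_\eps(u^0)$. But $u^0$ lives in $\cW_0$, i.e.\ it is a modelled distribution with respect to $\Gamma^0$, while $F_\eps$ is defined on $\cD_P^{\gamma,w}(V,\Gamma^\eps)$. So $F_\eps(u^0)$ is not a meaningful expression in the paper's setup, and your parenthetical remark (that the two quantities being compared are ``modelled on the same model $\Gamma^\eps$'') is not correct. The paper avoids this altogether: the compatibility hypotheses around \eqref{e:conteps} are \emph{designed} to compare $F_\eps$ applied to $f\in\cW_\eps$ directly with $F_0$ applied to $g\in\cW_0$, so one estimates $\vn{u^\eps;u^0}_{\cW;\tau}$ directly by chaining the two compatibility bounds with $f=u^\eps$, $g=u^0$, then absorbing the resulting $C\tau^\theta\vn{u^\eps;u^0}_{\cW;\tau}$ term --- no ill-typed intermediate term is needed. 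Your "main obstacle" discussion of uniformity in $\eps$ is accurate and matches the paper's implicit bookkeeping, but the two points above need to be fixed before the argument is complete.
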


\begin{proof}
By assumption $\cP^{(\eps)}_{\gamma_0}$ is an adapted linear map from
$\CV_\eps$ to $\cW_\eps$ with control on its
norm that is uniform over $\eps \in [0,1]$. It has the additional property that, when restricted to time $\tau$,
its operator norm is bounded by $\CO(\tau^\theta)$ for some exponent $\theta>0$, uniformly in $\eps$.
Combining this with the uniform local Lipschitz continuity of the maps $F_\eps$, it is immediate that,
for every $C> 2\|v\|_{\cW;1}$, there exists $\tau\in (0,1]$ such that
the right hand side of \eqref{eq:main eq} is a contraction and therefore admits a unique fixed point 
in the centred ball of radius $C$ in $\cW_\eps$. 

To show that this is the unique fixed point in all of $\cW_\eps$ is also standard: 
assume by contradiction that there
exists a second fixed point $\bar u$ (which necessarily has norm strictly greater than $C$). Then, for every $\tau' < \tau$,
the restrictions of both $u$ and $\bar u$ are fixed points in $\cW_\eps$. However, since the
norm of $A_\eps$ is bounded by $\CO(\bar \tau^\theta)$, one has uniqueness of the fixed point in a ball of 
radius $\bar C(\tau')$ of $\cW_\eps$ with $\lim_{\tau' \to 0} \bar C(\tau') = \infty$,
so that one reaches a contradiction by choosing $\tau'$ small enough.
The continuity of the solution map at $(v,0)$ then follows immediately from \eqref{e:conteps}.
\end{proof}

\section{Singular SPDEs with boundary conditions}\label{sec:applications}

The next three subsections are devoted to the proofs of Theorems~\ref{thm:PAM},~\ref{thm:KPZ D}, and~\ref{thm:KPZ N}, respectively. We do rely on the results of the corresponding statements without boundary conditions from \cite{H_KPZ,H0}, in particular the specific regularity structures, models, and their convergence do not change in our setting. Therefore we only specify details about these objects to the extent that is sufficient to cover the new aspects of our setting.
\subsection{2D gPAM with Dirichlet boundary condition}

The regularity structure for the equation \eqref{eq:0PAM} is built as in \cite[~Sec 8]{H0},
and the models $(\Pi^\eps,\Gamma^\eps)_{\eps\in[0,1]}$ as in \cite[~Sec 10]{H0},
 and we will
use the notations from there without further ado.
We use the periodic model with sufficiently large period: if the truncated heat kernel $K^0$ is chosen to have support of diameter $1$,
then the periodic model on $[-2,2]^2$ suffices, since convolution with $K^0$ and with its periodic symmetrisation agrees on $[-1,1]^2$.
The homogeneity of the symbol $\Xi$ is denoted by 
$-1-\kappa$, where $\kappa\in(0,(1/3)\wedge\delta)\setminus\mathbb{Q}$, with $\delta$ being the regularity of the initial condition.

Our setup to apply Theorem~\ref{thm:FPP} is the following. The sectors we are working with are
$$
V =\cI(T)+\bar T,\quad V_0 =T_0^+\star\scD(V)\star\scD(V),\quad V_1 =T_0^+\star\Xi
,\quad V_2=\scal{\Xi}
$$
and we set the exponents $\gamma =1+2\kappa$, $\gamma_0=\kappa$,
\begin{equs}
\alpha &=0, &\quad
\eta &=\kappa&\quad \sigma &=1/2+\kappa&\quad \mu &=-\kappa;\\
\alpha_0 &=-2\kappa, &\quad
\eta_0 &=2\kappa-2,&\quad \sigma_0 &=2\kappa-1,&\quad \mu_0 &=2\kappa-2;\\
\alpha_1 &=-1-\kappa, &\quad
\eta_1 &=-1,&\quad
\sigma_1 &=-1/2,&\quad
\mu_1 &=-1-\kappa;\\
\alpha_2 &=-1-\kappa, &\quad
\eta_2 &=-1-\kappa,&\quad
\sigma_2 &=-1-\kappa,&\quad
\mu_2 &=-1-\kappa.
\end{equs}

We then set 
\begin{equ}[e:space]
\cV_\eps=\cD_P^{\gamma_0,w_0}(V_0,\Gamma^\eps)\oplus\cD_{P,\{1\}}^{\gamma_0,w_1}(V_1,\Gamma^\eps)\oplus\cD_P^{\gamma_0,w_2}(V_2,\Gamma^\eps)\;,
\end{equ}
and we let $\iota_\eps$ be the identity.
As for $\hat \cR^\eps$, it is chosen to act coordinate-wise, and in the first two coordinates there is no choice to be made, one simply applies Theorems~\ref{thm:reco}-\ref{thm:reco hat'}. The definition of the action
of $\hat \cR^\eps$ on the third coordinate is momentarily postponed.

We take $G$ to be the Dirichlet heat kernel of the domain $D=(-1,1)^2$ continued to all of $\R^2$ as 
in Example~\ref{example}. We also consider the decomposition $G \sim K + Z$ given there and construct 
$\cK_{\gamma_0}$ and $Z_{\gamma_0}$ accordingly. Furthermore, by Schauder's estimate, 
it follows that, for all $f\in\cC^\alpha$ with $\alpha>-2$, the function
$$
(t,x) \mapsto \int_{[0,t]\times D}G((t,x),(s,y))f(s,y)\,ds\,dy
$$
is continuous and vanishes on $\R_+ \times \d D$.
In particular, for any $v\in\cV_\eps$, the modelled distribution
$$
h=(\cK_{\gamma_0}^{(\eps)} + Z_{\gamma_0}\hat\cR^\eps)v
$$
satisfies $\langle \bone,h(t,x)\rangle=0$ for all $t>0$ and $x \in \d D$. Since the only basis element in $V$ 
with homogeneity lower than $\sigma$ is $\bone$, we conclude that one has $h\in\cD_{P,\{1\}}^{\gamma,w}$. 
We exploit this by setting the time-indexed space $\CW_\eps$ to be
\begin{equ}
\CW_\eps = \bigl\{u \in \cD_{P,\{1\}}^{\gamma,(\eta,\sigma,0)}\,:\, \scD_i u\in\cD_{P}^{\gamma-1,(\eta-1,\sigma-1,\kappa-1)},\,i = 1,2\bigr\}\;.
\end{equ}
The reason for only imposing a slightly weaker condition on $u$ itself (i.e.\ we use $0$ instead of $\kappa$ as the third
singularity index) is to be able to deal with initial conditions. 
Indeed, let $v$ be the lift of the solution of the linear equation
\begin{equs}\label{eq:initial}
\partial_t v=\Delta v,\quad v|_{\partial D}=0,\quad v|_{\{0\}\times D}=u_0.
\end{equs}
Combining our assumption that $u_0 \in \CC^\delta$ with Lemma~\ref{lem:initial 2} and the definition of the various
exponents, we then note that indeed $v \in \CW_\eps$ as required, but this would \textit{not} be the case
had we simply replaced $\CW_\eps$ by $\cD_{P,\{1\}}^{\gamma,(\eta,\sigma,\kappa)}$. Due to the above choice of exponents, the required estimate of 
order $T^\theta$ of the short time norm of $\cP_\gamma^{(\eps)}$ 
from $\cV_\eps$ to $\cW_\eps$ follows from 
Lemmas \ref{lem:short time K}-\ref{lem:short time Z,R}, with the choice 
$$
\vn{f;g}_{\cW;\tau}:=\vn{f;g}_{\gamma,(\eta,\sigma,0);\tau}+\vn{\scD f;\scD g}_{\gamma-1,(\eta-1,\sigma-1,\kappa-1);\tau}.
$$

We now define the functions $F_\eps$. They are given as local operations with formal
expression that do not depend on $\eps$, and we define its three components according to 
the decomposition \eqref{e:space} separately.
We first set
$$
F^{(0)}(u)=\hat f_{ij}(u)\star\scD_i(u)\star\scD_j(u).
$$
Here $\hat f_{ij}$ are the lifts of the functions $f_{ij}$ in \eqref{eq:0PAM}. By Lemmas~\ref{lem:comp},~\ref{lem:mult}, and~\ref{lem:diff}, $F^{(0)}$ is indeed a mapping from $\CW_\eps$ to $\cD_{P}^{\gamma_0,w_0}(V_0)$.
At this stage we note that the fact that the derivatives of 
elements of $\cW^\eps$ have better corner singularity than $\mu-1$ is crucial, since otherwise we would have had to choose
$\mu_0 \le -2$ which would violate the condition $\mu_0 + 2 > (\mu\vee 0)$ appearing in the conditions of
Theorem~\ref{thm:FPP}.

Next, set
$$
F^{(1)}(u)=(\hat g(u)-g(0)\bone)\star\Xi
$$
Again, using Lemmas~\ref{lem:comp} and~\ref{lem:mult}, it is easy to see that $F^{(1)}$ maps from $\cW_\eps$ to $\cD_{P}^{\gamma_1,w_1}(V_1)$. To see that it in fact maps to $\cD_{P,\{1\}}^{\gamma_0,w_1}(V_1)$, we need only check the coefficient of $\Xi$, since $\Xi$ is the only basis element in $V_1$ with homogeneity less than $\sigma_1$. Since $\langle \bone,u(z)\rangle$ has $0$ limit at $P_1\setminus P_0$, so does $\langle \bone, (\hat g(u))(z)-g(0)\bone\rangle$, and therefore so does the coefficient of $\Xi$ in $F_1(u,v)$.

Finally, the third coordinate is the constant modelled distribution
$$
F^{(2)}(u)= g(0)\Xi.
$$

It remains to define $\hat\cR^\eps$ on $\cD_P^{\gamma_0,w_2}(\scal{\Xi})$. To this end, let us recall that for the 
model constructed for this equation
 in \cite[Sec.~10.4]{H0} (which coincides with the canonical BPHZ model defined more generally in
 \cite{BHZ,CH}) $\Pi^0_x\Xi$ is the spatial white noise $\xi$ for all $x$, while 
$\Pi_x^\varepsilon\Xi$ is the smoothed noise $\xi_\varepsilon$ for all $x$. 
Also notice that any $f\in\cD_P^{\gamma_0,w_2}(\scal{\Xi})$ is necessarily constant on $\R^+\times D$, 
and therefore in fact it suffices to define $\hat\cR^\eps(\bR^D_+\Xi)$ in a way that the continuity property \eqref{e:conteps} holds. 
Defining $\hat\cR^0(\bR^{D}_+\Xi)$ as $\mathbf{1}_{[0,\infty)\times D}\xi$ (which is of course a meaningful expression) 
and $\hat\cR^\varepsilon(\bR^{D}_+\Xi)$ as $\mathbf{1}_{[0,\infty)\times D}\xi_\varepsilon$ we therefore only
need to show that the convergence
$$
\|\mathbf{1}_{[0,\infty)\times D}\xi-\mathbf{1}_{[0,\infty)\times D}\xi_\varepsilon\|_{-1-\kappa;[0,1]\times D}\xrightarrow{\varepsilon\rightarrow0}0
$$
holds in probability for \eqref{e:conteps} to hold. This however follows in a more or less standard way 
from a Kolmogorov continuity type argument, see for example \cite[Prop.~9.5]{H0} for a very similar statement.

Therefore we can apply Theorem~\ref{thm:FPP} to get that the equation
$$
u=(\cK^{(\varepsilon)}_{\gamma_0}+Z_{\gamma_0}\hat\cR^\varepsilon)\big((F^{(0)},F^{(1)},F^{(2)})(u)\big)
+v
$$
has a unique local solution $u^\eps\in\cD_{P,\{1\}}^{\gamma,w}(V,\Gamma)$ for each of the models $(\Pi^\varepsilon,\Gamma^\varepsilon)$, for $\varepsilon\in[0,1]$. 
The fact that these correspond to the approximating equations in the sense that $\cR u^\varepsilon$ is the classical solution of \eqref{eq:0PAM approx}, for $\eps>0$,  follows exactly as in \cite{H0}: 
indeed, this is a property of the models and the compatibility of the abstract integration operators with the corresponding convolutions, neither of which changed in our setting. 
One also has, by Theorem~\ref{thm:FPP}, that $u^\varepsilon$ converges to $u^0$ in probability, with respect to the `distance' $\vn{\cdot;\cdot}_{\gamma,w,T}$. 
Therefore, $\cR u^\varepsilon$ also converge to $\cR u^0$ in probability, which proves Theorem~\ref{thm:PAM}.

\begin{remark}\label{remark:initialPAM}
If we replace $u_0$ in \eqref{eq:initial} by $(\cR u^0)(s,\cdot)$, $s<\tau$, where $\tau$ is the solution time from Theorem~\ref{thm:FPP}, then $v$ still belongs to $\CW_\eps$, in fact, one even has
$$
v\in\cD_P^{\gamma,(1-\kappa,1-\kappa,0)},\quad\scD_i v\in\cD_P^{\gamma-1,(-\kappa,-\kappa,-\kappa)},\,i=1,2.
$$
Therefore the solution can be restarted from time $s$ and these solutions can be patched together by the arguments in  \cite[Sec.~7.3]{H0}. One then sees that the only way that the solution may fail to be global is if $\|\hat\cR^0 F(u^0)\|_{-1-\kappa;s}$, and consequently, $\|(\cR u^0)(s,\cdot)\|_{1-\kappa,\bar D}$ blows up in finite time.
\end{remark}

\subsection{KPZ equation with Dirichlet boundary condition}
The construction of the regularity structure and models (as before, with a sufficiently large period)
for the KPZ equation can be for example found in \cite[Sec.~15]{FH}. The homogeneity of the symbol $\Xi$ is now denoted by $-3/2-\kappa$, where $\kappa\in(0,(1/8)\wedge\delta)\setminus\mathbb{Q}$, with $\delta$ being the regularity of the initial condition.

Similarly to the previous subsection, we let $v$ be the lift of the solution to the linear problem with initial 
condition $u_0$ (and Dirichlet boundary conditions). We also choose $K\in\scK_2$ and $Z\in\scZ_{2,P}$, as obtained 
from $G$, the Dirichlet heat kernel on the domain $D=(-1,1)$ as in Example~\ref{example}. 
We also set $\gamma=3/2+\kappa$, $\gamma_0=\kappa$, and define
\begin{equ}\label{eq:Psi}
\Psi=\Psi^\varepsilon=(\cK_{\gamma_0}^{(\eps)}+Z_{\gamma_0}\hat\cR^\eps)(\bR^D_+\Xi),
\end{equ}
where we define the distributions $\hat \cR^\eps\bR_+^D\Xi$ as in the previous subsection, with the obvious modification that $\xi$ now stands for the 1+1-dimensional space-time white noise.

We then write the abstract fixed point problem for the remainder of a one step expansion
\begin{equation}\label{eq:kpz2}
u=(\cK_{\gamma_0}+Z_{\gamma_0}\hat\cR)((F^{(0)},F^{(1)},F^{(2)})(u))+v,
\end{equation}
with
\begin{equs}
F^{(0)}(u)=(\scD u)^{\star 2},\quad
F^{(1)}(u)=2(\scD\Psi)\star(\scD u),\quad
F^{(2)}(u)\equiv(\scD\Psi)^{\star 2}.
\end{equs}

We further set
$$
V =\cI(T_{-1-2\kappa}^+)+\bar T,
\quad \quad 
V_0=(\scD V)^{\star 2},
\quad \quad
V_1=(\scD V)\star T_{-1/2-\kappa}^+,
\quad \quad
V_2=T_{-1-2\kappa}^+,
$$
which obviously implies $\alpha=0$, $\alpha_0=-4\kappa$, $\alpha_1=-1/2-3\kappa$, and $\alpha_2=-1-2\kappa$. As for the weight exponents, let
\begin{equs}
\eta&=\kappa, &\quad \sigma&=1/2+2\kappa,  &\quad  \mu&=-\kappa,\\
\eta_0&=2\kappa-2, &\quad \sigma_0&=2\kappa-1, &\quad \mu_0&=2\kappa-2,\\
\eta_1&=-3/2, &\quad \sigma_1&=\kappa-1, &\quad \mu_1&=-3/2,\\
\eta_2&=-1-2\kappa, &\quad \sigma_2&=-1-2\kappa, &\quad \mu_2&=-1-2\kappa.
\end{equs}
We then set 
similarly to above
\begin{equ}
\CW_\eps = \bigl\{u \in \cD_{P}^{\gamma,(\eta,\sigma,0)}\,:\, \scD_i u\in\cD_{P}^{\gamma-1,(\eta-1,\sigma-1,\kappa-1)},\,i = 1,2\bigr\}\;,
\end{equ}
as well as
$$
\cV_\eps=\cD_P^{\gamma_0,w_0}(V_0,\Gamma^\eps)\oplus\cD_{P}^{\gamma_0,w_1}(V_1,\Gamma^\eps)\oplus\scal{\bR^D_+(\scD\Psi^\eps)^{\star 2}}\;,
$$ 
and $\iota_\eps$ to be the identity. As before, it is straightforward to check that that the conditions on $\cV_\eps$ and $\cW_\eps$ satisfied, and also that regarding the first two coordinates of $\hat\cR^\eps$ one has 
a canonical choice given by Theorem~\ref{thm:reco}.

It remains to define $\hat\cR^\eps\bR^D_+(\scD\Psi)^{\star 2}.$ Recall that $\tilde \cR$ stands for the local reconstruction operator and that the issue with the singularity of low order is that $\tilde \cR \bR^{D}_+(\scD(G_\gamma\Xi))^{\star 2}$ does not have a canonical extension as a distribution in $\cC^{-1-2\kappa}$. Of course, for the approximating models this is just a bounded function, so it could even be extended as an element of $\cC^0$, but these extensions may not converge in the $\varepsilon\rightarrow0$ limit. Therefore some modification of these natural extensions are required at the boundary.
\begin{remark}
This process is very similar to the situation when one takes the sequence of distributions $1/(|x|+\varepsilon)$. This sequence of course does not converge to any distribution as $\varepsilon\rightarrow0$, but $1/(|x|+\varepsilon)+2\log(\varepsilon)\delta_0$ does, in $\cC^{-1-\rho}$ for any $\rho>0$. Moreover, the limiting distribution agrees with $1/|x|$ on test functions supported away from $0$.
\end{remark}
First, for the models $(\Pi^\varepsilon,\Gamma^\varepsilon)$, $\eps>0$, we denote by $\cR \bR^{D}_+(\scD\Psi)^{\star 2}$ 
the natural extension of $\tilde \cR \bR^{D}_+(\scD\Psi)^{\star 2}$ which, as just mentioned, is a bounded function 
and can be written in the form
$$
(\cR \bR^{D}_+(\scD\Psi)^{\star 2})(z)=A_2^\varepsilon(z)+A_0^\varepsilon(z),
$$
where $A_i^\varepsilon(z)$ are random variables belonging to the $i$-th homogeneous Wiener chaos for $i=0,2$. 
To write them more explicitly, introduce the notations $\bar f(s,y)=f(-s,-y)$ for any function $f$, set
\begin{equs}
\tilde{K}_{Q,\varepsilon}(z,z')&=(\bar\rho_\varepsilon\ast (D_1K(z,\cdot)\mathbf{1}_{Q}(\cdot)))(z'),\\
\tilde{Z}_{Q,\varepsilon}(z,z')&=(\bar\rho_\varepsilon\ast (D_1Z(z,\cdot)\mathbf{1}_{Q}(\cdot)))(z'),
\end{equs}
and define $\tilde G_{Q,\varepsilon}=\tilde K_{Q,\eps}+\tilde Z_{Q,\eps}$ for any $Q\subset\R^d$, and with the convention that for $\varepsilon=0$ we substitute the convolution $\bar\rho_\varepsilon\ast$ with the identity. We can then write 
\begin{equs}\label{eq:A2epsilon}
A^\varepsilon_2(z)&=\int (\tilde G_{[0,\infty)\times D,\varepsilon})(z,z')(\tilde G_{[0,\infty)\times D,\varepsilon})(z,z'')\,\xi(dz')\,\xi(dz''),\\
A_0^\varepsilon(z)&=\int (\tilde G_{[0,\infty)\times D,\varepsilon}(z,z'))^2-\tilde K_{\R^d,\varepsilon}^2(z,z')\, dz'.
\label{eq:kpzz}
\end{equs}
Note that the reason for the subtraction in \eqref{eq:kpzz} is the renormalisation already built in the model $(\Pi^\varepsilon,\Gamma^\varepsilon)$. Similarly, for the limiting model $(\Pi^0,\Gamma^0)$,
$$
\tilde \cR \bR^{D}_+(\scD\Psi)^{\star 2}=A_2+A_0,
$$
where $A_2$ and $A_0$ are given by setting $\varepsilon=0$ with the above mentioned convention in \eqref{eq:A2epsilon} and \eqref{eq:kpzz}, respectively.

The convergence of $A_2^\varepsilon$ to $A_2$ in the $\varepsilon\rightarrow0$ limit in $\cC^{-1-\kappa}$ follows from essentially the same power counting argument as in the case without boundary conditions.
The term $A_0^\varepsilon(z)$ however is more delicate. While it is not difficult to show that it converges pointwise to the 
smooth function $A_0(z)$ on $(0,\infty)\times D$, the convergence in $\cC^{-1-\kappa}$ is not a priori clear. In fact, 
without using the specific form of $G$, one cannot even rule out that the limit exhibits a non-integrable singularity 
at the spatial boundary. To see how this can be `countered', first define
\begin{align}
B_0^\varepsilon(z)&=\int (\tilde G_{(-\infty,0)\times D,\varepsilon})
(\tilde G_{\R\times D,\varepsilon}+\tilde G_{[0,\infty)\times D,\varepsilon})
(z,z')
dz',\nonumber\\
C_0^\varepsilon(z)&=\int (\tilde K_{\R\times D,\varepsilon}+\tilde Z_{\R\times D,\varepsilon})^2(z,z')-\tilde K_{\R^d,\varepsilon}^2(z,z')dz'
\nonumber\\
&=\int 2\tilde K_{\R\times D,\varepsilon}\tilde Z_{\R\times D,\varepsilon}(z,z')+
\tilde Z_{\R\times D,\varepsilon}^2(z,z')-\tilde K_{\R\times D^c,\varepsilon}^2(z,z')
\nonumber\\
&\quad -2\tilde K_{\R\times D^c,\varepsilon}\tilde K_{\R\times D,\varepsilon}(z,z')\,dz',\label{eq:C0}
\end{align}
for $z\in(0,\infty)\times D$, and extending them by $0$ otherwise, we have $A_0^\varepsilon=-B_0^\varepsilon+C_0^\varepsilon$. We can similarly write $A_0=-B_0+C_0$, where $B_0$ and $C_0$ are defined by formally setting $\varepsilon=0$ in the above definitions, that is, replacing the convolution with $\rho_\varepsilon$ with the identity.

First we claim that for $z\in(0,\infty)\times D$
\begin{equation}\label{kpz:blowup t}
|B_0^\varepsilon(z)|\lesssim 1/(|z|_{P_0}+\varepsilon)=1/(t^{1/2}+\varepsilon).
\end{equation}

It is easy to see that one has the decomposition
\begin{equation}\label{kpz:decom}
(\tilde G_{(-\infty,0)\times D,\varepsilon})(z,\cdot)=\sum_{n\geq0}\tilde{G}^{(n)}(\cdot),
\end{equation}
where, for each $n$, the function $\tilde{G}^{(n)}$ is supported on $\{z':|z'|_{P_0}\leq\varepsilon,\|z-z'\|_\frs\leq 2^{-n}+\varepsilon\}$, and is bounded by $2^{-n}(\varepsilon\vee 2^{-n})^{-3}$. Furthermore, the function $(\tilde G_{\R\times D,\varepsilon}+\tilde G_{[0,\infty)\times D,\varepsilon})
(z,\cdot)$ is also bounded by $2^{-n}(\varepsilon\vee 2^{-n})^{-3}$ on the support of $\tilde G^{(n)}$. Hence in the case $|z|_{P_0}\geq 3\varepsilon$, noting that the only nonzero terms in the sum \eqref{kpz:decom} are those where $2^{-n}\geq(|z|_{P_0}/3)$, we can bound
\begin{align*}
B_0^\varepsilon(z)
&\lesssim\int\sum_{(|z|_{P_0}/3)\leq 2^{-n}}2^{-3n}2^{2n}2^{2n}\lesssim 1/|z|_{P_0}
\end{align*}
as required. On the other hand, in the case $|z|_{P_0}\leq3\varepsilon$, we have
\begin{align*}
B_0^\varepsilon(z)\lesssim \sum_{2^{-n}>\varepsilon}2^{-3n}2^{2n}2^{2n}+
\sum_{2^{-n}\leq\varepsilon}\varepsilon^32^{-n}\varepsilon^32^{-n}\varepsilon^3\lesssim1/\varepsilon,
\end{align*}
as required. The estimate
$
|B_0(z)|\lesssim 1/t^{1/2}
$
can be obtained analogously. Since $B_0^\varepsilon$ (extended by $0$ outside of $(0,\infty)\times D$) converges to $B_0$ locally uniformly on $(0,\infty)\times D$ and since by the above estimates $(B_0^\varepsilon)_{\varepsilon\in(0,1]}$
and $B_0$ are uniformly bounded in $\cC^{-1-\kappa/2}$, the convergence also holds in $\cC^{-1-\kappa}$.

Moving on to $C_0^\varepsilon$, first notice that it only depends on the variable $x$. Furthermore, by similar calculations as above, one obtains a bound analogous to \eqref{kpz:blowup t}, namely
\begin{equation}\label{kpz:blowup}
|C^\varepsilon_0(z)|\lesssim 1/(|z|_{P_1}+\varepsilon)=1/\big((x+1)\wedge(1-x)+\varepsilon\big)
\end{equation} 
for $z\in(0,\infty)\times D$.
 We then define the distribution $\hat C^\varepsilon_0$ by
\begin{equ}[e:defC0hat]
(\hat C^\varepsilon_0,\varphi):=\int C^\varepsilon_0(z)[\varphi(z)-\chi(x+1)\varphi(t,-1)-\chi(x-1)\varphi(t,1)]dz,
\end{equ}
where $\chi$ is a smooth symmetric cutoff function in the $x$ variable which is 
$1$ on $\{x':|x'|\leq 1/8\}$, and is supported on $\{x':|x'|\leq 1/4\}$. 
The estimate \eqref{kpz:blowup}, together with the local uniform convergence of $C_0^\eps$, then implies that $\hat C^\varepsilon_0$ converges in $\cC^{-1-\kappa}$ to a limit, which we denote by $\hat C^\ast_0$. 
Moreover, since $\hat C^\varepsilon_0$ agrees with $C^\varepsilon_0$ on test functions supported away from $P$, $\hat C^\ast_0$ also agrees with $C_0$ on the same class of test functions. In other words, defining
\begin{equation}\label{eq:kpz hat epsilon}
\hat \cR^\varepsilon \bR^{D}_+(\scD\Psi)^{\star 2}=A_2^\varepsilon-B_0^\varepsilon+\hat C^\varepsilon_0,
\end{equation}
as well as
\begin{equation}\label{eq:kpz hat}
\hat \cR^0 \bR^{D}_+(\scD\Psi)^{\star 2}=A_2-B_0+\hat C_0^\ast,
\end{equation}
the desired properties \eqref{eq:hatR0}-\eqref{e:conteps} of $(\hat\cR^\eps)_{\eps\in[0,1]}$ hold

Therefore by Theorem~\ref{thm:FPP} we can conclude \eqref{eq:kpz2} has a unique local solution $u^\varepsilon\in\cD_P^{\gamma,w}(V,\Gamma^\eps)$
 for each $\eps\in[0,1]$,
 and $\cR(u^\varepsilon+\Psi^\eps)$ converges to $\cR( u^0+\Psi^0)$. To conclude the proof of Theorem~\ref{thm:KPZ D}, 
it remains to confirm that for $\eps>0$,  $\cR( u^\varepsilon+\Psi^\eps)$ solves \eqref{eq:0KPZ approx}. 
This would again follow in exactly the same manner as in \cite{H_KPZ} if we used the `natural' reconstructions everywhere, which we only steered away from in the previous construction. However, since $\hat \cR^\eps$ and $\cR$ only differ by some (finite) Dirac mass on the boundary, and since $G$, the Dirichlet heat kernel, vanishes on the boundary, we have 
\begin{align}
\cR (\cK_{\gamma_0}^{(\eps)}+Z_{\gamma_0}\hat\cR^\eps)\bR^D_+(\scD\Psi^\eps)^{\star 2}&
=G\ast\hat\cR^\varepsilon(\bR^D_+(\scD\Psi^\eps)^{\star 2})
\nonumber\\&=G\ast\cR(\bR^D_+(\scD\Psi^\eps)^{\star 2}).\label{eq:kpz hat r}
\end{align}
The previous modification is therefore not visible after the application of the reconstruction operator, and this 
concludes the proof of Theorem~\ref{thm:KPZ D}.

\subsection{KPZ equation with Neumann boundary condition}
\label{sec:KPZNeumann}

Most of the arguments of the previous subsection carry through if the Dirichlet heat kernel is replaced by the Neumann heat kernel, with the sole exception of \eqref{eq:kpz hat r}. Instead, we have
\begin{align}
\cR (\cK_{\gamma_0}^{(\eps)}+Z_{\gamma_0}\hat\cR^\eps)\bR^D_+(\scD\Psi^\eps)^{\star 2}&
=G\ast\hat\cR^\varepsilon(\bR^D_+(\scD\Psi^\eps)^{\star 2})
\nonumber\\&=G\ast(\cR(\bR^D_+(\scD\Psi^\eps)^{\star 2})-c^-_\varepsilon\delta_{-1}-c^+_\varepsilon\delta_1),
\end{align}
where $\delta_{\pm1}$ is the Dirac distribution at $x=\pm1$,  and
$$
c^-_\varepsilon=\int_{[-1,-3/4]}C_0^\varepsilon(x)\chi(x+1)\,dx,\quad
c^+_\varepsilon=\int_{[3/4,1]}C_0^\varepsilon(x)\chi(x-1)\,dx.
$$
(We henceforth view $C_0^\eps$ and $C_0$ as functions of the spatial variable $x$ only, since we already noted
that these functions, as defined in \eqref{eq:C0}, do not depend on the time variable.) 
Since these Dirac masses now do not cancel, one needs more concrete information about $c^-_\varepsilon$ and $c^+_\varepsilon$, and we begin with the former. 
First, it will be convenient to shift the equation to the right, so that the left boundary is at $x=0$. 
Furthermore, we note that we can add a globally smooth component to $K$ and $Z$ in the definitions 
of $C_0^\varepsilon$ and $C_0$ without changing the conclusion that $\hat C_0^\eps$ as defined
by \eqref{e:defC0hat} converges to a limit $\hat C_0^*$. In particular, setting
\begin{equ}[e:notationN]
\CN(x,\sigma) = {\bone_{\sigma > 0} \over \sqrt{2\pi\sigma}}\exp\Big(-{x^2\over 2\sigma}\Big)\;,
\end{equ}
we can assume that for $x\in[0,1/4]$, one has 
\begin{equs}
K((0,x),(-s,y)) = \CN(x-y,s),\qquad 
Z((0,x),(-s,y)) = \CN(x+y,s)\;.
\end{equs}
With the notations
\begin{equs}
f^{(1)}_x(s,y)=\bone_{y>0}\frac{x-y}{s}\CN(x-y,s),
\quad
f^{(2)}_x(s,y)=\bone_{y>0}\frac{x+y}{s}\CN(x+y,s),
\end{equs}
as well as $f^{(3)}_x(s,y) = f^{(1)}_x(s,y) + f^{(2)}_x(s,-y)$,
and after a trivial change of variables in $s$, we can then write, recalling the notation $\bar f(s,y)=f(-s,-y)$ for any function $f$ of time and space,
\begin{equs}
C_0^\varepsilon(x)&=\int_{\R^2}(\bar\rho_\varepsilon\ast(f^{(1)}_x+f^{(2)}_x))^2(s,y)-(\bar\rho_\varepsilon\ast f^{(3)}_x)^2(s,y)\,ds\,dy,\label{eq:C0varepsilon}
\\
C_0(x)&=\int_{\R^2}(f^{(1)}_x+f^{(2)}_x)^2(s,y)-(f^{(3)}_x)^2(s,y)\,ds\,dy.
\end{equs}
Note that our modifications of $K$ and $Z$ are only valid for $x\in[0, 1/4]$, and so \eqref{eq:C0varepsilon} also holds for these values of $x$. But since other values do not play a role in computing $c_\varepsilon^-$, for the duration of this computation we can simply define $C_0^\varepsilon(x)$ as the right-hand side of \eqref{eq:C0varepsilon} for other values of $x$. We can then write the decomposition
$$
c_\varepsilon^-=\bar c_\varepsilon^--\hat c_\varepsilon^-:=\int_0^\infty C_0^\varepsilon(x)\,dx-\int_0^\infty(1-\chi(x))C_0^\varepsilon (x)\,dx,
$$
We first show that the second term in this decomposition doesn't matter.

\begin{proposition}
With the above notations, one has $C_0(x) = 0$ for every $x \neq 0$. Furthermore, for every $\kappa \in (0,1)$,
there exists a constant $C$ such that, for $|x| \ge C\eps$, one has the bound 
$|C_0^\eps(x)| \le C\eps^{1-\kappa} |x|^{\kappa-2}$.
\end{proposition}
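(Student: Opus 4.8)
\noindent\emph{Proof sketch (proposal).}
The plan is to treat the two assertions separately: the identity $C_0\equiv 0$ follows from an explicit Gaussian computation, while the bound on $C_0^\eps$ will be obtained from an exact parabolic rescaling followed by a stability estimate that crucially uses the vanishing of $C_0$.

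For the first assertion I would start from the identity $f^{(3)}_x(s,y)=f^{(1)}_x(s,y)+f^{(2)}_x(s,-y)$ and note that the two summands have disjoint supports, namely $\{y>0\}$ and $\{y<0\}$. Expanding the squares in the definition \eqref{eq:C0varepsilon} of $C_0$ (with $\eps=0$) and using the change of variables $y\mapsto -y$ to cancel $\int(f^{(2)}_x(s,y))^2$ against $\int(f^{(2)}_x(s,-y))^2$, everything collapses to $C_0(x)=2\int_0^\infty\!\!\int_0^\infty f^{(1)}_x(s,y)f^{(2)}_x(s,y)\,dy\,ds$. Since $(x-y)(x+y)=x^2-y^2$ and, by \eqref{e:notationN}, $\CN(x-y,s)\CN(x+y,s)=(2\pi s)^{-1}\exp(-(x^2+y^2)/s)$, this equals $\tfrac1\pi\int_0^\infty\!\!\int_0^\infty s^{-3}(x^2-y^2)\exp(-(x^2+y^2)/s)\,dy\,ds$; carrying out the $s$-integral via $\int_0^\infty s^{-3}e^{-a/s}\,ds=a^{-2}$ leaves $\tfrac1\pi\int_0^\infty(x^2-y^2)(x^2+y^2)^{-2}\,dy$, which vanishes because $(x^2-y^2)(x^2+y^2)^{-2}=\partial_y\bigl(y(x^2+y^2)^{-1}\bigr)$ and the primitive vanishes both at $y=0$ and as $y\to\infty$. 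As the text already observes, the globally smooth components one is free to add to $K$ and $Z$ are irrelevant here.

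For the second assertion, the point is that substituting $s\mapsto x^2s$, $y\mapsto |x|y$ and using the exact scale-invariance of $\CN$ (available precisely because $K$ and $Z$ have been arranged to coincide with the untruncated Gaussians on the relevant range) yields $C_0^\eps(x)=|x|^{-1}C_0^{\eps/|x|}(\mathrm{sgn}\,x)$. It therefore suffices to show $|C_0^\delta(\pm1)|\lesssim\delta$ for $\delta$ small, since then $|C_0^\eps(x)|\lesssim|x|^{-1}(\eps/|x|)=\eps|x|^{-2}\le C^{-\kappa}\eps^{1-\kappa}|x|^{\kappa-2}$ whenever $|x|\ge C\eps$. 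To bound $C_0^\delta(1)$ I would rewrite it as the $L^2$-pairing $C_0^\delta(1)=\langle \bar\rho_\delta*h,\ \bar\rho_\delta*(2f^{(1)}_1+k)\rangle$ with $h=f^{(2)}_1(\cdot,\cdot)-f^{(2)}_1(\cdot,-\cdot)$ and $k=f^{(2)}_1(\cdot,\cdot)+f^{(2)}_1(\cdot,-\cdot)$, and subtract $C_0(1)=\langle h,\ 2f^{(1)}_1+k\rangle=0$. Moving one mollifier through the pairing (using $\langle h,\bar\rho_\delta*w\rangle=\langle\rho_\delta*h,w\rangle$) splits the result into $\langle \bar\rho_\delta*h-h,\ \bar\rho_\delta*(2f^{(1)}_1+k)\rangle+2\langle\rho_\delta*h-h,\ f^{(1)}_1\rangle+\langle h,\ \bar\rho_\delta*k-k\rangle$. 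Here $h$ and $k$ are smooth, of size $O(1)$ at parabolic scale $1$ and decaying like $r^{-2}$ at parabolic distance $r$, so $\bar\rho_\delta*h-h$ and $\bar\rho_\delta*k-k$ gain a factor $\delta$ in sup-norm; a dyadic decomposition in the distance $r$ from the singular point $(0,1)$ of $f^{(1)}_1$ then bounds the first and third terms by $\sum_r\delta\,r^{-2}\lesssim\delta$. For the middle term one uses that $h$ is smooth \emph{and super-polynomially small} near $(0,1)$ (there $h$ is, up to the indicator, $\tfrac{1+y}{s}\CN(1+y,s)$ with $1+y\approx2$, hence $\lesssim s^{-3/2}e^{-2/s}$): on a ball $B((0,1),\delta^{1/2})$ the factor $\rho_\delta*h-h$ is $O(\delta^N)$ for every $N$ while $f^{(1)}_1$ is only locally $L^1$ of order $\delta^{1/2}$, and on the complement $f^{(1)}_1$ is an honest $L^1$ function against which $\rho_\delta*h-h$ is $O(\delta)$; summing dyadically again gives $\lesssim\delta$. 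The case $x<0$ (equivalently $C_0^\delta(-1)$) is handled identically after relabelling.

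The main obstacle is this last estimate: one must extract a genuine power of $\delta$ from the pairing $\langle\rho_\delta*h-h,\ f^{(1)}_1\rangle$ even though $f^{(1)}_1$ carries a parabolic singularity whose square is non-integrable, and the only mechanism that makes this work is the cancellation $C_0\equiv0$ together with the fact that the ``reflection-antisymmetrised'' profile $h$ vanishes to infinite order exactly where $f^{(1)}_1$ is most singular. A secondary point requiring care is that the rescaling identity $C_0^\eps(x)=|x|^{-1}C_0^{\eps/|x|}(\mathrm{sgn}\,x)$ be exact, which is why one works with the literal Gaussians $\CN(x\mp y,s)$ (so that scale-invariance holds on the nose and the $\sim r^{-2}$ decay at large parabolic scales makes all the dyadic sums converge), the discarded pieces being smooth and hence harmless in the sense already used in the surrounding argument.
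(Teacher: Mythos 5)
Your strategy is genuinely different from the paper's, and in some ways more conceptual: the paper establishes $C_0 \equiv 0$ only \emph{a posteriori} (as a consequence of the $\eps$-bound plus locally uniform convergence), whereas you compute it directly; and the exact parabolic scaling identity $C_0^\eps(x)=|x|^{-1}C_0^{\eps/|x|}(\mathrm{sgn}\,x)$, which you correctly justify by the scale-homogeneity of the Gaussian kernels and of the mollifier family, neatly reduces the problem to the single scale $x=\pm 1$. The explicit Gaussian integral you perform is, in disguise, the same cancellation the paper uses when it shows $I_1 = \int f^{(1)}_x f^{(2)}_x\,dz' = 0$, so this part is sound.

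However, there is a genuine gap in the stability estimate, exactly at the step you yourself flag as the crux. You assert that ``$h$ and $k$ are smooth, of size $O(1)$ at parabolic scale $1$ and decaying like $r^{-2}$ at parabolic distance $r$, so $\bar\rho_\delta*h-h$ and $\bar\rho_\delta*k-k$ gain a factor $\delta$ in sup-norm.'' This is false: $h(s,y)=f^{(2)}_1(s,y)-f^{(2)}_1(s,-y)$ inherits the indicator $\bone_{y>0}$ from the definition of $f^{(2)}_1$, so it has a genuine \emph{jump} of size $2s^{-1}\CN(1,s)$ across $\{y=0,\ s>0\}$ (and $k$ has a corner there). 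On the strip $\{|y|\lesssim\delta\}$ the difference $\rho_\delta*h-h$ is $O(1)$ relative to the local size of $h$, not $O(\delta)$. This is precisely why the paper decomposes $(\bar\rho_\eps-\bar\rho_0)*f^{(2)}_x = f^{(2,1)}_{x,\eps}+f^{(2,2)}_{x,\eps}$ with a separate piece $f^{(2,2)}_{x,\eps}$ supported on the thin strip and estimated via its small measure rather than via smoothing. Your dyadic bookkeeping, as written, therefore does not close; you need to isolate the strip contribution and bound it by the product of the jump size, the strip width $\sim\delta$, and the $s$-integrability of $s^{-1}\CN(1,s)\cdot f^{(1)}_1(s,0)$, which does give $O(\delta)$ but for a different reason than the one you invoke.

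A secondary point: you claim the bound $|C_0^\delta(1)|\lesssim\delta$, hence $|C_0^\eps(x)|\lesssim\eps|x|^{-2}$, which is stronger than the $\eps^{1-\kappa}|x|^{\kappa-2}$ the Proposition asks for. The paper deliberately takes $\kappa>0$ to sidestep a logarithmic divergence (near $z'=-z$) in one of its convolution estimates; your argument might in fact avoid this by using the exponential smallness of $h$ and $f^{(1)}_1$ at small $s$, but you should verify that none of the dyadic sums degenerate to $\delta\log(1/\delta)$ before asserting the unqualified $O(\delta)$ bound. If you can only obtain $\delta^{1-\kappa}$ that is still enough for the statement.
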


\begin{proof}
%
The first statement follows from the second one since $C_0^\eps \to C_0$ locally uniformly, so
it remains to show that the claimed bound on $C_0^\eps(x)$ holds.
We will assume without the loss of generality that $x>C\varepsilon$ for some sufficiently large $C$
($C=6$ will do)
and we write $z=(0,x)$ and $z'=(s,y)$. Since $f^{(3)}_x=f^{(1)}_x+f^{(3)}_x\bone_{y<0}$ almost everywhere, one has
\begin{equs}
C_0^\varepsilon(x)&=\int_{\R^2}2(\bar\rho_\varepsilon\ast f^{(1)}_x)(\bar\rho_\varepsilon\ast f^{(2)}_x)\,dz'
+\int_{\R^2}(\bar\rho_\varepsilon\ast f^{(2)}_x)^2-(\bar\rho_\varepsilon\ast (f^{(3)}_x\bone_{y<0}))^2\,dz'
\\
&\quad-\int_{\R^2}2(\bar\rho_\varepsilon\ast (f^{(3)}_x\bone_{y<0}))(\bar\rho_\varepsilon\ast (f^{(3)}_x\bone_{y>0}))\,dz'
\\
&=:2J_1+J_2-2J_3.
\end{equs}
With the usual convention $\bar\rho_0\ast$ standing for the identity, we can furthermore write
\begin{equs}
J_1&=\int f^{(1)}_x f^{(2)}_x\,dz'+
2\int(\bar\rho_\varepsilon\ast f^{(1)}_x)((\bar\rho_\varepsilon-\bar\rho_0)\ast f^{(2)}_x)\,dz'
 +2\int((\bar\rho_\varepsilon-\bar\rho_0)\ast f^{(1)}_x)f^{(2)}_x\,dz'
\\
&=:I_1+I_2+I_3.
\end{equs}
The expression $I_1$ actually vanishes, since
\begin{align*}
I_1&=\int_{s>0}\frac{x^2-y^2}{s^{2}}\CN(x,s)\CN(y,s)\,dz'=\int_{s>0}{x^2-s \over s^2} \CN(x,s)\,ds \\
&=\int_{r>0}{r^2 - 1 \over |x|}\CN(r,1)\,dr = 0\;.
\end{align*}
To estimate $I_2$, we first note that it follows immediately from the scaling properties of $f^{(2)}_x$ 
and the fact that it only has a discontinuity at $y=0$, that one can write
\begin{equ}
(\bar\rho_\varepsilon-\bar\rho_0)\ast f_x^{(2)} = f_{x,\eps}^{(2,1)} + f_{x,\eps}^{(2,2)}\;,
\end{equ}
where $f_{x,\eps}^{(2,2)}$ is supported on $\R \times [-2\eps,2\eps]$ and, for any $\kappa \in [0,1]$, 
one has the bounds
\begin{equ}[e:boundsf2]
|f_{x,\eps}^{(2,1)}(z')|\lesssim {\eps^{1-\kappa} \over \|z'+z\|^{3-\kappa}}\;,\qquad
|f_{x,\eps}^{(2,2)}(z')|\lesssim {1 \over \|z'+z\|^{2}} \lesssim {1\over s+x^2}\;.
\end{equ}
It follows immediately from standard properties of convolutions (see for example
\cite[Lem.~10.14]{H0}) that
\begin{equ}
\Bigl|\int (\bar\rho_\varepsilon\ast f^{(1)}_x)f_{x,\eps}^{(2,1)}\,dz'\Bigr| \lesssim \eps^{1-\kappa} |x|^{\kappa-2}\;,
\end{equ}
as required. Regarding the term involving $f_{x,\eps}^{(2,2)}$, it follows from the support properties of that
function that 
\begin{equ}[e:boundTermEps]
\Bigl|\int (\bar\rho_\varepsilon\ast f^{(1)}_x)f_{x,\eps}^{(2,2)}\,dz'\Bigr| \lesssim \eps \int_0^\infty {ds \over (s+x^2)^2} \lesssim \eps |x|^{-2}\le \eps^{1-\kappa} |x|^{\kappa-2}\;.
\end{equ}
The term $I_3$ can be bounded in exactly the same way.

To bound $J_2$, we use the notation $\tilde \rho_\varepsilon(t,x) = \bar \rho_\varepsilon(t,-x)$. 
Since $(f^{(3)}_x\bone_{y<0})(s,y) = f^{(2)}_x(s,-y)$, we can then rewrite $J_2$ as
\begin{equ}
J_2=\int_{\R^2}((\bar\rho_\varepsilon-\tilde \rho_\varepsilon)\ast f_x^{(2)})((\bar\rho_\varepsilon+\tilde\rho_\varepsilon)\ast f_x^{(2)})
\,dz'\;.
\end{equ}
Exactly as above, we can decompose the first factor as
\begin{equ}
(\bar\rho_\varepsilon-\tilde\rho_\eps)\ast f_x^{(2)} = f_{x,\eps}^{(2,1)} + f_{x,\eps}^{(2,2)}\;,
\end{equ}
so that the bounds \eqref{e:boundsf2} hold and $f_{x,\eps}^{(2,2)}(z') = 0$ for $y \not \in [-2\eps,2\eps]$. 
This time, we exploit the fact that the second factor itself satisfies the bound
\begin{equ}
|((\bar\rho_\varepsilon+\tilde\rho_\varepsilon)\ast f_x^{(2)})(z')| \lesssim \|z+z'\|^{-2}\;,
\end{equ}
uniformly in $\eps$, and that the support of both factors is included in 
the set $\|z+z'\| \ge |x|/2$. As a consequence, the term involving $f_{x,\eps}^{(2,1)}$ is bounded
by
\begin{equ}
\int_{\|z'\| \ge |x|/2} {\eps \over \|z'\|^5}\,dz' \lesssim \eps |x|^{-2}\;,
\end{equ}
while the other term is bounded exactly as in \eqref{e:boundTermEps}.

Finally, regarding $J_3$, the product is supported on $\R \times [-\eps,\eps]$ and each
factor is bounded by $(s+x^2)^{-1}$ there, so that the corresponding integral is again bounded 
as in \eqref{e:boundTermEps}, thus concluding the proof.
\end{proof}

Let us now return to the computation of the constant $\bar c_\varepsilon^-$.
Using the identity $(f\ast g,h)=(g,\bar f\ast h)_{L^2(R^2)}$ and the commutativity of the convolution, we 
can rewrite it as
\begin{equs}\label{eq:bar c0 as a product}
\bar c_\varepsilon^-= (\bar\rho_\varepsilon\ast\rho_\varepsilon, F)_{L^2(R^2)},
\end{equs}
where
$$
F=F_1+F_2:= \int_\R\bar f^{(1)}_x\ast f^{(2)}_x\,dx+{1\over 2}\int_\R(\bar f^{(1)}_x\ast f^{(1)}_x+\bar f^{(2)}_x\ast f^{(2)}_x-\bar f^{(3)}_x\ast f^{(3)}_x)\,dx.
$$
We will use again the notation \eqref{e:notationN} and we will make use of the 
identities
\begin{equs}
\CN(x,\sigma)\,\CN(y,\eta) &= \CN(x \pm y,\sigma+\eta)\, \CN \Bigl({\eta x \mp \sigma y \over \sigma+\eta},{\sigma \eta \over \sigma+\eta}\Bigr)\;,\\
\d_x \CN(x,\sigma) &= -(x/\sigma)\CN(x,\sigma)\;.
\end{equs}
The first identity can be obtained by considering a jointly Gaussian centred random variable
$(X,Y)$ with $\Var(Y) = \sigma$, $\E (X\,|\,Y) = Y$, $\Var(X\,|\,Y) = \eta$ and noting that one then
has $\Var(X) = \sigma+ \eta$, $\E(Y\,|\,X) = {\sigma X \over \sigma+\eta}$, and $\Var(Y\,|\,X) = {\sigma \eta \over \sigma+\eta}$.
Exploiting this identity, we can rewrite $F_1$ as
\begin{equs}
F_1 &= \int \bone_{y'>y \vee 0} {x-y'+y \over s'-s}{x+y'\over s'} \CN(x-y'+y,s'-s)\CN(x+y',s')\,dz'\,dx \\
&={1\over 4}\int \bone_{y'>y \vee 0} {(2x+y)^2 - (2y'-y)^2 \over s'(s'-s)} \CN(2y'-y,2s'-s)\\
&\qquad \times \CN\Big(x+{y\over 2} - {s(2y'-y) \over 2(2s'-s)},{s'(s'-s) \over 2s'-s}\Big)\,dz'\,dx\;.
\end{equs}
We now perform the change of variables $2y'-y \mapsto y'$ and $2s'-s \mapsto s'$ which in
particular maps $dz'$ to ${1\over 4}dz'$ and $s'(s'-s)$ to $((s')^2-s^2)/4$ so that
\begin{equs}
F_1 &={1\over 4}\int \bone_{y'>|y|} {(2x+y)^2 - (y')^2 \over (s'+s)(s'-s)} \CN(y',s')
 \CN\Big(x+{y\over 2} - {s y' \over 2s'},{(s'-s)(s'+s) \over 4 s'}\Big)\,dz'\,dx\\
 &= {1\over 4}\int \bone_{y'>|y| \atop s' > |s|}{1\over s'} \Big({1- {(y')^2 \over s'}}\Big) \CN(y',s')\,dz'\\
 &= {1\over 4}\int \bone_{y'>|y| \atop s' > |s|}{1\over \sqrt{s'}} \Big({1- {(y')^2 \over s'}}\Big) \CN(y'/\sqrt{s'},1)\,{dz' \over s'}\;.
\end{equs}
At this stage, for fixed $y'$, we perform the change of variables $r = y'/\sqrt{s'}$, so that
$dz'/s' = 2 dy'\,dr/r$, thus yielding
\begin{equs}
F_1(z) &= {1\over 2}\int_{|y|}^\infty {1\over y'} \int_0^{y' \over \sqrt{|s|}} \big({1- r^2}\big) \CN(r,1)\,dr\,dy'
= -{1\over 2}\int_{|y|}^\infty {1\over y'} \int_0^{y' \over \sqrt{|s|}} \d_r^2 \CN(r,1)\,dr\,dy' \\
&= -{1\over 2}\int_{|y|}^\infty {1\over y'} (\d_1 \CN)\Big({y' \over \sqrt{|s|}},1\Big) \,dy'
= {1\over 2}\int_{|y|}^\infty {1\over \sqrt{|s|}} \CN\Big({y' \over \sqrt{|s|}},1\Big) \,dy' \\
&= {1\over 2}\int_{|y| \over \sqrt{|s|}}^\infty \CN(q,1) \,dq = {1\over 4} - {1\over 4}\Erf \Bigl({|y| \over \sqrt{2|s|}}\Bigr)\;.
\end{equs}
Let's now turn to $F_2$. Setting $f_x(z) = {x-y\over s}\CN(x-y,s)$, a simple calculation shows that
\begin{equs}
F_2(z) &= {1\over 2} \int f_x(z-z')f_x(-z') \bigl(\bone_{y'<(0\wedge y)} + \bone_{y'>(0  \vee y)} - 1\bigr)\,dz'\,dx \\
&= -{1\over 2} \int f_x(z-z')f_x(-z') \bone_{-|y| < 2y'-y < |y|}\,dz'\,dx\\
&= {1\over 2} \int {x-y + y'\over s - s'}{x + y'\over s'}\CN(x -y+ y', s-s') \CN(x + y', -s') \bone_{|2y'-y| < |y|}\,dz'\,dx\\
&= - {1\over 8} \int {(2x+y')^2-y^2 \over (s')^2 - s^2} \CN(y,s') \CN \Bigl(x + {y'\over 2} + {ys\over 2s'}, {(s')^2 - s^2 \over 4s'}\Bigr) \bone_{|y'| < |y|} \,dz'\,dx\\
&= - {1\over 8} \int_{-|y|}^{|y|} \int_{|s|}^\infty \Bigl({1\over s'} - {y^2 \over (s')^2}\Bigr) \CN(y,s') \,ds'\,dy'\\
&= {|y|\over 4} \int_{|s|}^\infty {1\over s'}\Bigl({y^2 \over s'} - 1\Bigr) \CN(y,s') \,ds'
= -{|y| \over 2} \CN(y,|s|)\;,
\end{equs}
where the last equality was obtained in exactly the same way as above.
Combining these identities with \eqref{eq:bar c0 as a product} and exploiting the fact that $F$ is $0$-homogeneous
under the parabolic scaling, we finally obtain
\begin{equ}\label{eq:bar c0}
\bar c_\varepsilon^-=\int_{\R^2}(\bar\rho\ast\rho)(s,y)\,\Big({1\over 4} - \frac{1}{4}\Erf\Big(\frac{|y|}{\sqrt{2|s|}}\Big)-{|y| \over 2} \CN(|y|,|s|)\Big)\,ds\,dy=\frac{a}{2},
\end{equ}
where $a$ is the quantity given in \eqref{eq:constant a}.

If momentarily one also includes the dependence of $c^{\pm}_\eps$ on $\rho$, one has, by symmetry, $c^+_\eps(\rho)=c^-_\eps(\hat \rho)$, with $\hat \rho(t,x) = \rho(t,-x)$. Therefore by \eqref{eq:bar c0}, $c_\varepsilon^+=\bar c_\eps^+-\hat c_\varepsilon^+$, where $\hat c_\varepsilon^+\rightarrow 0$ as $\varepsilon\rightarrow0$ and $\bar c_\eps^+$ is given by
\begin{equ}\label{eq:bar c1}
\bar c_\eps^+ = \int_{\R^2}(\bar{\hat\rho}\ast\hat\rho)(s,y)F(y,s)\,ds\,dy
=\int_{\R^2}(\bar\rho\ast\rho)(s,y)F(-y,s)\,ds\,dy = \frac{a}{2},
\end{equ}
since $F$ is symmetric in both of its arguments.

We can conclude that, for any fixed constants $\hat b_\pm \in \R$, setting
\begin{equation}\label{eq:kpz hat epsilon N}
\hat \cR^\varepsilon \bR^{D}_+(\scD\Psi)^{\star 2}=A_2^\varepsilon-B_0^\varepsilon+ C^\varepsilon_0- {1\over 2}\bone_{t>0}\big((a-\hat b_-)\delta_{-1}+ (a+\hat b_+)\delta_1)\Big),
\end{equation}
for the models $(\Pi^\varepsilon,\Gamma^\varepsilon)$ and
\begin{equation}\label{eq:kpz hat N}
\hat \cR^0 \bR^{D}_+(\scD\Psi)^{\star 2}=A_2-B_0+ C_0 - {1\over 2}\bone_{t>0}(\hat b_+\delta_1- \hat b_-\delta_{-1})\Big) ,
\end{equation}
for the limiting model,
the desired properties \eqref{eq:hatR0}-\eqref{e:conteps} of $(\hat\cR^\eps)_{\eps\in[0,1]}$ hold. 
Similarly to before, but accounting for the additional Dirac masses, we then see that for any 
fixed $\eps > 0$ the function $h^\eps = \cR( u^\varepsilon+\Psi^\eps)$ 
(there is no ambiguity for the reconstruction operator as far as the solution $u^\eps$ is concerned, it is trivially given
simply by the component in the direction $\bone$) solves
\begin{equs}[eq:0KPZ N approx 2]
        \partial_t h^\varepsilon &=\tfrac{1}{2}\d_x^2 h^\varepsilon+(\partial_x h^\varepsilon)^2+2c\partial_x  h^\varepsilon-C_\varepsilon+\xi_\varepsilon \quad & \text{on } &\R_+\times [-1,1],\\
        \partial_x  h^\varepsilon &= \mp a + b_{\pm} & \text{on } &\R_+\times \{\pm 1\},\\
         h^\varepsilon &=u_0 &\text{on }&\{0\}\times[-1,1],
\end{equs}
where $c$ is given by \eqref{e:defc} below. Hence, clearly, $\hat h^\eps = h^\eps +cx+(C_\varepsilon+c^2) t$ solves \eqref{eq:0KPZ N approx} with boundary data $\hat b_\pm=\mp a+ b_\pm+c$ and $\hat u_0(x)=u_0(x)+cx$.

Applying again Theorem~\ref{thm:FPP}, combined with the results of \cite{HS15} regarding the convergence of the 
corresponding admissible models, we conclude that, for any choice of $b_\pm$, the solution
to \eqref{eq:0KPZ N approx 2} (which is precisely the same as \eqref{e:renormu} provided
that the constant $C_\eps$ is adjusted in the appropriate way) converges locally
as $\eps \to 0$ to a limit which depends on the choice of $b_\pm$ but is independent of
the choice of mollifier $\rho$. It remains to show that this limit coincides with the 
Hopf-Cole solution to the KPZ equation with Neumann boundary data given by $b_\pm$.
This follows by considering the special case $\rho(t,x) = \delta(t)\hat \rho(x)$, which is covered 
by the above proof, the only minor modification being the proof of convergence of the corresponding
admissible model to the same limit, which can be obtained in a way very similar to \cite{H_KPZ,H0}. 
As already mentioned at the end of Section~\ref{subsec:applications}, one has $a = c = 0$ in this case, 
so that in particular
$\hat b_\pm = b_\pm$.
In this case, we can apply It\^o's formula to perform the Hopf-Cole transform and obtain convergence
to the corresponding limit by classical means \cite{DPZ}, which concludes the proof.

\subsubsection{Expression for the drift term}

It follows from \cite{HS15} that the constant $c$ appearing in \eqref{eq:0KPZ N approx 2}
is given by
\begin{equ}[e:defc]
c = -2\scal{\rho * \bar \rho, \d_x P * \d_x P * \overline{\d_x P} } =: \scal{\rho * \bar \rho, F_{0}}\;,
\end{equ}
where $P$ is the heat kernel.
Similarly to above, we obtain the identity
\begin{equs}
(\d_x P * \d_x P)(t,x) &= \int {x-y\over t-s} {y\over s} \CN(y,s)\CN(x-y,t-s) \,dy\,ds\\
&= \CN(x,t) \int {x-y\over t-s} {y\over s}  \CN\Bigl(y-{s x \over t},{s(t-s) \over t}\Bigr) \,dy\,ds\\
&= \CN(x,t) \int_0^t {x^2 - t \over t^2} \,ds
= \CN(x,t) {x^2 - t\over t}\;,
\end{equs}
which then implies that the function $F_0$ is indeed given by 
\begin{equs}
F_0(t,x) &= 2\int {y^2-s\over s} {x-y\over t-s} \CN(y,s)\CN(x-y,s-t) \bone_{s \ge 0\vee t}\,dy\,ds \\
&= 2  \int {y^2-s\over s} {x-y\over t-s} \CN(x,2s-t) \CN\Bigl(y-{s x \over 2s-t},{s(s-t) \over 2s-t}\Bigr) \bone_{s \ge 0\vee t}\,dy\,ds\\
&= 2  \int {(2y^2-r-t)(y-x)\over r^2-t^2} \CN(x,r) \CN\Bigl(y-{(r+t) x \over 2r},{r^2-t^2 \over 4r}\Bigr) \bone_{r \ge |t|}\,dy\,dr\\
&= \int_{|t|}^\infty {(r+t) x \over 2r^2} \Bigl(3-{x^2 \over r}\Bigr) \CN(x,r) \,dr 
= \Erf(x/\sqrt{2|t|}) + 2x\CN(x,t)\;.
\end{equs}
To obtain \eqref{eq:constant c}, it remains to note that the first term is odd under the 
substitution $(t,x) \leftrightarrow (-t,-x)$, while $\rho * \bar \rho$ is even, so that this
does not contribute to the value of $c$.

\bibliography{boundary}{}

\begin{thebibliography}{BHZ16}
\expandafter\ifx\csname url\endcsname\relax
  \def\url#1{\texttt{#1}}\fi
\expandafter\ifx\csname urlprefix\endcsname\relax\def\urlprefix{URL }\fi
\expandafter\ifx\csname href\endcsname\relax
  \def\href#1#2{#2}\fi
\expandafter\ifx\csname burlalt\endcsname\relax
  \def\burlalt#1#2{\href{#2}{\texttt{#1}}}\fi

\bibitem[BBF15]{Ismael}
\textsc{I.~{Bailleul}}, \textsc{F.~{Bernicot}}, and \textsc{D.~{Frey}}.
\newblock {Higher order paracontrolled calculus, 3d-PAM and multiplicative
  Burgers equations}.
\newblock \emph{ArXiv e-prints} (2015).
\newblock \burlalt{arXiv:1506.08773}{http://arxiv.org/abs/1506.08773}.

\bibitem[BHZ16]{BHZ}
\textsc{Y.~{Bruned}}, \textsc{M.~{Hairer}}, and \textsc{L.~{Zambotti}}.
\newblock {Algebraic renormalisation of regularity structures}.
\newblock \emph{ArXiv e-prints} (2016).
\newblock \burlalt{arXiv:1610.08468}{http://arxiv.org/abs/1610.08468}.

\bibitem[CH16]{CH}
\textsc{A.~{Chandra}} and \textsc{M.~{Hairer}}.
\newblock {An analytic BPHZ theorem for regularity structures}.
\newblock \emph{ArXiv e-prints} (2016).
\newblock \burlalt{arXiv:1612.08138}{http://arxiv.org/abs/1612.08138}.

\bibitem[CS16]{2016arXiv161004931C}
\textsc{I.~{Corwin}} and \textsc{H.~{Shen}}.
\newblock {Open ASEP in the Weakly Asymmetric Regime}.
\newblock \emph{ArXiv e-prints} (2016).
\newblock To appear in Comm. Pure Appl. Math.
\newblock \burlalt{arXiv:1610.04931}{http://arxiv.org/abs/1610.04931}.

\bibitem[Dau88]{Daub}
\textsc{I.~Daubechies}.
\newblock Orthonormal bases of compactly supported wavelets.
\newblock \emph{Communications on Pure and Applied Mathematics} \textbf{41},
  no.~7, (1988), 909--996.
\newblock
  \burlalt{doi:10.1002/cpa.3160410705}{http://dx.doi.org/10.1002/cpa.3160410705}.

\bibitem[DPZ92]{DPZ}
\textsc{G.~Da~Prato} and \textsc{J.~Zabczyk}.
\newblock \emph{Stochastic equations in infinite dimensions}, vol.~44 of
  \emph{Encyclopedia of Mathematics and its Applications}.
\newblock Cambridge University Press, Cambridge, 1992.
\newblock
  \burlalt{doi:10.1017/CBO9780511666223}{http://dx.doi.org/10.1017/CBO9780511666223}.

\bibitem[FH14]{FH}
\textsc{P.~K. Friz} and \textsc{M.~Hairer}.
\newblock \emph{A course on rough paths}.
\newblock Universitext. Springer, Cham, 2014.
\newblock With an introduction to regularity structures.
\newblock
  \burlalt{doi:10.1007/978-3-319-08332-2}{http://dx.doi.org/10.1007/978-3-319-08332-2}.

\bibitem[GPS17]{Nicolas}
\textsc{P.~{Gon{\c c}alves}}, \textsc{N.~{Perkowski}}, and \textsc{M.~{Simon}}.
\newblock {Derivation of the stochastic Burgers equation with Dirichlet
  boundary conditions from the WASEP}.
\newblock \emph{ArXiv e-prints} (2017).
\newblock \burlalt{arXiv:1710.11011}{http://arxiv.org/abs/1710.11011}.

\bibitem[Hai13]{H_KPZ}
\textsc{M.~Hairer}.
\newblock Solving the {K}{P}{Z} equation.
\newblock \emph{Annals of Mathematics} \textbf{2}, no.~2, (2013), 559--664.
\newblock \burlalt{arXiv:1109.6811}{http://arxiv.org/abs/1109.6811}.
\newblock
  \burlalt{doi:10.4007/annals.2013.178.2.4}{http://dx.doi.org/10.4007/annals.2013.178.2.4}.

\bibitem[Hai14]{H0}
\textsc{M.~Hairer}.
\newblock A theory of regularity structures.
\newblock \emph{Inventiones mathematicae} \textbf{198}, no.~2, (2014),
  269--504.
\newblock \burlalt{arXiv:1303.5113}{http://arxiv.org/abs/1303.5113}.
\newblock
  \burlalt{doi:10.1007/s00222-014-0505-4}{http://dx.doi.org/10.1007/s00222-014-0505-4}.

\bibitem[Hin91]{Hinch}
\textsc{E.~Hinch}.
\newblock \emph{Perturbation Methods}, vol.~6 of \emph{Cambridge Texts in
  Applied Mathematics}.
\newblock Cambridge University Press, 1991.
\newblock
  \burlalt{doi:10.1017/CBO9781139172189}{http://dx.doi.org/10.1017/CBO9781139172189}.

\bibitem[HL15]{HL15}
\textsc{M.~{Hairer}} and \textsc{C.~{Labb{\'e}}}.
\newblock Multiplicative stochastic heat equations on the whole space.
\newblock \emph{ArXiv e-prints} (2015).
\newblock \burlalt{arXiv:1504.07162}{http://arxiv.org/abs/1504.07162}.

\bibitem[Hol13]{Holmes}
\textsc{M.~H. Holmes}.
\newblock \emph{Introduction to perturbation methods}, vol.~20 of \emph{Texts
  in Applied Mathematics}.
\newblock Springer, New York, second ed., 2013.
\newblock
  \burlalt{doi:10.1007/978-1-4614-5477-9}{http://dx.doi.org/10.1007/978-1-4614-5477-9}.

\bibitem[HP15]{HP15}
\textsc{M.~Hairer} and \textsc{{\'E}.~Pardoux}.
\newblock A {W}ong-{Z}akai theorem for stochastic {P}{D}{E}s.
\newblock \emph{J. Math. Soc. Japan} \textbf{67}, no.~4, (2015), 1551--1604.
\newblock \burlalt{arXiv:1409.3138}{http://arxiv.org/abs/1409.3138}.
\newblock
  \burlalt{doi:10.2969/jmsj/06741551}{http://dx.doi.org/10.2969/jmsj/06741551}.

\bibitem[HS15]{HS15}
\textsc{M.~{Hairer}} and \textsc{H.~{Shen}}.
\newblock {A central limit theorem for the KPZ equation}.
\newblock \emph{ArXiv e-prints} (2015).
\newblock \burlalt{arXiv:1507.01237}{http://arxiv.org/abs/1507.01237}.

\bibitem[HS16]{HShen}
\textsc{M.~Hairer} and \textsc{H.~Shen}.
\newblock The dynamical sine-{G}ordon model.
\newblock \emph{Communications in Mathematical Physics} \textbf{341}, no.~3,
  (2016), 933--989.
\newblock \burlalt{arXiv:1409.5724}{http://arxiv.org/abs/1409.5724}.
\newblock
  \burlalt{doi:10.1007/s00220-015-2525-3}{http://dx.doi.org/10.1007/s00220-015-2525-3}.

\end{thebibliography}
\bibliographystyle{Martin} 

\end{document}